\numberwithin{equation}{section}
\newtheorem{thm}{Theorem}
\newtheorem{prop}[thm]{Proposition}
\newtheorem{conj}{Conjecture}
\theoremstyle{definition}
\newtheorem{rem}{Remark}
\newcommand{\rr}{\mathbb{R}}
\newcommand{\la}{\lambda}
\newcommand{\ee}{\varepsilon}
\newcommand{\ph}{\varphi}
\newcommand{\si}{\Sigma}
\newcommand{\sis}{\widetilde{\Sigma}}
\newcommand{\rrr}{\mathcal{R}}
\newcommand{\rrrr}{\widetilde{\mathcal{R}}}
\newcommand{\mi}{\mathring{\mu}}
\newcommand{\ho}{\mathcal{H}^{+}}
\newcommand{\nnn}{\mathcal{N}}
\newcommand{\so}{\bar{S}}
\newcommand{\aaa}{\alpha}
\newcommand{\meg}{\geqslant}
\newcommand{\mik}{\leqslant}
\begin{document}
\title{Nonlinear Wave Equations With Null Condition On Extremal Reissner-Nordstr\"{o}m Spacetimes I: Spherical Symmetry}
\author{Yannis Angelopoulos\thanks{Department of Mathematics, University of Toronto, Bahen Centre, 40 St. George Street,  Toronto, Ontario, M5S 2E4, Canada}}
\date{}
\maketitle

\begin{abstract}
We study spherically symmetric solutions of semilinear wave equations in the case where the nonlinearity satisfies the null condition on extremal Reissner--Nordstr\"{o}m black hole spacetimes. We show that solutions which arise from sufficiently small compactly supported smooth data prescribed on a Cauchy hypersfurace $\widetilde{\Sigma}_0$ crossing the future event horizon $\mathcal{H}^{+}$ are globally well-posed in the domain of outer communications up to and including $\mathcal{H}^{+}$. Our method allows us to close all bootstrap estimates under very weak decay results (compatible with those known for the linear case). Moreover we establish a certain number of non-decay and blow-up results along the horizon $\mathcal{H}^{+}$ which generalize known instability results for the linear case. Our results apply to spherically symmetric wave maps for a wide class of target spaces.
\end{abstract}
\tableofcontents
\section{Introduction}
The analysis of linear and non-linear wave equations on black hole spacetimes has recently been the object of intense study.  Following this thread of research in this paper we  focus on the the study of spherically symmetric solutions of some nonlinear wave equation on the extremal Reissner--Nordstr\"{o}m black hole spacetime.

The extremal Reissner--Nordstr\"{o}m solution is an $1$--parameter family of solutions (the parameter being the mass $M > 0$) to the Einstein--Maxwell equations and it has the following form:
\begin{equation}\label{rn1}
 g = -\left(1- \frac{M}{r} \right)^2 dt^2 + \left(1- \frac{M}{r} \right)^{-2} dr^2 + r^2 \gamma_{\mathbb{S}^2},
 \end{equation}
where $\gamma_{\mathbb{S}^2} $ is the standard metric on the 2-sphere $\mathbb{S}^2$. This spacetime has attracted the interest of both mathematicians and physicists, especially during the last 5 years. 

We will study the following equation:
\begin{equation}\label{nw}
\left\{\begin{aligned}
       \Box_g \psi = A(\psi ) g^{\alpha \beta} \partial_{\alpha} \psi \partial_{\beta} \psi  + O (|\psi|^l ) \\
       \psi |_{\Sigma_0 } = \ee \psi_0, \quad n_{\widetilde{\Sigma}_0} \psi |_{\Sigma_0 } = \ee \psi_1, \
       \end{aligned} \right.
\end{equation}
where the foliation $\{ \widetilde{\Sigma}_{\tau}\}_{\tau \meg 0}$ (and hence the initial hypersurface $\widetilde{\Sigma}_0$) will be defined later,  $\psi_0$ is smooth compactly supported, $\ee > 0$ is such that $0\leq \ee\leq \ee'$ where $\ee'$ is some small constant to be chosen later, $l$ is some constant to be specified later as well, and $A$ is a function that is bounded along with all its derivatives:
\begin{equation}\label{boundedgeom}
|A^{(k)} (\psi ) | \mik a_k \mbox{ for all $k\in \mathbb{N}$.} 
\end{equation}
Note here that we can also allow the condition 
\begin{equation}\label{wmsphere}
|A (\psi )| \mik a_0 |\psi | + O (|\psi|^2 ).
\end{equation}
A special case of equation \eqref{nw} is the very well known and very well studied Wave Maps equation. This can be seen by dropping the cubic and higher order terms, and taking into consideration only the quadratic nonlinear term
$$ A(\psi ) g^{\alpha \beta} \partial_{\alpha} \psi \partial_{\beta} \psi, $$
which satisfies the so called null condition. In its extrinsic formulation, $A(\psi)$ is the second fundamental form of the target manifold. The condition of boundedness of $A$ along with all its derivatives \eqref{boundedgeom} translates to the geometric condition that the target manifold for the Wave Maps equation has bounded geometry. The second more relaxed condition on $A$ (condition \eqref{wmsphere}) covers the case of the target being the sphere. Our techniques probably allow for more relaxed conditions, but we won't deal with this here. We will actually work with \eqref{nw} without the $l$-th or higher order terms (which are easier to treat as it will be demonstrated in Section \ref{othernon}) and just with the assumptions \eqref{boundedgeom}, since for our results the assumption \eqref{wmsphere} makes things even simpler.

We will assume that our initial data $(\ee \psi_0 , \ee \psi_1 )$ are spherically symmetric, which will give us that our solution will be spherically symmetric as well, due to the form of the nonlinearity.

Our main results in this paper can be summarized as follows (see Section \ref{Main} for the precise statements):

1) Spherically symmetric solutions of \eqref{nw} that come from sufficiently small compactly supported smooth data \textit{are unique and  exist globally } in the domain of outer communications up to and including the event horizon $\mathcal{H}^{+}$.

2) The solution itself \textit{decays} with respect to a parameter that moves across the future null infinity $\mathcal{I}^{+}$ and the future event horizon $\mathcal{H}^{+}$. The first order derivatives with respect to $v$ and $r$ (in the Eddington--Finkelstein coordinate system -- see Section \ref{gernfm}) on the other hand remain \textit{bounded}, and their size is comparable always to the one of the initial data.

3) On the future event horizon $\mathcal{H}^{+}$ we demonstrate the existence of an \textit{almost conservation law} (see Section \ref{acwothh}). More specifically we have that on $\mathcal{H}^{+}$ the $Y \psi$ derivative (where $Y = \partial_r$ in the Eddington--Finkelstein system of coordinates, see Section \ref{gernfm} again) is almost constant and hence generically\textit{ does not decay}. This coupled with our dispersive estimates implies that second and higher order derivatives with respect to $r$ \textit{blow up} asymptotically on $\mathcal{H}^{+}$ (see Section \ref{asbufhd}).

The instabilities observed in this nonlinear problem are analogous to the ones that were previously observed in the linear case (see Section \ref{ernnn}). The behaviour of solutions to the same nonlinear problem in the non-extremal setting is drastically different as none of these phenomena occur, while the global well-posedness proof presents significantly fewer difficulties.

\subsection{Linear Waves On Black Hole Spacetimes}
From the point of view of mathematical general relativity, the study of the linear wave equation
$$ \Box_g \psi = 0 $$
on a black hole spacetime is seen as a rather crude linearization of the problem of nonlinear black hole stability (which involves of course the Einstein equations -- see the seminal work of Christodoulou and Klainerman \cite{christab} for the proof of the nonlinear stability of Minkowski spacetime). This linear problem has been intensively studied, especially during the last 20 years. We refer to the works of Dafermos and Rodnianski \cite{enadio}, \cite{redshift}, \cite{newmethod}, Blue and Soffer \cite{blu1}, Marzuola, Metcalfe, Tataru and Tohaneanu \cite{tataru1}, Aretakis \cite{A1}, \cite{A2}, \cite{aretakis3}, Schlue \cite{volker1}, Dafermos, Rodnianski and Shlapentokh--Rothman \cite{tessera} to mention a few. For a more complete set of references and for a nice introduction to the topic we refer to the lecture notes of Dafermos and Rodnianski \cite{lecturesMD}, \cite{tria}. 

In these works, results of decay and boundedness for various energies have been obtained. Such estimates pave the way for the study of nonlinear problems on black hole spacetimes. 

\subsection{Semilinear Waves On Minkowski And Black Hole Spacetimes}
There is a long history on the study of nonlinear waves on the Minkowski spacetime. We will focus here on the study of global solutions for small and smooth initial data. 

For equations of the form
$$ \Box \psi = O (|\psi|^2 , |\partial \psi |^2 )  $$
we can establish small data global well-posedness in dimensions $d \meg 4$ (our Minkowski spacetime being in this case $\rr^{1+d}$) as it was shown by Klainerman \cite{SK80} (see also the book of H\"{o}rmander \cite{hormander} and the lecture notes of Selberg \cite{selberg}). In dimension 3 (which is of interest here) the situation is different. John \cite{john} managed to prove that for the equation
$$ \Box \psi = (\partial_t\psi )^2 $$
any nontrivial $C^3$ solution with compactly supported initial data will blow up in finite time. On the positive side, he managed to prove a lower bound for the time of existence in \cite{john76}, \cite{john83}, \cite{johnklainerman}. 

This led to the requirement of imposing conditions on the nonlinearity in order to prove a global well-posedness statement. This was done by Christodoulou \cite{chcon} and Klainerman \cite{sergiunull} independently and by the use of different methods. They managed to prove global well-posedness for solutions that come from small data for equations 
$$ \Box \psi = F (\psi , \partial \psi)  ,$$
where $F$ satisfies the so-called null condition. An easy example is the following:
$$ \Box \psi = G(\psi ) (|\partial_t \psi |^2 - |\nabla \psi |^2 ) , $$
for some function $G$ that is bounded along with all its derivatives (we should note here that if $G = 1$ then this equation can be turned to a linear one as it was observed by Nirenberg -- see the lecture notes of Selberg \cite{selberg} for the actual transformation).

Christodoulou in \cite{chcon} used the conformal method by embedding the Minkowski space into the Einstein cylinder. Klainerman in \cite{sergiunull} on the other hand used the -- by now, celebrated -- vector field method. This relies on commutations of the equation with the vector fields that generate the symmetries of Minkowski spacetime.

The method of Klainerman turned out to be very influential and found applications in many different settings. Its geometric character made it adaptable to variable coefficient settings as well. 

For black hole spacetimes, nonlinear Klein-Gordon equations have been studied in terms of global well-posedness and asymptotic completeness in \cite{bachelotnicolas}, \cite{nicolas1}, \cite{nicolas2}. On the other hand Blue and Soffer \cite{blu0} and Blue and Sterbenz \cite{blu3} studied semilinear problems on the Schwarzschild black hole, while Dafermos and Rodnianski \cite{MDIR05i} studied spherically symmetric nonlinear waves on subextremal Reissner-Nordstr\"{o}m spacetimes. Moreover the vector field method has also been used by Luk \cite{luknullcondition} in the case of the Kerr black hole with small angular momentum $|a| \ll M$. In all these works the problem of global well-posedness for small data is considered through the study of energy decay estimates.

More recently, the Strauss conjecture on Kerr black holes with small angular momentum was resolved in \cite{lindmet} (after being combined with the ill-posedness results of \cite{cataniageorgiev}), i.e. small data global well-posedness was proven for the equation:
\begin{equation}\label{strauskerr}
\Box_{g_K} \psi = |\psi |^p
\end{equation}
for $p > 1 + \sqrt{2}$ and for $g_K$ the Kerr metric with small angular momentum.

We  also mention here the work of Shiwu Yang \cite{shiwu} that combines Klainerman's method with the method for proving decay for linear waves of Dafermos and Rodnianski \cite{newmethod}. Yang's proof can be appropriately adapted to give an alternative proof for Luk's result. This is something that we heavily exploit in the present work.

It should also be noted that Yang's method is rather robust and has the potential for wide applications. The main advantage of using the hierarchy of $r$-weighted estimates of Dafermos and Rodnianski is that on the nonlinear level one doesn't have to commute with the vector field $t \partial_r + r \partial_r$ (or any other vector fields growing in $t$), so no stationarity assumption on the metric is needed. Yang exploits this fact to give the first small data global well-posedness proof for a nonlinear wave equation satisfying the null condition in time dependent inhomogeneous media. Later in \cite{shiwuq} he extended this result to the quasilinear setting.

Finally we highlight the fact that a special case of a nonlinear wave with a nonlinearity that satisfies the null condition is the Wave Maps equation. For a nice review on the topic we refer to the survey articles of Tataru \cite{tatarubulletin} and Rodnianski \cite{rodnianskiwm}. This equation is related to the Einstein equation under the assumption of axisymmetry (see \cite{CB04}), and this is one of the main motivations for studying such nonlinear waves on black hole spacetimes. The bulk of this work on this equation though has been done for the case of the domain manifold being the Minkowski space. In the variable coefficient setting we mention the work of Lawrie \cite{lawrie}, and the more recent work of Gudapati \cite{gudapati} which studies the Einstein-Wave Maps coupled system.

\subsection{The Case Of Extremal Black Hole Spacetimes And Aretakis Instabilities}
\label{ernnn}
A black hole spacetime is called extremal if the surface gravity vanishes on the horizon. The surface gravity of a Killing null hypersurface is defined as follows: if $V$ is the normal Killing vector field for this hypersurface, then we have that
$$ \nabla_V V = \kappa V ,$$
and $\kappa$ is the surface gravity of the null hypersurface.

As an example, in the case of the extremal Reissner-Nordstr\"{o}m black we have that the Killing vector field $T = \partial_v$ in the Eddington--Filkenstein coordinates (see the subsequent section \ref{gernfm} for the definitions) is normal on $\mathcal{H}^{+}$ and satisfies:
$$ \nabla_T T = 0 .$$ 
The extremal case for this two parameter family of black hole spacetimes occurs when the mass is equal in value to the one of the absolute value of the electromagnetic charge $M = |e|$. We have the subextremal case when $|e| < M$ and the naked singularity case when $|e| > M$. Another example of an extremal black hole spacetime is the extremal Kerr black hole which occurs when $M = |a|$.

The wave equation equation on the extremal Reissner-Nordstr\"{o}m black hole spacetime with general data in the whole domain of outer communications (including the horizon $\mathcal{H}^{+}$) was studied in detail by Aretakis in \cite{A1} and \cite{A2} (the extremal Kerr was studied by the same author in \cite{aretakis3}). 

In the works \cite{A1}, \cite{A2} several boundedness and decay estimates that were known for the subextremal case were established as well. In particular he obtained decay for the degenerate energy, boundedness for the non-degenerate energy, Morawetz estimates with degeneracy on the horizon and on the photon sphere (note that the degeneracy on the photon sphere holds only in the non-spherically symmetric case), integrated local energy decay estimates for the degenerate energy and 2nd derivative $L^2$ estimates away from the horizon (these are essentially elliptic estimates). One of the major obstacles for obtaining estimates for the non-degenerate energy is the degeneration of the red-shift effect that is caused by the vanishing of surface gravity on the horizon. For showing the aforementioned estimates Aretakis introduced various novel currents that capture the dispersive properties of the wave equation on such backgrounds.

So on the opposite direction, he also established a number of instabilities with no analogues in the subextremal case. He observed first that the quantity
$$ Y^{l+1} \psi + \sum_{i=0}^l \beta_i Y^i \psi ,$$
for constants $\beta_i$ and for $Y=\partial_r$ where $(v,r)$ are the Eddington---Finkelstein coordinates (see Section \ref{gernfm} for the relevant definitions), is conserved on the horizon for a linear solution that is supported on the fixed angular frequency $l$. In the case of the $0$-th angular frequency (the spherically symmetric part of the wave) that we are interested in, the conserved quantity has the form
$$Y \psi + \frac{1}{M} \psi .$$

Then he proved that we can have an integrated local energy decay estimate for the non-degenerate energy only if we assume that our solution has angular frequency support away from 0. The same result was obtained for 2nd derivative $L^2$ estimates where the area of integration includes the horizon. Moreover he showed that 2nd and higher derivatives in $r$ blow up asymptotically on the horizon. Later, in \cite{janpaper}, Sbierski showed that the integrated local energy decay estimate has to fail for the non-degenerate energy of a spherically symmetric linear wave (it is still open if such an estimate can hold after losing derivatives, although this is expected to be false as well).

These instabilities were later extended by Aretakis again in more general geometric settings, see \cite{aretakis4} and \cite{aretakisglue}, and by Lucietti and Reall \cite{hj2012} for other equations. Further work on extremal black holes was done by Lucietti et al \cite{hm2012}, by Reall et al \cite{harvey2013}, by Bizon and Friedrich \cite{bizon2012}, by Dain and Dotti \cite{dd2012}, and recently by Hollands and Ishibashi \cite{hollands}. 

In the nonlinear setting in \cite{aretakis2013}, Aretakis showed that the conservation laws on the horizon can produce finite time blow up for smooth data that can be arbitrarily small. The nonlinearities that he's working with can have the form (this is just an example, more general forms for the nonlinearity can be allowed)
$$ \psi^{2n} + (Y \psi )^{2n}  \mbox{ for $n \meg 1$} ,$$
in the case of the extremal Reissner-Nordstr\"{o}m spacetime for spherically symmetric nonlinear waves. Note that this result is in contrast to the Minkowskian setting where we know that for such nonlinearities with $n \meg 2$ we can establish easily global well-posedness for small data.

In the present work we show that by imposing the null condition on the nonlinearity the finite time blow up scenario doesn't occur. On the other hand, we establish all the observed instabilities of the linear case in a slightly different form.

\section{Geometry Of The Extremal Reissner--Nordstr\"{o}m Black-Hole Spacetime}\label{gern}
\subsection{Forms Of The Metric And Coordinates Systems}\label{gernfm}
The Reissner--Nordstr\"{o}m family of metrics is the unique spherically symmetric family of solutions to the Einstein--Maxwell equations (see the Appendix for their exact form). It was first introduced in \cite{R} and \cite{N}. It has the following form:
\begin{equation}\label{rn}
 g = -Ddt^2 + D^{-1} dr^2 + r^2 \gamma_{\mathbb{S}^2},
 \end{equation}
where $\gamma_{\mathbb{S}^2} $ is the standard metric on the 2-sphere and
$$ D = 1-\frac{2M}{r} + \frac{e^2}{r^2}, $$
for $M$ the mass, $e$ being the electromagnetic charge. In the case where $e=M$ we have the extreme Reissner--Nordstr\"{o}m solution with
$$ D = \left( 1 - \frac{M}{r} \right)^2 . $$
The coordinate singularity at $r=M$ can be removed by considering the tortoise coordinate $r^{*}$ given by:
$$ \dfrac{dr^{*}}{dr} = \frac{1}{D}, $$
which transforms the metric \eqref{rn} into the form:
\begin{equation}\label{trn}
g = -Ddt^2 + D ( dr^{*} )^2 + r^2 \gamma_{\mathbb{S}^2}.
\end{equation}
The spacetime can be extended beyond $r=M$ by considering the ingoing Eddington--Finkelstein $(v,r)$ coordinates, where $v = t + r^{*}$. The metric \eqref{rn} then takes the following form:
\begin{equation}\label{rnef}
g = -D dv^2 + 2 dvdr + r^2 \gamma_{\mathbb{S}^2}.
\end{equation}
These are the coordinates that we will use mostly throughout this paper. In these coordinates, the hypersurface at $r=M$ will be called the future event horizon $\mathcal{H}^{+}$, the area $r<M$ is the black hole region and the area $r \meg M$ is the domain of outer communications. On the hypersurface $r=2M$ there exists null geodesics that neither cross the future event horizon $\mathcal{H}^{+}$ nor end up at future null infinity $\mathcal{I}^{+}$. This is a trapping effect (that won't bother us though in this work) and the hypersurface at $r=M$ is called the photon sphere. We will work exclusively in the domain of outer of communications (which as we said, includes the future event horizon $\mathcal{H}^{+}$.

We use the following notation for the derivative vector fields $\partial_v$ and $\partial_r$:
\begin{equation}\label{trvf}
T = \partial_v , \quad Y = \partial_r .
\end{equation}

The Penrose diagram of this spacetime in the $(v,r)$ coordinate is as follows:
\begin{figure}[H]
\centering
\includegraphics[width=5cm]{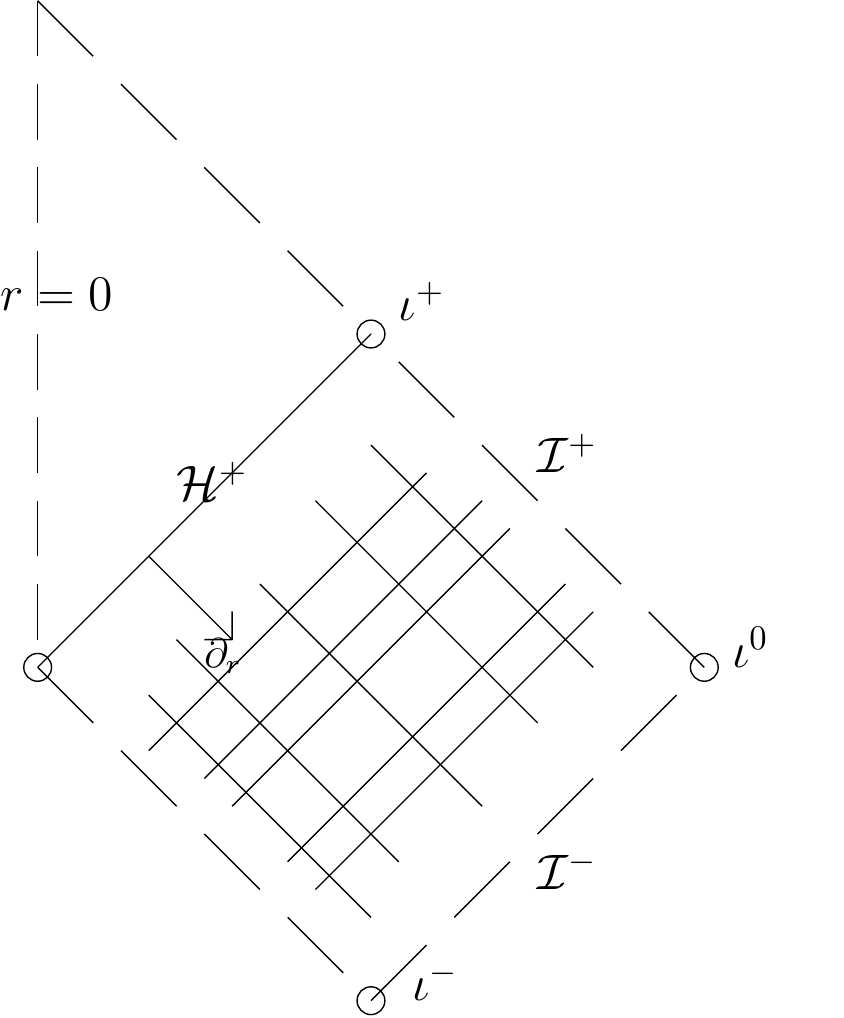}
\caption{The shaded region is the domain of outer communications, while its complement is the black hole region.}
\end{figure}

At $r=0$ we have the timelike curvature singularity inside the black hole region. At $\mathcal{I}^{+}$ we have future null infinity (and past null infinity at $\mathcal{I}^{-}$).

In these $(v,r)$ coordinates, and assuming that the coordinates on $\mathbb{S}^2$ are $(\theta, \phi)$, equation \eqref{nw} takes the following precise form (dropping the terms that are of $l$-th or higher order and expanding the quadratic nonlinearity):
\begin{equation}\label{nwef}
\Box_g \psi = A(\psi ) \left( D (\partial_r \psi)^2 + 2 \partial_v \psi \partial_r \psi + \frac{1}{r^2} (\partial_{\theta} \psi )^2 + \dfrac{1}{r^2 \sin^2 \phi} (\partial_{\phi} \psi)^2 \right) .
\end{equation}

Another useful set of coordinates are the null coordinates 
$$u = t-r^{*} , \quad v = t+r^{*} ,$$
which are singular on the horizon $r=M$ but turn out to be convenient for calculations away from it (and especially close to future null infinity as we will see later). In these coordinates the metric has the following form:
\begin{equation}\label{nrn}
g = -D dudv + r^2 \gamma_{\mathbb{S}^2}.
\end{equation}

\subsection{Foliations}\label{gernf}
 We define the set $N_{\tau} = \{ u = u_{\tau}, v \meg v_{\tau} \}$ (where $(u = t-r^{*},v = t+r^{*})$ the null coordinates of the previous section) for $u_{\tau} = u(p_0)$ for a point $p_0$ where $r(p_0) = R_0 > 2M$ (to be chosen later) and the set $\Sigma_{\tau}$ is the union of $N_{\tau}$ and a space-like hypersurface $S_{\tau} = \{ t^{*} = \tau \} \cap \{ r \mik R_0 \}$ for $t^{*} = v-r$ (for $(v,r)$ the ingoing Eddington--Finkelstein coordinates of Section \ref{gernfm}).  We also define the space-like hypersurfaces $\widetilde{\Sigma}_{\tau} =\{ t^{*} = \tau \}$. Moreover we define the corresponding spacetime regions in each one of the cases:
 $$ \widetilde{\mathcal{R}} (\tau_1 , \tau_2 ) = \cup_{\tau \in [\tau_1 , \tau_2]} \widetilde{\Sigma}_{\tau} , \quad \mathcal{R} (\tau_1 , \tau_2 ) = \cup_{\tau \in [\tau_1 , \tau_2]} \Sigma_{\tau} ,$$ $$ \so (\tau_1 , \tau_2 ) = \cup_{\tau \in [\tau_1 , \tau_2]} S_{\tau} , \quad \nnn (\tau_1 ,\tau_2 ) = \cup_{\tau \in [\tau_1 , \tau_2]} N_{\tau} , $$
for any $\tau_1$, $\tau_2$ with $\tau_1 < \tau_2$. 
 
Both of these foliations of the domain of outer communications can be considered through the flow of the vector field $T = \partial_v$ (for $(v,r)$ the Eddingont---Finkelstein coordinates of Section \ref{gernfm}). In the case of the spacelike foliation $\{ \widetilde{\Sigma}_{\tau} \}_{\tau \meg 0}$ we start with a spacelike hypersurface $\widetilde{\Sigma}_0$ that terminates at the spacelike infinity $\iota^0$ and we define each hypersurface $\widetilde{\Sigma}_{\tau}$ by the flow $\Phi_{\tau}^T$ as $\widetilde{\Sigma}_{\tau} = \Phi_{\tau}^T (\widetilde{\Sigma}_0)$. Similarly for the foliation $\{ \Sigma_{\tau}\}_{\tau \meg 0}$ we start with a spacelike-null hypersurface that coincides with $\widetilde{\Sigma}_0$ up to $R_0$ and each hypersurface $\Sigma_{\tau}$ is defined as $\Sigma_{\tau} = \Phi_{\tau}^T (\Sigma_0 )$.

In pictures (i.e. in the corresponding Penrose diagram) the spacelike foliation $\{ \widetilde{\Sigma}_{\tau} \}_{\tau \meg 0}$ looks as follows:
\begin{figure}[H]
\centering
\includegraphics[width=7cm]{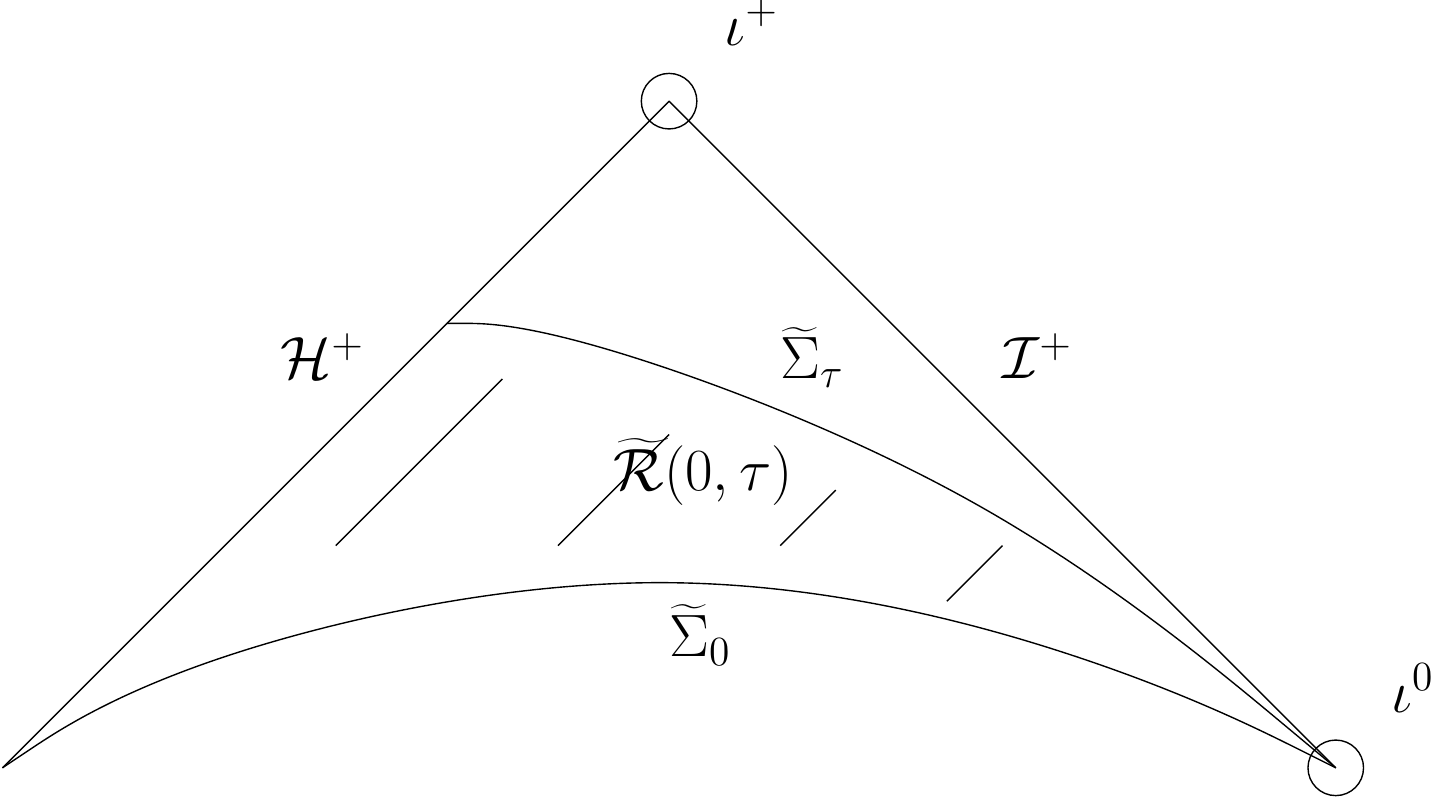}
\end{figure}
and the spacelike-null foliation $\{ \Sigma_{\tau} \}_{\tau \meg 0}$ as it is shown below:
\begin{figure}[H]
\centering
\includegraphics[width=7cm]{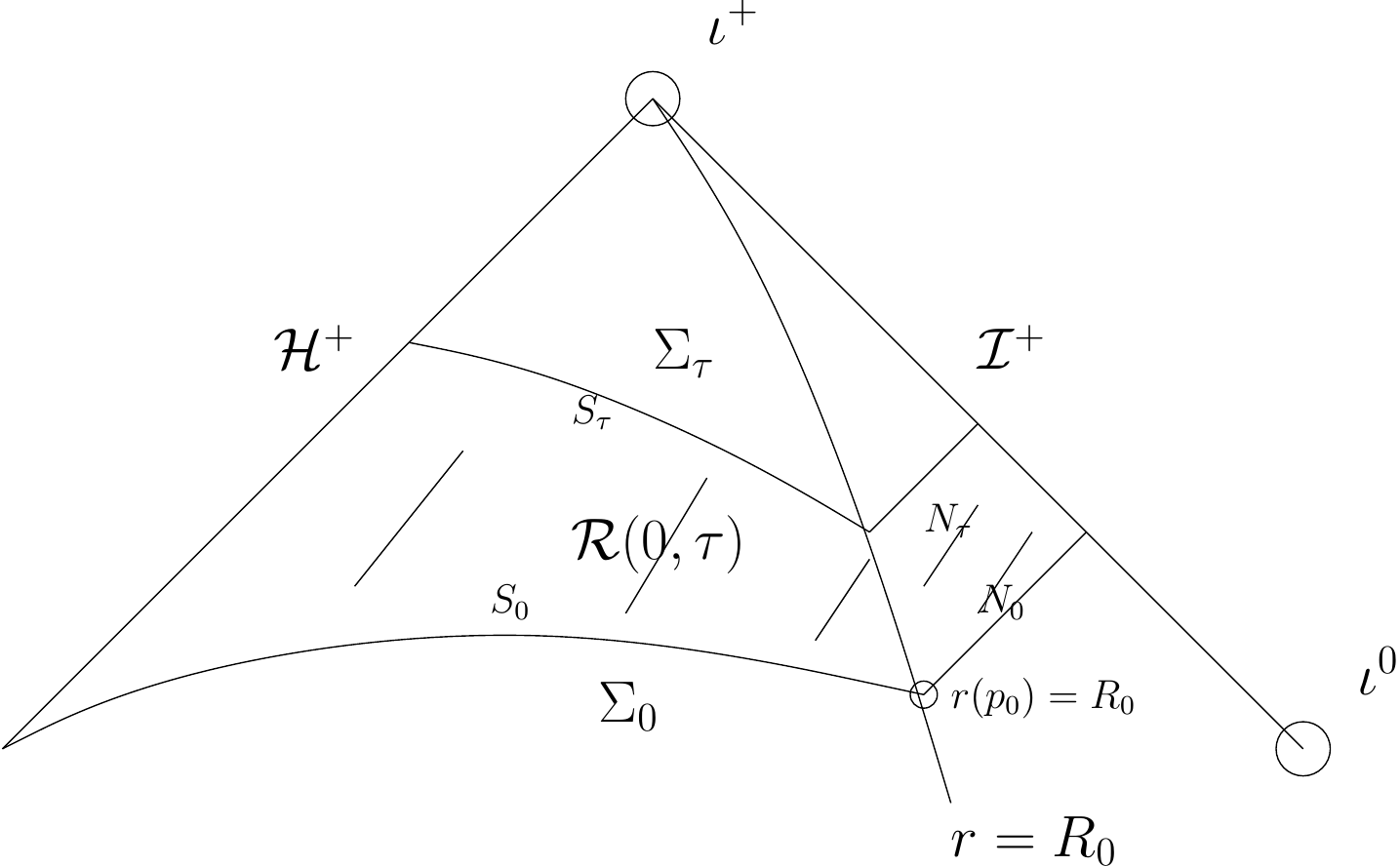}
\end{figure}

Finally we should mention that we can actually put the following coordinates on theses two foliation. For the spacelike one we can define the coordinate system $( \widetilde{r} , \omega)$ for $\widetilde{r} \in [M, \infty)$ and $\omega \in \mathbb{S}^2$, where $\widetilde{r}$ is defined through the following vector field definition:
\begin{equation}\label{rbar}
 \partial_{\widetilde{r}} = \bar{r} T + Y , 
 \end{equation}
for $\bar{r}$ a bounded function (which exists by the assumption that the initial spacelike hypersurface $\widetilde{\Sigma}_0$ has a future directed normal $n$ that satisfies $1 \lesssim - g(n,n) \lesssim 1$ and $1 \lesssim -g(n,T) \lesssim 1$). 

We have the following picture:
\begin{figure}[H]
\centering
\includegraphics[width=7cm]{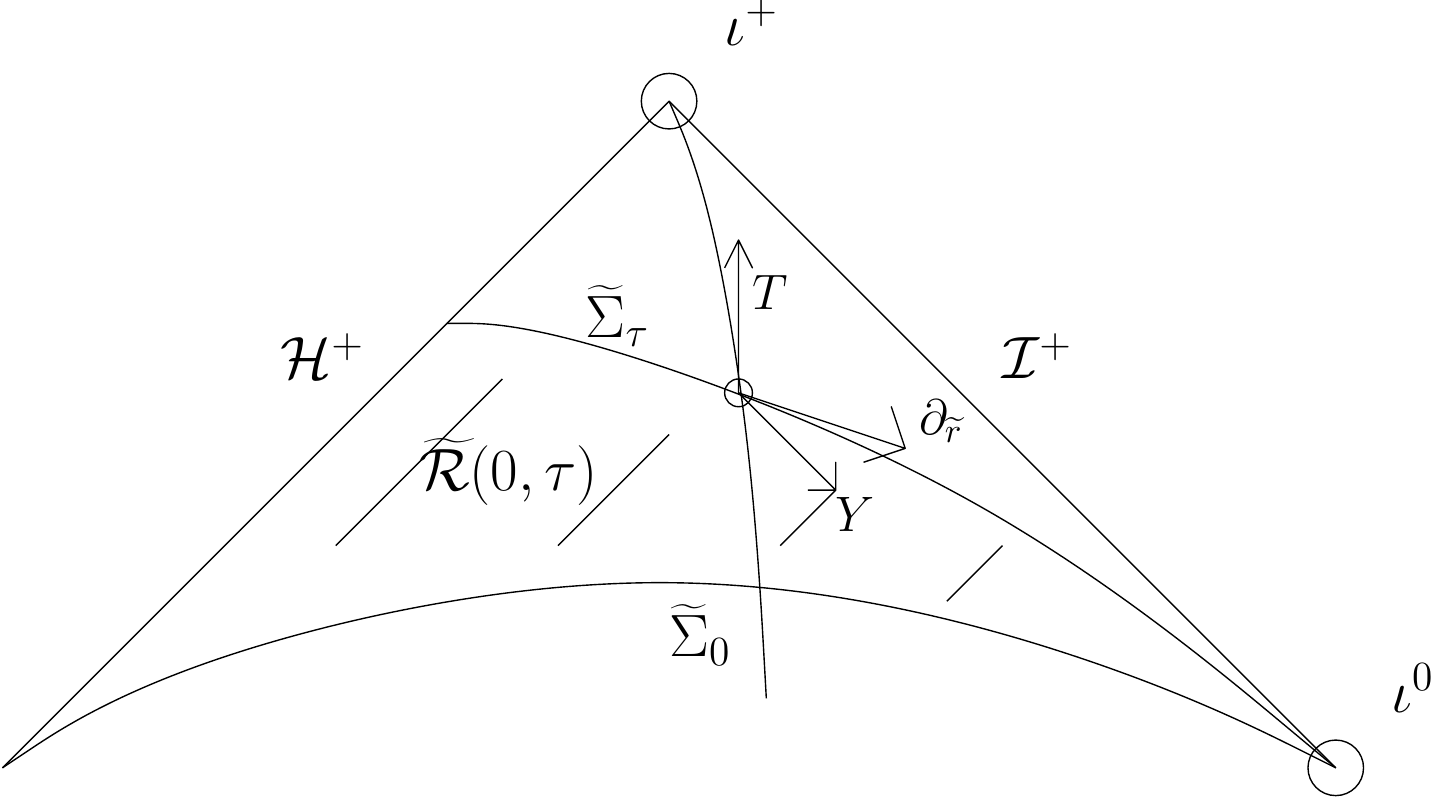}
\end{figure}

For the spacelike-null foliation $\{ \Sigma_{\tau} \}_{\tau \meg 0}$ we use the coordinate system $(\widetilde{r} , \omega)$ that was just specified on $\{ S_{\tau} \}_{\tau \meg 0}$ and the coordinate system $(v , \omega)$ (for $v = t+r^{*}$) on $\{ N_{\tau} \}_{\tau \meg 0}$.
\section{The Energy-Current Formalism}\label{ecf}
We will use throughout this paper the energy-current formalism which is another vector field based method. The Energy--Momentum tensor for the wave equation has the form:
$$ T_{\alpha \beta} = \partial_{\alpha} \psi \partial_{\beta} \psi - \frac{1}{2} g_{\alpha \beta} \partial^a \psi \partial_a \psi .$$
For some vector field $V$ we define the energy-current:
$$ J^V_{\mu} [\psi] = T_{\mu \nu} [\psi] V^{\nu} .$$
We compute the divergence of $J$ and we have:
$$ Div (J) = Div (T) V + T (\nabla V) \Rightarrow \nabla^{\mu} J^V_{\mu} [\psi] = Div (T[\psi]) V + T_{\mu\nu} [\psi] (\nabla V)^{\mu\nu} ,$$
for $(\nabla V)^{\mu \nu} = (\nabla^{\mu} V )^{\nu}$. We denote these last terms as:
$$ K^V [\psi ] = T_{\mu \nu} [\psi] (\nabla^{\mu} V )^{\nu}$$
and (since $Div(T[\psi]) = \Box_g \psi d\psi$):
$$\mathcal{E}^V [\psi ] = Div(T) V = \Box_g \psi \cdot V\psi .$$
We note that $(\nabla^{\mu} V )^{\nu} := \pi_V^{\mu \nu} = (\mathcal{L}_V g )^{\mu \nu}$ is the deformation tensor which is 0 if $V$ is a Killing vector field.

Finally we record the following computations for the currents $J^T$, $J^n$ on our different foliations with a spherically symmetric $\psi$:
\begin{equation}\label{current1}
 J^T_{\mu} [\psi ] n^{\mu}_{\sis} \approx (T\psi )^2 + D (Y\psi )^2 ,
 \end{equation}
 \begin{equation}\label{current2}
J^{n_{\sis}} [\psi ] n^{\mu}_{n_{\sis}} \approx (T\psi )^2 + (Y\psi )^2 ,
\end{equation}
 \begin{equation}\label{current3}
J^T_{\mu} [\psi ] n^{\mu}_{N} \approx J^{n_N}_{\mu} [\psi ] n^{\mu}_{N} \approx (\partial_v \psi )^2 .
\end{equation}
Estimate \eqref{current1} tells us that the current $J^T$ gives us an energy that degenerates on the horizon, while estimate \eqref{current2} tells us that the currect $J^n$ controls the $\dot{H}^1$ norm on the spacelike foliation $\{ \sis_{\tau} \}_{\tau \meg 0}$.

On the other hand, estimate \eqref{current3} shows that we can't control the $\dot{H}^1$ on the null part of the spacelike-null foliation$\{ \si_{\tau} \}_{\tau \meg 0}$ by any of the aforementioned currents.
\section{Notation}\label{Not}
We call the right hand side of \eqref{nw} by $F$. The metric $g$ is always the extremal Reissner-Nordstr\"{o}m metric, and $R_0$ is a constant related to the spacelike-null foliation $\{ \si_{\tau} \}_{\tau \meg 0 }$ which is chosen according to the support of the initial data. We define $T = \partial_v$, $Y = \partial_r$ for the Eddington---Finkelstein coordinates $(v,r)$ of the Section \ref{gern}, and each time that is used, $d\mi_g$ is the induced volume form, where every restriction of it will be indicated by a subscript (e.g. for an integral over a hypersurface $\si_{\tau}$ we will use the notation $d\mi_{g_{\si}}$).

We define as well the following initial energy:
\begin{equation}\label{inen1}
E_0 = \int_{S_0 } (Y \psi_0 )^2  + (T \psi_0 )^2  + |\slashed{\nabla} \psi_0 |^2  d\mi_{g_S} + \int_{N_0} (\partial_v \psi_0 )^2 + |\slashed{\nabla} \psi_0 |^2  d\mi_{g_N} + 
\end{equation}
$$ + \int_{\Sigma_0 } J^{n_{\Sigma_0}}_{\mu} [ \psi_1 ] n^{\mu} d\mi_{g_{\si}} ,
$$
where on the null part $N_0$ the null derivative $\partial_v$ is used (everywhere else we just use the Eddington--Finkelstein coordinates $(v,r)$).

We also note that we will use the symbols $\lesssim$ and $\gtrsim$ to denote the relations $\mik C \cdot$ and $\meg C \cdot$ for some constant $C > 0$ that is not related to any of the quantities involved in the inequality that is examined. Throughout the text a constant $\alpha$ will be fixed for our bootstrap assumptions. The constants in these assumptions will depend on $\alpha$ (and they actually become degenerate if $\alpha \rightarrow 0$) but this won't be always written down explicitly. Moreover all constants depend on the mass $M$ of the spacetime and the size of the support of the initial data. Finally we point out that the letter $C$ will be used to denote some constant several times, although this won't always be the same. 

\section{The Main Results}\label{Main}
We state here the main results of this paper.

First, we state the global well-posedness result.

\begin{thm}[\textbf{Global Well-Posedness}]\label{mainthm}
There exists an $\ee_0 >0$ such that if $0 \mik \ee \mik \ee_0$ then for all spherically symmetric compactly supported initial data 
$$ \psi [0] = (\ee \psi_0 , \ee \psi_1 ) $$
on a spacelike hypersurface $\widetilde{\Sigma}_0$ that terminates at spacelike infinity $\iota^0$ (as defined in Section \ref{Not}), satisfying
$$ \| \ee \psi_0 \|_{H^s (\widetilde{\Sigma}_0 )} \mik \sqrt{E_0} \ee , \quad \| \ee \psi_1 \|_{ H^{s-1} (\widetilde{\Sigma}_0 )} \mik \sqrt{{E}_0} \ee, $$
for some $s > 5/2$, $s\in \mathbb{N}$, there exists a unique globally defined spherically symmetric solution of \eqref{nw} with $l$ big enough (see Section \ref{othernon}) in $H^s$ in the domain of outer communications $\mathcal{M}$ up to and including the future event horizon of the extremal Reissner--Nordstr\"{o}m spacetime with mass and electromangetic charge equal to $M$.

Moreover the solution satisfies the following pointwise estimates:

1) \textbf{(Quantitative Decay for $\psi$)}. Let $\{ \Sigma_{\tau} \}_{\tau \meg 0}$ be the spacelike-null foliation as defined in Section \ref{gernf} and  let $\aaa>0$. Then there exists $\ee_{0}$ depending on $\aaa$ such that 
$$ \| \psi \|_{L^{\infty} (\Sigma_{\tau} )} \lesssim \dfrac{C_{\alpha}\sqrt{E_0} \ee}{(1+ \tau )^{3/5 - \alpha} }.$$
2) \textbf{(Uniform Boundedness for First-Order Derivatives).} Let $T,Y$ be the vector fields defined in Section \ref{gernfm}. Then 
$$ \| T \psi \|_{L^{\infty} (\mathcal{M} )} \lesssim \sqrt{E_0}  \ee , \quad \| Y\psi \|_{L^{\infty} (\mathcal{M} )} \lesssim \sqrt{E_0}  \ee .$$

\end{thm}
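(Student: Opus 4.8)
The plan is to run a bootstrap (continuity) argument combining Klainerman's vector field method with the $r$-weighted energy hierarchy of Dafermos and Rodnianski, in the form adapted to inhomogeneous media by Yang. First I would establish local well-posedness of \eqref{nw} in $H^s$ for $s > 5/2$ by standard energy estimates for semilinear wave equations; the threshold $s>5/2$ guarantees that $H^{s-1}$ controls $\psi$ and its first derivatives pointwise, and spherical symmetry of the solution follows from uniqueness together with the form of the nonlinearity. A continuation criterion then reduces global existence to showing that the solution and a finite collection of energies cannot blow up in finite time, with uniqueness on the maximal interval inherited from the local theory. The bootstrap assumptions would be of two kinds: (i) polynomial-in-$\tau$ bounds for the degenerate energy flux \eqref{current1} through $\si_\tau$, the non-degenerate flux \eqref{current2}, their $T$-commuted higher-order analogues (with mild growth permitted at top order, consistent with the asymptotic instabilities on $\ho$), and the associated $r$-weighted fluxes near $\cali^{+}$; and (ii) the sought pointwise bounds $|\psi| \lesssim \sqrt{E_0}\,\ee\,(1+\tau)^{-3/5+\aaa}$ and $|T\psi| + |Y\psi| \lesssim \sqrt{E_0}\,\ee$. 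The goal is to improve every constant for $\ee$ sufficiently small; the cubic and higher $O(|\psi|^l)$-terms, for $l$ large, are subcritical and absorbed into the same scheme with room to spare.

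The core is a hierarchy of weighted energy estimates. Since $T = \partial_v$ is Killing, its deformation tensor vanishes and the $J^T$ identity carries only the nonlinear bulk term $\mathcal{E}^T[\psi] = F\cdot T\psi$; this yields boundedness and, after the hierarchy, decay of the degenerate energy. In the region $r \mik R_0$ I would use a redshift-type vector field $N$ adapted to $\ho$; because the surface gravity vanishes in the extremal case, the associated bulk term $K^N[\psi]$ has the right sign but degenerates on the horizon (the Aretakis phenomenon), so the non-degenerate energy \eqref{current2} is recovered only at the cost of decay — this is the source of the weak exponent. In the region $r \meg R_0$, working in the null coordinates $(u,v)$ where $g = -D\,dudv + r^2\gamma_{\mathbb{S}^2}$, I would run the $r^p$-weighted multiplier estimates with multipliers of the form $r^p\,\partial_v(r\psi)$ and $p$ in an interval; following Yang, this hierarchy never requires commuting with a vector field growing in $v$, so no stationarity of $g$ is used and the nonlinear terms retain a favourable null structure. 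A pigeonhole argument across the hierarchy, together with the integrated local energy decay estimate (whose photon-sphere trapping degeneracy disappears in spherical symmetry), upgrades all of this into polynomial decay of the fluxes through $\si_\tau$, the $(1+\tau)^{-3/5+\aaa}$ rate being what survives the horizon degeneracy after interpolation.

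The nonlinear errors are controlled via the null condition. In spherical symmetry the nonlinearity in \eqref{nwef} reduces to $F = A(\psi)\left(D\,(Y\psi)^2 + 2\,T\psi\,Y\psi\right)$, since $|\slashed{\nabla}\psi|^2$ vanishes identically. The cross term $2\,T\psi\,Y\psi$ carries the good derivative $T\psi = \partial_v\psi$, which decays; the term $D\,(Y\psi)^2$ involves the non-decaying $Y\psi$ but is weighted by $D = (1 - M/r)^2$, which vanishes quadratically on $\ho$ — exactly enough, since the degenerate energy \eqref{current1} already controls $\sqrt{D}\,Y\psi$. With the bootstrap decay in hand, the spacetime integrals $\iint |F\cdot T\psi|$, $\iint |F\cdot N\psi|$ and the $r$-weighted nonlinear fluxes are all quadratically small in $\ee$ and integrable in $\tau$, so they close. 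The pointwise bounds are then recovered by commuting with $T$ to gain regularity and applying the radial (one-dimensional) Sobolev embedding on each $\si_\tau$ to obtain the decay of $\psi$; for $Y\psi$ one uses that on $\ho$ the equation reduces to the transport equation $\partial_v\!\left(Y\psi + \tfrac{1}{M}\psi\right) = A(\psi)\,T\psi\,Y\psi$, whose right-hand side is quadratically small and integrable in $v$, so $Y\psi$ stays bounded (and, since $\psi \to 0$, generically does not decay); away from $\ho$ one uses \eqref{nwef} to express $Y^2\psi$ through lower-order quantities, propagating the bound into $r \mik R_0$, while for $r \meg R_0$ the $r^p$-hierarchy controls the relevant derivatives directly.

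The step I expect to be the main obstacle is the simultaneous balancing of three competing requirements: the decay coming out of the hierarchy is weak — at most $(1+\tau)^{-3/5+\aaa}$ — because the redshift degenerates on $\ho$; the nonlinear fluxes must nonetheless be $\tau$-integrable for the bootstrap to close; and the only feature taming the non-decaying term $D\,(Y\psi)^2$ is the $D$-weight, which ties those nonlinear estimates to the degenerate energy rather than the non-degenerate one. Making the weights in the $r^p$-hierarchy, the near-horizon estimates, and the nonlinear bounds mutually consistent — and doing so all the way up to and including $\ho$, where the null coordinates $(u,v)$ are singular and one must work in $(v,r)$ — is the delicate part, and it is there that the precise exponent $3/5 - \aaa$ is pinned down.
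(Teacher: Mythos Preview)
Your overall architecture --- local theory plus continuation criterion, a bootstrap closed via the $J^T$/$J^N$/$r^p$ hierarchy, with the null structure of $F = A(\psi)\bigl(D(Y\psi)^2 + 2\,T\psi\,Y\psi\bigr)$ supplying the crucial $D$-weight --- matches the paper. But two of your proposed ingredients diverge from what the paper actually does, and the first is a real gap.

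\textbf{(i) Commuting with $T$.} You propose to recover the pointwise bounds by commuting with $T$ and applying Sobolev. The paper explicitly does \emph{not} do this, and in fact remarks that ``even a commutation with $T$ for our equation is problematic \ldots\ since for the time being we can't close a bootstrap argument for $TF$ on the horizon.'' The difficulty is that $TF$ contains $T\psi\cdot TY\psi$ and $T^2\psi\cdot Y\psi$; on $\ho$ these couple to the non-decaying $Y\psi$, and no weighted estimate absorbs them at the level of decay available here. So your route to the pointwise bounds is blocked exactly where it matters.

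\textbf{(ii) Boundedness of $Y\psi$ and the $r$-weighted pointwise bounds.} Your plan --- transport equation on $\ho$, then ``express $Y^2\psi$ through lower-order quantities'' to propagate the bound off the horizon --- is not how the paper proceeds and would run into (i). Instead the paper uses the method of characteristics in null coordinates $(u,v)$: writing $Y\psi = r^{-1}\zeta/\nu$ with $\zeta = r\partial_u\psi$, $\nu = \partial_u r$, one derives an ODE in $v$ for $\zeta/\nu$ and closes a \emph{pointwise} bootstrap simultaneously on $|\zeta/\nu|$, $|r^2\partial_v\psi|$ and $|r\psi|$, via repeated integration by parts against the null form. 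This argument never commutes the equation; it uses only the boundedness of $A,A',A''$ and the $L^2_v$-integrability of $\psi$ (which is why one needs $3/5-\aaa>1/2$). Crucially, it also delivers the bounds $|r\psi|\lesssim r^{\bar\aaa}$ and $|r^2\partial_v\psi|\lesssim r^{\bar\aaa}$, and \emph{these} are what close the bootstrap assumption on $\int_{N} r^{3-\aaa}|F|^2$ near $\cali^{+}$ (the $(\partial_u\ph)^2(\partial_v\ph)^2$ term needs $r^{1-\aaa}(\partial_v\ph)^2$ integrable in $v$, which the energy hierarchy alone does not give). Your proposal has no substitute for this step.

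Finally, the rate $3/5-\aaa$ is not read off the hierarchy directly: it comes from applying the fundamental theorem of calculus on a strip of $r$-width $\tau^{-\aaa_0}$, bounding $\int (\partial_\rho\psi)^2$ by the strip width times the \emph{mere boundedness} of $Y\psi$, and optimising $2-\aaa-2\aaa_0 = 1-\aaa/2+\aaa_0/2$ to get $\aaa_0 = 2/5-\aaa/5$. So the exponent is an artefact of having only boundedness (not decay) for $Y\psi$ near $\ho$ --- consistent with your intuition, but the mechanism is that specific interpolation, not the pigeonhole step.
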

The next Theorem establishes non-decay and blow-up results along the event horizon for higher order derivatives of the solutions. 

\begin{thm}[\textbf{Asymptotic Instabilities}]\label{asb}
For a globally defined spherically symmetric solution of \eqref{nw} $\psi$ that arises from small initial data as in Theorem \ref{mainthm} the following instabilities on the future event horizon $\mathcal{H}^{+}$ hold:

a) \textbf{(Almost Conservation Law).}  The quantity 
$$ H(v) = Y \psi (v,r=M)  + \frac{1}{M} \psi (v,r=M)  $$ is conserved 
in the sense that:
$$ \left| H(\tau) - H(0) \right| \mik O (\ee^2) \mbox{ for all $\tau\geq 0$}. $$
Here  $(v,r)$ are the Eddington---Finkelstein coordinates.

b) \textbf{(Non-Decay for $Y\psi$).} The translation-invariant transversal to $\mathcal{H}^{+}$ derivative $Y\psi$ does not decay along  on the event horizon $\mathcal{H}^{+}$. 

c) \textbf{(Asymptotic Blow-up For Higher Derivatives).}  If the initial data satisfy the additional positivity assumption
$$ \psi_0 (M) > 0 , \quad Y \psi (0 , M ) > 0 $$ then 
$$ | Y^k \psi (\tau , M )| \xrightarrow{\tau \rightarrow \infty} \infty \mbox{ for all $k \meg 2$}, $$
along the event horizon $\mathcal{H}^{+}$. 
\end{thm}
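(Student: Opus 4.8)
The plan is to base the whole argument on a single transport identity along $\mathcal{H}^{+}$. Since $D=(1-M/r)^2$ vanishes to second order at $r=M$, one has $r^2D=(r-M)^2$, so the term $\tfrac1{r^2}\partial_r(r^2D\,\partial_r\psi)$ in $\Box_g\psi$ and the term $A(\psi)D(\partial_r\psi)^2$ in \eqref{nwef} both vanish on $\mathcal{H}^{+}$; together with the vanishing of the angular terms for spherically symmetric $\psi$, a direct computation reduces \eqref{nwef} on $\mathcal{H}^{+}$ to
\[
T\Bigl(Y\psi+\tfrac1M\psi\Bigr)\Big|_{\mathcal{H}^{+}}=\bigl(A(\psi)\,T\psi\,Y\psi\bigr)\big|_{\mathcal{H}^{+}},\qquad\text{i.e.}\qquad \partial_v H(v)=\bigl(A(\psi)\,T\psi\,Y\psi\bigr)(v,M).
\]
I would take this as the common starting point for all three parts.

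For part (a) the plan is to integrate this identity in $v$: by \eqref{boundedgeom} the coefficient $A(\psi)$ is bounded, by Theorem \ref{mainthm}(2) one has $\|Y\psi\|_{L^{\infty}(\mathcal{M})}\lesssim\sqrt{E_0}\,\ee$, and the (weak) energy- and pointwise-decay estimates of the preceding sections yield integrable decay of $T\psi$ along $\mathcal{H}^{+}$ — concretely, a Cauchy--Schwarz argument over dyadic $v$-intervals using the decay of the degenerate flux $\int_{\mathcal{H}^{+}}(T\psi)^2$ gives $\int_{\mathcal{H}^{+}}|T\psi|\,dv\lesssim\sqrt{E_0}\,\ee$. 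Hence $|H(\tau)-H(0)|\le\int_0^\tau|\partial_v H|\,dv\lesssim E_0\,\ee^2=O(\ee^2)$. Part (b) then follows at once: $\partial_v H$ is absolutely integrable, so $H(v)$ converges to some $H_\infty$ with $|H_\infty-H(0)|=O(\ee^2)$; by Theorem \ref{mainthm}(1), $\psi(v,M)\to0$; therefore $Y\psi(v,M)=H(v)-\tfrac1M\psi(v,M)\to H_\infty$. Since $H(0)=Y\psi(0,M)+\tfrac1M\psi(0,M)$ depends linearly on the initial data, it is nonzero for generic data and of size $\sim\ee$, which dominates the $O(\ee^2)$ correction once $\ee$ is small; thus $Y\psi|_{\mathcal{H}^{+}}$ does not decay to $0$.

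For part (c) I would first note that the positivity hypothesis gives $\psi(0,M)=\ee\psi_0(M)>0$ and $Y\psi(0,M)>0$, so $H(0)>0$, hence $H_\infty>0$ and $Y\psi(v,M)\to H_\infty>0$ for $\ee$ small. I would then commute \eqref{nwef} with $Y^k$ and restrict to $\mathcal{H}^{+}$; writing $P_k:=(Y^k\psi)|_{\mathcal{H}^{+}}$, this produces, exactly as in Aretakis's linear hierarchy but with extra nonlinear terms, transport equations of the schematic form $\partial_v P_{k+1}=c_k P_k+(\text{linear combination of }\partial_v P_j,\,P_j,\ j\le k)+\mathcal{N}_k$, with $c_k\ne0$. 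One then argues by induction: for $k=1$, eliminating $\partial_v P_0$ through the $k=0$ relation yields $\partial_v\bigl(P_2+\tfrac2M P_1\bigr)=-\tfrac1{M^2}P_1+\mathcal{N}_1$, so (once $\mathcal{N}_1$ is shown to be lower order) $P_2(v)\sim-\tfrac{H_\infty}{M^2}v\to-\infty$ and $|Y^2\psi(\tau,M)|\to\infty$; and if inductively $|P_j(v)|$ grows like $v^{j-1}$ with a fixed sign for $2\le j\le k$, the same analysis gives $|P_{k+1}(v)|\sim(\text{const})\,v^{k}\to\infty$, which yields the claim for all $k\ge2$.

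The step I expect to be the main obstacle is the control of $\mathcal{N}_k$. Its only top-order contribution (in the number of $Y$-derivatives) is $\sim A(\psi)\,T\psi\,Y^{k+1}\psi=A(\psi)\,T\psi\,P_{k+1}$, whose coefficient is bounded by $|A(\psi)|\,|T\psi|$, integrable in $v$ with integral $O(\ee)$; it can therefore be absorbed into the transport equation by an integrating factor that perturbs the solution by $1+O(\ee)$ and leaves the leading asymptotics unchanged. Every remaining monomial of $\mathcal{N}_k$ either forces at least two $Y$-derivatives onto $D$ (and, $YD$ also vanishing on $\mathcal{H}^{+}$, drops the polynomial order by two) or carries a factor $TY^a\psi=\partial_v P_a$ — and this factor decays when $a\le1$, while for $a\ge2$ it grows only like $v^{a-2}$. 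Combining this with the inductive polynomial bounds on the $P_j$, the horizon decay of $T\psi$, $\partial_v P_0$, $\partial_v P_1$, and plain power counting, each such monomial is $o(v^{k-1})$, i.e. strictly below the linear source $c_kP_k\sim v^{k-1}$. Making this combinatorial bookkeeping rigorous and uniform in $k$ — distributing $Y^k$ over $A(\psi)\bigl(D(\partial_r\psi)^2+2\partial_v\psi\,\partial_r\psi\bigr)$ and verifying that every resulting term is subleading — together with first establishing the integrable horizon decay of $T\psi$ already needed in (a), is where essentially all of the work lies; the linear skeleton is then precisely Aretakis's.
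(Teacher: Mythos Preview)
Your proposal is correct and the overall architecture matches the paper's: the same horizon transport identity $\partial_v H = A(\psi)\,T\psi\,Y\psi$ drives (a) and (b), and the commuted transport hierarchy drives (c). There is, however, a genuine difference in how you and the paper estimate the integral $\int_0^\tau A(\psi)\,T\psi\,Y\psi\,dv$ in part (a). The paper never proves any decay for $T\psi$; instead it repeatedly integrates by parts in $v$ (moving the $T$ onto $\psi$ and using the horizon equation to trade $TY\psi$ for $T\psi\cdot Y\psi$) so that every remaining integrand carries at least a factor of $\psi^2$, and then invokes the pointwise bound $|\psi|\lesssim (1+v)^{-3/5+\aaa}$ to make the integral absolutely convergent. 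Your route---bounding $\int_{\mathcal{H}^+}|T\psi|\,dv$ by Cauchy--Schwarz on dyadic $v$-intervals using the decay of the horizon $T$-flux---is shorter; note only that the paper does not state $\int_{\mathcal{H}^+\cap[\tau_1,\tau_2]}(T\psi)^2\lesssim (1+\tau_1)^{-2+\aaa}$ explicitly, so you would have to extract it yourself from Stokes' theorem for $J^T$ together with Theorem~\ref{endec} and the bootstrap bounds on $F$. Your argument buys simplicity and avoids the iterated integration by parts; the paper's argument buys that it never needs the horizon flux decay as a separate ingredient.

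For part (c) the two arguments are essentially the same. The paper writes out the $k=2$ case in detail: the driving source is $-\tfrac{1}{M^2}\int Y\psi\,dv\sim -c\,\ee\,\tau$, and the dangerous top-order nonlinear term $A(\psi)\,T\psi\,Y^{2}\psi$ is absorbed not via an integrating factor but by one more integration by parts in $v$ (producing a factor $\psi\,Y^2\psi$ whose $\psi$-smallness lets it be moved to the left). Your integrating-factor formulation achieves the same absorption and is equivalent once you have $\int|T\psi|\,dv=O(\ee)$. The paper then dismisses $k>2$ with ``the proof is similar''; your inductive power-counting makes that step more explicit, which is a mild improvement in presentation.
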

We prove our global well-posedness statement using the vector field method through the machinery of Dafermos--Rodnianski (see \cite{lecturesMD} and \cite{newmethod}) as this was demonstrated in the work of S. Yang \cite{shiwu}, and the method of characteristics as this was used in the proof of Price's law for a scalar field coupled to the Einstein--Maxwell equations by Dafermos and Rodnianski \cite{MDIR05}. We particularly exploit the novel method of Yang in order to close the bootstrap assumption on the null part of the spacelike-null foliation. One of the major obstacles in our situation is the fact that we have to work with the degenerate energy 
\begin{equation}\label{degenerateenergy}
\int_{\Sigma_{\tau}} J^T_{\mu} [\psi ] n^{\mu} d\mi_{g_{\si}}, 
\end{equation}
due to the horizon instabilities for extremal black holes that were observed in \cite{A1} and \cite{A2}. The assumption of spherical symmetry (that allows us to ignore the trapping effect of the photon sphere) is a rather technical one but the instabilities for the spherically symmetric modes (that are not actually present in higher angular frequencies) pose several difficulties in proving the necessary a priori estimates. In this sense, this work is a significant first step for dealing with the general problem, that will be addressed in future work -- see the upcoming \cite{rnnonlin2}.

\begin{rem}
In the special case of spherically symmetric wave maps, due to the subcriticality of the equation, the fact that we work in a region where $r \meg M > 0$, and the fact that the degenerate energy is conserved, the global well-posedness result is immediate. Our contribution in this situation is the precise description of the asymptotic behaviour of such waves.
\end{rem}

We proceed as follows: 

(i) In Section \ref{eeh} we record and give proofs of all the energy inequalities that we will use.

(ii) In Sections \ref{tlwpt} and \ref{tlwpcc} we prove a standard local well-posedness theorem for solutions of \eqref{nw} (even non-spherically symmetric ones, and without the smallness restriction on the initial data) with respect to the spacelike foliations $\{\widetilde{\Sigma}_{\tau} \}_{\tau \meg 0}$ and then we formulate a continuation criterion. Such results are considered to be classical (analogues in Minkowski space can be found in many standard references for nonlinear wave equations, we just mention here the lecture notes of Selberg \cite{selberg} and the book of H\"{o}rmander \cite{hormander} that contains analogues for variable coefficient settings as well) and the proofs that we give here are adaptations to the specific black hole spacetime that we work with.

(iii) Having established a continuation criterion, we set out to verify it in the Sections \ref{apede} and \ref{apdbe}. Initially in Section \ref{apede} we prove decay for the degenerate energy \eqref{degenerateenergy} of $\psi$ assuming decay for the nonlinearity $F$. Then in Section \ref{apdbe}, assuming again the same decay estimates for the nonlinear term, we establish pointwise decay for $\psi$ and pointwise boundedness for $T \psi$ and $Y\psi$. The decay estimates for $\psi$ (everywhere in the domain of outer communications) are generalizations of the estimates derived in the linear case (that were established again in the work of Aretakis \cite{A1}, \cite{A2}). On the other hand the proofs for the boundedness of $Y \psi$ and for the boundedness of $T\psi$ are based on a novel technique which makes use of a subtle combination of the bootstrap arguments, $L^1$ and $L^2$ estimates and the method of characteristics. The same method gives us improved decay for $\psi$ with respect to $r$. Specifically we have managed to almost recover the bounds for $r\psi$ and $r^2 \partial_v \psi$ that were known in the subextremal case (but not of course a bound for $r \partial_u \psi$). It should be noted that we have managed to close all the estimates by using a bootstrap assumption only on $F$ and without proving decay for $T\psi$, which is a difference with the linear case and the subextremal situation. This is a consequence of the fact that even a commutation with $T$ for our equation is problematic (as opposed to the subextremal case) since for the time being we can't close a bootstrap argument for $TF$ on the horizon.

(iv) In Section \ref{bre} we improve the bootstrap assumptions (that were present in most of our estimates up to this point) closing thus all estimates. It should be noted though that this does not automatically imply global well-posedness. Indeed the estimates are in terms of the spacelike-null foliation $\{ \Sigma_{\tau} \}_{\tau \meg 0}$ for which we don't have a local well-posedness theorem. This issue is addressed in Section \ref{tgwpt} through the use of the standard local theory of Section \ref{tlwpt}, of all the estimates obtained up to this point, and the assumption of compact support for our initial data.

In the two Sections that follow we examine the asymptotic behaviour of our solution. First in Section \ref{acwothh} we prove an almost conservation law on the horizon $\mathcal{H}^{+}$ for the quantity $Y\psi + \frac{1}{M} \psi$ (a quantity which is actually conserved in the linear case as it was already mentioned in the Introduction). The proof is based on a bootstrap argument and integration by parts. Second, and by working with similar techniques, the almost conservation law allows us to prove asymptotic blow up for derivatives of order 2 and higher with respect to $r$  (i.e. for the quantities $Y^k \psi$ for $k\meg 2$) along the horizon. The behaviour of the solution obtained in Sections \ref{acwothh} and \ref{asbufhd} is reminiscent of an asymptotic version of a shock.

We note that all the above computations take place for the nonlinearity
$$ A(\psi ) g^{\alpha \beta} \partial_{\alpha} \psi \partial_{\beta} \psi ,$$
since the other cases that involve all the terms present in \eqref{nw} are either similar or easier. We explain the necessary modifications needed for our arguments to apply to more general nonlinearities in the last Section \ref{othernon}.

\begin{rem}
We should note here that the assumption of compact support on the initial data is probably removable in light of the recent works \cite{pin1}, \cite{pin2}, \cite{pin3} and \cite{shiwul} (the last one treating more general nonlinearities than the others and also being closer in spirit to the techniques that we employ here).  
\end{rem}

\section{Energy Estimates}\label{eeh}
We state here inhomogeneous versions of estimates found in \cite{A1} and \cite{A2}. They apply to the more general equation
\begin{equation}\label{nweff}
\Box_g \psi = F ,
\end{equation}
for $\psi$ being spherically symmetric.

These estimates will be used throughout the rest of the paper. 

\subsection{An Auxiliary Inequality}
First we record Hardy's inequality in the context of our black-hole spacetime. This inequality holds for general functions and not just for solutions of wave equations, it turns out to be rather useful.
\begin{prop}[\textbf{Hardy's Inequality}]\label{hardyineq}
For any sufficiently smooth function $f$ and any $\tau$ we have:
\begin{equation}\label{hardy}
\int_{\mathcal{S}_{\tau}} \frac{1}{r^2} f^2 d\mi_{g_{\mathcal{S}}} \lesssim \int_{\mathcal{S}_{\tau}} J^T_{\mu} [f] n^{\mu} d \mi_{g_{\mathcal{S}}} ,
\end{equation} 
where $\mathcal{S}_{\tau}$ is either $\Sigma_{\tau}$ or $\widetilde{\Sigma}_{\tau}$.
\end{prop}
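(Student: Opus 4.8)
The plan is to prove \eqref{hardy} by a single integration by parts against a radial multiplier that vanishes at the horizon at exactly the rate dictated by the degeneracy of $D$. Recall that on $\sis_\tau$ we may use the coordinate system $(\widetilde{r},\omega)$ of \eqref{rbar}, in which the induced volume form satisfies $d\mi_{g_{\sis}}\approx r^2\,d\widetilde{r}\,d\omega$ (with $\widetilde{r}\approx r$ and $\widetilde{r}\in[M,\infty)$), $\partial_{\widetilde{r}}=\bar r\,T+Y$ with $\bar r$ bounded, and—by \eqref{current1}—$J^T_\mu[f]n^\mu\gtrsim (Tf)^2+D(Yf)^2$. Thus the left-hand side of \eqref{hardy} is comparable to $\int f^2\,d\widetilde{r}\,d\omega$. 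The algebraic observation driving everything is that $D r^2=(1-M/r)^2 r^2=(r-M)^2$, so the weight $(r-M)^2$ is precisely the one that is controlled by the degenerate flux: writing $\partial_{\widetilde{r}}f=\bar r\,Tf+Yf$ and using $D\mik 1$ and the boundedness of $\bar r$, one gets $(r-M)^2(\partial_{\widetilde{r}}f)^2\lesssim r^2\big((Tf)^2+D(Yf)^2\big)\lesssim r^2\,J^T_\mu[f]n^\mu$.

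For $\mathcal{S}_\tau=\sis_\tau$: for fixed $\omega$ integrate by parts in $\widetilde{r}$ using $\partial_{\widetilde{r}}\big((r-M)f^2\big)=f^2+2(r-M)f\,\partial_{\widetilde{r}}f$. The boundary term at $\widetilde{r}=M$ vanishes because the multiplier $r-M$ does, and the boundary term at spacelike infinity vanishes by the decay of $f$ there (for compactly supported $f$, or merely sufficiently decaying $f$; in general one may first prove the estimate assuming the right-hand side finite). Hence $\int f^2\,d\widetilde{r}=-2\int(r-M)f\,\partial_{\widetilde{r}}f\,d\widetilde{r}$, and the weighted Cauchy–Schwarz inequality $2(r-M)|f|\,|\partial_{\widetilde{r}}f|\mik \frac12 f^2+2(r-M)^2(\partial_{\widetilde{r}}f)^2$ lets us absorb, giving $\int f^2\,d\widetilde{r}\mik 4\int(r-M)^2(\partial_{\widetilde{r}}f)^2\,d\widetilde{r}$. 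Integrating over $\omega\in\mathbb{S}^2$ and using the pointwise bound of the previous paragraph finishes this case.

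For $\mathcal{S}_\tau=\si_\tau$: run the identical argument on the spacelike part $S_\tau=\{r\mik R_0\}$, where now the boundary term at $\widetilde{r}=R_0$ survives and contributes $\int_{\mathbb{S}^2}f(R_0,\omega)^2\,d\omega$, so $\int_{S_\tau}\frac{1}{r^2}f^2\,d\mi_{g_S}\lesssim\int_{S_\tau}J^T_\mu[f]n^\mu\,d\mi_{g_S}+\int_{\mathbb{S}^2}f(R_0,\omega)^2\,d\omega$. On the null part $N_\tau=\{u=u_\tau,\ v\meg v_\tau\}$ one has $D\gtrsim 1$, $d\mi_{g_N}\approx r^2\,dv\,d\omega$ and $J^T_\mu[f]n^\mu_N\approx(\partial_v f)^2$ by \eqref{current3}; parametrising $N_\tau$ by $r$ (legitimate since $\partial_v r=D/2>0$) and integrating by parts with the multiplier $h=r$—whose boundary term at $r=R_0$ now has the favourable sign—yields $\int_{N_\tau}\frac{1}{r^2}f^2\,d\mi_{g_N}\lesssim\int_{N_\tau}J^T_\mu[f]n^\mu\,d\mi_{g_N}$. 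The same flux also dominates the corner trace, since $f(R_0,\omega)^2=-2\int_{R_0}^{\infty}f\,\partial_r f\,dr\lesssim\int_{R_0}^{\infty}\big(\frac{f^2}{r^2}+r^2(\partial_r f)^2\big)dr\lesssim\int_{R_0}^{\infty}r^2(\partial_r f)^2\,dr$ after using the preceding $N_\tau$ estimate, so $\int_{\mathbb{S}^2}f(R_0,\omega)^2\,d\omega\lesssim\int_{N_\tau}J^T_\mu[f]n^\mu\,d\mi_{g_N}$. Adding the $S_\tau$ and $N_\tau$ contributions and absorbing the corner term gives \eqref{hardy} for $\si_\tau$.

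The main obstacle is precisely the degeneration of $D$ on $\ho$: a Hardy inequality with a non-vanishing multiplier is useless there, and the crux is to notice that $(r-M)^2=Dr^2$ makes the multiplier $r-M$, vanishing to first order at $r=M$, exactly compatible with the degenerate flux $(Tf)^2+D(Yf)^2$—the same structural feature that makes the degenerate energy usable elsewhere in the paper. A secondary technical point is the correct bookkeeping of the corner term between $S_\tau$ and $N_\tau$ in the $\si_\tau$ case, which is handled by the favourable sign of the boundary term on the null leg together with the trace estimate above; and, as always for Hardy-type inequalities, one needs mild decay of $f$ at infinity to discard the far boundary term, which is harmless for the applications or can be arranged by density.
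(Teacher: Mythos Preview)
Your proof is correct and follows essentially the same route as the paper (which defers the spacelike case to \cite{A1}): integrate by parts against the multiplier $r-M$, using the identity $(r-M)^2=Dr^2$ so that the resulting weight matches the degenerate flux; on the null leg one integrates in $v$ with $D\gtrsim 1$. Your argument in fact recovers precisely the ``actual inequalities'' the paper records after the proposition, and your handling of the corner term at $r=R_0$ is a bit more explicit than needed---the favourable sign you already noted in the $N_\tau$ integration by parts bounds $R_0 f(R_0,\omega)^2$ directly---but this is harmless.
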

For the proof of this inequality in the case of the spacelike foliation \{$\widetilde{\Sigma}_{\tau} \}_{\tau \meg 0}$ see section 6, Proposition 6.0.2 of \cite{A1}.

The case of the spacelike-null foliation is similar, the only difference is that we integrate with respect to the null direction $v = t+r^{*}$ in the region $r\meg R_0$ now.

To be precise, the actual inequalities that we get are the following:
$$ \int_{\widetilde{\Sigma}_{\tau}} \frac{1}{r^2} f^2 d\mi_{g_{\sis}} \lesssim \int_{\widetilde{\Sigma}_{\tau}} D \left[ (T f )^2 + (Y f )^2 \right] d \mi_{g_{\widetilde{\Sigma}}} ,$$ 
and 
$$ \int_{\Sigma_{\tau}} \frac{1}{r^2} f^2 d\mi_{g_{\si}} \lesssim \int_{\Sigma_{\tau} \cap \{ r \mik R_0 \}} D \left[ (T f )^2 + (Y f )^2 \right] d \mi_{g_S} + \int_{\Sigma_{\tau} \cap \{ r \meg R_0 \}} (\partial_v f )^2 d \mi_{g_N} .$$

\subsection{Morawetz Estimates With Degeneracy at $\mathcal{H}^{+}$}
The inequalities stated below give us bounds for "space-time" integrals for degenerate energies (with the degeneracy taking place on the horizon $\mathcal{H}^{+}$) under the assumption of spherical symmetry. These estimates will be useful tools in the proofs of the integrated local energy decay results that will follow.
\begin{prop}[\textbf{Morawetz Estimate On Spacelike Hypersurfaces}]\label{degx}
Let $\psi$ be a spherically symmetric solution of \eqref{nweff}. We have the inequality:
\begin{equation}\label{degix}
\int_{\tau_1}^{\tau_2} \int_{\widetilde{\Sigma}_{\tau'}} \left( \dfrac{( T \psi )^2 }{r^{1+\eta}} +  D^2 \dfrac{(Y \psi )^2}{r^{1+\eta}} \right) d\mu_{g_{\rrrr}} \lesssim_{R_0,\eta} \int_{\widetilde{\Sigma}_{\tau_1}} J^T_{\mu} [\psi] n^{\mu} d\mu_{g_{\sis}} + 
\end{equation}
$$ + \int_{\tau_1}^{\tau_2} \int_{S_{\tau'}} |F|^2 d\mu_{g_{\so}} + \int_{\tau_1}^{\tau_2} \int_{\sis_{\tau'} \cap \{ r \meg R_0\}} r^{1+\eta} |F|^2 d\mu_{g_{\rrrr}} ,$$ 
for any $\tau_1$, $\tau_2$ with $\tau_1 < \tau_2$, and for any $\eta > 0$.
\end{prop}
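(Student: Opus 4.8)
The plan is to derive this inhomogeneous Morawetz estimate by running the standard multiplier argument of Aretakis (Section 7 of \cite{A1}) but keeping track of the contribution of the source term $F$ on the right-hand side. First I would recall the vector field multiplier $X = f(r^*) \partial_{r^*}$ (together with the appropriate lower-order scalar function multiplier used to absorb the zeroth-order terms) that produces, via the energy identity for the current $J^X_\mu[\psi] + (\text{scalar correction})$, a coercive spacetime integral of the form $\int D^2 (Y\psi)^2/r^{1+\eta} + (T\psi)^2/r^{1+\eta}$ over $\widetilde{\mathcal{R}}(\tau_1,\tau_2)$ in the spherically symmetric case. Since $\psi$ now solves $\Box_g \psi = F$ rather than the homogeneous equation, the divergence identity $\nabla^\mu J^X_\mu[\psi] = K^X[\psi] + \mathcal{E}^X[\psi]$ contributes the extra bulk term $\mathcal{E}^X[\psi] = F \cdot X\psi$ (plus $F$ times the scalar-multiplier factor), which after integration over the region must be estimated.

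Next I would handle the boundary terms: integrating the divergence identity over $\widetilde{\mathcal{R}}(\tau_1,\tau_2)$ produces flux terms on $\widetilde{\Sigma}_{\tau_1}$, on $\widetilde{\Sigma}_{\tau_2}$, and on $\mathcal{H}^+$. The term on $\widetilde{\Sigma}_{\tau_1}$ is bounded by $\int_{\widetilde{\Sigma}_{\tau_1}} J^T_\mu[\psi] n^\mu$ after choosing $f$ bounded, exactly as in the linear case; the term on $\widetilde{\Sigma}_{\tau_2}$ has a good sign (or is likewise controlled by the $T$-energy, which is itself controlled using the homogeneous-case $T$-energy identity applied to $\Box_g\psi = F$ — but since the statement only asks for the bound in terms of the $\tau_1$ data plus $F$, I would simply absorb it), and the horizon term is non-negative by the choice of $f$ (this is where the degeneracy $D^2$ in front of $(Y\psi)^2$ enters, reflecting the vanishing surface gravity). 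For the error term $\int F\cdot X\psi$ I would split the region into $\{r \leq R_0\}$ and $\{r \geq R_0\}$: in the near region $X\psi$ and the scalar correction term involve $T\psi, Y\psi, \psi$ with $r$ comparable to $M$, so Cauchy--Schwarz gives $\int_{\tau_1}^{\tau_2}\int_{S_{\tau'}} |F|^2 d\mu_{g_{\so}}$ plus a small multiple of the coercive left-hand side (absorbable) plus $\int |\psi|^2/r^2$ (absorbable via Hardy, Proposition \ref{hardyineq}, into the $T$-energy on $\widetilde{\Sigma}_{\tau_1}$ after also using the $T$-energy flux identity); in the far region one pays the $r$-weight and gets $\int_{\tau_1}^{\tau_2}\int_{\widetilde{\Sigma}_{\tau'}\cap\{r\geq R_0\}} r^{1+\eta}|F|^2$, matching the stated right-hand side.

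The main obstacle I expect is the absorption of the error terms generated by $F \cdot X\psi$ near the photon sphere $r = 2M$ and the careful matching of $r$-weights so that the $1+\eta$ loss is exactly what appears — in particular ensuring the Cauchy--Schwarz splitting $|F \cdot X\psi| \lesssim \frac{\delta}{r^{1+\eta}}(X\psi)^2 + \frac{1}{\delta} r^{1+\eta} |F|^2$ is compatible with the degenerate weight $D^2$ on $(Y\psi)^2$, since $X\psi = f(D\,Y\psi + \dots)$ in $(v,r)$ coordinates so the weight works out, but one must be attentive that no non-degenerate $(Y\psi)^2$ term is left over near $\mathcal{H}^+$. The other mildly delicate point is the treatment of the zeroth-order terms: the scalar-function multiplier correction introduces a term proportional to $F\psi$, which near the horizon is controlled by $\int |F|^2 + \int \psi^2$ and in the far region needs the $r$-weight; handling $\int\psi^2$ requires invoking Hardy's inequality to trade it for the degenerate energy flux, which is already on the right-hand side. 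All remaining computations are the standard ones from \cite{A1}, \cite{A2} adapted verbatim, with the only structural change being the bookkeeping of the $F$-dependent terms.
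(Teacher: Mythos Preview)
Your proposal is essentially correct and follows the same route as the paper: run the Aretakis multiplier construction from \cite{A1} (the paper cites Section~9, not Section~7, and uses specifically $X^0 = -r^{-3}\partial_{r^*}$ together with the modified currents $J^{X^{f^1},1}+J^{X^{f^2}}$ with $f^1=1-r^{-\eta}$, $f^2=\tfrac{\eta}{2+\eta}r^{-\eta}$ glued in via a cutoff to optimize the $r$-weight from $r^{-4}$ to $r^{-1-\eta}$), bound the boundary fluxes by the $T$-energy via Propositions~9.4.1--9.4.2 of \cite{A1} and Stokes for $J^T$, and absorb the inhomogeneous bulk terms $F\cdot X\psi$ and $F\cdot\psi$ by weighted Cauchy--Schwarz into the coercive left-hand side.

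Two small corrections: the photon sphere $r=2M$ is \emph{not} an obstacle here since $\psi$ is spherically symmetric (the trapping degeneracy only affects higher angular modes), so you need not worry about it; and the $\int F\psi$ term in the far region is handled in the paper not through Hardy but by absorbing $\int \psi^2/r^{3+\eta}$ directly into the coercive bulk of $\nabla^\mu J^{\tilde X}_\mu$, which already contains that zeroth-order term with a good sign.
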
 
\begin{proof}
We apply Stokes' theorem to $J^{X^0}$ for $X^0 = f^0 \partial_{r^{*}}$ with $f^0 = - \dfrac{1}{r^3}$ and we have:
$$  \int_{\widetilde{\Sigma}_{\tau_2}} J^{X^0}_{\mu} [\psi ] n^{\mu} d\mi_{g_{\sis}} + \int_{\mathcal{H}^{+}} J^{X^0}_{\mu} [\psi ] n^{\mu} d\mi_{g_{\ho}} + $$ $$ + \int_{\widetilde{\mathcal{R}} (\tau_1 , \tau_2 )} K^{X^0} [\psi ] d \mi_{g_{\rrrr}} + \int_{\widetilde{\mathcal{R}} (\tau_1 , \tau_2 )} \mathcal{E}^{X^0} [\psi ] d\mi_{g_{\rrrr}} = \int_{\widetilde{\Sigma}_{\tau_1}} J^{X^0}_{\mu} [\psi ] n^{\mu} d\mi_{g_{\sis}} .$$
  
By Proposition 9.3.1 of \cite{A1} we have that:
$$ K^{X^0} [\psi] = \frac{1}{r^4} (\partial_t \psi )^2 + \frac{5}{r^4} (\partial_{r^{*}} \psi )^2 ,$$
for $\psi$ a spherically symmetric solution of \eqref{nwef} as assumed. 

This gives us the following:
\begin{equation}\label{degix11}
\int_{\tau_1}^{\tau_2} \int_{\widetilde{\Sigma}_{\tau'}} \left( \dfrac{( \partial_t \psi )^2 }{r^4} +  \dfrac{5(\partial_{r^{*}} \psi )^2}{r^4} \right) d\mi_{g_{\rrrr}} \lesssim \left| \int_{\widetilde{\Sigma}_{\tau_2}} J^{X^0}_{\mu} [\psi] n^{\mu} d\mi_{g_{\sis}} \right| + 
\end{equation}
$$ +   \left| \int_{\ho} J^{X^0}_{\mu} [\psi] n^{\mu} d\mi_{g_{\ho}} \right| + \left| \int_{\widetilde{\Sigma}_{\tau_1}} J^{X^0}_{\mu} [\psi] n^{\mu} d\mi_{g_{\sis}} \right| + $$ $$ +\int_{\tau_1}^{\tau_2} \int_{\widetilde{\Sigma}_{\tau'}} \frac{1}{r^3} \left| F \cdot (T \psi + D\cdot Y \psi ) \right | d\mi_{g_{\rrrr}} .$$

Proposition 9.4.1 of \cite{A1} tells us that if $\mathcal{S}$ is an $SO(3)$-invariant spacelike or null hypersurface, then we have the bound:
\begin{equation}\label{941a}
\left| \int_{\mathcal{S}} J^{X^0}_{\mu} [\psi ] n^{\mu} d\mi_{g_{\mathcal{S}}} \right| \lesssim \int_{\mathcal{S}} J^T_{\mu} [\psi ] n^{\mu} d\mi_{g_{\mathcal{S}}} .
\end{equation}
Then by Stokes' Theorem for $J^T$ and \eqref{941a}, estimate \eqref{degix11} becomes:
$$ \int_{\tau_1}^{\tau_2} \int_{\widetilde{\Sigma}_{\tau'}} \left( \dfrac{( \partial_t \psi )^2 }{r^4} +  \dfrac{5(\partial_{r^{*}} \psi )^2}{r^4} \right) d\mi_{g_{\rrrr}} \lesssim $$ $$ \lesssim \int_{\si_{\tau_1}} J^T_{\mu} [\psi ] n^{\mu} d\mi_{g_{\si}} + \int_{\tau_1}^{\tau_2}\int_{\widetilde{\Sigma}_{\tau'}} \frac{1}{r^3} \left| F \cdot (T \psi + D\cdot Y \psi ) \right | d\mi_{g_{\rrrr}} + \int_{\tau_1}^{\tau_2}\int_{\sis_{\tau'}} |F \cdot T\psi  | d\mi_{g_{\sis}} .$$
Before applying Cauchy--Schwarz with the correct weights in $r$ and moving the appropriate terms on the left-hand side, we optimize the weights.

This can be done as it is noted in Remark 9.1 of \cite{A1}. In the region $\{r \meg R\}$ for a sufficiently large $R$ we consider the currents
$$ J^{\widetilde{X}} = J^{X^{f^1 , 1}} + J^{X^{f^2}} ,$$
where $f^1 = 1 - \dfrac{1}{r^{\eta}}$, $f^2 = \dfrac{\eta}{2+\eta} \dfrac{1}{r^{\eta}}$ for some $\eta > 0$, and 
$$J^{X^{f^1},1}_{\mu} = J^{X^{f^1}}_{\mu} + 2G \psi (\nabla_{\mu} \psi ) - (\nabla_{\mu} G ) \psi^2 ,$$
for $G$ defined as in (9.2) of \cite{A1}:
$$ G = \dfrac{ (f^1 )'}{4} + \dfrac{f^1 \cdot D}{2r} , $$
where $' = \dfrac{d}{dr^{*}}$.

Proposition 9.4.2 of \cite{A1} tells us that the analogue of \eqref{941a} holds for the current $J^{\widetilde{X}}$. We also have the observation that for $r\meg R$:
$$ \nabla^{\mu} J^{\widetilde{X}}_{\mu} \gtrsim_{\eta} \left( r^{-1-\eta} (\partial_t \psi )^2 + r^{-1-\eta} (\partial_{r^{*}} \psi )^2 + r^{-3-\eta} \psi^2 + \dfrac{1}{r} \psi \cdot F + \mathcal{E}^{X^{f^1}} [\psi ] + \mathcal{E}^{X^{f^2}} [\psi ]\right) .$$
We now apply Stokes' Theorem to the current:
$$ \chi  J^{X^0} + (1-\chi ) J^{\widetilde{X}} ,$$
for 
$$ \chi \in C^{\infty}_0 ( [M , \infty )) , \quad \chi (r) = 1 \mbox{ for $r\mik R$}, \quad \chi = 0 \mbox{ for $r\meg R+1$} .$$
Our previous observations and \eqref{degix11} give us that:
\begin{equation}\label{degix12}
 \int_{\tau_1}^{\tau_2} \int_{\widetilde{\Sigma}_{\tau'} \cap \{ r \mik R \}} \left( \dfrac{( T \psi )^2 }{r^4} +  D^2 \dfrac{(Y \psi )^2}{r^4} \right) d\mi_{g_{\rrrr}} + 
 \end{equation}
  $$ + \int_{\tau_1}^{\tau_2} \int_{\sis_{\tau'} \cap \{ r\meg R+1 \}} \left( \dfrac{( T \psi )^2 }{r^{1+\eta}} +  D^2 \dfrac{( Y \psi )^2}{r^{1+\eta}} \right) d\mi_{g_{\rrrr}} + \int_{\tau_1}^{\tau_2} \int_{\sis_{\tau'} \cap \{ r \meg R+1 \}} \dfrac{\psi^2}{r^{3+\eta}} d\mi_{g_{\rrrr}} \lesssim $$ $$ \lesssim \int_{\si_{\tau_1}} J^T_{\mu} [\psi ] n^{\mu} d\mi_{g_{\si}} + \int_{\tau_1}^{\tau_2}\int_{\widetilde{\Sigma}_{\tau'}} \frac{1}{r^3} \left| F \cdot (T \psi + D \cdot Y \psi ) \right | d\mi_{g_{\rrrr}} + \int_{\tau_1}^{\tau_2}\int_{\sis_{\tau'}} |F \cdot T\psi  | d\mi_{g_{\sis}} + $$ $$ + \left| \int_{\tau_1}^{\tau_2} \int_{\sis_{\tau'} \cap \{ r \meg R+1 \}} \dfrac{1}{r} \psi \cdot F d\mi_{g_{\rrrr}} \right| + \int_{\tau_1}^{\tau_2} \int_{\sis_{\tau'} \cap \{ r \meg R+1 \}} |F \cdot (X^{f^1} \psi + X^{f^2} \psi )| d\mi_{g_{\rrrr}} + $$ $$ +\int_{\tau_1}^{\tau_2} \int_{\sis_{\tau'} } |\nabla \chi | \cdot \left( J^{X^0}_{\mu} [\psi] n^{\mu} + J^{\widetilde{X}}_{\mu} [\psi] n^{\mu} \right) d\mi_{g_{\rrrr}} .$$
For the first term of the last line we apply Cauchy-Schwarz with weights $r^{1+\eta}$ which gives us that:
$$ \left| \int_{\tau_1}^{\tau_2} \int_{\sis_{\tau'} \cap \{ r \meg R+1 \}} \dfrac{1}{r} \psi \cdot F d\mi_{g_{\rrrr}} \right| \mik $$ $$ \mik \gamma \int_{\tau_1}^{\tau_2} \int_{\sis_{\tau'} \cap \{ r \meg R+1 \}} \dfrac{\psi^2}{r^{3+\eta}} \psi  d\mi_{g_{\rrrr}} + \dfrac{1}{\gamma} \int_{\tau_1}^{\tau_2} \int_{\sis_{\tau'} \cap \{ r \meg R+1 \}} r^{1+\eta} |F|^2 d\mi_{g_{\rrrr}} . $$
Choosing $\gamma$ small enough we can absorb the first term of the last line in the left hand side of \eqref{degix12}.

For the term 
$$ \int_{\tau_1}^{\tau_2} \int_{\sis_{\tau'} } |\nabla \chi | \cdot \left( J^{X^0}_{\mu} [\psi] n^{\mu} + J^{\widetilde{X}}_{\mu} [\psi] n^{\mu} \right) d\mi_{g_{\rrrr}} $$
we note that $supp ( |\nabla \chi | ) \subset \rrrr \cap \{ R \mik R+1 \}$ and then we use \eqref{degix11} to bound it. Finally we note that for any $\beta > 0$ we have the following:
$$ \int_{\tau_1}^{\tau_2}\int_{\widetilde{\Sigma}_{\tau'}} \frac{1}{r^3} \left| F \cdot (T \psi + D Y \psi ) \right | d\mi_{g_{\rrrr}} + \int_{\tau_1}^{\tau_2}\int_{\sis_{\tau'}} |F \cdot T\psi  | d\mi_{g_{\rrrr}} \mik $$ $$ \mik \dfrac{2}{\beta} \int_{\tau_1}^{\tau_2}\int_{\widetilde{\Sigma}_{\tau'}} r^{1+\eta} |F|^2 d\mi_{g_{\rrrr}} + 2\beta  \int_{\tau_1}^{\tau_2}\int_{\widetilde{\Sigma}_{\tau'}} \dfrac{ (T\psi)^2 + D^2 (Y\psi)^2 }{r^{1+\eta}} d\mi_{g_{\rrrr}}  .$$
The proof finishes by choosing $\beta$ small enough in order to move the last term above to the right hand side of \eqref{degix12} after noticing that in the region $\{ r\mik R \}$ all $r$-weights are equivalent.

\end{proof}

We point out that Proposition \ref{degx} has the same form on the spacelike-null hypersurfaces $\{ \Sigma_{\tau} \}_{\tau \meg 0}$.
\begin{prop}[\textbf{Morawetz Estimate On Spacelike-Null Hypersurfaces}]\label{degxsn}
Let $\psi$ be a spherically symmetric solution of \eqref{nweff}. We have the inequality:
\begin{equation}\label{degixsn}
\int_{\tau_1}^{\tau_2} \int_{\Sigma_{\tau'}} \left( \dfrac{( T \psi )^2 }{r^{1+\eta}} +  D^2 \dfrac{(Y \psi )^2}{r^{1+\eta}} \right) d\mi_{g_{\rrr}} \lesssim_{R_0,\eta} \int_{\Sigma_{\tau_1}} J^T_{\mu} [\psi] n^{\mu} d\mi_{g_{\si}}+ 
\end{equation}
$$ + \int_{\tau_1}^{\tau_2} \int_{S_{\tau'}} |F|^2 d\mi_{g_S} +  \int_{\tau_1}^{\tau_2} \int_{N_{\tau'}} r^{1+\eta} |F|^2 d\mi_{g_N} ,$$
for any $\tau_1$, $\tau_2$ with $\tau_1 < \tau_2$, and for any $\eta > 0$.
\end{prop}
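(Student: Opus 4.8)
The plan is to follow the proof of Proposition \ref{degx} essentially verbatim, the only structural change being that Stokes' theorem is now applied on the region $\rrr(\tau_1,\tau_2)$ whose boundary consists of $\si_{\tau_1}$, $\si_{\tau_2}$, the horizon segment $\ho\cap\{\tau_1\mik\cdot\mik\tau_2\}$, and a segment of future null infinity $\mathcal{I}^{+}$, in place of the point $\iota^0$ that occurs in the spacelike case. Concretely, I would apply Stokes' theorem to the current $\chi\,J^{X^0}+(1-\chi)\,J^{\widetilde{X}}$, with $X^0=-r^{-3}\partial_{r^{*}}$, with $\widetilde{X}$ the modified $r$-weighted current built from $f^1=1-r^{-\eta}$, $f^2=\tfrac{\eta}{2+\eta}r^{-\eta}$ and the correction $G$ exactly as before, and with $\chi$ the same cutoff localising $X^0$ near the horizon and $\widetilde{X}$ near infinity. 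As in Proposition \ref{degx}, Proposition 9.3.1 of \cite{A1} (for $K^{X^0}$) together with the positivity of $\nabla^{\mu}J^{\widetilde{X}}_{\mu}$ makes the bulk contribution control from below $\int_{\tau_1}^{\tau_2}\int_{\si_{\tau'}}r^{-1-\eta}\big[(T\psi)^2+D^2(Y\psi)^2\big]\,d\mi_{g_{\rrr}}$ (plus an extra $\int r^{-3-\eta}\psi^2$ in the far region, noting that in $\{r\mik R_0\}$ all $r$-weights are equivalent), up to the inhomogeneous terms $\mathcal{E}^{X^0}[\psi]$, $\mathcal{E}^{\widetilde{X}}[\psi]$ and the $\tfrac1r\psi F$ term coming from the correction.

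Next I would control the boundary terms exactly as in the spacelike case, the new feature being that the future boundary now genuinely contains the null hypersurfaces $N_{\tau_1}$, $N_{\tau_2}$ and a segment of $\mathcal{I}^{+}$. Since Propositions 9.4.1 and 9.4.2 of \cite{A1} are stated for any $SO(3)$-invariant hypersurface that is spacelike \emph{or null}, the bound $\big|\int_{\mathcal{S}}J^{X^0}_{\mu}[\psi]n^{\mu}d\mi_{g_{\mathcal{S}}}\big|+\big|\int_{\mathcal{S}}J^{\widetilde{X}}_{\mu}[\psi]n^{\mu}d\mi_{g_{\mathcal{S}}}\big|\lesssim\int_{\mathcal{S}}J^T_{\mu}[\psi]n^{\mu}d\mi_{g_{\mathcal{S}}}$ applies with $\mathcal{S}$ equal to any of $\si_{\tau_1}$, $\si_{\tau_2}$, $\ho$ and the $\mathcal{I}^{+}$ segment; on the null pieces the right-hand side reduces to $\int(\partial_v\psi)^2$, so no derivative transversal to the null generators is ever needed. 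Since $T=\partial_v$ is Killing ($K^T=0$) and causal throughout the domain of outer communications, Stokes' theorem for $J^T$ on $\rrr(\tau_1,\tau_2)$ then bounds each of the nonnegative fluxes $\int_{\si_{\tau_2}}J^T n$, $\int_{\ho}J^T n$ and $\int_{\mathcal{I}^{+}}J^T n$ by $\int_{\si_{\tau_1}}J^T_{\mu}[\psi]n^{\mu}d\mi_{g_{\si}}+\big|\int_{\rrr(\tau_1,\tau_2)}F\cdot T\psi\,d\mi_{g_{\rrr}}\big|$; combined with the previous bound, every $J^{X^0}$- and $J^{\widetilde{X}}$-flux on the future boundary is absorbed into $\int_{\si_{\tau_1}}J^T n$ plus an $F\cdot T\psi$ error. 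The $\psi^2$ boundary contributions produced by the correction $G$ are handled by Hardy's inequality (Proposition \ref{hardyineq}), which bounds $\int_{\si_{\tau'}}r^{-2}\psi^2$ by the degenerate energy — on the null part $N_{\tau'}$ after integrating $(\partial_v\psi)^2$ in $v$.

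It then remains to dispose of the inhomogeneous terms $\mathcal{E}^{X^0}[\psi]$, $\mathcal{E}^{\widetilde{X}}[\psi]$, $\tfrac1r\psi F$ and $F\cdot T\psi$ by Cauchy--Schwarz with $r$-weights, exactly as at the end of the proof of Proposition \ref{degx}: in $\{r\mik R_0\}$, where all $r$-weights are comparable, one gets $\gamma\cdot(\text{good density})+\gamma^{-1}|F|^2$, producing $\int_{\tau_1}^{\tau_2}\int_{S_{\tau'}}|F|^2\,d\mi_{g_S}$; in $\{r\meg R_0\}$ one pairs the weight $r^{1+\eta}$ with the $r^{-1-\eta}$-weighted good density (and with the $r^{-3-\eta}$-weighted $\psi^2$ term, for $\tfrac1r\psi F$), producing $\int_{\tau_1}^{\tau_2}\int_{N_{\tau'}}r^{1+\eta}|F|^2\,d\mi_{g_N}$; choosing $\gamma$ and the auxiliary small constant (the analogue of the $\beta$ of Proposition \ref{degx}) small enough absorbs all the small copies of the good density and of $\int r^{-3-\eta}\psi^2$ into the left-hand side, which yields \eqref{degixsn}. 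The step I expect to be the main obstacle, and the only one that really differs from Proposition \ref{degx}, is the treatment of the new null boundary components $N_{\tau_1},N_{\tau_2}$ and of the flux at $\mathcal{I}^{+}$: one has to check that $J^{X^0}$ and $J^{\widetilde{X}}$ contribute there only terms controlled by $\int(\partial_v\psi)^2$ and $\int r^{-2}\psi^2$, with no transversal derivative, and that the $\mathcal{I}^{+}$ flux can be routed through the $J^T$ identity so that it costs only the initial quantity $\int_{\si_{\tau_1}}J^T n$ and $F$-errors with nothing left over at infinity — which is precisely what Propositions 9.4.1--9.4.2 of \cite{A1} and the causal character of $T$ guarantee.
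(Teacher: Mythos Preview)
Your proposal is correct and follows essentially the same route as the paper: repeat the proof of Proposition~\ref{degx} verbatim, with the only new ingredient being the control of the boundary fluxes on the null pieces (in particular $\mathcal{I}^{+}$) via Propositions~9.4.1--9.4.2 of \cite{A1} and Stokes' theorem for $J^T$. The paper's own proof is in fact terser than yours, singling out only the $\mathcal{I}^{+}$ contribution as the new feature and then handling the resulting $\int_{\rrr}|F\cdot T\psi|$ by Cauchy--Schwarz with weight $r^{-1-\eta}$ and absorption into the left-hand side.
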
 
\begin{proof}
The proof is almost identical to that of Proposition \ref{degx} and won't be repeated in detail. The only difference is that for all the energy currents involved we must bound also the boundary terms on future null infinity. For this we use Propositions 9.4.1 and 9.4.2 of \cite{A1} and we have that if $\mathcal{Y}$ is any of the vector fields used in the proof of Proposition \ref{degx} then the following holds:
$$ \left| \int_{\mathcal{I}^{+}} J^{\mathcal{Y}}_{\mu} [\psi] n^{\mu} d\mi_{g_{\mathcal{I}^{+}}} \right| \lesssim \int_{\mathcal{I}^{+}} J^T_{\mu} [\psi] n^{\mu} d\mi_{g_{\mathcal{I}^{+}}} $$
The $T$-flux on $\mathcal{I}^{+}$ is positive since $T$ is timelike. Hence from Stokes' Theorem for $J^T$ we have 
$$ \int_{\mathcal{I}^{+}} J^T_{\mu} [\psi] n^{\mu} d\mi_{g_{\mathcal{I}^{+}}} \lesssim \int_{\Sigma_{\tau_1}} J^T_{\mu} [\psi] n^{\mu} d\mi_{g_{\si}} + \int_{\mathcal{R} (\tau_1 , \tau_2 )} |F \cdot T\psi | d\mi_{g_{\rrr}} .$$
Now the result follows by applying Cauchy-Scwharz to the last term of the right-hand side with weight $r^{-1-\eta}$, and by moving the term that involves $T\psi$ to the left-hand side.
\end{proof}

The version of estimate \eqref{degixsn} that we will actually use most frequently won't need a weight in $r$, since we will just localize in a compact region. It can be stated as follows for some spacetime region $\mathcal{G} \subset \widetilde{\mathcal{R}} (\tau_1 , \tau_2 ) \cap \{ r \mik R_0 \}$ and for $\psi$ being again a spherically symmetric solution of \eqref{nwef}:
\begin{equation}\label{degix1}
\int_{\mathcal{G}} \left(  T \psi )^2 +  D^2 (Y \psi )^2 \right) d\mi_{g_{\mathcal{G}}} \lesssim_{R_0,\eta} \int_{\Sigma_{\tau_1} \cap \{ r \mik R_0 \}} J^T_{\mu} [\psi] n^{\mu} d\mi_{g_{\si}} + 
\end{equation}
$$ + \int_{\tau_1}^{\tau_2} \int_{S_{\tau'}} |F|^2 d\mi_{g_{\so}} +  \int_{\tau_1}^{\tau_2} \int_{N_{\tau'}} r^{1+\eta} |F|^2 d\mi_{g_{\nnn}} ,  $$
for any $\eta > 0$.

Finally notice that we stated inequalities \eqref{degix}, \eqref{degixsn} and \eqref{degix1} with a constant depending on $R_0$. This convention that will be employed elsewhere later on comes from the trivial inequality:
$$ \int_{\tau_1}^{\tau_2} \int_{S_{\tau'}} r^{1+\eta} |F|^2 d\mi_{g_S} \lesssim_{R_0} \int_{\tau_1}^{\tau_2} \int_{S_{\tau'}} |F|^2 d\mi_{g_S} , $$
and is convenient for some estimates that will be used later in the article.
\subsection{Uniform Boundedness of the Degenerate Energy}
A combination of the previous results will give us a uniform bound for the degenerate energy (with the degeneracy taking place on the horizon as noted before) given by the current $J^T$. 

First we state an inequality for the degenerate energy coming from the current $J^T$ over the spacelike hypersurfaces $\{\widetilde{\Sigma}_{\tau} \}_{\tau \meg 0}$.
\begin{prop}[\textbf{Uniform Boundedness For The $T$-flux Over Spacelike Hypersurfaces}]\label{deg5}
Let $\psi$ be a spherically symmetric solution of \eqref{nweff}. Then we have that:
\begin{equation}\label{degi5}
\int_{\widetilde{\Sigma}_{\tau_2}} J^T_{\mu} [\psi ] n^{\mu} d\mi_{g_{\sis}} \lesssim_{R_0}  \int_{\widetilde{\Sigma}_{\tau_1} } J^T_{\mu} [\psi ] n^{\mu} d\mi_{g_{\sis}} + 
\end{equation}
$$ + \int_{\tau_1}^{\tau_2} \int_{S_{\tau'}} |F|^2 d\mi_{g_{\so}} + \int_{\tau_1}^{\tau_2} \int_{\sis_{\tau'} \cap \{ r \meg R_0\}} r^{1+\eta} |F|^2 d\mi_{g_{\sis}} ,$$ 
for any $\tau_1$, $\tau_2$ with $\tau_1 < \tau_2$, and any $\eta > 0$.
\end{prop}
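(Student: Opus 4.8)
The plan is to obtain the uniform bound from the divergence identity for the energy current $J^T$ combined with the Morawetz estimate of Proposition~\ref{degx} --- the standard scheme by which boundedness of a degenerate energy follows once one has an integrated local energy decay estimate.

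First I would apply Stokes' theorem to $J^T[\psi]$ over $\rrrr(\tau_1,\tau_2)$, whose boundary is $\sis_{\tau_1}$, $\sis_{\tau_2}$ and the horizon segment $\ho\cap\rrrr(\tau_1,\tau_2)$, with no contribution at spacelike infinity $\iota^0$. Since $T=\partial_v$ is Killing, $K^T[\psi]\equiv 0$, so the only bulk term is $\mathcal{E}^T[\psi]=\Box_g\psi\cdot T\psi=F\cdot T\psi$. On $\ho$ the vector field $T$ is null and future directed and $g(T,T)=-D$ vanishes there, so $J^T_{\mu}[\psi]n^{\mu}_{\ho}=(T\psi)^2\meg 0$; discarding this non-negative flux yields
\begin{equation*}
\int_{\sis_{\tau_2}} J^T_{\mu}[\psi]n^{\mu}\, d\mi_{g_{\sis}}\mik \int_{\sis_{\tau_1}} J^T_{\mu}[\psi]n^{\mu}\, d\mi_{g_{\sis}}+\int_{\rrrr(\tau_1,\tau_2)}|F\cdot T\psi|\, d\mi_{g_{\rrrr}}.
\end{equation*}

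It remains to absorb the bulk error term. I would use the weighted inequality $|F\cdot T\psi|\mik \tfrac12 r^{1+\eta}|F|^2+\tfrac12 (T\psi)^2 r^{-1-\eta}$. The spacetime integral of $(T\psi)^2 r^{-1-\eta}$ over $\rrrr(\tau_1,\tau_2)$ appears on the left-hand side of \eqref{degix} --- crucially with \emph{no} degeneracy at $\ho$, which is exactly why the $Y\psi$ part of the nonlinearity never enters --- so by Proposition~\ref{degx} it is controlled by $\int_{\sis_{\tau_1}}J^T_{\mu}[\psi]n^{\mu}\, d\mi_{g_{\sis}}$ together with the two $|F|^2$ error terms already present in \eqref{degi5}. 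For $\int r^{1+\eta}|F|^2$ I would split the region into $\{r\mik R_0\}$, where $r^{1+\eta}\lesssim_{R_0}1$ so that this contributes $\lesssim_{R_0}\int_{\tau_1}^{\tau_2}\int_{S_{\tau'}}|F|^2\, d\mi_{g_{\so}}$, and $\{r\meg R_0\}$, where it is precisely the last term of \eqref{degi5}. Collecting everything gives the claim.

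The argument is largely bookkeeping; the two things to get right are the sign of the $J^T$-flux through $\ho$ (immediate from $D=0$ on the horizon, so it can simply be thrown away) and the fact that the $(T\psi)^2$ weight in the Morawetz estimate does not degenerate at $\ho$, so that invoking Proposition~\ref{degx} really does close the estimate without reintroducing a horizon-degenerate quantity. No Gr\"onwall iteration is needed, since Proposition~\ref{degx} is established independently of any $J^T$-boundedness statement.
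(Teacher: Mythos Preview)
Your proof is correct and follows essentially the same approach as the paper: Stokes' theorem for $J^T$ with $K^T=0$ (since $T$ is Killing) and a non-negative horizon flux, then Cauchy--Schwarz with weight $r^{1+\eta}$ and an appeal to Proposition~\ref{degx} to control the $(T\psi)^2/r^{1+\eta}$ bulk term. Your explicit splitting of the $r^{1+\eta}|F|^2$ integral into $\{r\mik R_0\}$ and $\{r\meg R_0\}$ spells out what the paper leaves implicit in its $\lesssim_{R_0}$ convention.
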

\begin{proof}
Stokes' theorem for $J^T$ gives us the following:
$$ \int_{\sis_{\tau_2}} J^T_{\mu} [\psi ] n^{\mu} d\mi_{g_{\sis}} \lesssim \int_{\sis_{\tau_1}} J^T_{\mu} [\psi ] n^{\mu} d\mi_{g_{\sis}} +\int_{\rrrr (\tau_1 , \tau_2 )} |F \cdot T\psi | d\mi_{g_{\rrrr}} , $$
by the fact that $K^T [\psi ] = 0$ since $T$ is Killing, and because the integral $\int_{\ho} J^T_{\mu} [\psi ] n^{\mu} d \mi_{g_{\ho}}$ is positive since $T$ is causal on $\ho$.

We apply Cauchy-Schwarz with the weight $r^{1+\eta}$, for some $\eta > 0$, to the second term of the right-hand side, and by Proposition \ref{degx} we get that:
$$ \int_{\rrrr (\tau_1 , \tau_2 )} |F \cdot T\psi | d\mi_{g_{\rrrr}} \lesssim \int_{\sis_{\tau_1}} J^T_{\mu} [\psi ] n^{\mu} d\mi_{g_{\sis}} + $$ $$ + \int_{\rrrr (\tau_1 , \tau_2 )} r^{1+\eta} |F|^2 d\mi_{g_{\rrrr}} .$$
This finishes the proof.
\end{proof}
The same result holds on the spacelike-null foliation $\{ \si_{\tau}\}_{\tau \meg 0}$. 
\begin{prop}[\textbf{Uniform Boundedness For The $T$-flux Over Spacelike-Null Hypersurfaces}]\label{deg2}
Let $\psi$ be a spherically symmetric solution of \eqref{nweff}. Then we have that:
\begin{equation}\label{degi2}
\int_{\Sigma_{\tau_2}} J^T_{\mu} [\psi ] n^{\mu} d\mi_{g_{\si}} \lesssim_{R_0,\eta} \int_{\Sigma_{\tau_1}} J^T_{\mu} [\psi ] n^{\mu} d\mi_{g_{\si}}  + 
\end{equation}
$$ + \int_{\tau_1}^{\tau_2} \int_{S_{\tau'}} |F|^2 d\mi_{g_{\so}} +  \int_{\tau_1}^{\tau_2} \int_{N_{\tau'}} r^{1+\eta} |F|^2 d\mi_{g_{\nnn}} ,$$
for any $\tau_1$, $\tau_2$ with $\tau_1 < \tau_2$, and any $\eta > 0$.
\end{prop}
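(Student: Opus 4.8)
The plan is to follow the proof of Proposition \ref{deg5} essentially verbatim, the one new ingredient being the boundary term on future null infinity $\mathcal{I}^{+}$, which carries a favourable sign exactly as in the proof of Proposition \ref{degxsn}.

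First I would apply Stokes' theorem to the current $J^T[\psi]$ over the region $\rrr(\tau_1,\tau_2)$, whose boundary consists of $\Sigma_{\tau_1}$, $\Sigma_{\tau_2}$, the portion of the future event horizon $\ho$ between them, and the portion of $\mathcal{I}^{+}$ between them. Since $T$ is Killing we have $K^T[\psi]=0$, so the only bulk contribution is $\mathcal{E}^T[\psi]=F\cdot T\psi$; moreover $\int_{\ho} J^T_\mu[\psi]n^\mu\,d\mi_{g_{\ho}}\meg 0$ because $T$ is causal (null) along $\ho$, and $\int_{\mathcal{I}^{+}} J^T_\mu[\psi]n^\mu\,d\mi_{g_{\mathcal{I}^{+}}}\meg 0$ because $T$ is timelike as $r\to\infty$. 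Discarding these two good terms yields
\[
\int_{\Sigma_{\tau_2}} J^T_\mu[\psi] n^\mu \, d\mi_{g_{\si}} \lesssim \int_{\Sigma_{\tau_1}} J^T_\mu[\psi] n^\mu \, d\mi_{g_{\si}} + \int_{\rrr(\tau_1,\tau_2)} |F\cdot T\psi| \, d\mi_{g_{\rrr}}.
\]
Since $\rrr(\tau_1,\tau_2)$ reaches null infinity, this is as usual to be understood via the truncated regions $\rrr(\tau_1,\tau_2)\cap\{r\mik R\}$ and a limiting argument $R\to\infty$, in which the $\mathcal{I}^{+}$ flux is recovered with its favourable sign, exactly as in the proof of Proposition \ref{degxsn}.

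Next I would estimate the bulk term by Cauchy--Schwarz with weight $r^{1+\eta}$, that is $|F\cdot T\psi|\mik \gamma\,(T\psi)^2 r^{-1-\eta} + \gamma^{-1} r^{1+\eta}|F|^2$, and then invoke the Morawetz estimate on spacelike--null hypersurfaces, namely \eqref{degixsn}, to bound $\int_{\rrr(\tau_1,\tau_2)} (T\psi)^2 r^{-1-\eta}\,d\mi_{g_{\rrr}}$ by $\int_{\Sigma_{\tau_1}} J^T_\mu[\psi] n^\mu\,d\mi_{g_{\si}}$ together with the spacetime integrals of $|F|^2$ over the $S_{\tau'}$ and of $r^{1+\eta}|F|^2$ over the $N_{\tau'}$. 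The precise value of $\gamma$ is irrelevant here, since \eqref{degixsn} controls the $(T\psi)^2 r^{-1-\eta}$ integral outright. Finally, splitting the weighted $|F|^2$ spacetime integral into its $\{r\mik R_0\}$ and $\{r\meg R_0\}$ pieces, and using that on $\{r\mik R_0\}$ all powers of $r$ are comparable (at the cost of a constant depending on $R_0$), produces precisely \eqref{degi2}.

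I do not expect a genuine obstacle: the only feature absent from the proof of Proposition \ref{deg5} is the $\mathcal{I}^{+}$ boundary term and the mild task of justifying Stokes' theorem on a region reaching null infinity. Since the $T$-flux through $\mathcal{I}^{+}$ is manifestly nonnegative, the truncation-and-limit argument above closes with nothing lost, exactly as in the proof of Proposition \ref{degxsn}; everything else is a transcription of the proof of Proposition \ref{deg5} with Proposition \ref{degxsn} in place of Proposition \ref{degx}.
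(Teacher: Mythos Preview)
Your proposal is correct and matches the paper's approach essentially verbatim: the paper's proof simply says to repeat the argument of Proposition~\ref{deg5}, replace Proposition~\ref{degx} by Proposition~\ref{degxsn}, and observe that the $T$-flux through $\mathcal{I}^{+}$ is positive. Your write-up actually supplies more detail (the truncation-and-limit justification) than the paper does.
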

\begin{proof}
The proof is the same as that of Proposition \ref{deg5} by noticing additionally that the $T$-flux over $\mathcal{I}^{+}$ is positive, and by using Proposition \ref{degxsn} instead of \ref{degx}.
\end{proof}
\subsection{Non-Uniform Boundedness of the Non-Degenerate Energy}
We now state a ``time"-dependent estimate for the $\dot{H}^1$ norm of $\psi$ on the spacelike foliations $\{ \sis_{\tau} \}_{\tau \meg 0}$ which will be useful for the local theory for equation \eqref{nw}.

We should note here that such estimates hold also for more general vector fields, the proof boils down in the end to an application of Gr\"{o}nwall's inequality (an example can be given by the degenerate energy on the horizon given by $J^T$ -- note that this energy defines a norm as well and that local well-posedness can be proven with this too). Specifically we have the following general result:
 
\begin{prop}[\textbf{``Time"-Dependent Boundedness Estimate For The Non-Degenerate Energy}]\label{deg1}
For $\psi$ a solution of \eqref{nweff} we have the following bound for any $\tau_1$, $\tau_2$ with $\tau_1 < \tau_2$:
\begin{equation}\label{degi1}
\int_{\sis_{\tau_2}} J^{n_{\sis_{\tau_2}}}_{\mu} [\psi ] n^{\mu} d\mi_{g_{\sis_{\tau_2}}} \lesssim_{\tau_1 , \tau_2 } \int_{\sis_{\tau_1}} J^{n_{\sis_{\tau_1}}}_{\mu} [\psi ] n^{\mu} d\mi_{g_{\sis_{\tau_1}}} +\int_{\tau_1}^{\tau_2} \int_{\sis_{\tau'}} |F|^2 d\mi_{g_{\rrrr}} .
\end{equation}
\end{prop}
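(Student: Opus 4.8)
The plan is to apply Stokes' theorem (the divergence theorem) to the energy current $J^{n_{\sis}}$ associated to the normal vector field $n_{\sis}$ of the foliation $\{\sis_\tau\}$, over the region $\rrrr(\tau_1,\tau_2)$, and then to close the resulting inequality with Gr\"onwall's inequality. First I would write down the identity coming from Stokes' theorem: the difference of the fluxes $\int_{\sis_{\tau_2}} J^{n_{\sis_{\tau_2}}}_\mu[\psi] n^\mu - \int_{\sis_{\tau_1}} J^{n_{\sis_{\tau_1}}}_\mu[\psi] n^\mu$ (plus the flux through $\ho$, which has a favourable sign since $n_{\sis}$ is causal there, so it can be dropped) equals the spacetime integral of the divergence $\nabla^\mu J^{n_{\sis}}_\mu[\psi] = K^{n_{\sis}}[\psi] + \mathcal{E}^{n_{\sis}}[\psi]$. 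Here $\mathcal{E}^{n_{\sis}}[\psi] = F \cdot n_{\sis}\psi$ is the inhomogeneous term, and $K^{n_{\sis}}[\psi] = T_{\mu\nu}[\psi](\nabla^\mu n_{\sis})^\nu$ is the bulk term coming from the deformation tensor of $n_{\sis}$ (which is nonzero because $n_{\sis}$ is not Killing).

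Next I would estimate the bulk term $K^{n_{\sis}}[\psi]$. Since $n_{\sis}$ is a fixed smooth vector field (obtained by flowing the normal of $\sis_0$ by $T$, cf. \eqref{rbar}) whose deformation tensor is bounded on the region $\rrrr(\tau_1,\tau_2)$, one has the pointwise bound $|K^{n_{\sis}}[\psi]| \lesssim |T_{\mu\nu}[\psi]| \lesssim (T\psi)^2 + (Y\psi)^2 \approx J^{n_{\sis}}_\mu[\psi]n^\mu$ by \eqref{current2}; the implied constant is uniform but depends on the compact $\tau$-range $[\tau_1,\tau_2]$ (this is why the estimate is only ``time''-dependent and not uniform). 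For the inhomogeneous term I would apply Cauchy--Schwarz: $|F\cdot n_{\sis}\psi| \lesssim |F|^2 + (n_{\sis}\psi)^2 \lesssim |F|^2 + J^{n_{\sis}}_\mu[\psi]n^\mu$. Collecting these, setting $\mathcal{F}(\tau) = \int_{\sis_\tau} J^{n_{\sis_\tau}}_\mu[\psi]n^\mu \, d\mi_{g_{\sis_\tau}}$, we obtain
\begin{equation*}
\mathcal{F}(\tau_2) \lesssim \mathcal{F}(\tau_1) + \int_{\tau_1}^{\tau_2}\int_{\sis_{\tau'}} |F|^2 \, d\mi_{g_{\rrrr}} + C\int_{\tau_1}^{\tau_2} \mathcal{F}(\tau') \, d\tau'.
\end{equation*}
The same inequality holds with $\tau_2$ replaced by any $\tau \in [\tau_1,\tau_2]$. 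Applying Gr\"onwall's inequality in $\tau$ then absorbs the last term into an exponential factor $e^{C(\tau_2-\tau_1)}$, which is exactly the $\tau_1,\tau_2$-dependent constant in \eqref{degi1}, and yields the claimed bound.

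The main obstacle is not any single estimate but the bookkeeping around the bulk term $K^{n_{\sis}}[\psi]$: one must verify that the deformation tensor of $n_{\sis}$ is indeed bounded on $\rrrr(\tau_1,\tau_2)$ and controlled by the energy density $(T\psi)^2 + (Y\psi)^2$ with constants that, although growing with $\tau_2 - \tau_1$, do not degenerate in any worse way — in particular that there is no loss at $\ho$ (where the $r$-weights in \eqref{current1} vs. \eqref{current2} differ by a factor of $D$ that vanishes). Since we are using the \emph{non-degenerate} current $J^{n_{\sis}}$ rather than $J^T$, the density is uniformly comparable to $(T\psi)^2+(Y\psi)^2$ everywhere including the horizon, so the only price is the Gr\"onwall factor; this is acceptable because this proposition is used only for the local theory, where a compact $\tau$-interval is fixed. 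Everything else — the sign of the $\ho$ flux, the Cauchy--Schwarz split of the $F$-term, and the final Gr\"onwall step — is routine.
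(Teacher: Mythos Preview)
Your proposal is correct and is exactly the approach the paper has in mind: the paper does not give a detailed proof but remarks that it ``boils down in the end to an application of Gr\"{o}nwall's inequality'' and is ``straight forward and applies also to any timelike vector field $V$.'' Your argument---Stokes for $J^{n_{\sis}}$, pointwise control of $K^{n_{\sis}}$ by the non-degenerate energy density via the bounded deformation tensor, Cauchy--Schwarz on $\mathcal{E}^{n_{\sis}}$, then Gr\"onwall---is precisely this. One small clarification: since the foliation is $\Phi_T$-invariant and $T$ is Killing, the deformation tensor of $n_{\sis}$ is actually $\tau$-independent, so the pointwise constant in $|K^{n_{\sis}}[\psi]| \lesssim J^{n_{\sis}}_\mu[\psi]n^\mu$ does not depend on the $\tau$-range; the entire $(\tau_1,\tau_2)$-dependence of the final constant comes from the Gr\"onwall factor $e^{C(\tau_2-\tau_1)}$, as you note at the end.
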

The proof is straight forward and applies also to any timelike vector field $V$.

\subsection{Uniform Boundedness of the Non-Degenerate Energy}
We will prove now a uniform estimate for the non-degenerate energy on the spacelike-null foliations $\{ \si_{\tau} \}_{\tau \meg 0}$. 
\begin{prop}[\textbf{Uniform Boundedness For The $\dot{H}^1$ Norm}]\label{deg4}
Let $\psi$ be a spherically symmetric solution of \eqref{nweff}. Then we have the following for any $\tau_1$, $\tau_2$ with $\tau_1 < \tau_2$ and any $\eta > 0$:
\begin{equation}\label{degi4}
\int_{\si_{\tau_2}} J^{n_{\si}}_{\mu} [\psi ] n^{\mu} d\mi_{g_{\si}} + \int_{\mathcal{A}} \left( (T\psi )^2 + \sqrt{D} (Y\psi )^2 \right) d\mi_{g_{\mathcal{A}}} \lesssim_{R_0,\eta} \int_{\si_{\tau_1}} J^{n_{\si}}_{\mu} [\psi ] n^{\mu} d\mi_{g_{\si}}  +
\end{equation}
$$ +  \int_{\tau_1}^{\tau_2} \int_{S_{\tau'}} |F|^2 d\mi_{g_{\so}} +  \int_{\tau_1}^{\tau_2} \int_{N_{\tau'}} r^{1+\eta} |F|^2 d\mi_{g_N} + \left( \int_{\tau_1}^{\tau_2} \left( \int_{S_{\tau'}} |F|^2 d\mi_{g_S} \right)^{1/2} d\tau' \right)^2 .$$
\end{prop}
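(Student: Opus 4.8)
The plan is to run an energy identity for a redshift-type current adapted to the extremal horizon, and then to close a Gr\"{o}nwall-type argument in which the non-degenerate flux over $\{S_{\tau'}\}$ absorbs precisely the part of the inhomogeneity that the degenerate bulk cannot see. Throughout write $\mathcal{Q}(\tau) := \int_{\Sigma_{\tau}} J^{n_{\si}}_{\mu}[\psi] n^{\mu}\, d\mi_{g_{\si}}$. First I would invoke the $T$-invariant, future-directed timelike vector field $V$ constructed by Aretakis in \cite{A1}, \cite{A2} for the extremal Reissner--Nordstr\"{o}m background: $V = T$ for $r \meg r_0$ (with $r_0$ a fixed radius close to $M$), while for $M \mik r \mik r_0$ the field $V$ is chosen so that $J^{V}_{\mu}[\psi] n^{\mu} \approx (T\psi)^2 + (Y\psi)^2$ there and the bulk term below has a good sign. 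Applying Stokes' theorem to $J^{V}$ over $\rrr(\tau_1,\tau_2)$, and recalling that $\mathcal{E}^{V}[\psi] = F\cdot V\psi$ for a solution, gives
\[ \int_{\Sigma_{\tau_2}} J^{V}_{\mu}[\psi] n^{\mu}\, d\mi_{g_{\si}} + \int_{\ho} J^{V}_{\mu}[\psi] n^{\mu}\, d\mi_{g_{\ho}} + \int_{\mathcal{I}^{+}} J^{V}_{\mu}[\psi] n^{\mu}\, d\mi_{g_{\mathcal{I}^{+}}} + \int_{\rrr(\tau_1,\tau_2)} K^{V}[\psi]\, d\mi_{g_{\rrr}} = \mathcal{Q}(\tau_1) - \int_{\rrr(\tau_1,\tau_2)} F\cdot V\psi\, d\mi_{g_{\rrr}}, \]
where on the left I have used that the $\Sigma_{\tau_2}$-flux of $V$ is comparable to $\mathcal{Q}(\tau_2)$ and that the $\ho$- and $\mathcal{I}^{+}$-fluxes are nonnegative ($V$ is causal on $\ho$, timelike on $\mathcal{I}^{+}$), hence may be discarded.

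For the bulk I would use Aretakis' computation of $K^{V}$ for spherically symmetric $\psi$: on a fixed spacetime neighbourhood $\mathcal{A}$ of $\ho$ inside $\rrr(\tau_1,\tau_2)$ one has $K^{V}[\psi] \gtrsim (T\psi)^2 + \sqrt{D}(Y\psi)^2$ up to an error supported in a fixed compact $r$-range, of the form $C\big((T\psi)^2 + D^2(Y\psi)^2\big)$, while $K^{V} = K^{T} = 0$ for $r \meg r_0$. This error is absorbed by the localized Morawetz estimate \eqref{degix1}, whose right-hand side is in turn controlled by $\int_{\Sigma_{\tau_1}} J^{T}_{\mu}[\psi] n^{\mu}$ and the $|F|^2$-terms already appearing in \eqref{degi4} (this is where Proposition \ref{deg2} enters), and $\int_{\Sigma_{\tau_1}} J^{T}_{\mu}[\psi] n^{\mu} \lesssim \mathcal{Q}(\tau_1)$. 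What survives of $\int_{\rrr(\tau_1,\tau_2)} K^{V}$ on the left is then the coercive quantity $\int_{\mathcal{A}}\big((T\psi)^2 + \sqrt{D}(Y\psi)^2\big)\, d\mi_{g_{\mathcal{A}}}$ of \eqref{degi4}.

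It remains to bound $\int_{\rrr(\tau_1,\tau_2)}|F\cdot V\psi|\, d\mi_{g_{\rrr}}$, and here the argument departs from the subextremal one. In the far region $\{ r \meg R_0 \}$ we have $V = T$, so a Cauchy--Schwarz with weight $r^{1+\eta}$ splits this into $\gamma \int \tfrac{(T\psi)^2}{r^{1+\eta}}$, absorbed via Proposition \ref{degxsn}, plus $\gamma^{-1}\int_{\tau_1}^{\tau_2}\!\int_{N_{\tau'}} r^{1+\eta}|F|^2\, d\mi_{g_N}$. In the compact region $\{ M \mik r \mik R_0 \}$ we have $|V\psi| \lesssim |T\psi| + |Y\psi|$, but the bulk there controls only $\sqrt{D}(Y\psi)^2$, so a Cauchy--Schwarz against the bulk would cost the non-integrable weight $D^{-1/2}|F|^2$ near $\ho$; instead I would foliate by $\{ S_{\tau'} \}$ and use the non-degenerate flux directly, since by \eqref{current2} one has $\int_{S_{\tau'}}\big((T\psi)^2 + (Y\psi)^2\big)\, d\mi_{g_{\so}} \lesssim \mathcal{Q}(\tau')$, so a Cauchy--Schwarz on each $S_{\tau'}$ and then in $\tau'$ bounds this contribution by $\int_{\tau_1}^{\tau_2}\big(\int_{S_{\tau'}}|F|^2\, d\mi_{g_{\so}}\big)^{1/2}\sqrt{\mathcal{Q}(\tau')}\, d\tau'$. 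Running the whole argument with $\tau_2$ replaced by an arbitrary $s\in[\tau_1,\tau_2]$ and setting $\bfm(s) := \sup_{\tau_1\mik s'\mik s}\mathcal{Q}(s')$, one arrives at
\[ \bfm(s) \lesssim_{R_0,\eta} \mathcal{Q}(\tau_1) + \int_{\tau_1}^{\tau_2}\!\int_{S_{\tau'}}|F|^2\, d\mi_{g_{\so}} + \int_{\tau_1}^{\tau_2}\!\int_{N_{\tau'}} r^{1+\eta}|F|^2\, d\mi_{g_N} + \Big(\int_{\tau_1}^{s}\big(\int_{S_{\tau'}}|F|^2\, d\mi_{g_{\so}}\big)^{1/2}d\tau'\Big)\sqrt{\bfm(s)}, \]
and absorbing $\sqrt{\bfm(s)}$ by Young's inequality and then taking $s=\tau_2$ gives exactly \eqref{degi4}, the extra term $\big(\int_{\tau_1}^{\tau_2}(\int_{S_{\tau'}}|F|^2\, d\mi_{g_{\so}})^{1/2}d\tau'\big)^2$ being precisely the residue of this last absorption.

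The main obstacle is the degeneracy of the redshift on the extremal horizon: the favourable bulk is only $\sqrt{D}(Y\psi)^2$, not $(Y\psi)^2$, so the $|F\cdot Y\psi|$ contribution near $\ho$ cannot be handled by a plain weighted Cauchy--Schwarz and one is forced into the $L^1_\tau L^2_x$ mechanism above --- which is exactly why \eqref{degi4}, unlike its subextremal analogue, carries that extra term. The remaining steps (bookkeeping the cutoff errors of $K^{V}$ against \eqref{degix1}, checking the sign of the $\mathcal{I}^{+}$-flux, and the Young/Gr\"{o}nwall closure) are routine given Propositions \ref{degxsn} and \ref{deg2}.
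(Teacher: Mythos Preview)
Your proposal is essentially correct and follows the same strategy as the paper: both use Aretakis' redshift-substitute vector field (called $N$ in the paper), exploit the degenerate near-horizon bulk coercivity $(T\psi)^2 + \sqrt{D}(Y\psi)^2$, absorb the compact-$r$ errors via the Morawetz estimate \eqref{degix1}, treat the far-region inhomogeneity by weighted Cauchy--Schwarz, and --- crucially --- deal with the near-horizon piece of $\int |F\cdot V\psi|$ by Cauchy--Schwarz on each $S_{\tau'}$, then pulling out $\sup_{\tau'}\mathcal{Q}(\tau')$ and absorbing it into the left-hand side, which is precisely what produces the squared $L^{1}_{\tau}L^{2}_{x}$ term. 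Your identification of \emph{why} this extra term is forced (the bulk controls only $\sqrt{D}(Y\psi)^2$, so a direct weighted Cauchy--Schwarz near $\ho$ would cost a non-integrable $D^{-1/2}$) matches the paper's motivation exactly.

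One technical point you glossed over: the paper does not get the coercive bulk from $K^V$ alone, but from the \emph{modified} current $J^{N,\delta,-1/2}_{\mu} = J^{N}_{\mu} - \tfrac{1}{2}\delta(r)\,\psi\nabla_{\mu}\psi$ (with $\delta$ a cutoff equal to $1$ near $\ho$). It is this zeroth-order correction that makes the near-horizon bulk sign-definite in Aretakis' Proposition 10.2.1; its divergence then contains an extra $\psi\cdot F$ contribution (from $\psi\,\Box_g\psi$) which the paper handles separately via Hardy \eqref{hardy} followed by the Morawetz estimate. This is a minor amendment to your scheme --- the tools you are already invoking cover it --- but as written your claimed inequality ``$K^{V}[\psi]\gtrsim (T\psi)^2+\sqrt{D}(Y\psi)^2$ up to a compact-$r$ error'' is not quite the one available from \cite{A1} for the unmodified $K^{V}$.
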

where $\mathcal{A} = \rrr (\tau_1 , \tau_2 ) \cap \{ M \mik r \mik A \}$ for some $A < R_0$. 

The last term on the right hand side is a problematic one and will cause several difficulties that we will have to overcome later on. The existence of this term is forced by the degeneracy of the redshift effect on extremal Reissner--Nordstr\"{o}m and is a novel aspect of our analysis. 

\begin{proof}
We will prove the required estimate for the current $J^N$ for the vector field $N$ that we construct below.

We use the vector field $N$ from \cite{A1} which is future directed, timelike and $\Phi_T$-invariant close to the horizon $\ho$ (it should be noted that it acts as a substitute for the red-shift effect which degenerates in the extremal case as we noted before). This is defined as follows:
$$ N = N^v (r) T + N^r (r) Y \mbox{ with } $$
$$ N^v (r) > 0 \mbox{ for all $r\meg M$ and } N^v (r) =1 \mbox{ for all $r \meg \dfrac{8M}{7}$} ,$$
$$ N^v (r) \mik 0 \mbox{ for all $r\meg M$ and } N^v (r) =0 \mbox{ for all $r \meg \dfrac{8M}{7}$} .$$
With this definition as someone can compute we get that the current $J^N$ contracted with the normal of any $\sis_{\tau}$ hypersurface controls the $\dot{H}^1$ norm of $\psi$:
$$ J^N_{\mu} [\psi ] n^{\mu}_{\sis_{\tau}} \approx (T\psi )^2 + (Y\psi )^2 .$$
We will show \eqref{degi1} by proving the analogous statement with $J^N$ in the place of $J^{n_{\sis}}$.

We define the modified current
$$ J^{N , \delta , h}_{\mu} = J^N_{\mu} + h (r) \delta (r) \psi \nabla_{\mu} \psi .$$
We let $h=-\frac{1}{2}$ and $\delta : [M, \infty) \rightarrow \rr$ to be a smooth cut-off function such that $\delta (r) = 0$ for $r\meg \dfrac{8M}{7}$ and $\delta (r) =1$ for $M \mik r \mik \dfrac{9M}{8}$.

For the divergence of $J^{N, \delta , -\frac{1}{2}}$ we have
\begin{equation}\label{bulkk}
 K^{N, \delta , -\frac{1}{2}} [\psi ] + \mathcal{E}^N [\psi ] = \nabla^{\mu} J^{N, \delta , -\frac{1}{2}}_{\mu} [\psi] ,
 \end{equation}
we have that
$$ K^{N, \delta , -\frac{1}{2}} [\psi ] = 0 \mbox{ for $r \meg \dfrac{8M}{7}$} ,$$
since in this region $\delta = 0$ and $N=T$. On the other hand by Proposition 10.2.1 of \cite{A1} we have that:
$$ K^{N,\delta , -\frac{1}{2}} [\psi ] \gtrsim \left( (T\psi )^2 + \sqrt{D} (Y\psi )^2 - \dfrac{1}{2} \psi \cdot F \right) \mbox{ for $r \in \left[ M , \frac{9M}{8} \right]$} ,$$
for a spherically symmetric solution $\psi$ of \eqref{nweff}.

We apply Stokes' Theorem to the current $J^{N, \delta , -\frac{1}{2}}$ and we get that:
\begin{equation}\label{stokesn}
\int_{\si_{\tau_2}} J^{N, \delta , -\frac{1}{2}}_{\mu} [\psi ] n^{\mu} d\mi_{g_{\si}} + \int_{\ho} J^{N, \delta , -\frac{1}{2}}_{\mu} [\psi ] n^{\mu} d\mi_{g_{\ho}} + \int_{\mathcal{I}^{+}} J^{N, \delta , -\frac{1}{2}}_{\mu} [\psi ] n^{\mu} d\mi_{g_{\mathcal{I}^{+}}}  + 
\end{equation}
$$ + \int_{\tau_1}^{\tau_2} \int_{\si_{\tau'}} K^{N, \delta , -\frac{1}{2}}_{\mu} [\psi ] d\mi_{g_{\rrr}} + \int_{\tau_1}^{\tau_2} \int_{\si_{\tau'}} \mathcal{E}^N_{\mu} [\psi ] d\mi_{g_{\rrr}} = \int_{\si_{\tau_1}} J^{N, \delta , -\frac{1}{2}}_{\mu} [\psi ] n^{\mu} d\mi_{g_{\si}} .$$
By Propositions 10.3.1 and 10.4.1, and Corollary 10.1 of \cite{A1}, \eqref{stokesn} gives us the following (after noticing as well that on $\mathcal{I}^{+}$ the $N$-flux is positive since it is equal to $T$ there):
\begin{equation}\label{stokesn1}
\int_{\si_{\tau_2}} J^N_{\mu} [\psi ] n^{\mu} d\mi_{g_{\si}} + \int_{\tau_1}^{\tau_2} \int_{\si_{\tau'} \cap \{ M \mik r \mik 9M/8 \}} \left( (T\psi )^2 + \sqrt{D} (Y\psi )^2 \right)  d\mi_{g_{\rrr}} + 
\end{equation}
$$ + \int_{\tau_1}^{\tau_2} \int_{\si_{\tau'} \cap \{ r \meg 9M/8 \}} K^{N, \delta , -\frac{1}{2} } d\mi_{g_{\rrr}} \lesssim \int_{\si_{\tau_2}} J^T_{\mu} [\psi ] n^{\mu} d\mi_{g_{\si}} + \int_{\si_{\tau_1}} J^N_{\mu} [\psi ] n^{\mu} d\mi_{g_{\si}} + $$ $$ + \int_{\tau_1}^{\tau_2} \int_{\si_{\tau'}} |F \cdot N\psi| d\mi_{g_{\rrr}} + \left| \int_{\tau_1}^{\tau_2} \int_{\si_{\tau'} \cap \{ M \mik r \mik 9M/8 \}} \psi \cdot F d\mi_{g_{\rrr}} \right|  .$$
For the last term  we apply Cauchy-Schwarz, Hardy's inequality \eqref{hardy} and the Morawetz estimate \eqref{degix1} and we arrive at the following estimate for some $\eta > 0$:
$$ \left| \int_{\tau_1}^{\tau_2} \int_{\si_{\tau'} \cap \{ M \mik r \mik 9M/8 \}} \psi \cdot F d\mi_{g_{\rrr}} \right| \lesssim \int_{\si_{\tau_1}} J^T_{\mu} [\psi ] n^{\mu} d\mi_{g_{\si}} + \int_{\tau_1}^{\tau_2} \int_{\si_{\tau'}} r^{1+\eta} |F|^2 d\mi_{g_{\si}} + $$ $$ + \int_{\tau_1}^{\tau_2} \int_{\si_{\tau'} \cap \{ M \mik r \mik 9M/8 \}} |F|^2 d\mi_{g_{\rrr}} .$$

By Stokes' Theorem for $J^T$ we turn \eqref{stokesn1} to the following:
\begin{equation}\label{stokesn2}
\int_{\si_{\tau_2}} J^N_{\mu} [\psi ] n^{\mu} d\mi_{g_{\si}} + \int_{\tau_1}^{\tau_2} \int_{\si_{\tau'}} K^{N, \delta , -\frac{1}{2}}_{\mu} [\psi ] d\mi_{g_{\rrr}} \lesssim
\end{equation}
$$ \lesssim \int_{\si_{\tau_1}} J^T_{\mu} [\psi ] n^{\mu} d\mi_{g_{\si}} + \int_{\si_{\tau_1}} J^N_{\mu} [\psi ] n^{\mu} d\mi_{g_{\si}} + $$ $$ +\int_{\tau_1}^{\tau_2} \int_{\si_{\tau'}} |F \cdot N\psi| d\mi_{g_{\rrr}} + \int_{\tau_1}^{\tau_2} \int_{\si_{\tau'}} |F \cdot T\psi| d\mi_{g_{\rrr}} + \int_{\tau_1}^{\tau_2} \int_{\si_{\tau'} \cap \{ M \mik r \mik 9M/8 \}} |F|^2 d\mi_{g_{\rrr}} .$$

As we already noticed, the bulk term $\int_{\si_{\tau'}} K^{N, \delta , -\frac{1}{2}}_{\mu} [\psi ] d\mi_{g_{\rrr}}$ is 0 for $r \meg \dfrac{8M}{7}$ and positive close to the horizon. In the remaining region that we call $\mathcal{B}$, we use estimate \eqref{degix12} from the proof of Proposition \ref{degx} and we have that:
$$ \int_{\mathcal{B}} K^{N, \delta , -\frac{1}{2}}_{\mu} [\psi ] d\mi_{g_{\mathcal{B}}} \lesssim \int_{\si_{\tau_1}} J^T_{\mu} [\psi ] n^{\mu} d\mi_{g_{\si}} + \int_{\tau_1}^{\tau_2} \int_{\si_{\tau'}} r^{1+\eta} |F|^2 d\mi_{g_{\rrr}} .$$

Now we define a smooth cut-off $\chi : [M , \infty ) \rightarrow [0,1]$  such that $\chi (r) = 1$ for $M \mik r \mik A$ and $\chi (r) = 0$ for $r \meg A+1$. 

We apply Stokes' Theorem to the current $\chi J^{N , \delta , -\frac{1}{2}}$ and we get by estimate \eqref{stokesn2} (adapted to the spacelike-null foliation $\{ \si_{\tau} \}_{\tau \meg 0}$ where we note that the additional integral $\int_{\mathcal{I}^{+}} J^{N, \delta , -\frac{1}{2}} [\psi ] n^{\mu} d\mi_{g_{\mathcal{I}^{+}}}$ is positive, hence it has the right sign) the following:
\begin{equation}\label{stokesn11}
\int_{\si_{\tau_2}} \chi J^N_{\mu} [\psi ] n^{\mu} d\mi_{g_{\si}} + \int_{\tau_1}^{\tau_2} \int_{\si_{\tau'}} \chi K^{N, \delta , -\frac{1}{2}}_{\mu} [\psi ] d\mi_{g_{\rrr}} \lesssim
\end{equation}
$$ \lesssim \int_{\si_{\tau_1}} \chi J^T_{\mu} [\psi ] n^{\mu} d\mi_{g_{\si}} + \int_{\si_{\tau_1}} \chi J^N_{\mu} [\psi ] n^{\mu} d\mi_{g_{\si}} + $$ $$ +\int_{\tau_1}^{\tau_2} \int_{\si_{\tau'}} \chi |F \cdot N\psi| d\mi_{g_{\rrr}} + \int_{\tau_1}^{\tau_2} \int_{\si_{\tau'}} \chi |F \cdot T\psi| d\mi_{g_{\rrr}} + $$ $$  + \left| \int_{\tau_1}^{\tau_2} \int_{\si_{\tau'}} |\nabla \chi | \cdot J^{N, \delta , -\frac{1}{2}} [\psi ] n^{\mu} d\mi_{g_{\rrr}} \right|  .$$ 
First we note that we can apply Cauchy-Schwarz and estimate \eqref{degix1} for the term $\int_{\tau_1}^{\tau_2} \int_{\si_{\tau'}} \chi |F \cdot T\psi| d\mi_{g_{\rrr}}$ for some $\eta > 0$ as follows:
\begin{equation}\label{stokesn12}
 \int_{\tau_1}^{\tau_2} \int_{\si_{\tau'}} \chi |F \cdot T\psi| d\mi_{g_{\rrr}} \mik \int_{\tau_1}^{\tau_2} \int_{\si_{\tau'}}  |F \cdot T\psi| d\mi_{g_{\rrr}} \mik 
 \end{equation} 
 $$ \mik \int_{\tau_1}^{\tau_2} \int_{\si_{\tau'}}  r^{1+\eta} |F |^2 d\mi_{g_{\rrr}} + \int_{\tau_1}^{\tau_2} \int_{\si_{\tau'}} \dfrac{( T\psi )^2}{r^{1+\eta}} d\mi_{g_{\rrr}} \lesssim_{R_0} $$ $$ \lesssim_{R_0} \int_{\si_{\tau_1}}  J^T_{\mu} [\psi ] n^{\mu} d\mi_{g_{\si}} + \int_{\tau_1}^{\tau_2} \int_{S_{\tau'}} |F|^2 d\mi_{g_S} +  \int_{\tau_1}^{\tau_2} \int_{N_{\tau'}} r^{1+\eta} |F|^2 d\mi_{g_N} . $$ 
We note also that the last term of \eqref{stokesn11} can be bounded as well by the three last terms of \eqref{stokesn12}, just by noticing that we can apply estimate \eqref{degix1} to it as well because of Corollary 10.1 of \cite{A1} (which is a pointwise estimate) and because $supp (|\nabla \chi | ) \subset \{ A \mik r \mik A+1 \}$.

Hence we can rewrite \eqref{stokesn11} as follows:
\begin{equation}\label{stokesn13}
\int_{\si_{\tau_2} \cap \{ r \mik A \}}  J^N_{\mu} [\psi ] n^{\mu} d\mi_{g_{\si}} + \int_{\tau_1}^{\tau_2} \int_{\si_{\tau'} \cap \{ r\mik A\}}  \left( (T\psi )^2 + \sqrt{D} (Y\psi )^2 \right) d\mi_{g_{\rrr}} \lesssim_{R_0}
\end{equation}
$$ \lesssim_{R_0} \int_{\si_{\tau_1}}  J^N_{\mu} [\psi ] n^{\mu} d\mi_{g_{\si}} + \int_{\tau_1}^{\tau_2} \int_{S_{\tau'}} |F|^2 d\mi_{g_S} +  \int_{\tau_1}^{\tau_2} \int_{N_{\tau'}} r^{1+\eta} |F|^2 d\mi_{g_N} + $$ $$ + \int_{\tau_1}^{\tau_2} \int_{\si_{\tau'} \cap \{ r\mik R_0 \}}  |F \cdot N\psi| d\mi_{g_{\rrr}} .$$
Since $\int_{\si_{\tau_2}} J^T_{\mu} [\psi] n^{\mu} d\mi_{g_{\si}}$ is positive we add it on both sides of \eqref{stokesn13} and as $J^T$ behaves approximately like $J^N$ away from the horizon we have that:
$$ \int_{\si_{\tau_2}} J^T_{\mu} [\psi] n^{\mu} d\mi_{g_{\si}} + \int_{\si_{\tau_2} \cap \{ r\mik A\}} J^N_{\mu} [\psi] n^{\mu} d\mi_{g_{\si}} \approx \int_{\si_{\tau_2}} J^N_{\mu} [\psi] n^{\mu} d\mi_{g_{\si}} .$$
By applying the result of Proposition \ref{deg5} we have that \eqref{stokesn13} becomes:
\begin{equation}\label{stokesn14}
 \int_{\si_{\tau_2}} J^N_{\mu} [\psi] n^{\mu} d\mi_{g_{\si}} +  \int_{\tau_1}^{\tau_2} \int_{\si_{\tau'} \cap \{ r\mik A\}}  \left( (T\psi )^2 + \sqrt{D} (Y\psi )^2 \right) d\mi_{g_{\rrr}} \lesssim_{R_0}
\end{equation}
$$ \lesssim_{R_0} \int_{\si_{\tau_1}}  J^N_{\mu} [\psi ] n^{\mu} d\mi_{g_{\si}} + \int_{\tau_1}^{\tau_2} \int_{S_{\tau'}} |F|^2 d\mi_{g_S} +  \int_{\tau_1}^{\tau_2} \int_{N_{\tau'}} r^{1+\eta} |F|^2 d\mi_{g_N} + $$ $$ + \int_{\tau_1}^{\tau_2} \int_{\si_{\tau'} \cap \{ r\mik R_0 \}}  |F \cdot N\psi| d\mi_{g_{\rrr}} .$$
We finally deal with the last term by applying Cauchy-Schwarz to it twice:
\begin{equation}\label{stokesn15} \int_{\tau_1}^{\tau_2} \int_{\si_{\tau'} \cap \{ r\mik R_0 \}}  |F \cdot N\psi| d\mi_{g_{\rrr}} \mik 
\end{equation}
 $$\mik \int_{\tau_1}^{\tau_2} \left( \int_{\si_{\tau'} \cap \{ r\mik R_0 \}}  (N\psi )^2 d \mi_{g_{\si}} \right)^{1/2} \left( \int_{\si_{\tau'} \cap \{ r\mik R_0 \}}  |F|^2 d \mi_{g_{\si}} \right)^{1/2}  d\tau' \mik $$ $$ \mik \sup_{\tau'' \in [\tau_1 , \tau_2 ]}  \left( \int_{\si_{\tau''} \cap \{ r\mik R_0 \}}  (N\psi )^2 d \mi_{g_{\si}} \right)^{1/2} \int_{\tau_1}^{\tau_2} \left( \int_{\si_{\tau'} \cap \{ r\mik R_0 \}}  |F|^2 d \mi_{g_{\si}} \right)^{1/2} d\tau' \mik $$ $$ \mik \beta \sup_{\tau'' \in [\tau_1 , \tau_2 ]}  \int_{\si_{\tau''} \cap \{ r\mik R_0 \}}  (N\psi )^2 d \mi_{g_{\si}} + \dfrac{1}{\beta} \left( \int_{\tau_1}^{\tau_2} \left( \int_{\si_{\tau'} \cap \{ r\mik R_0 \}}  |F|^2 d \mi_{g_{\si}} \right)^{1/2} d\tau' \right)^2 , $$
for any $\beta > 0$. Since in the region $\{ r\mik R_0 \}$ we have that:
$$ \int_{\si_{\tau}\cap \{r \mik R_0\}} (N\psi )^2 d\mi_{g_{\si}} \lesssim \int_{\si_{\tau}\cap \{r \mik R_0\}} J^N_{\mu} [\psi] n^{\mu} d\mi_{g_{\si}},$$
we can restate \eqref{stokesn15} as
\begin{equation}\label{stokesn16}
\int_{\tau_1}^{\tau_2} \int_{\si_{\tau'} \cap \{ r\mik R_0 \}}  |F \cdot N\psi| d\mi_{g_{\rrr}} \mik 
\end{equation} 
$$ C\beta \sup_{\tau'' \in [\tau_1 , \tau_2 ]}  \int_{\si_{\tau''} \cap \{ r\mik R_0 \}}  J^N_{\mu} [\psi] n^{\mu} d \mi_{g_{\si}} + \dfrac{C}{\beta} \left( \int_{\tau_1}^{\tau_2} \left( \int_{\si_{\tau'} \cap \{ r\mik R_0 \}}  |F|^2 d \mi_{g_{\si}} \right)^{1/2} d\tau' \right)^2 .$$

Now the proof finishes by taking the supremum over all $\tau \in [\tau_1 , \tau_2 ]$ on the left hand side of \eqref{stokesn14} and by choosing $\beta$ small enough in order to be able to absorb the term $\sup_{\tau'' \in [\tau_1 , \tau_2 ]}  \int_{\si_{\tau'} \cap \{ r\mik R_0 \}}  J^N_{\mu} [\psi ] n^{\mu} d \mi_{g_{\si}}$ in the left hand side of \eqref{stokesn14}.

\end{proof}

\subsection{$P$-Energy Estimates}
In order to get useful integrated local energy decay estimates we use the novel vector field $P$ that was introduced in \cite{A2}. 

This vector field has the form $P = P^v (r) T + P^r (r) Y $ where 

(a) $P^r (r) = -\sqrt{D}$ for $M \mik r \mik r_0 < 2M$ ($r_0$ will be determined later), $P^r (r) = 0$ for $r \meg r_1 > r_0$ where $r_1 < 2M$ and where the extension in the remaining region is a smooth one.

(b) $P^v (r)$ for $M \mik r \mik r_0 < 2M$ is such that $\frac{1}{\beta} \left[ \sqrt{D} \left(Y P^v \right) + \frac{2}{r} \right]^2 < Y P^v$ for some $\beta > 0$ which is small enough, and $P^v (r) = 1$ for $r \meg r_1$ where again the extension in the remaining region is a smooth one.

Choosing $\beta$ to be small enough, then we can choose an $r_0$ with the property that in the region $\mathcal{F} = \{ M \mik r \mik r_0 < 2M \}$ we have:
\begin{equation}\label{pbulk}
K^P [\psi ] \approx (T\psi )^2 + D (Y \psi )^2 + |\slashed{\nabla} \psi |^2 \approx J^T_{\mu} [\psi] n^{\mu}_{\Sigma} .
\end{equation}
Also since in $\mathcal{B}$ we have that $g (P, P ) \approx -\sqrt{D}$, it is also true that:
\begin{equation}\label{pest}
J^P_{\mu} [\psi] n^{\mu}_{\Sigma} \approx (T\psi )^2 + \sqrt{D} (Y \psi )^2 + |\slashed{\nabla} \psi |^2 .
\end{equation}
Note that we stated the above estimates for general $\psi$ and not just spherically symmetric ones.

Let us state first a very simple estimate.
\begin{prop}\label{pener}
For all spherically symmetric solutions $\psi$ of \eqref{nweff} we have for any $\tau_1$, $\tau_2$ with $\tau_1 < \tau_2$ and any $\eta > 0$:
\begin{equation}\label{pen}
\int_{\Sigma_{\tau_2}} J^P_{\mu} [\psi] n^{\mu} d\mi_{g_{\si}} \lesssim_{R_0} \int_{\Sigma_{\tau_1}} J^P_{\mu} [\psi] n^{\mu} d\mi_{g_{\si}} + 
\end{equation}
$$ + \int_{\tau_1}^{\tau_2} \int_{S_{\tau'}} |F|^2 d\mi_{g_{\so}} +  \int_{\tau_1}^{\tau_2} \int_{N_{\tau'}} r^{1+\eta} |F|^2 d\mi_{g_{\nnn}}+ \int_{\tau_1}^{\tau_2} \int_{\Sigma_{\tau'}} | F \cdot P\psi | d\mi_{g_{\rrr}} . $$
\end{prop}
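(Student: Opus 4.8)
The plan is to derive \eqref{pen} from the energy identity (Stokes' theorem) for the current $J^P$ over the region $\rrr(\tau_1,\tau_2)$. As in the proofs above, this produces the boundary terms $\int_{\si_{\tau_2}} J^P_\mu[\psi]n^{\mu}\, d\mi_{g_{\si}}$, $\int_{\si_{\tau_1}} J^P_\mu[\psi]n^{\mu}\, d\mi_{g_{\si}}$, $\int_{\ho} J^P_\mu[\psi]n^{\mu}\, d\mi_{g_{\ho}}$, $\int_{\mathcal{I}^{+}} J^P_\mu[\psi]n^{\mu}\, d\mi_{g_{\mathcal{I}^{+}}}$, together with the bulk terms $\int_{\rrr(\tau_1,\tau_2)} K^P[\psi]\, d\mi_{g_{\rrr}}$ and $\int_{\rrr(\tau_1,\tau_2)} \mathcal{E}^P[\psi]\, d\mi_{g_{\rrr}} = \int_{\rrr(\tau_1,\tau_2)} F\cdot P\psi\, d\mi_{g_{\rrr}}$. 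Solving for $\int_{\si_{\tau_2}} J^P_\mu[\psi]n^{\mu}\, d\mi_{g_{\si}}$ and moving everything else to the right-hand side, the task reduces to controlling the signs and sizes of the remaining terms.

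First I would dispose of the boundary fluxes. On $\ho$ we have $P^r=-\sqrt{D}=0$ since $D$ vanishes at $r=M$, so $P$ is a positive multiple of $T$ there; hence $\int_{\ho} J^P_\mu[\psi]n^{\mu}\, d\mi_{g_{\ho}} \gtrsim \int_{\ho} J^T_\mu[\psi]n^{\mu}\, d\mi_{g_{\ho}} \meg 0$ because $T$ is causal on $\ho$. Similarly, for $r$ large $P=T$, so the $J^P$-flux on $\mathcal{I}^{+}$ equals the (nonnegative) $T$-flux there. Both boundary terms thus have the favourable sign and may be discarded. The term $\int_{\rrr(\tau_1,\tau_2)} F\cdot P\psi\, d\mi_{g_{\rrr}}$ is bounded in absolute value by $\int_{\tau_1}^{\tau_2}\int_{\si_{\tau'}}|F\cdot P\psi|\, d\mi_{g_{\rrr}}$, which is exactly the last term on the right-hand side of \eqref{pen}.

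The only term requiring a short argument is the bulk integral $\int_{\rrr(\tau_1,\tau_2)} K^P[\psi]\, d\mi_{g_{\rrr}}$. Since $P=T$ and $T$ is Killing for $r\meg r_1$, the integrand $K^P$ is supported in the compact region $\{M\mik r\mik r_1<2M\}$. On the subregion $\mathcal{F}=\{M\mik r\mik r_0\}$, estimate \eqref{pbulk} gives $K^P[\psi]\gtrsim J^T_\mu[\psi]n^{\mu}_{\si}\meg 0$, so this part again has the favourable sign and is discarded. On the transition region $\{r_0\mik r\mik r_1\}$, which is uniformly bounded away from $\ho$ and from $\mathcal{I}^{+}$ so that $D$ is bounded below there, one has the pointwise bound $|K^P[\psi]|\lesssim (T\psi)^2 + D^2(Y\psi)^2$ (the angular term drops by spherical symmetry); applying the localized Morawetz estimate \eqref{degix1} with $\mathcal{G}=\rrr(\tau_1,\tau_2)\cap\{r_0\mik r\mik r_1\}$ bounds $\int_{\mathcal{G}}|K^P[\psi]|\,d\mi_{g_{\mathcal{G}}}$ by $\int_{\si_{\tau_1}\cap\{r\mik R_0\}}J^T_\mu[\psi]n^{\mu}\, d\mi_{g_{\si}} + \int_{\tau_1}^{\tau_2}\int_{S_{\tau'}}|F|^2\, d\mi_{g_{\so}} + \int_{\tau_1}^{\tau_2}\int_{N_{\tau'}}r^{1+\eta}|F|^2\, d\mi_{g_{\nnn}}$. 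Finally, by \eqref{pest} near $\ho$ and $P=T$ away from it one gets the pointwise comparison $J^T_\mu[\psi]n^{\mu}_{\si}\lesssim J^P_\mu[\psi]n^{\mu}_{\si}$, so the $J^T$-flux over $\si_{\tau_1}$ is absorbed into $\int_{\si_{\tau_1}}J^P_\mu[\psi]n^{\mu}\, d\mi_{g_{\si}}$. Collecting everything yields \eqref{pen}.

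I do not expect a genuine obstacle here: this is the ``very simple'' estimate advertised before the statement, the only mild point being that $K^P$ is not pointwise nonnegative in the transition region, which is handled by the already-established Morawetz estimate. Note that the term $\int_{\tau_1}^{\tau_2}\int_{\si_{\tau'}}|F\cdot P\psi|\, d\mi_{g_{\rrr}}$ is deliberately left unestimated at this stage; it is meant to be absorbed later via Cauchy--Schwarz together with decay information on $F$ obtained through the bootstrap.
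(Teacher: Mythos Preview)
Your proposal is correct and follows essentially the same approach as the paper: apply Stokes' theorem to $J^P$ over $\rrr(\tau_1,\tau_2)$, discard the boundary fluxes on $\ho$ and $\mathcal{I}^{+}$ using that $P$ is causal and future-directed, use \eqref{pbulk} for the positivity of $K^P$ near the horizon, and control $K^P$ in the transition region by the Morawetz estimate \eqref{degix1}. Your write-up supplies a bit more detail than the paper (in particular the explicit absorption of the resulting $J^T$-flux into the $J^P$-flux on $\si_{\tau_1}$), but the argument is the same.
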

\begin{proof}
We apply Stokes' theorem to the current $J^P$ and we get:
$$ \int_{\Sigma_{\tau_2}} J^P_{\mu} [\psi]n^{\mu} d\mi_{g_{\si}} + \int_{\mathcal{H}^{+}} J^P_{\mu} [\psi]n^{\mu} d\mi_{g_{\ho}} + \int_{\mathcal{I}^{+}} J^P_{\mu} [\psi]n^{\mu} d\mi_{g_{\mathcal{I}^{+}}} + $$ $$ + \int_{\mathcal{R} (\tau_1 , \tau_2 )}  K^P [\psi] dvol + \int_{\mathcal{R} (\tau_1 , \tau_2 )} \mathcal{E}^P [\psi]  d\mi_{g_{\rrr}} = \int_{\Sigma_{\tau_1}} J^P_{\mu} [\psi]n^{\mu} d\mi_{g_{\si}} .$$ 
Since $P$ is a causal future-directed vector field, the boundary integrals over $\mathcal{H}^{+}$ and $\mathcal{I}^{+}$ are positive. On the other hand we have that $K^P$ is positive close to the horizon by \eqref{pbulk}, and away from it we can just use the Morawetz estimate \eqref{degix1} (since the different power of $D$ in front of $Y \psi$ is not important away from the horizon).

Finally the second term on the right hand side of \eqref{pen} comes from the term $\int_{\mathcal{R} (\tau_1 , \tau_2 )} \mathcal{E}^P [\psi] d\mi_{g_{\rrr}}$.
\end{proof}
Proposition \ref{pener} has as a consequence the following bound for the integrated $T$-flux close to the horizon.

\begin{prop}\label{ilep}
For all spherically symmetric solutions $\psi$ of \eqref{nweff} we have for any $\tau_1$, $\tau_2$ with $\tau_1 < \tau_2$, any $\eta > 0$ and for $r_0$ given in the definition of the $P$-flux:
\begin{equation}\label{ilep1}
\int_{\tau_1}^{\tau_2} \int_{ \Sigma_{\tau'} \cap \{ r \mik r_0 \}} J^T_{\mu} [\psi] n^{\mu} d\mi_{g_{\rrr}} \lesssim_{R_0,\eta} \int_{\Sigma_{\tau_1}} J^P_{\mu} [\psi] n^{\mu} d\mi_{g_{\si}} + 
\end{equation}
$$ + \int_{\tau_1}^{\tau_2} \int_{S_{\tau'}} |F|^2 d\mi_{g_{\so}} +  \int_{\tau_1}^{\tau_2} \int_{N_{\tau'}} r^{1+\eta} |F|^2 d\mi_{g_{\nnn}}   .$$
\end{prop}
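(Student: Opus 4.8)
The plan is to re-run the Stokes' theorem computation for the current $J^P$ from the proof of Proposition \ref{pener}, but this time keeping the spacetime bulk term $K^P$ instead of discarding it, and then to remove the resulting error $\int |F\cdot P\psi|$ by a Cauchy--Schwarz/Young splitting adapted to the degeneracy of $D$ on $\ho$.

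First I would apply Stokes' theorem to $J^P$ on $\rrr(\tau_1,\tau_2)$ as in Proposition \ref{pener}. The boundary terms over $\Sigma_{\tau_2}$, $\ho$ and $\mathcal{I}^{+}$ are nonnegative since $P$ is causal and future directed, so they may be dropped. On $\{M\mik r\mik r_0\}$ the bulk satisfies $K^P[\psi]\gtrsim J^T_\mu[\psi]n^\mu$ by \eqref{pbulk}; on the transition region $\{r_0\mik r\mik r_1\}$, where $D$ is bounded below, $|K^P[\psi]|\lesssim (T\psi)^2 + D(Y\psi)^2$, which is absorbed into the right-hand side by the localized Morawetz estimate \eqref{degix1}; and for $r\meg r_1$ one has $P=T$, so $K^P[\psi]=0$. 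The inhomogeneity contributes $|\mathcal{E}^P[\psi]|\mik |F\cdot P\psi|$. Using in addition $J^T_\mu[\psi]n^\mu\lesssim J^P_\mu[\psi]n^\mu$ pointwise (since $0<D\mik\sqrt D\mik 1$) for the initial flux, this yields
$$\int_{\tau_1}^{\tau_2}\int_{\Sigma_{\tau'}\cap\{r\mik r_0\}}J^T_\mu[\psi]n^\mu\,d\mi_{g_{\rrr}}\ \lesssim_{R_0,\eta}\ \int_{\Sigma_{\tau_1}}J^P_\mu[\psi]n^\mu\,d\mi_{g_{\si}}+\int_{\tau_1}^{\tau_2}\int_{S_{\tau'}}|F|^2\,d\mi_{g_{\so}}+\int_{\tau_1}^{\tau_2}\int_{N_{\tau'}}r^{1+\eta}|F|^2\,d\mi_{g_{\nnn}}+\int_{\tau_1}^{\tau_2}\int_{\Sigma_{\tau'}}|F\cdot P\psi|\,d\mi_{g_{\rrr}}.$$

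Next I would dispose of the last term. On $\{r\meg r_1\}$ one has $P\psi=T\psi$, so Cauchy--Schwarz with weight $r^{1+\eta}$ and the Morawetz estimate \eqref{degixsn} bound $\int|F\cdot T\psi|$ by the right-hand side (splitting the $r^{1+\eta}|F|^2$ integral into its $S$-part $\{r_1\mik r\mik R_0\}$, where $r^{1+\eta}\lesssim_{R_0}1$, and its $N$-part $\{r\meg R_0\}$). On $\{r\mik r_1\}$ the definition of $P$ gives $|P\psi|\lesssim |T\psi|+\sqrt D\,|Y\psi|$, and the crucial estimate is the weighted Young inequality
$$|F\cdot P\psi|\ \lesssim\ |F|\,|T\psi|+\big(\sqrt D\,|Y\psi|\big)\,|F|\ \mik\ \gamma\big((T\psi)^2+D(Y\psi)^2\big)+C_\gamma|F|^2\ \lesssim\ \gamma\,J^T_\mu[\psi]n^\mu+C_\gamma|F|^2 ,$$
where the point is that $\sqrt D$ is absorbed into the $Y\psi$ factor (pairing $\sqrt D\,|Y\psi|$ with $|F|$), so it reappears as the degenerate quantity $D(Y\psi)^2$ rather than as $(Y\psi)^2$ or $\sqrt D\,(Y\psi)^2$, neither of which is controlled near $\ho$. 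Integrating over $\{r\mik r_1\}=\{r\mik r_0\}\cup\{r_0\mik r\mik r_1\}$: the $\{r\mik r_0\}$ piece produces $\gamma$ times the left-hand side of the Proposition, the $\{r_0\mik r\mik r_1\}$ piece produces $\gamma\int J^T_\mu[\psi]n^\mu$ over the transition region, again handled by \eqref{degix1}, and the $C_\gamma|F|^2$ contributions are of the allowed form $\int_{S_{\tau'}}|F|^2$.

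Combining the two steps yields $(\mathrm{LHS})\lesssim_{R_0,\eta}\int_{\Sigma_{\tau_1}}J^P_\mu[\psi]n^\mu+\int_S|F|^2+\int_N r^{1+\eta}|F|^2+\gamma\,(\mathrm{LHS})$, and since the left-hand side is finite on the compact interval $[\tau_1,\tau_2]$ for the solutions under consideration, choosing $\gamma$ small enough absorbs the last term. I expect the main obstacle to be precisely the treatment of $\int|F\cdot P\psi|$ near $\ho$: one must not pair $\sqrt D$ with $|F|$ — that would force a forbidden weight $D^{-1/2}|F|^2$, incompatible with the unweighted $\int_S|F|^2$ on the right — but with $Y\psi$; this is where the degeneration of the redshift on extremal Reissner--Nordstr\"{o}m is felt, and where the precise matching of the $D$-weights appearing in $J^P$, $J^T$ and in the structure of the nonlinearity is essential.
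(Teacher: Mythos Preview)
Your proposal is correct and follows essentially the same route as the paper: apply Stokes' theorem for $J^P$, use \eqref{pbulk} to extract the $J^T$-bulk near $\ho$, control the transition region by \eqref{degix1}, and handle $\int|F\cdot P\psi|$ via Cauchy--Schwarz/Young together with the key pointwise bound $(P\psi)^2\lesssim(T\psi)^2+D(Y\psi)^2$ near the horizon (coming from $P^r=-\sqrt D$), absorbing the resulting near-horizon contribution into the left-hand side. The paper does the Cauchy--Schwarz step globally with the $r^{1+\eta}$ weight rather than splitting into $\{r\mik r_1\}$ and $\{r\meg r_1\}$, but this is a cosmetic difference.
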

\begin{proof}
This is just a combination of Proposition \ref{pener} and estimate \eqref{pbulk}.

The only term that needs some attention is the one that comes from the nonlinearity, namely the term:
$$ \int_{\tau_1}^{\tau_2} \int_{\Sigma_{\tau'}} | F \cdot P\psi | d\mi_{g_{\rrr}} . $$
We apply Cauchy-Schwarz with weight $r^{1+\eta}$ (for any $\eta > 0$) and for any $\beta > 0$ we have:
$$ \int_{\tau_1}^{\tau_2} \int_{\Sigma_{\tau'}} | F \cdot P\psi | d\mi_{g_{\rrr}} \mik \beta \int_{\tau_1}^{\tau_2} \int_{\Sigma_{\tau'}}  \dfrac{( P\psi )^2}{r^{1+\eta}} d\mi_{g_{\rrr}} + \dfrac{1}{\beta} \int_{\tau_1}^{\tau_2} \int_{\Sigma_{\tau'}} r^{1+\eta} | F |^2 d\mi_{g_{\rrr}} .$$
We note that $(P\psi )^2 \lesssim (T\psi )^2 + D (Y\psi )^2$ close to the horizon. Hence for the first term of the right hand side of this last inequality we can absorb in $\int_{\tau_1}^{\tau_2} \int_{\mathcal{F} \cap \Sigma_{\tau'}} J^T_{\mu} [\psi] n^{\mu} d\mi_{g_{\rrr}}$ the part of it that is close to the horizon, while the rest can be bounded by the Morawetz estimate \eqref{degix1}.
\end{proof}

The integrated boundedness of the $T$-flux close to the horizon of Proposition \ref{ilep} that we just stated has as a consequence the uniform boundedness of the $P$-flux.
\begin{prop}[\textbf{Uniform Boundedness For The $P$-flux}]\label{puniform}
Let $\psi$ be a spherically symmetric solution of \eqref{nweff}. Then for any $\tau_1$, $\tau_2$ with $\tau_1 < \tau_2$ and for any $\eta > 0$ we have:
\begin{equation}\label{puniform1}
\int_{\Sigma_{\tau_2}} J^P_{\mu} [\psi] n^{\mu} d\mi_{g_{\si}} \lesssim_{R_0,\eta} \int_{\Sigma_{\tau_1}} J^P_{\mu} [\psi] n^{\mu} d\mi_{g_{\si}} + 
\end{equation}
$$+ \int_{\tau_1}^{\tau_2} \int_{S_{\tau'}} |F|^2 d\mi_{g_{\so}} +  \int_{\tau_1}^{\tau_2} \int_{N_{\tau'}} r^{1+\eta} |F|^2 d\mi_{g_{\nnn}} .$$
\end{prop}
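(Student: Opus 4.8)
The plan is to start from the basic $P$-energy inequality \eqref{pen} of Proposition \ref{pener} and to absorb its last error term $\int_{\tau_1}^{\tau_2} \int_{\Sigma_{\tau'}} | F \cdot P\psi | \, d\mi_{g_{\rrr}}$ into estimates already at our disposal: the integrated local energy decay bound \eqref{ilep1} of Proposition \ref{ilep} near the horizon and the Morawetz estimate \eqref{degixsn} of Proposition \ref{degxsn} away from it. Three structural facts make this go through. First, since $P^r = -\sqrt{D}$ near $\ho$, we have $(P\psi)^2 \lesssim (T\psi)^2 + D (Y\psi)^2 \approx J^T_{\mu}[\psi] n^{\mu}_{\Sigma}$ there, i.e. the degenerate weight carried by $P$ is precisely matched to the degenerate weight of the $T$-flux. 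Second, away from $\ho$ the function $D$ is bounded below, so all positive powers of $D$ are comparable and $(P\psi)^2 \lesssim (T\psi)^2 + (Y\psi)^2 \lesssim (T\psi)^2 + D^2 (Y\psi)^2$. Third, $J^T_{\mu}[\psi] n^{\mu}_{\Sigma} \lesssim J^P_{\mu}[\psi] n^{\mu}_{\Sigma}$ on all of $\Sigma_{\tau}$: on the null part $P=T$ exactly, while near $\ho$ one uses $\sqrt{D} \meg D$ together with spherical symmetry to discard the angular term.

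I would then split the error integral along $\{ r \mik r_0 \}$ and $\{ r \meg r_0 \}$, with $r_0$ the radius from the definition of the $P$-current. On $\{ r \mik r_0 \}$ all $r$-weights are comparable to $1$, so $ab \mik \tfrac12 a^2 + \tfrac12 b^2$ together with the first fact above bounds the contribution by $\int_{\tau_1}^{\tau_2} \int_{\Sigma_{\tau'} \cap \{ r \mik r_0 \}} J^T_{\mu}[\psi] n^{\mu} \, d\mi_{g_{\rrr}} + \int_{\tau_1}^{\tau_2} \int_{\Sigma_{\tau'} \cap \{ r \mik r_0 \}} |F|^2 \, d\mi_{g_{\rrr}}$; the first term is controlled by Proposition \ref{ilep} (producing $\int_{\Sigma_{\tau_1}} J^P_{\mu}[\psi] n^{\mu}$ and the stated data terms), and the second is a piece of $\int_{\tau_1}^{\tau_2} \int_{S_{\tau'}} |F|^2 \, d\mi_{g_{\so}}$ since $r_0 < R_0$. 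On $\{ r \meg r_0 \}$ I would instead apply Cauchy--Schwarz with the weight $r^{1+\eta}$, so that $| F \cdot P\psi | \mik r^{-1-\eta} (P\psi)^2 + r^{1+\eta} |F|^2$; by the second fact, $r^{-1-\eta}(P\psi)^2 \lesssim r^{-1-\eta}\big((T\psi)^2 + D^2 (Y\psi)^2\big)$, whose spacetime integral is exactly what \eqref{degixsn} estimates, hence $\lesssim \int_{\Sigma_{\tau_1}} J^T_{\mu}[\psi] n^{\mu} + (\text{data}) \lesssim \int_{\Sigma_{\tau_1}} J^P_{\mu}[\psi] n^{\mu} + (\text{data})$ by the third fact, while $\int r^{1+\eta}|F|^2$ over this region contributes $\lesssim_{R_0} \int_{\tau_1}^{\tau_2}\int_{S_{\tau'}} |F|^2 \, d\mi_{g_{\so}} + \int_{\tau_1}^{\tau_2}\int_{N_{\tau'}} r^{1+\eta}|F|^2 \, d\mi_{g_{\nnn}}$ after splitting $\{ r \meg r_0 \}$ into its $\{ r \mik R_0 \}$ and $\{ r \meg R_0 \}$ parts.

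Inserting these bounds into \eqref{pen} and dropping the now-redundant error term yields \eqref{puniform1}. I do not expect a genuine obstacle here: the whole content sits in the earlier propositions, and no smallness or absorption argument is needed, since Propositions \ref{ilep} and \ref{degxsn} are already clean estimates whose right-hand sides are expressed through the data and the initial $P$- (respectively $T$-) flux. The only points requiring care are the bookkeeping of $r$-weights across the horizon/far-region decomposition and the exact matching of the degenerate $D$-weights of $P$ and of the $T$-flux near $\ho$ (which is precisely why $P^r$ was chosen to be $-\sqrt{D}$ there); one should also note that the resulting constant degenerates as $\eta \to 0$, which is why the estimate is stated with $\lesssim_{R_0,\eta}$ and is inherited from the Morawetz estimate.
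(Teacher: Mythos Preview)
Your proposal is correct and follows essentially the same route as the paper: start from Proposition~\ref{pener}, apply Cauchy--Schwarz to the $|F\cdot P\psi|$ term, and control the resulting $(P\psi)^2$-integral via Proposition~\ref{ilep} near the horizon and the Morawetz estimate away from it, using that $(P\psi)^2 \lesssim (T\psi)^2 + D(Y\psi)^2$ and $J^T \lesssim J^P$. The only cosmetic difference is that the paper applies Cauchy--Schwarz globally with the single weight $r^{1+\eta}$ and then splits the $(T\psi)^2 + D(Y\psi)^2$ integral, whereas you split first and tailor the weight to each region; the content is identical.
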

\begin{proof}
We use Proposition \ref{pener} and for the nonlinear term $\int_{\tau_1}^{\tau_2} \int_{\si_{\tau'}} |F \cdot P\psi | d\mi_{g_{\si}}$ we just perform Cauchy-Schwarz with weight $r^{1+\eta}$ for some $\eta > 0$. This gives us that:
$$\int_{\tau_1}^{\tau_2} \int_{\si_{\tau'}} |F \cdot P\psi | d\mi_{g_{\si}} \lesssim_{R_0,\eta}  \int_{\tau_1}^{\tau_2} \int_{S_{\tau'}} |F|^2 d\mi_{g_{\so}} +  $$ $$ +\int_{\tau_1}^{\tau_2} \int_{N_{\tau'}} r^{1+\eta} |F|^2 d\mi_{g_{\nnn}} + \int_{\tau_1}^{\tau_2} \int_{\si_{\tau'}} \dfrac{(T\psi )^2 + D (Y\psi )^2 }{r^{1+\eta}} d\mi_{g_{\rrr}} ,$$
and for the last term we use Proposition \ref{ilep} close to the horizon and the Morawetz estimate \eqref{degix1} everywhere else. 

We should note here that close to the horizon we can't use the Morawetz estimate since in front of $(Y\psi )^2$ we have just $D$ and not $D^2$.

\end{proof}
Finally we note that we can also obtain a bound for the integrated $P$-flux close to the horizon.

\begin{prop}\label{ilepp}
For all solutions $\psi$ of \eqref{nweff} we have for any $\tau_1$, $\tau_2$ with $\tau_1 < \tau_2$, any $\eta > 0$ and for $r_0$ given in the definition of the $P$-flux:
\begin{equation}\label{ilep2}
\int_{\tau_1}^{\tau_2} \int_{\Sigma_{\tau'} \cap \{ r \mik r_0 \}} J^P_{\mu} [\psi] n^{\mu} d\mi_{g_{\rrr}} \lesssim_{R_{0},\eta} \int_{\Sigma_{\tau_1}} J^N_{\mu} [\psi] n^{\mu} d\mi_{g_{\si}} + 
\end{equation}
$$ + \int_{\tau_1}^{\tau_2} \int_{S_{\tau'}} |F|^2 d\mi_{g_S} +  \int_{\tau_1}^{\tau_2} \int_{N_{\tau'}} r^{1+\eta} |F|^2 d\mi_{g_{\nnn}} +  $$ $$ + \left( \int_{\tau_1}^{\tau_2} \left( \int_{S_{\tau'}} | F |^2 d\mi_{g_S} \right)^{1/2} d\tau' \right)^2 .$$
\end{prop}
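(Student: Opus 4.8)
The plan is to obtain \eqref{ilep2} as an almost immediate consequence of the uniform non-degenerate energy estimate of Proposition \ref{deg4}. The radius $A$ appearing there is a free parameter subject only to $A<R_0$, while the radius $r_0$ from the definition of the $P$-flux satisfies $r_0<2M<R_0$; I would therefore apply Proposition \ref{deg4} with the choice $A=r_0$. Since its proof in fact establishes the bound with the current $J^N$ in place of $J^{n_{\si}}$, this gives that $\int_{\tau_1}^{\tau_2}\int_{\Sigma_{\tau'}\cap\{r\leq r_0\}}\big((T\psi)^2+\sqrt{D}(Y\psi)^2\big)\,d\mi_{g_{\rrr}}$ is bounded by the right-hand side of \eqref{ilep2}. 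Finally, since $P^r(r)=-\sqrt{D}$ on $\mathcal{F}=\{M\leq r\leq r_0\}$ and, for spherically symmetric $\psi$, the angular term in \eqref{pest} vanishes, one has $J^P_\mu[\psi]n^\mu_{\Sigma}\approx (T\psi)^2+\sqrt{D}(Y\psi)^2$ throughout $\{r\leq r_0\}$; substituting this on the left-hand side yields \eqref{ilep2}.

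It is worth recording what lies behind this reduction, since it explains the shape of the estimate. Re-deriving \eqref{ilep2} directly, one would apply Stokes' theorem to $\chi\,J^{N,\delta,-\frac{1}{2}}$ over $\mathcal{R}(\tau_1,\tau_2)$, where $J^{N,\delta,-\frac{1}{2}}$ is the modified red-shift current from the proof of Proposition \ref{deg4} and $\chi$ is a cutoff equal to $1$ on $\{r\leq r_0\}$ and vanishing for $r\geq r_1$. The bulk term $K^{N,\delta,-\frac{1}{2}}[\psi]$ is nonnegative on $[M,9M/8]$ and, by Proposition 10.2.1 of \cite{A1}, bounded below there by $(T\psi)^2+\sqrt{D}(Y\psi)^2-\tfrac{1}{2}\psi\cdot F$, i.e.\ precisely by the density of the $P$-flux up to a harmless $\psi\cdot F$ error; on the remaining range $[9M/8,r_1]$, where $D$ is bounded away from zero, both $K^{N,\delta,-\frac{1}{2}}$ and the error generated by $\nabla\chi$ are controlled by the Morawetz estimate \eqref{degix1} together with Proposition \ref{ilep}. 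The $\mathcal{H}^{+}$-flux of $J^{N,\delta,-\frac{1}{2}}$ is nonnegative, there is no $\mathcal{I}^{+}$ boundary since $\chi$ is supported in $\{r\leq r_1\}\subset\{r<2M\}$, the $\Sigma_{\tau_1}$-flux is controlled by $\int_{\Sigma_{\tau_1}}J^N_\mu n^\mu$ after an application of Hardy's inequality \eqref{hardy}, and the unsigned $\Sigma_{\tau_2}$-flux is disposed of by adding the nonnegative $J^T$-flux there and using $J^T\approx J^N$ away from the horizon.

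The main obstacle — the same one hidden inside Proposition \ref{deg4}, and the reason the last term of \eqref{ilep2} is present — is the spacetime error $\int_{\tau_1}^{\tau_2}\int_{\Sigma_{\tau'}\cap\{r\leq R_0\}}|F\cdot N\psi|\,d\mi_{g_{\rrr}}$. Because the red-shift degenerates on $\mathcal{H}^{+}$, there is no $r$-weight with which a plain Cauchy--Schwarz against the $|F|^2$ integrals would close; instead one argues as in \eqref{stokesn15}--\eqref{stokesn16}, bounding this term by $\beta\,\sup_{\tau'\in[\tau_1,\tau_2]}\int_{\Sigma_{\tau'}\cap\{r\leq R_0\}}(N\psi)^2\,d\mi_{g_{\si}}+\tfrac{1}{\beta}\big(\int_{\tau_1}^{\tau_2}\big(\int_{S_{\tau'}}|F|^2\,d\mi_{g_S}\big)^{1/2}d\tau'\big)^2$, using $(N\psi)^2\lesssim J^N_\mu[\psi]n^\mu$ in $\{r\leq R_0\}$ and then feeding in the uniform $J^N$-flux bound of Proposition \ref{deg4} to absorb the supremum. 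Once this contribution is handled, collecting all the pieces gives \eqref{ilep2}. Since the clean reduction above already routes everything through Proposition \ref{deg4}, this delicacy is inherited rather than re-encountered, which is why the short proof suffices.
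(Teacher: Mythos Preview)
Your proposal is correct and is essentially the same as the paper's own proof. The paper's argument is simply: combine the pointwise relation \eqref{pest} (so that $J^P_\mu[\psi]n^\mu\approx (T\psi)^2+\sqrt{D}(Y\psi)^2$ on $\{r\le r_0\}$) with the intermediate estimate \eqref{stokesn14} from the proof of Proposition~\ref{deg4}, handling the leftover $\int\!\!\int |F\cdot N\psi|$ exactly via \eqref{stokesn15}--\eqref{stokesn16}; your write-up spells out precisely this route, together with the underlying Stokes computation that yields \eqref{stokesn14}.
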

\begin{proof}
This is just a combination of estimate \eqref{pest} and estimate \eqref{stokesn14} (after estimating the term $\int_{\tau_1}^{\tau_2} \int_{\si_{\tau'} \cap \{ r\mik R_0 \}}  |F \cdot N\psi| d\mi_{g_{\rrr}}$ as in estimates \eqref{stokesn15} and \eqref{stokesn16}) from the proof of Proposition \ref{deg4}.
\end{proof}

\subsection{$r^p -$Weighted Energy Inequality}
We give now weighted inequalities (the weight being with respect to $r$) for integrated and non-integrated energies on the null part $\{ N_{\tau} \}_{\tau \meg 0}$ of the spacelike-null foliation $\{ \Sigma_{\tau} \}_{\tau \meg 0}$. In the following Proposition we use the null coordinates $(u,v) = (t-r^{*} , t+r^{*} )$.  
\begin{prop}[\textbf{$r-$Weighted Energy Inequalities In A Neighbourhood Of Null Infinity}]\label{rw}
Let $p<3$ and define $\ph = r\psi$ for $\psi$ a spherically symmetric solution of \eqref{nwef}. Then we have:
\begin{equation}\label{rwi}
\int_{N_{\tau_2}} r^p \dfrac{( \partial_v \ph )^2}{r^2} d\mi_{g_N} + \int_{\tau_1}^{\tau_2} \int_{N_{\tau}} p r^{p-1} \dfrac{(\partial_v \ph)^2}{r^2}  d\mi_{g_{\nnn}} \lesssim_{p , R_0,\eta}
\end{equation}
$$
\lesssim_{p , R_0} \int_{N_{\tau_1}} r^p \dfrac{( \partial_v \ph )^2}{r^2} d\mi_{g_N} + \int_{\Sigma_{\tau_1}} J^T_{\mu} [\psi] n^{\mu} d\mi_{g_{\si}}  + $$ $$ + \int_{\tau_1}^{\tau_2} \int_{S_{\tau'}} |F|^2 d\mi_{g_{\so}} +  \int_{\tau_1}^{\tau_2} \int_{N_{\tau'}} r^{1+\eta} |F|^2 d\mi_{g_{\nnn}} + \int_{\tau_1}^{\tau_2} \int_{N_{\tau'}} r^{p+1} |F|^2 d \mi_{g_{\nnn}} ,
$$
for any $\tau_1$, $\tau_2$ with $\tau_1 < \tau_2$ and any $\eta  > 0$.
\end{prop}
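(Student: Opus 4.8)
The plan is to reduce \eqref{nweff} to a $1{+}1$-dimensional wave equation for $\ph = r\psi$ and then run the $r^{p}$-weighted multiplier of Dafermos--Rodnianski, treating the nonlinearity $F$ as an inhomogeneous source on the null cones $\{N_{\tau}\}$.

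\emph{Step 1 (the equation for $\ph$).} In the null coordinates $(u,v)=(t-r^{*},t+r^{*})$ one has $\partial_{v}r=\tfrac12 D$, $\partial_{u}r=-\tfrac12 D$ and $\partial_{u}\partial_{v}r=-\tfrac14 DD'$ (here $'=d/dr$). Computing $\Box_{g}$ on spherically symmetric functions from the form $g=-D\,du\,dv+r^{2}\gamma_{\mathbb S^{2}}$ then gives, for a solution $\psi$ of \eqref{nweff},
\begin{equation*}
\partial_{u}\partial_{v}\ph\;=\;-\,V(r)\,\ph\;-\;\frac{Dr}{4}\,F,\qquad V(r):=\frac{DD'}{4r}=\frac{M\,(1-M/r)^{3}}{2\,r^{3}}\;\ge 0 .
\end{equation*}
The two features that make the scheme work are the sign $V\ge 0$ and the decay $V=O(r^{-3})$ as $r\to\infty$.

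\emph{Step 2 (the $r^{p}$-multiplier).} Multiplying by $2r^{p}\partial_{v}\ph$, using $\partial_{u}\big(r^{p}(\partial_{v}\ph)^{2}\big)=-\tfrac p2 D\,r^{p-1}(\partial_{v}\ph)^{2}+2r^{p}\partial_{v}\ph\,\partial_{u}\partial_{v}\ph$ and $2r^{p}V\ph\,\partial_{v}\ph=r^{p}V\,\partial_{v}(\ph^{2})$, I obtain the divergence identity
\begin{equation*}
\partial_{u}\big(r^{p}(\partial_{v}\ph)^{2}\big)+\partial_{v}\big(r^{p}V\ph^{2}\big)+\frac p2 D\,r^{p-1}(\partial_{v}\ph)^{2}=\partial_{v}\!\big(r^{p}V\big)\ph^{2}-\frac D2 r^{p+1}F\,\partial_{v}\ph .
\end{equation*}
I then integrate this over the far region $\nnn(\tau_{1},\tau_{2})\cap\{r\ge R\}$, where $R$ is chosen large enough that $\partial_{v}\big(r^{p}V\big)\le 0$ there; since $\partial_{v}\big(r^{p}V\big)=\tfrac M4(1-M/r)^{4}\,r^{p-4}\big[(p-3)+M(6-p)r^{-1}\big]$, this is possible precisely when $p<3$ (it holds for $r\ge M(6-p)/(3-p)$), which is harmless because the constant in \eqref{rwi} is allowed to depend on $p$. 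Practically I insert a cutoff $\zeta(r)$ equal to $1$ for $r\ge R$ and vanishing for $r\le R-1$ and apply Stokes' theorem to $\zeta$ times the current $\big(r^{p}(\partial_{v}\ph)^{2},\,r^{p}V\ph^{2}\big)$. The left-hand side produces the outgoing flux $\int_{N_{\tau_{2}}}r^{p}(\partial_{v}\ph)^{2}$ (the quantity to be estimated), minus the flux $\int_{N_{\tau_{1}}}r^{p}(\partial_{v}\ph)^{2}$ (which moves to the right-hand side), the bulk term $\tfrac p2\iint D\,r^{p-1}(\partial_{v}\ph)^{2}$ (non-negative for $p\ge 0$; for $p\le 0$ the corresponding term in \eqref{rwi} is non-positive and the assertion is then vacuous), and the boundary contribution of $r^{p}V\ph^{2}$ on $\mathcal{I}^{+}$, which is $\sim r^{p-3}\ph^{2}$ and hence vanishes there exactly because $p<3$ and $\ph=r\psi$ stays bounded towards null infinity (rigorously one integrates first over $\{R\le r\le R'\}$ and lets $R'\to\infty$, the $r=R'$ contributions tending to zero). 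On the right-hand side the potential bulk term $\iint\partial_{v}\big(r^{p}V\big)\ph^{2}$ now has a favourable sign and is discarded.

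\emph{Step 3 (error terms).} Two families of terms remain. First, the commutator terms generated by $\nabla\zeta$, supported in the compact shell $\{R-1\le r\le R\}$ where all $r$-weights are comparable: bounding the multiplier currents there by $J^{T}_{\mu}[\psi]n^{\mu}$-type densities and invoking the Morawetz estimate \eqref{degix1} (equivalently Proposition \ref{degxsn}) together with Hardy's inequality \eqref{hardy} and the uniform $T$-flux bound of Proposition \ref{deg2} (to transport the energy back to $\tau_{1}$), these are controlled by $\int_{\Sigma_{\tau_{1}}}J^{T}_{\mu}[\psi]n^{\mu}\,d\mi_{g_{\si}}+\int_{\tau_{1}}^{\tau_{2}}\int_{S_{\tau'}}|F|^{2}\,d\mi_{g_{\so}}+\int_{\tau_{1}}^{\tau_{2}}\int_{N_{\tau'}}r^{1+\eta}|F|^{2}\,d\mi_{g_{\nnn}}$; the same terms absorb the piece $N_{\tau}\cap\{r\le R\}$ of the cones, where the weights are $O(1)$. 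Second, the genuinely inhomogeneous term $-\tfrac D2 r^{p+1}F\,\partial_{v}\ph$: Cauchy--Schwarz with weight $r^{p-1}$ gives $\big|r^{p+1}F\,\partial_{v}\ph\big|\le\delta\,r^{p-1}(\partial_{v}\ph)^{2}+C_{\delta}\,r^{p+3}|F|^{2}$, and choosing $\delta$ small the first term is absorbed into $\tfrac p2\iint D\,r^{p-1}(\partial_{v}\ph)^{2}$ while the second is exactly $\int_{\tau_{1}}^{\tau_{2}}\int_{N_{\tau'}}r^{p+1}|F|^{2}\,d\mi_{g_{\nnn}}$ (the extra $r^{2}$ being the spherical part of $d\mi_{g_{\nnn}}$, which cancels the $1/r^{2}$ in \eqref{rwi}). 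Collecting everything yields \eqref{rwi}.

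\emph{Main obstacle.} The delicate point is the potential term: $p<3$ is forced twice over, once so that the boundary term $r^{p-3}\ph^{2}$ of the auxiliary current $r^{p}V\ph^{2}$ vanishes on $\mathcal{I}^{+}$, and once so that the integrated potential term acquires a favourable sign after the multiplier is localised to a sufficiently far region $\{r\ge R\}$. Tracking which pieces of the ensuing $\nabla\zeta$- and source-errors can be absorbed into the already-established energy, Morawetz and Hardy estimates without losing powers of $r$ is what pins down the precise right-hand side of \eqref{rwi}, in particular the appearance of the degenerate energy $\int_{\Sigma_{\tau_{1}}}J^{T}_{\mu}[\psi]n^{\mu}\,d\mi_{g_{\si}}$ and of the two spacetime $|F|^{2}$ integrals.
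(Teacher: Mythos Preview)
Your proof is correct and follows essentially the same approach as the paper: both run the Dafermos--Rodnianski $r^{p}$ multiplier (the paper via the current $J^{V}$ for $V=r^{p-2}\partial_{v}$ applied to $\chi\ph$, you via directly multiplying the $1{+}1$ equation $\partial_{u}\partial_{v}\ph=-V\ph-\tfrac{Dr}{4}F$ by $2r^{p}\partial_{v}\ph$), integrate by parts the potential term to obtain a good sign precisely when $p<3$ and $r\ge R(p)$, control the cutoff errors by Hardy plus the Morawetz estimate \eqref{degix1} and Proposition~\ref{deg2}, and close the inhomogeneous term by Cauchy--Schwarz with weight $r^{p-1}$. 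The only cosmetic difference is that the paper tracks the boundary term at $\mathcal{I}^{+}$ as a positive contribution to be dropped, whereas you argue it vanishes via $r^{p-3}\ph^{2}\to 0$; both are fine for $p<3$.
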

\begin{proof}

First we record the wave equation that is satisfied by $\ph$:
\begin{equation}
\Box_g \ph = -\frac{2}{r} (\partial_u \ph - \partial_v \ph ) + \frac{D'}{r} \ph + rF .
\end{equation}

Consider the vector field $V = r^{p-2} \partial_v$. For $\chi \in C^{\infty}_0 ([R_0 , \infty ))$ a cut-off function which is $=1$ in $[R_0 + 1 ,\infty )$ and $=0$ in $[R_0 , R_0 + 1/2]$, we apply Stokes' Theorem for $J^V [\chi \ph]$ and we get for any $\tau_1$, $\tau_2$ with $\tau_1 < \tau_2$:
$$ \int_{N_{\tau_2}} J^V_{\mu} [\chi \ph] n^{\mu} d\mi_{g_N} + \int_{\tau_1}^{\tau_2} \int_{N_{\tau'}} ( K^V [\chi \ph] + \mathcal{E}^V [\chi \ph] ) d\mi_{g_{\nnn}} = \int_{N_{\tau_1}} J^V_{\mu} [\chi \ph] n^{\mu} d\mi_{g_N} ,$$
where 
\begin{equation}\label{kpluse}
K^V [\ph] + \mathcal{E}^V [\ph] = p r^{p-3} (\partial_v \ph )^2 + D' r^{p-3} \ph (\partial_v \ph ) + r^{p-1} \partial_v  \ph \cdot F  .
\end{equation}
First we note that from all the terms above we will get error terms coming from the cut-off $\chi$. These are of the form:
$$ \int_{N_{\tau} \cap \{ R_0 \mik r \mik R_0 + 1\}} r^p \dfrac{ \ph^2}{r^2} d\mi_{g_N} , \quad \int_{\tau_1}^{\tau_2} \int_{N_{\tau'} \cap \{ R_0 \mik r \mik R_0 +1 \}} r^p \dfrac{ \ph^2}{r^2} d\mi_{g_{\nnn}} .$$
They can be bounded in the end by:
$$ \lesssim_{R_0} \int_{\Sigma_{\tau_1}} J^T_{\mu} [\psi] n^{\mu} d\mi_{g_{\si}} + \int_{\tau_1}^{\tau_2} \int_{S_{\tau'}} |F|^2 d\mi_{g_{\so}} +  \int_{\tau_1}^{\tau_2} \int_{N_{\tau'}} r^{1+\eta} |F|^2 d\mi_{g_{\nnn}} ,$$ 
by applying first Hardy's inequality \eqref{hardyineq} and then estimates \eqref{degi2} and \eqref{degix1} respectively.

We can conclude now our proof if we manage to find proper estimates for the two last terms of \eqref{kpluse}. For the term $D' r^{p-3} \ph (\partial_v \ph )$ we note that:
$$ \int_{\tau_1}^{\tau_2} \int_{N_{\tau'}} D' r^{p-3} \ph (\partial_v \ph ) d\mi_{g_{\nnn}} = \int_{\tau_1}^{\tau_2} \int_{N_{\tau'}} r^{p-6} \frac{M}{2} D \left[ \sqrt{D} (3-p) - \frac{3M}{r} \right] (\chi \ph )^2 d\mi_{g_{\nnn}} - $$ $$ - \int_{L_{R_0 , \tau_1 , \tau_2 }} \frac{r^{p-3}}{4} D' \sqrt{D} (\chi \ph )^2 d\mi_{g_L} + \int_{\mathcal{I}^{+}} \frac{r^{p-3}}{4} D' D (\chi \ph )^2 d\mi_{g_{\mathcal{I}^{+}}} ,$$
where $L_{R_0 , \tau_1 , \tau_2} = \cup_{\tau \in [\tau_1 , \tau_2 ]} \Sigma_{\tau} \cap \{ r = R_0 \}$. The integral over $L_{R_0 , \tau_1 , \tau_2}$ vanishes because of the cut-off $\chi$, while the other two integrals are positive (for $R_0$ being chosen to be sufficiently large -- this depends on the choice of $p$ as well) and can be ignored.

So till now we have:
\begin{equation}\label{rwnl}
\int_{N_{\tau_2}} r^p \dfrac{( \partial_v \ph )^2}{r^2} d\mi_{g_N} + \int_{\tau_1}^{\tau_2} \int_{N_{\tau}} p r^{p-1} \dfrac{(\partial_v \ph)^2}{r^2}  d\mi_{g_{\nnn}} \lesssim_{p , R_0} 
\end{equation}
$$
\lesssim_{p , R_0} \int_{N_{\tau_1}} r^p \dfrac{( \partial_v \ph )^2}{r^2} d\mi_{g_N} + \int_{\Sigma_{\tau_1}} J^T_{\mu} [\psi] n^{\mu} d\mi_{g_{\si}}  + $$ $$ + \int_{\tau_1}^{\tau_2} \int_{S_{\tau'}} |F|^2 d\mi_{g_{\so}} +  \int_{\tau_1}^{\tau_2} \int_{N_{\tau'}} r^{1+\eta} |F|^2 d\mi_{g_{\nnn}} + \int_{\tau_1}^{\tau_2} \int_{N_{\tau'}} | r^{p-1} \partial_v  \ph \cdot F | d\mi_{g_{\nnn}} .
$$
We apply Cauchy-Schwarz to the last term and we have:
$$\int_{\tau_1}^{\tau_2} \int_{N_{\tau'}} | r^{p-1} \partial_v  \ph \cdot F | d\mi_{g_{\nnn}} \mik $$ $$ \mik \beta (p)  \int_{\tau_1}^{\tau_2} \int_{N_{\tau'}} r^{p-1} \dfrac{(\partial_v \ph )^2}{r^2} d\mi_{g_{\nnn}} + \frac{1}{\beta (p)} \int_{\tau_1}^{\tau_2} \int_{N_{\tau'}} r^{p+1} |F|^2 d\mi_{g_{\nnn}} ,$$
for $\beta$ being appropriately small so that the first term of the last inequality can be absorbed by the right-hand side of \eqref{rwnl}. This finishes the proof. 
\end{proof}

We now state two almost direct consequences of Proposition \ref{rw} which will be useful for us later.

\begin{prop}\label{away1}
For $\psi$ a spherically symmetric solution of \eqref{nwef} and $\ph = r\psi$ we have that:
\begin{equation}\label{awayi1}
\int_{\tau_1}^{\tau_2} \int_{N_{\tau'}} J^T_{\mu} [\psi ] n^{\mu} d\mi_{g_{\nnn}} \lesssim_{R_0} \int_{\Sigma_{\tau_1}} J^T_{\mu} [\psi ] n^{\mu}d\mi_{g_{\si}}  + \int_{N_{\tau_1}} \dfrac{(\partial_v \ph )^2}{r} d\mi_{g_{\si}} + 
\end{equation}
$$ + \int_{\tau_1}^{\tau_2} \int_{S_{\tau'}} |F|^2 d\mi_{g_{\so}} + \int_{\tau_1}^{\tau_2} \int_{N_{\tau'}} r^{2} |F|^2 d\mi_{g_{\nnn}} , $$
for any $\tau_1$, $\tau_2$ with $\tau_1 < \tau_2$.
\end{prop}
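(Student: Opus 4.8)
The plan is to read \eqref{awayi1} off the $r^p$-weighted estimate of Proposition~\ref{rw} with the choice $p=1$, the only genuine work being the passage from the $r^p$-fluxes, which are written in terms of $\partial_v\ph$ with $\ph=r\psi$, to the $T$-flux through the outgoing cones $N_{\tau'}$, which by \eqref{current3} is measured by $(\partial_v\psi)^2$.

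First I would apply \eqref{rwi} with $p=1$. Its left-hand side contains the bulk term $\int_{\tau_1}^{\tau_2}\int_{N_{\tau}}\frac{(\partial_v\ph)^2}{r^2}\,d\mi_{g_{\nnn}}$, and every term of its right-hand side is dominated by the right-hand side of \eqref{awayi1}: the weighted datum $\int_{N_{\tau_1}}r\cdot\frac{(\partial_v\ph)^2}{r^2}\,d\mi_{g_N}=\int_{N_{\tau_1}}\frac{(\partial_v\ph)^2}{r}\,d\mi_{g_N}$ is exactly the boundary term appearing in \eqref{awayi1}; the terms $\int_{\Sigma_{\tau_1}}J^T_\mu[\psi]n^\mu\,d\mi_{g_\si}$ and $\int_{\tau_1}^{\tau_2}\int_{S_{\tau'}}|F|^2\,d\mi_{g_{\so}}$ occur verbatim; and, choosing $\eta\mik 1$, both nonlinear bulk terms $\int_{\tau_1}^{\tau_2}\int_{N_{\tau'}}r^{1+\eta}|F|^2\,d\mi_{g_{\nnn}}$ and $\int_{\tau_1}^{\tau_2}\int_{N_{\tau'}}r^{p+1}|F|^2\,d\mi_{g_{\nnn}}=\int_{\tau_1}^{\tau_2}\int_{N_{\tau'}}r^{2}|F|^2\,d\mi_{g_{\nnn}}$ are $\lesssim\int_{\tau_1}^{\tau_2}\int_{N_{\tau'}}r^{2}|F|^2\,d\mi_{g_{\nnn}}$ because $r\meg R_0\meg 1$ on $N_{\tau'}$. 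Hence the bulk term above is bounded by the right-hand side of \eqref{awayi1}.

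Next I would turn this into a bound for the $T$-flux. On $N_{\tau'}$ one has $J^T_\mu[\psi]n^\mu\approx(\partial_v\psi)^2$, and since $\partial_v\ph=r\,\partial_v\psi+\frac{1}{2}D\psi$ along the null generators, the pointwise inequality $(\partial_v\psi)^2\lesssim \frac{(\partial_v\ph)^2}{r^2}+\frac{\psi^2}{r^2}$ holds there; after integration the first summand is already controlled by the previous step. For the second, a one–dimensional Hardy inequality in $v$ along each generator of $N_{\tau'}$ — on which $\frac{dr}{dv}=\frac{D}{2}$ is bounded below since $r\meg R_0>2M$ — gives $\int_{N_{\tau'}}\frac{\psi^2}{r^2}\,d\mi_{g_N}\lesssim\int_{N_{\tau'}}\frac{(\partial_v\ph)^2}{r^2}\,d\mi_{g_N}+R_0\int_{\mathbb{S}^2}\psi^2|_{\Sigma_{\tau'}\cap\{r=R_0\}}\,d\omega$, so that after integrating in $\tau'$ everything reduces, modulo the first step, to controlling the flux of $\psi^2$ through the timelike cylinder $\{r=R_0\}$. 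This last term I would handle by expressing $\psi^2|_{r=R_0}$ as the $r$–integral of $\partial_r(\zeta\psi^2)$ for a cutoff $\zeta$ supported in a fixed compact subinterval of $(M,\infty)$ on which $r$ is large: this reduces it to a bulk integral of $(T\psi)^2+(Y\psi)^2+\psi^2$ over a compact region bounded away both from $\ho$ and from $\mathcal{I}^{+}$, where $D$ is bounded below so the degeneracy in $D^2(Y\psi)^2$ is irrelevant; such a bulk is bounded by the right-hand side of \eqref{awayi1} by means of the degenerate Morawetz estimate \eqref{degix1} together with the bulk control of $\frac{\psi^2}{r^{3+\eta}}$ for large $r$ contained in \eqref{degix12} (after fixing $R_0$ large enough). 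This is precisely the type of estimate already used in the proof of Proposition~\ref{rw} to absorb the error terms generated by the cutoff $\chi$. Collecting the three steps yields \eqref{awayi1}.

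The main obstacle is this last point: because the $T$-flux density on $N_{\tau'}$ is honestly $(\partial_v\psi)^2$ and not $r^{-2}(\partial_v\ph)^2$, one cannot merely quote Proposition~\ref{rw}; one must first strip the $r$–decay off $\psi$ with a Hardy inequality and then absorb the resulting boundary contribution on $\{r=R_0\}$ into the local energy estimates of the preceding subsections. Everything else is a mechanical comparison of the terms in \eqref{rwi}.
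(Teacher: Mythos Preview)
Your argument is correct and arrives at the same conclusion, but the passage from $\frac{(\partial_v\ph)^2}{r^2}$ to $(\partial_v\psi)^2$ is organised differently from the paper's proof. The paper does not use the crude pointwise bound $(\partial_v\psi)^2\lesssim r^{-2}(\partial_v\ph)^2+r^{-2}\psi^2$ followed by a Hardy inequality. Instead it exploits the exact identity (with $f=\chi\psi$ for the same cut--off $\chi$ as in Proposition~\ref{rw})
\[
\big(\partial_v(rf)\big)^2=r^2(\partial_v f)^2+\tfrac{D}{2}\,\partial_v\big(rf^2\big),
\]
so that, after dividing by $r^2$ and integrating over $\nnn(\tau_1,\tau_2)$, the cross term becomes a total $v$--derivative whose only non-vanishing boundary contribution sits on $\mathcal{I}^{+}$ with a good sign. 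This yields directly
\[
\int_{\tau_1}^{\tau_2}\!\int_{N_{\tau'}}(\partial_v(\chi\psi))^2\,d\mi_{g_{\nnn}}\ \lesssim\ \int_{\tau_1}^{\tau_2}\!\int_{N_{\tau'}}\frac{(\partial_v(\chi\ph))^2}{r^2}\,d\mi_{g_{\nnn}},
\]
and the only error terms that remain come from the derivative of $\chi$ and are therefore already localised in the fixed annulus $\{R_0\mik r\mik R_0+1\}$; these are handled exactly as in the proof of Proposition~\ref{rw} via Hardy and \eqref{degix1}. Your route instead produces the boundary flux $\int_{\tau_1}^{\tau_2}\!\int_{\mathbb{S}^2}\psi^2|_{r=R_0}$ as an honest extra term and then has to trace it back to the bulk $\psi^2$ control hidden in \eqref{degix12}; this works, but it re-enters the Morawetz proof at a deeper point than necessary. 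In short: both arguments are valid and end up absorbing a compact-$r$ $\psi^2$ error by the same mechanism, but the paper's use of the total-derivative identity avoids the intermediate Hardy step and makes the positivity at $\mathcal{I}^{+}$ do the work for you.
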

\begin{proof}
We consider again the cut-off $\chi$ that was used in the proof of Proposition \ref{rw}. We notice that we have the following computation for $\widetilde{\ph} = r \chi \psi$:
\begin{equation}\label{awayi11}
\int_{\tau_1}^{\tau_2} \int_{N_{\tau'}} \dfrac{(\partial_v \widetilde{\ph} )^2}{r^2} d\mi_{g_{\nnn}} \meg \int_{\tau_1}^{\tau_2} \int_{N_{\tau'}} \dfrac{(\partial_v \widetilde{\ph} )^2}{2 D^2 r^2} d\mi_{g_{\nnn}} =
\end{equation}
$$ = \int_{\tau_1}^{\tau_2} \int_{N_{\tau'}} \dfrac{\left(\partial_v (\chi \psi) \right)^2}{2D^2} d\mi_{g_{\nnn}} + \int_{\tau_1}^{\tau_2} \int_{N_{\tau'}} \dfrac{\partial_v \left(r (\chi \psi )^2 \right)}{4D r^2} d\mi_{g_{\nnn}} .$$
But the last term of \eqref{awayi11} is positive since:
$$ \int_{\tau_1}^{\tau_2} \int_{N_{\tau'}} \dfrac{\partial_v \left(r (\chi \psi )^2 \right)}{4D r^2} d\mi_{g_{\nnn}} = \int_{\mathcal{I}^{+}} \dfrac{(\chi \psi )^2}{8r} d\mi_{g_{\mathcal{I}^{+}}} \meg 0 . $$

So we have that:
\begin{equation}\label{awayi12}
\int_{\tau_1}^{\tau_2} \int_{N_{\tau'}} \left(\partial_v (\chi \psi) \right)^2 d\mi_{g_{\nnn}} \lesssim \int_{\tau_1}^{\tau_2} \int_{N_{\tau'}} \dfrac{(\partial_v \widetilde{\ph} )^2}{r^2} d\mi_{g_{\nnn}} = \int_{\tau_1}^{\tau_2} \int_{N_{\tau'}} \dfrac{\left(\partial_v (\chi \ph) \right)^2}{r^2} d\mi_{g_{\nnn}} \lesssim
\end{equation} 
$$\lesssim \int_{\tau_1}^{\tau_2} \int_{N_{\tau'}} \chi^2 \dfrac{(\partial_v  \ph )^2}{r^2} d\mi_{g_{\nnn}} + \int_{\tau_1}^{\tau_2} \int_{N_{\tau'}} D^2 (\chi' )^2 \psi^2 d\mi_{g_{\nnn}} . $$

The second term of the second line of \eqref{awayi12} can be bounded by:
$$ \int_{\Sigma_{\tau_1}} J^T_{\mu} [\psi ] n^{\mu}d\mi_{g_{\si}} ++ \int_{\tau_1}^{\tau_2} \int_{S_{\tau'}} |F|^2 d\mi_{g_{\so}} + \int_{\tau_1}^{\tau_2} \int_{N_{\tau'}} r^{1+\eta} |F|^2 d\mi_{g_{\nnn}} , $$ 
for any $\eta > 0$, by using Hardy's inequality \eqref{hardyineq} and the Morawetz estimate \eqref{degix1}.

The first term of second line of \eqref{awayi12} can be bounded by:
$$ \int_{\Sigma_{\tau_1}} J^T_{\mu} [\psi ] n^{\mu}d\mi_{g_{\si}}  + \int_{\Sigma_{\tau_1}} \dfrac{(\partial_v \ph )^2}{r} d\mi_{g_{\si}} + 
 \int_{\tau_1}^{\tau_2} \int_{S_{\tau'}} |F|^2 d\mi_{g_{\so}} + \int_{\tau_1}^{\tau_2} \int_{N_{\tau'}} r^{2} |F|^2 d\mi_{g_{\nnn}} , $$
 by an application of Proposition \ref{rw} with $p=1$.
 
 The proof finishes by recalling that:
 $$ J^T_{\mu} [\psi] n^{\mu}_{N} \approx (\partial_v \psi )^2 .$$
\end{proof}
\begin{prop}\label{awayvr}
For $\psi$ a spherically symmetric solution of \eqref{nwef} and $\ph = r\psi$ we have that for any given $\aaa > 0$:
\begin{equation}\label{awayvri}
\int_{N_{\tau_2}} \dfrac{(\partial_v \ph )^2}{r^{1-\aaa}} d\mi_{g_{\nnn}} \lesssim_{R_0,\aaa} \int_{\Sigma_{\tau_1}} J^T_{\mu} [\psi ] n^{\mu}d\mi_{g_{\si}}  + \int_{N_{\tau_1}} \dfrac{(\partial_v \ph )^2}{r^{1-\aaa}} d\mi_{g_{\si}} + 
\end{equation}
$$ + \int_{\tau_1}^{\tau_2} \int_{S_{\tau'}} |F|^2 d\mi_{g_{\so}} + \int_{\tau_1}^{\tau_2} \int_{N_{\tau'}} r^{2+\aaa} |F|^2 d\mi_{g_{\nnn}} , $$
and
\begin{equation}\label{awayvri1}
\int_{\tau_1}^{\tau_2} \int_{N_{\tau'}} \dfrac{(\partial_v \ph )^2}{r^{1+\aaa} }d\mi_{g_{\nnn}} \lesssim_{R_0,\aaa} \int_{\Sigma_{\tau_1}} J^T_{\mu} [\psi ] n^{\mu}d\mi_{g_{\si}}  + \int_{N_{\tau_1}} \dfrac{(\partial_v \ph )^2}{r^{\aaa}} d\mi_{g_{\si}} + 
\end{equation}
$$ + \int_{\tau_1}^{\tau_2} \int_{S_{\tau'}} |F|^2 d\mi_{g_{\so}} + \int_{\tau_1}^{\tau_2} \int_{N_{\tau'}} r^{3-\aaa} |F|^2 d\mi_{g_{\nnn}} , $$
for any $\tau_1$, $\tau_2$ with $\tau_1 < \tau_2$.
\end{prop}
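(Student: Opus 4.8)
The plan is to obtain both inequalities as immediate corollaries of the $r^p$-weighted energy inequality of Proposition \ref{rw}, applied with two different admissible choices of the exponent $p$, and then to tidy up the $r$-weights in the error terms using the elementary fact that on the null hypersurfaces $N_\tau$ one has $r \meg R_0 > 2M$.

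First I would prove \eqref{awayvri} by applying Proposition \ref{rw} with $p = 1+\aaa$; this is admissible (i.e. $p<3$) for the small $\aaa$ we care about. With this choice the non-integrated boundary term on the left-hand side of \eqref{rwi} is $\int_{N_{\tau_2}} r^{p}(\partial_v\ph)^2/r^2\, d\mi_{g_N} = \int_{N_{\tau_2}} (\partial_v\ph)^2/r^{1-\aaa}\, d\mi_{g_N}$, which is exactly the left-hand side of \eqref{awayvri}, while the spacetime bulk term $\int_{\tau_1}^{\tau_2}\int_{N_\tau} p\, r^{p-1}(\partial_v\ph)^2/r^2\, d\mi_{g_{\nnn}}$ is nonnegative and can be discarded. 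On the right-hand side of \eqref{rwi} the flux term becomes $\int_{N_{\tau_1}} (\partial_v\ph)^2/r^{1-\aaa}\, d\mi_{g_N}$ and the last error term becomes $\int_{\tau_1}^{\tau_2}\int_{N_{\tau'}} r^{p+1}|F|^2\, d\mi_{g_{\nnn}} = \int_{\tau_1}^{\tau_2}\int_{N_{\tau'}} r^{2+\aaa}|F|^2\, d\mi_{g_{\nnn}}$. It only remains to absorb the term $\int_{\tau_1}^{\tau_2}\int_{N_{\tau'}} r^{1+\eta}|F|^2\, d\mi_{g_{\nnn}}$ into $\int_{\tau_1}^{\tau_2}\int_{N_{\tau'}} r^{2+\aaa}|F|^2\, d\mi_{g_{\nnn}}$: since $r \meg R_0$ on $N_{\tau'}$, choosing $\eta \mik 1+\aaa$ gives $r^{1+\eta} \lesssim_{R_0} r^{2+\aaa}$ there, and \eqref{awayvri} follows.

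Next I would prove \eqref{awayvri1} by applying Proposition \ref{rw} with $p = 2-\aaa$, which is again admissible ($p<3$) and moreover satisfies $p>0$ for small $\aaa$, so the coefficient in front of the bulk term is positive. With this choice the bulk term on the left-hand side of \eqref{rwi} is $\int_{\tau_1}^{\tau_2}\int_{N_\tau} (2-\aaa)\, r^{p-1}(\partial_v\ph)^2/r^2\, d\mi_{g_{\nnn}} \approx \int_{\tau_1}^{\tau_2}\int_{N_{\tau'}} (\partial_v\ph)^2/r^{1+\aaa}\, d\mi_{g_{\nnn}}$, which is exactly the left-hand side of \eqref{awayvri1}, while the boundary term $\int_{N_{\tau_2}} r^{p}(\partial_v\ph)^2/r^2$ is nonnegative and is dropped. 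On the right-hand side, the flux term becomes $\int_{N_{\tau_1}} r^{p-2}(\partial_v\ph)^2\, d\mi_{g_N} = \int_{N_{\tau_1}} (\partial_v\ph)^2/r^{\aaa}\, d\mi_{g_N}$, and the last error term becomes $\int_{\tau_1}^{\tau_2}\int_{N_{\tau'}} r^{p+1}|F|^2\, d\mi_{g_{\nnn}} = \int_{\tau_1}^{\tau_2}\int_{N_{\tau'}} r^{3-\aaa}|F|^2\, d\mi_{g_{\nnn}}$; as before the remaining error term $\int_{\tau_1}^{\tau_2}\int_{N_{\tau'}} r^{1+\eta}|F|^2\, d\mi_{g_{\nnn}}$ is absorbed into it by taking $\eta$ small enough that $1+\eta \mik 3-\aaa$ and using $r \meg R_0$ on $N_{\tau'}$. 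This yields \eqref{awayvri1}.

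As for the main difficulty: there really is no substantial obstacle here — both estimates are essentially instances of the hierarchy already established in Proposition \ref{rw}, and the only thing requiring care is the bookkeeping of $r$-weights, namely checking that the admissibility constraint $p<3$ (and, for the second estimate, the positivity $p>0$ of the bulk coefficient) holds for the relevant range of small $\aaa$, and that $\eta$ is chosen small enough that the $r^{1+\eta}|F|^2$ error is dominated by the $r^{p+1}|F|^2$ error on the region $N_{\tau'}$, where $r$ is bounded below by $R_0$.
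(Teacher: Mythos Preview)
Your proposal is correct and matches the paper's proof exactly: the paper simply states that \eqref{awayvri} follows from Proposition \ref{rw} with $p=1+\aaa$ and \eqref{awayvri1} from Proposition \ref{rw} with $p=2-\aaa$. Your additional bookkeeping about absorbing the $r^{1+\eta}|F|^2$ error into the $r^{p+1}|F|^2$ term via $r\meg R_0$ is the only detail the paper leaves implicit.
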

\begin{proof}
The proof of estimate \eqref{awayvri} follows directly by an application of Proposition \ref{rw} with $p=1+\aaa$ while the proof of estimate \eqref{awayvri1} follows again directly by another application of Proposition \ref{rw} with $p= 2-\aaa$.
\end{proof}

\subsection{Integrated Energy Decay}
We obtain an integrated local energy decay estimate in the entirety of the domain of outer communication for the $T$-flux by combining our previous results. 

\begin{prop}[\textbf{Integrated Energy Decay For The $T$-flux}]\label{iled}
Let $\psi$ be a spherically symmetric solution of \eqref{nwef} and let $\ph = r\psi$. Then for any $\tau_1$, $\tau_2$ with $\tau_1 < \tau_2$ we have:
\begin{equation}\label{iled1}
\int_{\tau_1}^{\tau_2} \int_{\Sigma_{\tau'}} J^T_{\mu} [\psi] n^{\mu} d\mi_{g_{\si}} \lesssim_{R_0} \int_{\Sigma_{\tau_1}} J^P_{\mu} [\psi] n^{\mu} d\mi_{g_{\si}} + \int_{N_{\tau_1}} \dfrac{(\partial_v \ph )^2}{r} d\mi_{g_N} + 
\end{equation}
$$ + \int_{\tau_1}^{\tau_2} \int_{S_{\tau'}} |F|^2 d\mi_{g_{\so}} + \int_{\tau_1}^{\tau_2} \int_{N_{\tau'}} r^2 |F|^2 d\mi_{g_{\nnn}} . $$
\end{prop}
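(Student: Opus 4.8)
The plan is to combine the integrated $T$-flux bound near the horizon (Proposition \ref{ilep}) with the $r$-weighted machinery of Proposition \ref{rw} (and its consequence Proposition \ref{away1}) in the asymptotically flat region, gluing the two via the Morawetz estimate \eqref{degix1} in the intermediate compact region. Concretely, I would split the spacetime region $\rrr(\tau_1,\tau_2)$ into three pieces: $\{r \mik r_0\}$ (near $\ho$), $\{r_0 \mik r \mik R_0\}$ (the intermediate compact zone), and $\{r \meg R_0\}$ (the null piece $\{N_\tau\}$ near $\mathcal{I}^+$). On the near-horizon piece, Proposition \ref{ilep} gives
$$\int_{\tau_1}^{\tau_2}\int_{\Sigma_{\tau'}\cap\{r\mik r_0\}} J^T_\mu[\psi] n^\mu\, d\mi_{g_{\rrr}} \lesssim_{R_0,\eta} \int_{\Sigma_{\tau_1}} J^P_\mu[\psi]n^\mu\, d\mi_{g_\si} + \int_{\tau_1}^{\tau_2}\int_{S_{\tau'}}|F|^2 d\mi_{g_{\so}} + \int_{\tau_1}^{\tau_2}\int_{N_{\tau'}}r^{1+\eta}|F|^2 d\mi_{g_{\nnn}},$$
which already has exactly the form of the desired right-hand side (note $J^P$ controls nothing worse than $J^P$ on $\Sigma_{\tau_1}$, and $r^{1+\eta}\mik r^2$ on $N_{\tau'}$ once $\eta\mik 1$). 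On the intermediate compact zone $\{r_0\mik r\mik R_0\}$, the weights $r^{-1-\eta}$ and $D$, $D^2$ are all comparable to $1$, so the Morawetz estimate \eqref{degix1} (applied with $\mathcal{G} = \rrr(\tau_1,\tau_2)\cap\{r_0\mik r\mik R_0\}$) bounds $\int\int (T\psi)^2 + D^2(Y\psi)^2$, hence $\int\int J^T_\mu[\psi]n^\mu$, by $\int_{\Sigma_{\tau_1}} J^T_\mu[\psi]n^\mu \lesssim \int_{\Sigma_{\tau_1}} J^P_\mu[\psi]n^\mu$ plus the same $F$-terms.

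For the remaining piece near null infinity, i.e.\ $\int_{\tau_1}^{\tau_2}\int_{N_{\tau'}} J^T_\mu[\psi]n^\mu\, d\mi_{g_{\nnn}}$, I would invoke Proposition \ref{away1} directly: it gives precisely
$$\int_{\tau_1}^{\tau_2}\int_{N_{\tau'}} J^T_\mu[\psi]n^\mu\, d\mi_{g_{\nnn}} \lesssim_{R_0} \int_{\Sigma_{\tau_1}} J^T_\mu[\psi]n^\mu\, d\mi_{g_\si} + \int_{N_{\tau_1}}\frac{(\partial_v\ph)^2}{r}d\mi_{g_\si} + \int_{\tau_1}^{\tau_2}\int_{S_{\tau'}}|F|^2 d\mi_{g_{\so}} + \int_{\tau_1}^{\tau_2}\int_{N_{\tau'}}r^2|F|^2 d\mi_{g_{\nnn}},$$
and then replace $\int_{\Sigma_{\tau_1}} J^T_\mu[\psi]n^\mu$ by the larger $\int_{\Sigma_{\tau_1}} J^P_\mu[\psi]n^\mu$ (since $J^P\approx (T\psi)^2+\sqrt{D}(Y\psi)^2+|\slashed\nabla\psi|^2 \gtrsim J^T$ near the horizon and they are comparable away from it, up to the degenerate weight $\sqrt D$ versus $D$, which only helps). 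Summing the three contributions and absorbing $r^{1+\eta}|F|^2 \lesssim_{R_0} r^2|F|^2$ on $N_{\tau'}$ and $r^{1+\eta}|F|^2 \lesssim_{R_0} |F|^2$ on $S_{\tau'}$ yields \eqref{iled1}.

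The main obstacle — and the only step requiring genuine care — is making sure the overlaps between the three regions are handled without double-counting or sign errors: in particular, the cut-off error terms generated when one localizes the $P$-current to $\{r\mik r_0\}$ and the $r^p$-current to $\{r\meg R_0\}$ must be controlled, but these are supported in the compact intermediate zone and are therefore absorbed by the Morawetz estimate \eqref{degix1}, exactly as in the proofs of Propositions \ref{ilep} and \ref{rw}. One should also check that the $N_{\tau_1}$-flux term appearing here, $\int_{N_{\tau_1}}(\partial_v\ph)^2/r$, is consistent with (dominated by) the initial-data quantities already present, which it is since it is part of the $p=1$ $r$-weighted flux. Beyond that, the proof is a bookkeeping exercise assembling results already established in this section.
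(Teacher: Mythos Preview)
Your proposal is correct and follows essentially the same approach as the paper: split the spacetime integral into the three regions $\{M\mik r\mik r_0\}$, $\{r_0\mik r\mik R_0\}$, $\{r\meg R_0\}$, then apply Proposition~\ref{ilep}, the Morawetz estimate \eqref{degix1}, and Proposition~\ref{away1} respectively, replacing $J^T$ by the larger $J^P$ on $\Sigma_{\tau_1}$. Your discussion of cut-off error terms is in fact unnecessary here, since the integral is simply partitioned (not localized via cut-offs) and the three cited propositions already furnish global-in-$r$ right-hand sides.
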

\begin{proof}
We break our integral as follows:
$$ \int_{\tau_1}^{\tau_2} \int_{\Sigma_{\tau'}} J^T_{\mu} [\psi] n^{\mu} d\mi_{g_{\si}} =  $$ $$ = \int_{\tau_1}^{\tau_2} \int_{\Sigma_{\tau'} \cap \{M \mik r \mik r_1 < 2M\}} J^T_{\mu} [\psi] n^{\mu} d\mi_{g_{\rrr}} + \int_{\tau_1}^{\tau_2} \int_{\Sigma_{\tau'} \cap \{ r_1 \mik r \mik R_0\}} J^T_{\mu} [\psi] n^{\mu} d\mi_{g_{\rrr}} + $$ $$ + \int_{\tau_1}^{\tau_2} \int_{\Sigma_{\tau'} \cap \{ r \meg R_0 \}} J^T_{\mu} [\psi] n^{\mu} d\mi_{g_{\rrr}} := A + B + C .$$

We use Proposition \ref{ilep} for the spacetime region close to the event horizon $\mathcal{H}^{+}$ taking care of $A$, we use estimate \eqref{degix1} with some $\eta < 1$ for the "middle" spacetime region, which is both away from the event horizon and future null infinity taking care of $B$, and finally we use Proposition \ref{away1} for the spacetime region close to future null infinity taking care of $C$.
\end{proof}

We also state here another integrated local energy decay estimate for a quantity that we will use later when we will derive a priori energy decay estimates.
\begin{prop}[\textbf{Integrated Energy Decay For The $P$-flux}]\label{iledi}
Let $\psi$ be a spherically symmetric solution of \eqref{nwef} and let $\ph = r\psi$. Then for any $\tau_1$, $\tau_2$ with $\tau_1 < \tau_2$ we have:
\begin{equation}\label{iled2}
\int_{\tau_1}^{\tau_2} \int_{\Sigma_{\tau'}} J^P_{\mu} [\psi] n^{\mu} d\mi_{g_{\rrr}} \lesssim_{R_0} \int_{\Sigma_{\tau_1}} J^N_{\mu} [\psi] n^{\mu} d\mi_{g_{\si}} + \int_{N_{\tau_1}} \dfrac{(\partial_v \ph )^2}{r} d\mi_{g_N} +
\end{equation}
$$  + \int_{\tau_1}^{\tau_2} \int_{S_{\tau'}} |F|^2 d\mi_{g_{\so}} + \int_{\tau_1}^{\tau_2} \int_{N_{\tau'}} r^2 |F|^2 d\mi_{g_{\nnn}} + \left( \int_{\tau_1}^{\tau_2} \left( \int_{S_{\tau'} } | F |^2 d\mi_{g_S} \right)^{1/2} d\tau' \right)^2 . $$
\end{prop}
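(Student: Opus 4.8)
The plan is to imitate the proof of Proposition~\ref{iled} line by line, decomposing $\mathcal{R}(\tau_1,\tau_2)$ into the region near the horizon $\{M\mik r\mik r_0\}$, the middle region $\{r_0\mik r\mik R_0\}$, and the region near null infinity $\{r\meg R_0\}$, where $r_0$ is the radius appearing in the definition of the $P$-flux, and bounding the integral of $J^P_\mu[\psi]n^\mu$ over each region in turn.

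For the near-horizon region I would simply quote Proposition~\ref{ilepp}: estimate~\eqref{ilep2} already controls $\int_{\tau_1}^{\tau_2}\int_{\Sigma_{\tau'}\cap\{r\mik r_0\}}J^P_\mu[\psi]n^\mu\,d\mi_{g_{\rrr}}$ by the $J^N$-flux on $\Sigma_{\tau_1}$, the $S$- and $N$-weighted $F$-errors, and the time-integrated error term $\left(\int_{\tau_1}^{\tau_2}\left(\int_{S_{\tau'}}|F|^2\,d\mi_{g_S}\right)^{1/2}d\tau'\right)^2$; this region is the only one producing that last term. In the middle region $D$ is bounded above and below, so $J^P_\mu[\psi]n^\mu\approx(T\psi)^2+(Y\psi)^2$ there, and — since we are in spherical symmetry there is no photon sphere degeneracy — the localized Morawetz estimate~\eqref{degix1}, applied with any fixed $\eta<1$, bounds this contribution by $\int_{\Sigma_{\tau_1}}J^T_\mu[\psi]n^\mu\,d\mi_{g_{\si}}$ plus the $F$-errors. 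For the region near $\mathcal{I}^{+}$ I would use that $P=T$ for $r\meg r_1$ by construction (so $J^P=J^T$ and $J^T_\mu[\psi]n^\mu_N\approx(\partial_v\psi)^2$), and invoke Proposition~\ref{away1} to control $\int_{\tau_1}^{\tau_2}\int_{N_{\tau'}}J^T_\mu[\psi]n^\mu\,d\mi_{g_{\nnn}}$ by $\int_{\Sigma_{\tau_1}}J^T_\mu[\psi]n^\mu$, $\int_{N_{\tau_1}}(\partial_v\ph)^2/r$, and $F$-errors with $r$-weights up to $r^2$.

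It then remains to assemble the three pieces. Since $N$ is timelike and dominates $T$, $\int_{\Sigma_{\tau_1}}J^T_\mu[\psi]n^\mu\lesssim\int_{\Sigma_{\tau_1}}J^N_\mu[\psi]n^\mu$, so every initial-slice flux term collapses to the single $J^N$ term of~\eqref{iled2}; on the cones $N_{\tau'}$ one has $r\meg R_0>1$, hence $r^{1+\eta}\lesssim_{R_0}r^2$, so all $N$-flux $F$-errors are absorbed into $\int_{\tau_1}^{\tau_2}\int_{N_{\tau'}}r^2|F|^2\,d\mi_{g_{\nnn}}$, and the $S$-flux $F$-errors collapse to $\int_{\tau_1}^{\tau_2}\int_{S_{\tau'}}|F|^2\,d\mi_{g_{\so}}$. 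Adding up yields exactly~\eqref{iled2}. There is no genuine analytic obstacle here, as all the hard work is already contained in Propositions~\ref{ilepp} and~\ref{away1}; the one point deserving attention is that near the horizon one must use the $P$-flux bound~\eqref{ilep2} rather than the Morawetz estimate~\eqref{degix1}, since the latter carries a factor $D^2$, not $D$, in front of $(Y\psi)^2$ and is therefore too weak at $\mathcal{H}^{+}$ — and it is exactly this replacement that forces the extra time-integrated error term into~\eqref{iled2}.
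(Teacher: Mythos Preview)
Your proposal is correct and follows essentially the same approach as the paper: decompose into the three regions, apply Proposition~\ref{ilepp} near the horizon, the localized Morawetz estimate~\eqref{degix1} in the middle, and Proposition~\ref{away1} near $\mathcal{I}^{+}$, with $\eta<1$ throughout. Your write-up is in fact more detailed than the paper's own proof, and your closing remark about the $D^2$ versus $D$ weight explaining why one must use~\eqref{ilep2} rather than~\eqref{degix1} near $\mathcal{H}^{+}$ is exactly the point.
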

\begin{proof}
We break the integral $\int_{\tau_1}^{\tau_2} \int_{\Sigma_{\tau'}} J^P_{\mu} [\psi] n^{\mu} d\mi_{g_{\si}}$ into three terms as in the proof of Proposition \ref{iled}. We call the corresponding terms $A_1$, $B_1$, $C_1$. We treat the terms $B_1$, $C_1$ as in the proof of Proposition \ref{iled}, while for $A_1$ we use Proposition \ref{ilepp}. Again we choose $\eta < 1$ for all the Morawetz-type estimates that we use.

\end{proof}

\section{Local Well-Posedness}\label{tlwpt}
In this section we investigate the local theory for \eqref{nwef}. For this section we will set $A(\psi ) \equiv 1$ since the extra terms that come from the $T$ and $Y$ derivatives of $A$ don't change anything in the end.

We will use the following norm:
$$ \| \psi \|_{X^3 (\widetilde{\Sigma}_{\tau} )}^2 = \left\| \dfrac{\psi}{r} \right\|_{L^2 (\widetilde{\Sigma}_{\tau} )}^2  + \sum_{k+l\mik 3, k,l \meg 0 , k+l \neq 0 } \| T^k  Y^l \psi \|_{L^2 (\widetilde{\Sigma}_{\tau} )}^2 ,  $$
for any given $\tau \meg 0$.

Since as someone can check:
$$ \| \psi \|_{X^3 (\widetilde{\Sigma}_{\tau} )}^2 \approx \| ( \psi n_{\sis_{\tau}} \psi ) \|_{X^3 \times X^2 (\sis_{\tau} )} , $$

we note that this norm is a variation of the $H^3 \times H^2 (\widetilde{\Sigma}_{\tau} )$ norm since approximately we have:
$$ \| ( \psi , n_{\sis_{\tau}} \psi ) \|_{H^3 \times H^2 (\widetilde{\Sigma}_{\tau} )}^2 \approx \| \psi\|_{L^2 (\widetilde{\Sigma}_{\tau} )}^2 +\| n_{\widetilde{\Sigma}_{\tau}} \psi \|_{H^2 (\widetilde{\Sigma}_{\tau} )}^2 \approx $$ $$ \approx \| \psi\|_{L^2 (\widetilde{\Sigma}_{\tau} )}^2  + \sum_{k+l\mik 3, k,l \meg 0 , k+l \neq 0 } \| T^k  Y^l \psi \|_{L^2 (\widetilde{\Sigma}_{\tau} )}^2 .$$
We should note of course that this computation is not really precise since the function $\bar{r}$ of \eqref{rbar} tends to 0 as we approach the spacelike infinity $\iota^0$. It is though an appropriate norm for us in order to be able to formulate a local theory for \eqref{nwef}.

\begin{thm}\label{lwp}
Equation \eqref{nwef} is locally well-posed in $X = X^3 \times X^2$ in the sense the if we start with data $( \psi_0 , \psi_1 ) \in X^3 \times X^2 (\widetilde{\Sigma}_0 )$ then there exists some $\mathcal{T} > 0$ such that there exists a unique solution of \eqref{nwef} in $\mathcal{R}(0,\mathcal{T} ) = \cup_{\tau \in [0,\mathcal{T}]} \widetilde{\Sigma}_{\tau}$ for which we have that for each $\tau \in [0, \mathcal{T}]$ it holds that $\left( \psi (\tau) , n_{\sis_{\tau}} \psi (\tau) \right) \in X (\widetilde{\Sigma}_{\tau} )$.
\end{thm}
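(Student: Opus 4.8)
The plan is to run a standard Picard iteration scheme adapted to the geometry of the extremal Reissner--Nordstr\"{o}m spacetime, using the energy estimates of Section \ref{eeh} as the quantitative backbone. First I would set up the iteration: let $\psi^{(0)}$ solve the linear homogeneous equation $\Box_g \psi^{(0)} = 0$ with the prescribed data $(\psi_0,\psi_1)$ on $\widetilde{\Sigma}_0$, and then inductively define $\psi^{(n+1)}$ as the solution of the \emph{linear} inhomogeneous equation $\Box_g \psi^{(n+1)} = F(\psi^{(n)}, \partial \psi^{(n)})$ with the same data, where $F$ denotes the nonlinearity (here, with $A\equiv 1$, the quantity $D(Y\psi^{(n)})^2 + 2 T\psi^{(n)} Y\psi^{(n)} + |\slashed{\nabla}\psi^{(n)}|^2$ plus the lower order terms). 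The linear solvability at each stage is classical; the content is the uniform-in-$n$ control of $\sup_{\tau\in[0,\mathcal{T}]}\|(\psi^{(n)}(\tau), n_{\sis_\tau}\psi^{(n)}(\tau))\|_{X(\widetilde{\Sigma}_\tau)}$ on a time interval $[0,\mathcal{T}]$ whose length depends only on the size of the data in $X$.

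The second step is the a priori bound for the linear problem. For this I would commute the equation $\Box_g \psi^{(n+1)} = F^{(n)}$ with the vector fields $T$ and $Y$ up to total order three; since $T$ is Killing its commutator vanishes, while $[\Box_g, Y]$ produces only lower-order terms with bounded ($r$-dependent) coefficients on the region $r\meg M$, which are harmless. Applying Proposition \ref{deg1} (the ``time''-dependent non-degenerate energy estimate) to each commuted quantity $T^k Y^l \psi^{(n+1)}$ with $k+l\mik 3$, summing, and adding the zeroth-order Hardy-type control of $\|\psi/r\|_{L^2}$ from Proposition \ref{hardyineq}, gives
\begin{equation*}
\|\psi^{(n+1)}\|_{X(\widetilde{\Sigma}_\tau)}^2 \lesssim_{\tau} \|\psi^{(n+1)}\|_{X(\widetilde{\Sigma}_0)}^2 + \int_0^{\tau} \sum_{k+l\mik 3}\int_{\widetilde{\Sigma}_{\tau'}} |T^k Y^l F^{(n)}|^2\, d\mi_{g_{\rrrr}}\, d\tau'.
\end{equation*}
The nonlinear term is then estimated by the standard product/Moser estimates: since $s=3 > 5/2 = d/2$ in the relevant spatial dimension, $X^3$ (which is an $H^3$-type space) is an algebra and controls $L^\infty$, so $\|F^{(n)}\|_{X^2(\widetilde{\Sigma}_{\tau'})} \lesssim \|\psi^{(n)}\|_{X^3(\widetilde{\Sigma}_{\tau'})}^2 + (\text{higher powers})$, uniformly on compact $r$-ranges and with the harmless $1/r^2$ weights near $\iota^0$ absorbed by the definition of the $X$-norm. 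A continuity/bootstrap argument on a short interval $[0,\mathcal{T}]$ then yields a uniform bound $\sup_{\tau\in[0,\mathcal{T}]}\|\psi^{(n)}\|_{X(\widetilde{\Sigma}_\tau)} \mik 2\|(\psi_0,\psi_1)\|_X$ for all $n$, with $\mathcal{T}$ depending only on that initial size.

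The third step is contraction: I would apply the same energy estimate of Proposition \ref{deg1} to the difference $\psi^{(n+1)}-\psi^{(n)}$, which satisfies $\Box_g(\psi^{(n+1)}-\psi^{(n)}) = F^{(n)}-F^{(n-1)}$ with zero data. Writing $F^{(n)}-F^{(n-1)}$ as a sum of terms each containing a factor $\partial(\psi^{(n)}-\psi^{(n-1)})$ or $(\psi^{(n)}-\psi^{(n-1)})$ times a factor bounded by the uniform bound of step two, one gets $\sup_{[0,\mathcal{T}]}\|\psi^{(n+1)}-\psi^{(n)}\|_{X} \mik \tfrac12 \sup_{[0,\mathcal{T}]}\|\psi^{(n)}-\psi^{(n-1)}\|_{X}$ after possibly shrinking $\mathcal{T}$; here one should measure the differences in the lower-regularity space $X^2\times X^1$ to avoid the usual loss of derivatives, and then upgrade regularity of the limit by weak-* compactness and the uniform $X^3$ bound. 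This produces a fixed point $\psi$, which is the desired solution, and uniqueness in $C([0,\mathcal{T}]; X)$ follows from the same difference estimate applied to two putative solutions.

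The main obstacle I expect is \emph{not} the nonlinearity itself (the null structure plays no role in the local theory — one only needs that $F$ is quadratic in $\partial\psi$ with bounded coefficients) but rather the bookkeeping near the two ends of $\widetilde{\Sigma}_\tau$: at the horizon $\mathcal{H}^+$ one must check that commuting with $Y=\partial_r$ and using the non-degenerate energy of Proposition \ref{deg1} really closes, i.e.\ that no uncontrolled redshift-type weight appears at this finite-$\tau$ level (it does not, since we are not claiming any uniformity in $\tau$ here), and near spacelike infinity $\iota^0$ one must verify that the weight function $\bar r$ of \eqref{rbar} degenerating to $0$ is compatible with the algebra property of the $X$-norm and with the product estimates for $F$ — this is exactly the imprecision flagged after the definition of the norm, and handling it carefully (e.g.\ by a finite-speed-of-propagation/localization argument away from $\iota^0$ combined with the fact that the data are effectively controlled there) is the one genuinely spacetime-specific point of the proof. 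A secondary technical point is ensuring the linear inhomogeneous problem at each iteration step is itself well-posed in $X$ with the stated energy estimate; this is where one invokes the classical variable-coefficient linear theory (as in \cite{hormander}), combined with Propositions \ref{hardyineq} and \ref{deg1}.
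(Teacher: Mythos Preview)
Your proposal is correct and follows essentially the same approach as the paper: Picard iteration combined with the non-degenerate energy estimate of Proposition~\ref{deg1} applied to $\psi$ commuted with $T^kY^l$ for $k+l\le 3$, together with Hardy's inequality for the zeroth-order term and product estimates (which the paper calls the ``Calculus inequality for products in Sobolev spaces'') to control the nonlinearity. The only cosmetic difference is that the paper organizes the argument so as to isolate the dependence on $\|T\psi\|_{L^\infty}+\|Y\psi\|_{L^\infty}$ explicitly in the main estimate~\eqref{contlwp}, since this is precisely what feeds into the continuation criterion of Proposition~\ref{cont}; your Moser/algebra formulation is equivalent after a Sobolev embedding.
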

\begin{rem}
The same result holds for equation \eqref{nw}. The proof is similar.
\end{rem}
\begin{rem}
Recall that if we start with data that is spherically symmetric then our solution of \eqref{nwef} will be spherically symmetric as well. The same holds for solutions of equation \eqref{nw}.
\end{rem}
\begin{proof}[Sketch of Proof]
We will just give the outline of this proof by proving the main estimate. The rest is considered to be standard. 

For the zero-th order term we use Hardy's inequality \eqref{hardy}, for the higher derivatives we use the "time"-dependent uniform boundedness for the non-degenerate energy of Proposition \ref{deg1} for all the terms $J^N [\psi]$, $J^N [Y \psi]$, $J^N [Y^2 \psi]$, $J^N [T\psi]$, $J^N [T^2 \psi]$ (hence, all constants involved in the following inequalities will be "time"-dependent, this is not important for the local theory). We will also use the commutation relations:
\begin{equation}\label{commr}
[ \Box_g , Y ] = -D' \cdot Y^2 + \frac{2}{r^2}  T - R' \cdot Y ,
\end{equation}
where $R = D' + \frac{2D}{r}$, and
\begin{equation}\label{commv}
[\Box_g , T ] = 0 .
\end{equation}
A consequence of \eqref{commr} and \eqref{commv} are the following equations:
\begin{equation}\label{partialr}
\Box_g (Y \psi ) = -D' \cdot ( Y^2 \psi ) + \frac{2}{r^2} ( T \psi ) - R' \cdot  ( Y \psi ) + D' \cdot (Y \psi )^2 + 
\end{equation} 
 $$ + 2D \cdot ( Y^2 \psi ) \cdot Y \psi + 2 ( TY \psi ) \cdot ( Y \psi ) + 2 ( T \psi ) \cdot ( Y^2 \psi ) ,$$

\begin{equation}\label{partialv}
\Box_g T^l \psi = T^l F \mbox{ for any $l\meg 0$} .
\end{equation}
In more detail, for the first term $\left\| \dfrac{\psi}{r} \right\|_{L^2 (\widetilde{\Sigma}_{\tau} )}$ we use Hardy's inequality \eqref{hardy} and we have:
$$ \left\| \dfrac{\psi}{r} \right\|_{L^2 (\widetilde{\Sigma}_{\tau} )}^2 \lesssim \int_{\widetilde{\Sigma}_{\tau} } J^T_{\mu} [\psi ] n^{\mu} d\mi_{g_{\sis}} \lesssim $$ $$ \lesssim \int_{\widetilde{\Sigma}_{0} } J^T_{\mu} [\psi ] n^{\mu} d\mi_{g_{\sis}} + \int_0^{\tau} \int_{\widetilde{\Sigma}_{\tau'}} |F|^2 d\mi_{g_{\sis}} \lesssim $$ $$ \lesssim \int_{\widetilde{\Sigma}_{0} } J^T_{\mu} [\psi ] n^{\mu} d\mi_{g_{\sis}} + \sup_{\tau'' \in [0, \tau ]} \|Y \psi  \|_{L^{\infty} (\widetilde{\Sigma}_{\tau'} )}^2 \int_0^{\tau} \| \psi \|_{X (\widetilde{\Sigma}_{\tau' } )}^2 d\mi_{g_{\sis}} .$$
For the first derivative terms $T \psi$, $Y \psi$ we just use the uniform boundedness of the non-degenerate energy \eqref{degi1} and we get since:
$$ \| Y \psi \|_{L^2 (\widetilde{\Sigma}_{\tau} )}^2 + \| T \psi \|_{L^2 (\widetilde{\Sigma}_{\tau} )}^2 \approx \int_{\widetilde{\Sigma}_{\tau}} J^N_{\mu} [\psi ] n^{\mu} d\mi_{g_{\sis}} ,$$ 
the following: 
$$ \int_{\widetilde{\Sigma}_{\tau} } J^N_{\mu} [\psi ] n^{\mu} d\mi_{g_{\sis}}\lesssim \int_{\widetilde{\Sigma}_0 } J^N_{\mu} [\psi ] n^{\mu} d\mi_{g_{\sis}} + \int_0^{\tau} \int_{\widetilde{\Sigma}_{\tau'}} |F|^2 d\mi_{g_{\sis}} \lesssim $$ $$ \lesssim \int_{\widetilde{\Sigma}_0 } J^N_{\mu} [\psi ] n^{\mu} d\mi_{g_{\sis}} + \sup_{\tau'' \in [0, \tau ]} \|Y \psi  \|_{L^{\infty} (\widetilde{\Sigma}_{\tau'} )}^2 \int_0^{\tau} \int_{\widetilde{\Sigma}_{\tau'}} \left( D^2 |Y \psi |^2 + |T \psi |^2 \right) d\mi_{g_{\sis}} \lesssim $$ $$ \lesssim  \int_{\widetilde{\Sigma}_0 } J^N_{\mu} [\psi ] n^{\mu} d\mi_{g_{\sis}} + \sup_{\tau'' \in [0, \tau ]} \|Y \psi  \|_{L^{\infty} (\widetilde{\Sigma}_{\tau'} )}^2 \int_0^{\tau}  \int_{\widetilde{\Sigma}_{\tau' }}  \left( (T \psi )^2 + (Y \psi )^2 \right) d\mi_{g_{\sis}} \Rightarrow $$
$$ \Rightarrow \| Y \psi \|_{L^2 (\widetilde{\Sigma}_{\tau} )}^2 + \| T \psi \|_{L^2 (\widetilde{\Sigma}_{\tau} )}^2  \lesssim \| \psi \|_{X (\widetilde{\Sigma}_{0} )}^2 + \sup_{\tau'' \in [0, \tau ]} \|Y \psi  \|_{L^{\infty} (\widetilde{\Sigma}_{\tau'} )}^2 \int_0^{\tau}   \| \psi \|_{X (\widetilde{\Sigma}_{\tau' } )}^2 d\mi_{g_{\sis}} .$$

For the second derivatives $TY \psi$, $Y^2 \psi$ we use again the energy current $J^N$  and the commutation relation \eqref{commr}.

Calling the right hand side of \eqref{partialr} by $G$ and since 
$$ \int_{\widetilde{\Sigma}_{\tau}} J^N_{\mu} [Y \psi ] n^{\mu} d\mi_{g_{\sis}} \approx \| Y^2 \psi \|_{L^2 (\widetilde{\Sigma}_{\tau} )}^2+ \| TY \psi \|_{L^2 (\widetilde{\Sigma}_{\tau} )}^2 , $$ 
we get by applying the non-uniform boundedness of the non-degenerate energy \eqref{degi1} to $Y \psi$ that:
$$  \int_{\widetilde{\Sigma}_{\tau}} J^N_{\mu} [Y \psi ] n^{\mu} d\mi_{g_{\sis}} \lesssim \int_{\widetilde{\Sigma}_{0}} J^N_{\mu} [Y \psi ] n^{\mu} d\mi_{g_{\sis}} + \int_0^{\tau} \int_{\widetilde{\Sigma}_{\tau'}} |G|^2 d\mi_{g_{\sis}} $$
$$ \lesssim \int_{\widetilde{\Sigma}_{0}} J^N_{\mu} [Y \psi ] n^{\mu} d\mi_{g_{\sis}} + \int_0^{\tau} \int_{\widetilde{\Sigma}_{\tau'}}  ( \| \psi \|_{X (\widetilde{\Sigma}_{\tau'})}^2 + |Y F |^2 ) d\mi_{g_{\sis}} \lesssim $$
$$ \lesssim \| \psi \|_{X (\widetilde{\Sigma}_0 )}^2 + \sup_{\tau'' \in [0,\tau]} \left( \|Y \psi \|_{L^{\infty} (\widetilde{\Sigma}_{\tau''} )}^2 + \|T \psi \|_{L^{\infty} (\widetilde{\Sigma}_{\tau''} )}^2 \right) \int_0^{\tau} \int_{\widetilde{\Sigma}_{\tau'}} \| \psi \|_{X (\widetilde{\Sigma}_{\tau' } )}^2 d\mi_{g_{\sis}} .$$

For the third derivatives $Y^3 \psi$, $T Y^2 \psi$ on the other hand we use the relation \eqref{commr} to get:
$$ \Box_g (Y^2 \psi ) = - 2D' \cdot ( Y^3 \psi ) + \frac{4}{r^2} ( TY \psi ) - ( 2R' + D'' ) \cdot ( Y^2 \psi )  + $$ $$ + \frac{4}{r^3} ( T \psi ) + R'' \cdot ( Y \psi ) + Y^2 F .$$
We call the right hand side of the above equality by $Q$.

Now we use the non-uniform boundedness for $J^N [Y^2 \psi ]$ to arrive at the following:
$$  \| Y^3 \psi \|_{L^2 (\widetilde{\Sigma}_{\tau})}^2 + \| T Y^2 \psi \|_{L^2 (\widetilde{\Sigma}_{\tau})}^2 \lesssim \int_{\widetilde{\Sigma}_{\tau}} J^N_{\mu} [Y^2 \psi ] n^{\mu} d\mi_{g_{\sis}} \lesssim $$ $$ \lesssim \int_{\widetilde{\Sigma}_{\tau}} J^N_{\mu} [Y^2 \psi ] n^{\mu} d\mi_{g_{\sis}} + \int_0^{\tau} \int_{\widetilde{\Sigma}_{\tau'}} |Q|^2 d\mi_{g_{\sis}} \lesssim $$ $$ \lesssim \| \psi \|_{X (\widetilde{\Sigma}_0 )}^2 + \int_0^{\tau} ( \| \psi \|_{X (\widetilde{\Sigma}_{\tau'} )}^2 + \| (Y \psi + T \psi )^2 \|_{H^2 (\widetilde{\Sigma}_{\tau'} )}^2 ) d\mi_{g_{\sis}} $$ $$ \lesssim \| \psi \|_{X (\widetilde{\Sigma}_0 )}^2 + \sup_{\tau'' \in [0,\tau]} \left( \|Y \psi \|_{L^{\infty} (\widetilde{\Sigma}_{\tau''} )}^2 + \|T \psi \|_{L^{\infty} (\widetilde{\Sigma}_{\tau''} )}^2 \right)\int_0^{\tau} \| \psi  \|_{X (\widetilde{\Sigma}_{\tau'} )}^2 d\mi_{g_{\sis}} ,$$
where in the last line we used the Calculus inequality for products in Sobolev spaces.

For the second derivative $T^2 \psi$ we recall that because of \eqref{commv} we have the equation \eqref{partialv} for $Y \psi$ and we can compute:
$$ \| T^2 \psi \|_{L^2 (\widetilde{\Sigma}_{\tau} )}^2 \lesssim \int_{\widetilde{\Sigma}_{\tau}} J^N_{\mu} [\psi ] n^{\mu} d\mi_{g_{\sis}} \lesssim \int_{\widetilde{\Sigma}_{0}} J^N_{\mu} [\psi ] n^{\mu} d\mi_{g_{\sis}} + \int_0^{\tau} \int_{\widetilde{\Sigma}_{\tau'}} |TF|^2 d\mi_{g_{\sis}} \lesssim $$ $$ \lesssim \int_{\widetilde{\Sigma}_{0}} J^N_{\mu} [\psi ] n^{\mu} d\mi_{g_{\sis}} + $$ $$ + \sup_{\tau'' \in [0,\tau]} \left( \|Y \psi \|_{L^{\infty} (\widetilde{\Sigma}_{\tau''} )}^2 + \|T \psi \|_{L^{\infty} (\widetilde{\Sigma}_{\tau''} )}^2 \right)\int_0^{\tau} \int_{\widetilde{\Sigma}_{\tau'}} \left( |T^2 \psi|^2 + | TY \psi |^2 \right) d\mi_{g_{\sis}} $$ 
$$ \Rightarrow \| T^2 \psi \|_{L^2 (\widetilde{\Sigma}_{\tau} )}^2 \lesssim \| \psi \|_{X (\widetilde{\Sigma}_{\tau} )}^2 + $$ $$ + \sup_{\tau'' \in [0,\tau]} \left( \|Y \psi \|_{L^{\infty} (\widetilde{\Sigma}_{\tau''} )}^2 + \|T \psi \|_{L^{\infty} (\widetilde{\Sigma}_{\tau''} )}^2 \right) \int_0^{\tau} \int_{\widetilde{\Sigma}_{\tau'}} \| \psi \|_{X (\widetilde{\Sigma}_{\tau' } )}^2 d\mi_{g_{\sis}} .$$ 

For the third derivatives $T^3 \psi$, $T^2 Y \psi$ we use the non-uniform boundedness of $J^N [ T^2 \psi ]$ and we have:
$$ \| T^3 \psi \|_{L^2 (\widetilde{\Sigma}_{\tau})}^2 + \| T^2 Y \psi \|_{L^2 (\widetilde{\Sigma}_{\tau})}^2 \lesssim \int_{\widetilde{\Sigma}_{\tau}} J^N_{\mu} [T^2 \psi ] n^{\mu} d\mi_{g_{\sis}} \lesssim $$ $$ \lesssim \int_{\widetilde{\Sigma}_{0}} J^N_{\mu} [T^2 \psi ] n^{\mu} d\mi_{g_{\sis}} + \int_0^{\tau} \int_{\widetilde{\Sigma}_{\tau'}} |T^2 F|^2 d\mi_{g_{\sis}} \lesssim $$ $$ \lesssim \int_{\widetilde{\Sigma}_{0}} J^N_{\mu} [T^2 \psi ] n^{\mu} d\mi_{g_{\sis}} + \int_0^{\tau} \| (Y \psi + T \psi )^2 \|_{H^2 (\widetilde{\Sigma}_{\tau'} )}^2 d\mi_{g_{\sis}} \lesssim $$ $$ \lesssim  \int_{\widetilde{\Sigma}_{0}} J^N_{\mu} [T^2 \psi ] n^{\mu} d\mi_{g_{\sis}} + $$ $$ + \sup_{\tau'' \in [0,\tau]} \left( \|Y \psi \|_{L^{\infty} (\widetilde{\Sigma}_{\tau''} )}^2 + \|T \psi \|_{L^{\infty} (\widetilde{\Sigma}_{\tau''} )}^2 \right) \int_0^{\tau} \| Y \psi + T \psi  \|_{H^2 (\widetilde{\Sigma}_{\tau'} )}^2 d\mi_{g_{\sis}} \lesssim $$ $$ \lesssim \| \psi \|_{X (\widetilde{\Sigma}_0 )}^2 + \sup_{\tau'' \in [0,\tau]} \left( \|Y \psi \|_{L^{\infty} (\widetilde{\Sigma}_{\tau''} )}^2 + \|T \psi \|_{L^{\infty} (\widetilde{\Sigma}_{\tau''} )}^2 \right) \int_0^{\tau} \| \psi  \|_{X (\widetilde{\Sigma}_{\tau'} )}^2 d\mi_{g_{\sis}} ,$$ 
where in the last step we used again the Calculus inequality for products in Sobolev spaces. 

Finally we gather all the above estimates together and we have:
\begin{equation}\label{contlwp}
 \| \psi \|_{X (\widetilde{\Sigma}_{\tau})}^2 \lesssim \| \psi \|_{X (\widetilde{\Sigma}_0 )}^2 + \sup_{\tau'' \in [0,\tau]} \left( \|Y \psi \|_{L^{\infty} (\widetilde{\Sigma}_{\tau''} )}^2 + \|T \psi \|_{L^{\infty} (\widetilde{\Sigma}_{\tau''} )}^2 \right) \int_0^{\tau} \| \psi  \|_{X (\widetilde{\Sigma}_{\tau'} )}^2 d\mi_{g_{\sis}} .
 \end{equation}

By Sobolev's inequality we have for $\| \psi \|_{\widetilde{X}} := \sup_{\tau \in [0,\mathcal{T}]} \| \psi \|_{X (\widetilde{\Sigma}_{\tau} )}$ (replacing $\tau$ with $\mathcal{T}$):
$$ \Rightarrow \| \psi \|_{\widetilde{X} }^2 \lesssim \| \psi \|_{X (\widetilde{\Sigma}_0 )}^2 +  \mathcal{T} \| \psi \|_{\widetilde{X}}^4 . $$
For an appropriate choice of a small enough $\mathcal{T}$ we can apply the fixed point argument.
\end{proof}
The above proof implies by Sobolev's inequality that if we start with smooth $C^{\infty}$ data, our solution will be $C^{\infty}$ as well.
\section{Continuation Criterion}\label{tlwpcc}
We will state here a condition that allows us to extend a solution beyond the time $\mathcal{T}$ given by the local theory. We have the following continuation criterion.
\begin{prop}[\textbf{Breakdown Criterion}]\label{cont}
Let $\psi$ be a solution of \eqref{nwef} with smooth compactly supported data $\psi [0] = (f , g)$ with finite $X$ norm. Denote by $\mathcal{T} = \mathcal{T} (f,g)$ the maximal time of existence for $\psi$ given by Theorem \ref{lwp}. Then we have that either $\mathcal{T} = \infty$ (in which case we say that $\psi$ is globally well-posed) or we have that:
$$ T\psi \not\in L^{\infty} (\mathcal{\widetilde{R}} (0,\mathcal{T} )), \quad Y\psi \not\in L^{\infty} (\mathcal{\widetilde{R}} (0,\mathcal{T} )).  $$ 
\end{prop}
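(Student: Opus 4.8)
The plan is to deduce the criterion from the a~priori estimate already isolated in the proof of Theorem~\ref{lwp} by a continuation/Gr\"onwall argument, carried out in the contrapositive form that is actually used later: \emph{if} $\mathcal{T} < \infty$ \emph{and}
$$ B := \sup_{0 \mik \tau < \mathcal{T}} \Big( \| Y\psi \|_{L^{\infty}(\sis_{\tau})}^2 + \| T\psi \|_{L^{\infty}(\sis_{\tau})}^2 \Big) < \infty , $$
then we reach a contradiction with the maximality of $\mathcal{T}$; in particular, if $\psi$ is not globally well-posed then $T\psi$ and $Y\psi$ cannot both remain bounded on $\rrrr(0,\mathcal{T})$. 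Since the data are smooth and compactly supported and \eqref{nwef} has finite speed of propagation, $\psi|_{\sis_{\tau}}$ stays supported in a fixed compact set for $\tau \in [0,\mathcal{T})$; this is what makes the weight $\bar{r}$ of \eqref{rbar} irrelevant near $\iota^0$ and lets us treat the $X$-norms as honest Sobolev norms on a compact region throughout.

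First I would re-run the energy estimate from the proof of Theorem~\ref{lwp} on each slab $\rrrr(0,\tau)$, $\tau \in [0,\mathcal{T})$. Its only ingredients are Hardy's inequality \eqref{hardy}, the commutation identities \eqref{commr}, \eqref{commv}, and the ``time''-dependent non-degenerate energy bound \eqref{degi1} of Proposition~\ref{deg1}; because $\tau \mik \mathcal{T} < \infty$, the endpoint-dependent constants in Proposition~\ref{deg1} are all bounded by a single constant $C(\mathcal{T})$. Tracking where $\| Y\psi\|_{L^{\infty}}$ and $\| T\psi\|_{L^{\infty}}$ enter the nonlinear and commutator terms exactly as in \eqref{contlwp}, this yields
$$ \| \psi \|_{X(\sis_{\tau})}^2 \mik C(\mathcal{T})\, \| \psi \|_{X(\sis_0)}^2 + C(\mathcal{T})\, B \int_0^{\tau} \| \psi \|_{X(\sis_{\tau'})}^2 \, d\tau' \qquad (0 \mik \tau < \mathcal{T}). $$
Gr\"onwall's inequality then gives the uniform bound
$$ M_0 := \sup_{0 \mik \tau < \mathcal{T}} \| \psi \|_{X(\sis_{\tau})} \mik C(\mathcal{T})\, \| \psi \|_{X(\sis_0)} \, e^{C(\mathcal{T}) B \mathcal{T}/2} < \infty . $$

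The remaining step is to pass to the limit $\tau \to \mathcal{T}^{-}$ and restart the local theory. For this I would note that, by the equation together with the bound $M_0$, the curve $\tau \mapsto ( \psi|_{\sis_{\tau}}, n_{\sis_{\tau}}\psi|_{\sis_{\tau}} )$ has $\tau$-derivative bounded in the one-order-lower space $X^2 \times X^1$ (defined analogously to $X = X^3\times X^2$) by $C(\mathcal{T})M_0$ --- only one derivative is lost relative to the uniform $X^3\times X^2$ control --- so the curve is uniformly Cauchy in $X^2 \times X^1$ as $\tau \to \mathcal{T}^{-}$ and extends to a limit on $\sis_{\mathcal{T}}$; by weak-$\ast$ lower semicontinuity of the $X^3\times X^2$ norm this limit lies in $X^3 \times X^2$ with norm $\mik M_0$, and by uniqueness it continues $\psi$ up to and including $\sis_{\mathcal{T}}$. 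Applying Theorem~\ref{lwp} with these data, of $X$-size at most $M_0$, produces a solution on $\rrrr(\mathcal{T}, \mathcal{T} + \mathcal{T}')$ with $\mathcal{T}' = \mathcal{T}'(M_0) > 0$ depending only on $M_0$; gluing the two solutions contradicts the maximality of $\mathcal{T}$.

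The routine parts are the reproduction of \eqref{contlwp} and the Gr\"onwall step. \textbf{The main obstacle} is the passage to the limit at $\tau = \mathcal{T}$: one must argue carefully that the uniform $X$-bound, combined with \eqref{nwef}, genuinely produces a limiting data pair on $\sis_{\mathcal{T}}$ in the same space with no loss of derivatives, and that the convergence is strong enough (and compatible with the degeneration of $\bar{r}$ near spacelike infinity, handled via propagation of compact support) for Theorem~\ref{lwp} to be re-applicable with a time step depending only on the size of the data. This type of continuation argument is classical in the Minkowski setting (cf.\ \cite{selberg}, \cite{hormander}) and only needs a minor adaptation to the present black-hole foliation.
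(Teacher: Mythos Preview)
Your proposal is correct and follows exactly the route the paper has in mind: the paper's proof is a one-line appeal to ``\eqref{contlwp}, Gr\"onwall's lemma and the local theory of the previous section,'' and you have simply written out this standard continuation argument in detail. The only thing worth noting is that the precise contrapositive you prove---``if both $T\psi$ and $Y\psi$ remain bounded then $\mathcal{T}=\infty$''---yields, upon negation, that at least one of them fails to be in $L^{\infty}$, which is indeed how the criterion is used in the paragraph following the proposition.
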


\begin{proof}
This is a standard consequence of \eqref{contlwp}, Gr\"{o}nwall's lemma and the local theory of the previous section.
\end{proof}

Proposition \ref{cont} tells us essentially that we have to verify that both 
$$ \| T\psi \|_{L^{\infty} (\mathcal{\widetilde{R}} (0,\mathcal{T} ))} \mbox{ and } \| Y\psi \|_{L^{\infty} (\mathcal{\widetilde{R}} (0,\mathcal{T} ))} $$
are finite at any given $\mathcal{T}$ in order to conclude that our solution $\psi$ (as in the assumption of Proposition \ref{cont}) is globally defined.

\begin{rem}
For equation \eqref{nw} the corresponding continuation criterion is:
$$  \| \psi \|_{L^{\infty} (\mathcal{\widetilde{R}} (0,\mathcal{T} ))}, \quad \| T\psi \|_{L^{\infty} (\mathcal{\widetilde{R}} (0,\mathcal{T} ))} \mbox{ and } \| Y\psi \|_{L^{\infty} (\mathcal{\widetilde{R}} (0,\mathcal{T} ))} $$ $$ \mbox{ are finite at any given time $\mathcal{T}$.} $$
If we consider equation \eqref{nw} without the derivative terms then the term 
$$\| \psi \|_{L^{\infty} (\mathcal{\widetilde{R}} (0,\mathcal{T} ))}$$
 alone determines the continuation criterion.
\end{rem}

\section{The Bootstrap Assumptions}
Let $\aaa>0$. The following bootstrap assumptions will be used from now on:
\begin{equation}\label{A1}
\int_{S_{\tau_0 }} |F|^2 d\mi_{g_S} \mik C_{\aaa } E_0 \ee^2 (1+\tau_0 )^{-2+\aaa} \tag{\textbf{A1}},
 \end{equation}
\begin{equation}\label{A2}
\int_{\tau_1}^{\tau_2} \int_{S_{\tau'}} |F|^2 d\mi_{g_{\so}} \mik C_{\aaa } E_0 \ee^2 (1+\tau_1 )^{-2+\aaa} \tag{\textbf{A2}},
\end{equation}
\begin{equation}\label{A3}
\int_{\tau_1}^{\tau_2} \int_{N_{\tau'}} r^{3-\aaa} |F|^2 d\mi_{g_{\nnn}} \mik C_{\aaa} E_0 \ee^2 (1+\tau_1 )^{-2+\aaa} \tag{\textbf{A3}} ,
 \end{equation}
 for any $\tau_0 $, $\tau_1$, $\tau_2$ with $\tau_1 < \tau_2$.

Their validity for $0\leq \ee\leq \ee_{0}(\aaa)$ will be demonstrated in Section \ref{bre}.
\section{A Priori Energy Decay Estimates}\label{apede}
We will now combine the aforementioned results in order to prove the following theorem which shows decay for the degenerate energy $\int_{\Sigma_{\tau}} J^T_{\mu} [\psi ] n^{\mu} d\mi_{g_{\si}}$ under some decay assumptions on the nonlinear term.
\begin{thm}[\textbf{Decay For The Degenerate Energy}]\label{endec}
Assume that the bootstrap assumptions \eqref{A1}, \eqref{A2}, \eqref{A3} hold true for some constant $C_{\aaa} := C(\aaa )$, for some $\ee > 0$ (which is related to the initial data), for any $\tau_0$, $\tau_1$, $\tau_2$ with $\tau_1 < \tau_2$ and for any given $\aaa > 0$.

Then for $\psi$ a spherically symmetric solution of \eqref{nwef} we have:
\begin{equation}\label{end}
\int_{\Sigma_{\tau}} J^T_{\mu} [\psi ] n^{\mu} d\mi_{g_{\si}} \lesssim E_0 \ee^2 (1+\tau)^{-2+\aaa} ,
\end{equation}
for any $\tau$.
\end{thm}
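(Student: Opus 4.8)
The proof follows the Dafermos--Rodnianski hierarchy of $r$-weighted energy estimates coupled with integrated local energy decay, in the form adapted to the extremal Reissner--Nordstr\"{o}m geometry by Aretakis in \cite{A2}; the only genuinely new feature is that all of the inhomogeneous error terms are now controlled by the bootstrap assumptions \eqref{A1}, \eqref{A2}, \eqref{A3}. The plan is first to assemble, for the present solution $\psi$, a closed hierarchy of estimates for the degenerate $T$-flux $\int_{\Sigma_\tau}J^T_\mu[\psi]n^\mu$, the $P$-flux $\int_{\Sigma_\tau}J^P_\mu[\psi]n^\mu$, and the $r$-weighted quantities $\int_{N_\tau}r^{p-2}(\partial_v\varphi)^2$ near null infinity (with $\varphi=r\psi$), and then to run a dyadic pigeonhole argument that converts the ``uniform boundedness plus time-integrated decay'' structure of these inequalities into polynomial decay.

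Concretely, I would first record the following consequences of the estimates of Section \ref{eeh}, in each case absorbing the nonlinear contributions $\int_{\tau_1}^{\tau_2}\int_{S_{\tau'}}|F|^2$, $\int_{\tau_1}^{\tau_2}\int_{N_{\tau'}}r^{1+\eta}|F|^2$ and $\int_{\tau_1}^{\tau_2}\int_{N_{\tau'}}r^{2}|F|^2$ into a single term $\lesssim C_\alpha E_0\varepsilon^2(1+\tau_1)^{-2+\alpha}$ by \eqref{A2} and \eqref{A3} (recall that on the null legs $r\meg R_0>1$, so the weights $r^2$ and $r^{1+\eta}$ with $\eta$ small are dominated by $r^{3-\alpha}$): (i) uniform boundedness and almost-monotonicity of the degenerate $T$-flux from Proposition \ref{deg2} and of the $P$-flux from Proposition \ref{puniform}; (ii) the integrated decay bound $\int_{\tau_1}^{\tau_2}\int_{\Sigma_{\tau'}}J^T_\mu[\psi]n^\mu\,d\tau'\lesssim \int_{\Sigma_{\tau_1}}J^P_\mu[\psi]n^\mu + \int_{N_{\tau_1}}r^{-1}(\partial_v\varphi)^2 + C_\alpha E_0\varepsilon^2(1+\tau_1)^{-2+\alpha}$ from Proposition \ref{iled} (and its $P$-flux analogue, Proposition \ref{iledi}, together with the near-horizon versions, Propositions \ref{ilep}, \ref{ilepp}); (iii) the $r^p$-weighted estimate of Proposition \ref{rw}, valid for $p\in(0,2-\alpha]$, the upper endpoint being forced by the available weight $r^{3-\alpha}$ in \eqref{A3} since Proposition \ref{rw} generates an error $\int_{\tau_1}^{\tau_2}\int_{N_{\tau'}}r^{p+1}|F|^2$, which together with Propositions \ref{away1} and \ref{awayvr} yields $\int_{N_{\tau_2}}r^{p-2}(\partial_v\varphi)^2+\int_{\tau_1}^{\tau_2}\int_{N_{\tau'}}p\,r^{p-3}(\partial_v\varphi)^2\lesssim_p \int_{N_{\tau_1}}r^{p-2}(\partial_v\varphi)^2+\int_{\Sigma_{\tau_1}}J^T_\mu[\psi]n^\mu+C_\alpha E_0\varepsilon^2(1+\tau_1)^{-2+\alpha}$. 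I would then run the two-step iteration. Using the $p=1$ instance of (iii) with (ii) one produces a quantity $\mathcal{F}_1(\tau):=\int_{\Sigma_\tau}J^P_\mu[\psi]n^\mu+\int_{N_\tau}r^{-1}(\partial_v\varphi)^2$ that is uniformly bounded by $C\big(E_0\varepsilon^2+\mathcal{F}_1(0)\big)$, almost-monotone, and whose time integral dominates $\int_{\tau_1}^{\tau_2}\int_{\Sigma_{\tau'}}J^T_\mu n\,d\tau'$; a dyadic pigeonhole on intervals $[2^{n-1}\tau_*,2^{n}\tau_*]$, combined with the almost-monotonicity of $\int_{\Sigma_\tau}J^T_\mu n$ from (i), gives $\int_{\Sigma_\tau}J^T_\mu[\psi]n^\mu\lesssim(1+\tau)^{-1}E_0\varepsilon^2$. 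Feeding this decay of the degenerate flux back into the $p$ close to $2-\alpha$ instances of (iii) (whose right-hand sides are now integrable in time) and repeating the pigeonhole one level higher upgrades this to the asserted rate $\int_{\Sigma_\tau}J^T_\mu[\psi]n^\mu\lesssim(1+\tau)^{-2+\alpha}E_0\varepsilon^2$.

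The main obstacle, exactly as in the linear analysis of \cite{A2}, is the horizon: because the red-shift degenerates, the integrated decay estimate of Proposition \ref{iled} carries on its right-hand side the $P$-flux (and, via Proposition \ref{iledi}, the non-degenerate $N$-flux), neither of which decays -- generically they are bounded below by a positive constant because of the Aretakis conservation law for $Y\psi$. One therefore has to exploit that the degenerate flux $\int_{\Sigma_\tau}J^T_\mu n\approx\int (T\psi)^2+D(Y\psi)^2$ carries the vanishing factor $D\sim(r-M)^2$ in front of $(Y\psi)^2$, so that the non-decaying near-horizon part of $Y\psi$ contributes only a term that can be made arbitrarily small by localising sufficiently close to $\mathcal{H}^{+}$, while the good bulk term $K^P$ near $\mathcal{H}^{+}$ still controls $\int J^T_\mu n$ there (Proposition \ref{ilep}); this is what lets the pigeonhole close, and, together with the restriction $p\mik 2-\alpha$ imposed by the weight $r^{3-\alpha}$ in \eqref{A3}, is precisely what pins the decay rate at $(1+\tau)^{-2+\alpha}$ rather than the faster rate one might hope for from the $p<3$ range of Proposition \ref{rw}. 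Finally, since we are in the nonlinear setting, I would be careful that none of the constants produced by the iteration depend on $C_\alpha$ in a way that would obstruct closing the bootstrap in Section \ref{bre}; this is guaranteed by carrying a free small parameter in every application of Cauchy--Schwarz, exactly as in the proofs of Section \ref{eeh}.
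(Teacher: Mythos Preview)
Your overall architecture is right---this \emph{is} the Dafermos--Rodnianski hierarchy combined with the Aretakis $P$-current near the horizon, and your ingredients (i)--(iii) are exactly the estimates the paper uses. Your first iteration step is also correct (and in fact slightly cleaner than the paper's): since the $P$-flux is uniformly bounded by Proposition~\ref{puniform} and $\int_{N_\tau}r^{-1}(\partial_v\varphi)^2$ is uniformly bounded by Proposition~\ref{rw} with $p=1$, Proposition~\ref{iled} and a dyadic pigeonhole give $\int_{\Sigma_\tau}J^T_\mu[\psi]n^\mu\lesssim E_0\varepsilon^2(1+\tau)^{-1}$ directly.

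The gap is in your second step. To improve $\int_{\Sigma_\tau}J^T_\mu n^\mu$ from $(1+\tau)^{-1}$ to $(1+\tau)^{-2+\alpha}$ you must run Proposition~\ref{iled} again, and for that you need the right-hand side $\mathcal{F}_1(\tau)=\int_{\Sigma_\tau}J^P_\mu n^\mu+\int_{N_\tau}r^{-1}(\partial_v\varphi)^2$ to \emph{decay}. ``Feeding the decay of the degenerate flux back into the $p$ close to $2-\alpha$ instances of (iii)'' addresses only the second summand: on the right-hand side of Proposition~\ref{rw} the dominant term is the $r^{p-2}$-energy at $\tau_1$, which is merely bounded, so what you actually obtain is $\int_{N_\tau}r^{-1-\alpha}(\partial_v\varphi)^2\lesssim(1+\tau)^{-1}$ by pigeonhole and then $\int_{N_\tau}r^{-1}(\partial_v\varphi)^2\lesssim(1+\tau)^{-1+\alpha}$ by interpolation against the bounded $r^{-\alpha}$-weighted energy (this is how the paper handles it). But none of this touches the near-horizon part $\int_{\Sigma_\tau}J^P_\mu n^\mu$. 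Your claim that the $P$-flux ``does not decay'' is incorrect: it \emph{does} decay (at rate $(1+\tau)^{-1+\alpha}$), and proving this is precisely the missing step. It cannot be recovered from $J^T$-decay alone, since $J^P$ controls $\sqrt{D}(Y\psi)^2$ near the horizon while $J^T$ only controls $D(Y\psi)^2$; no localisation argument of the type you sketch closes this gap.

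What the paper actually does is run a separate pigeonhole on the $P$-flux \emph{first}, via Proposition~\ref{iledi}. The right-hand side there involves $\int_{\Sigma_{\tau_1}}J^N_\mu n^\mu$, which is controlled by Proposition~\ref{deg4}; that estimate contains the novel term $\bigl(\int_{\tau_1}^{\tau_2}(\int_{S_{\tau'}}|F|^2)^{1/2}d\tau'\bigr)^2$, which under \eqref{A1} grows like $\tau_2^{\alpha}$ and is the ultimate source of the $\alpha$-loss coming from the horizon. Pigeonholing then yields $\int_{\Sigma_\tau}J^P_\mu n^\mu\lesssim E_0\varepsilon^2(1+\tau)^{-1+\alpha}$, and only after that (combined with the interpolated decay of $\int_{N_\tau}r^{-1}(\partial_v\varphi)^2$) does a second application of Proposition~\ref{iled} plus pigeonhole give the claimed $(1+\tau)^{-2+\alpha}$ rate. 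So the order is: decay of $J^P$ via $J^N$-boundedness, \emph{then} decay of $J^T$ via $J^P$-decay---not the other way around.
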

\begin{proof}
We will prove this in two steps. 

First we will show decay for 
$$  \int_{\Sigma_{\tau}} J^P_{\mu} [\psi ] n^{\mu} d\mi_{g_{\si}} , $$
for any $\tau$. This will imply decay for $\int_{\Sigma_{\tau}} J^T_{\mu} [\psi ] n^{\mu} d\mi_{g_{\si}}$ as well, since 
$$ \int_{\Sigma_{\tau}} J^T_{\mu} [\psi ] n^{\mu} d\mi_{g_{\si}} \mik \int_{\Sigma_{\tau}} J^P_{\mu} [\psi ] n^{\mu} d\mi_{g_{\si}} . $$
This decay though won't be the required one. We will upgrade it to the desired one through the use of the integrated local energy decay estimate of Proposition \ref{iled}.

\paragraph{Step 1:}  We examine one by one the terms in the right hand side of Proposition \ref{iledi}. 

First for the $N$-flux we use Proposition \ref{deg4} and we have that for some $0 < \eta < 1$:
$$ \int_{\si_{\tau_1}} J^N_{\mu} [\psi ] n^{\mu} d\mi_{g_{\si}} \lesssim_{R_0} \int_{\si_0} J^N_{\mu} [\psi ] n^{\mu} d\mi_{g_{\si}}  + $$
$$ +  \int_{0}^{\tau_1} \int_{S_{\tau'}} |F|^2 d\mi_{g_S} +  \int_{0}^{\tau_1} \int_{N_{\tau'}} r^{1+\eta} |F|^2 d\mi_{g_N} + \left( \int_{0}^{\tau_1} \left( \int_{S_{\tau'}} |F|^2 d\mi_{g_S} \right)^{1/2} d\tau' \right)^2$$ $$ \lesssim_{R_0} E_0 \ee^2 + E_0 \ee^2 \tau_1 ^{\aaa} ,$$
with the polynomial growth coming from the term $\left( \int_{0}^{\tau_1} \left( \int_{S_{\tau'}} |F|^2 d\mi_{g_S} \right)^{1/2} d\tau' \right)^2$ and our first assumption.

For the term $\int_{N_{\tau_1}} \dfrac{(\partial_v \ph )^2}{r} d\mi_{g_N}$ we have by the $r$-weighted energy inequality of Proposition \ref{rw} with $p=1$ and our assumptions that:
$$ \int_{N_{\tau_1}} \dfrac{(\partial_v \ph )^2}{r} d\mi_{g_N} \lesssim_{R_0} \int_{N_0} \dfrac{( \partial_v \ph )^2}{r} d\mi_{g_N} + \int_{\Sigma_0} J^T_{\mu} [\psi] n^{\mu} d\mi_{g_{\si}}  + $$ $$ + \int_{0}^{\tau_1} \int_{S_{\tau'}} |F|^2 d\mi_{g_{\so}} +  \int_{0}^{\tau_1} \int_{N_{\tau'}} r^2 |F|^2 d\mi_{g_{\nnn}} \lesssim_{R_0} E_0 \ee^2 .$$

For the first two nonlinear terms we use our assumptions and we have that:
$$ \int_{\tau_1}^{\tau_2} \int_{S_{\tau'}} |F|^2 d\mi_{g_{\so}} + \int_{\tau_1}^{\tau_2} \int_{N_{\tau'}} r^2 |F|^2 d\mi_{g_{\nnn}} \lesssim E_0 \ee^2 (1+\tau_1 )^{-2+\aaa} .$$

Finally from the third nonlinear term we get polynomial growth (as it was noticed earlier too):
$$\left( \int_{\tau_1}^{\tau_2} \left( \int_{S_{\tau'}} |F|^2 d\mi_{g_S} \right)^{1/2} d\tau' \right)^2 \lesssim \left( \int_{\tau_1}^{\tau_2} \dfrac{E_0 \ee^2 }{(1+\tau' )^{1-\aaa/2}} d\tau' \right)^2 \lesssim E_0 \ee^2 \tau_2^{\aaa} . $$

Gathering all the above estimates together and going back to estimate \eqref{iled2} we get the following bound for the integrated $P$-flux:
\begin{equation}\label{pinteg}
\int_{\tau_1}^{\tau_2} \int_{\Sigma_{\tau'}} J^P_{\mu} [\psi] n^{\mu} d\mi_{g_{\rrr}} \lesssim_{R_0}  E_0 \ee^2 \tau_2^{\aaa} .
\end{equation}
Estimate \eqref{pinteg} tells us that through a standard contradiction argument we can find a dyadic sequence $\{ \la_n \}_n$, where the following bound holds true:
\begin{equation}\label{pinteg1}
\int_{\Sigma_{\la_n}} J^P_{\mu} [\psi] n^{\mu} d\mi_{g_{\rrr}} \lesssim_{R_0}  E_0 \ee^2 (1+\la_n )^{-1+\aaa} \mbox{ for all $n$} .
\end{equation}
We would like to extend this estimate to every given $\tau$. Any $\tau$ will always satisfy the property that there will be some $k$ such that $\la_k \mik \tau \mik \la_{k+1}$. We use the uniform boundedness estimate for the $P$-flux of Proposition \ref{puniform} for some $0 < \eta < 1$:
\begin{equation}\label{pinteg2}
\int_{\Sigma_{\tau}} J^P_{\mu} [\psi] n^{\mu} d\mi_{g_{\si}} \lesssim_{R_0} \int_{\Sigma_{\la_k}} J^P_{\mu} [\psi] n^{\mu} d\mi_{g_{\si}} + 
\end{equation}
$$  + \int_{\la_k}^{\la_{k+1}} \int_{S_{\tau'}} |F|^2 d\mi_{g_{\so}} +  \int_{\la_k}^{\la_{k+1}} \int_{N_{\tau'}} r^{1+\eta} |F|^2 d\mi_{g_{\nnn}} .$$
For the first term of the right hand side we use estimate \eqref{pinteg1}, while for the last two we use our assumptions, and we arrive at:
\begin{equation}\label{pinteg3}
\int_{\Sigma_{\tau}} J^P_{\mu} [\psi] n^{\mu} d\mi_{g_{\si}} \lesssim_{R_0} E_0 \ee^2 (1+\la_k )^{-1+\aaa} +  
 + E_0 \ee^2 (1+\la_k )^{-2+\aaa} \lesssim_{R_0}
 \end{equation}
 $$ \lesssim_{R_0} E_0 \ee^2 (1+\la_k )^{-1+\aaa} \approx E_0 \ee^2 (1+\la_{k+1} )^{-1+\aaa} \approx E_0 \ee^2 (1+\tau )^{-1+\aaa} \Rightarrow  $$
 \begin{equation}\label{pintegd}
 \int_{\Sigma_{\tau}} J^P_{\mu} [\psi] n^{\mu} d\mi_{g_{\si}} \lesssim_{R_0} E_0 \ee^2 (1+\tau )^{-1+\aaa} .
 \end{equation}

\paragraph{Step 2:} We already have the following decay for the $T$-flux from estimate \eqref{pintegd}:
\begin{equation}\label{tintegb}
\int_{\Sigma_{\tau}} J^T_{\mu} [\psi] n^{\mu} d\mi_{g_{\si}} \lesssim_{R_0} E_0 \ee^2 (1+\tau )^{-1+\aaa} \mbox{ for all $\tau$}.
\end{equation}
We will now improve this to decay of rate $-2+\aaa$. For this we will use the integrated local energy decay estimate of Proposition \ref{iled} for any $\tau_1$, $\tau_2$ with $\tau_1 < \tau_2$. We will examine one by one the terms of the right hand side of estimate \eqref{iled2}.

For the term $\int_{\Sigma_{\tau}} J^P_{\mu} [\psi] n^{\mu} d\mi_{g_{\si}}$ we have estimate \eqref{pintegd}, so we have decay of rate $-1+\aaa$ for this term.

For the last two nonlinear terms of \eqref{iled1} we just use our assumptions and we get for them decay of rate $-2+\aaa$.

For the term $\int_{N_{\tau}} \dfrac{(\partial_v \ph )^2}{r} d\mi_{g_{N}}$ we will now prove decay of rate $-1+\aaa$.

By estimate \eqref{awayvri} we have that for any $\tau > 0$ and for any $\aaa_1 > 0$ such that $2+\aaa_1 \mik 3-\aaa$ the following holds true:
$$
\int_{N_{\tau}} \dfrac{(\partial_v \ph )^2}{r^{1-\aaa_1}} d\mi_{g_{\nnn}} \lesssim_{R_0} \int_{\Sigma_0 } J^T_{\mu} [\psi ] n^{\mu}d\mi_{g_{\si}}  + \int_{\Sigma_0} \dfrac{(\partial_v \ph )^2}{r^{1-\aaa_1}} d\mi_{g_{\si}} + 
$$
$$ + \int_{0}^{\tau} \int_{S_{\tau'}} |F|^2 d\mi_{g_{\so}} + \int_{0}^{\tau} \int_{N_{\tau'}} r^{2+\aaa_1} |F|^2 d\mi_{g_{\nnn}} \lesssim_{R_0} E_0 \ee^2 \Rightarrow $$ 
\begin{equation}\label{pintegint1} 
\int_{N_{\tau}} \dfrac{(\partial_v \ph )^2}{r^{1-\aaa_1}} d\mi_{g_{\nnn}} \lesssim_{R_0} E_0 \ee^2 , 
\end{equation}
where for the nonlinear terms we used our assumptions.

By estimate \eqref{awayvri1} we have that for any $\tau_1$, $\tau_2$ with $\tau_1 < \tau_2$ and any $\aaa_2 > 0$ such that $3-\aaa_2 \mik 3-\aaa$ the following holds true:
$$ \int_{\tau_1}^{\tau_2} \int_{N_{\tau'}} \dfrac{(\partial_v \ph )^2}{r^{1+\aaa_2} }d\mi_{g_{\nnn}} \lesssim_{R_0} \int_{\Sigma_{\tau_1}} J^T_{\mu} [\psi ] n^{\mu}d\mi_{g_{\si}}  + \int_{\Sigma_{\tau_1}} \dfrac{(\partial_v \ph )^2}{r^{\aaa_2}} d\mi_{g_{\si}} + $$
$$ + \int_{\tau_1}^{\tau_2} \int_{S_{\tau'}} |F|^2 d\mi_{g_{\so}} + \int_{\tau_1}^{\tau_2} \int_{N_{\tau'}} r^{3-\aaa_2} |F|^2 d\mi_{g_{\nnn}} \lesssim_{R_0} $$ $$ \lesssim_{R_0}  \int_{\Sigma_{0}} J^T_{\mu} [\psi ] n^{\mu}d\mi_{g_{\si}}  + \int_{\Sigma_{0}} \dfrac{(\partial_v \ph )^2}{r^{\aaa_2}} d\mi_{g_{\si}} + $$
$$ + \int_{0}^{\tau_2} \int_{S_{\tau'}} |F|^2 d\mi_{g_{\so}} + \int_{0}^{\tau_2} \int_{N_{\tau'}} r^{3-\aaa_2} |F|^2 d\mi_{g_{\nnn}} \lesssim_{R_0} E_0 \ee^2 \Rightarrow $$
\begin{equation}\label{pintegint2}
\int_{\tau_1}^{\tau_2} \int_{N_{\tau'}} \dfrac{(\partial_v \ph )^2}{r^{1+\aaa_2} }d\mi_{g_{\nnn}} \lesssim_{R_0} E_0 \ee^2 ,
\end{equation}
where we used Proposition \ref{rw} with $p=2-\aaa_2$, the uniform boundedness of the degenerate energy of Proposition \ref{deg1} and our assumptions.

Estimate \eqref{pintegint2} implies through a standard contradiction argument that there exists a dyadic sequence $\{ \rho_n \}_n$, where the following holds true:
\begin{equation}\label{pintegint3}
\int_{N_{\rho_n}} \dfrac{(\partial_v \ph )^2}{r^{1+\aaa_2} }d\mi_{g_N} \lesssim_{R_0} E_0 \ee^2 (1+\rho_n )^{-1} \mbox{ for all $n$}.
\end{equation}

Now we set $\aaa_1 = 1-\aaa$ and $\aaa_2 = \aaa$ and we have by an interpolation argument on each $N_{\rho_n}$ for all $n$ between estimate \eqref{pintegint3} and estimate \eqref{pintegint1} (with $\tau = \rho_n$) that:
\begin{equation}\label{pintegint}
\int_{N_{\rho_n}} \dfrac{(\partial_v \ph )^2}{r}d\mi_{g_N} \lesssim_{R_0} E_0 \ee^2 (1+\rho_n )^{-1+\aaa} \mbox{ for all $n$}.
\end{equation} 
See also Theorem 1.18.5 of \cite{triebel} for a more general interpolation result that includes the one that we just used as a very special case.

So finally we note that for any given $\tau$ there exists some $k$ such that $\rho_k \mik \tau \mik \rho_{k+1}$ and then we have:
$$\int_{N_{\tau}} \dfrac{(\partial_v \ph )^2}{r} d\mi_{g_{N}} \lesssim_{R_0} \int_{N_{\la_k}} \dfrac{(\partial_v \ph )^2}{r} d\mi_{g_{N}} + \int_{\Sigma_{\rho_k}} J^T_{\mu} [\psi] n^{\mu} d\mi_{g_{\si}} + $$ $$ + \int_{\rho_k}^{\la_{k+1}} \int_{S_{\tau'}} |F|^2 d\mi_{g_{\so}} +  \int_{\rho_k}^{\la_{k+1}} \int_{N_{\tau'}} r^2 |F|^2 d\mi_{g_{\nnn}} \lesssim_{R_0}$$ $$ \lesssim_{R_0} E_0 \ee^2 (1+\rho_k )^{-1+\aaa} + E_0 \ee^2 (1+\rho_k )^{-2+\aaa} \lesssim_{R_0}   E_0 \ee^2 (1+\rho_k )^{-1+\aaa} \approx  $$ $$ \approx E_0 \ee^2 (1+\rho_{k+1} )^{-1+\aaa} \approx  E_0 \ee^2 (1+\tau )^{-1+\aaa} .$$

So in the end we have by gathering all the aforementioned estimates together that:
$$\int_{\tau_1}^{\tau_2} \int_{\Sigma_{\tau'}} J^T_{\mu} [\psi] n^{\mu} d\mi_{g_{\si}} \lesssim_{R_0} E_0 \ee^2 (1+\tau_1 )^{-1+\aaa} + E_0 \ee^2 (1+\tau_1 )^{-2+\aaa} \Rightarrow $$
\begin{equation}\label{tintegbu}
\int_{\tau_1}^{\tau_2} \int_{\Sigma_{\tau'}} J^T_{\mu} [\psi] n^{\mu} d\mi_{g_{\si}} \lesssim_{R_0} E_0 \ee^2 (1+\tau_1 )^{-1+\aaa} .
\end{equation}
Once again through a contradiction argument we can find a dyadic sequence $\{\kappa_n \}_n$, where the following holds:
\begin{equation}\label{tinteg1}
\int_{\Sigma_{\kappa_n}} J^T_{\mu} [\psi] n^{\mu} d\mi_{g_{\si}} \lesssim_{R_0} E_0 \ee^2 (1+\kappa_n )^{-2+\aaa} \mbox{ for all $n$}.
\end{equation}
We note again that for any given $\tau$ there exists some $k$ such that $\kappa_k \mik \tau \mik \kappa_{k+1}$. Applying now estimate \eqref{degi2} for some $0 < \eta < 1$ and using our assumptions we finally get that:
$$ \int_{\Sigma_{\tau}} J^T_{\mu} [\psi] n^{\mu} d\mi_{g_{\si}} \lesssim_{R_0} \int_{\Sigma_{\kappa_k}} J^T_{\mu} [\psi] n^{\mu} d\mi_{g_{\si}} + $$ $$ + \int_{\kappa_k}^{\kappa_{k+1}} \int_{S_{\tau'}} |F|^2 d\mi_{g_{\so}} +  \int_{\kappa_k}^{\kappa_{k+1}} \int_{N_{\tau'}} r^{1+\eta} |F|^2 d\mi_{g_{\nnn}} \lesssim_{R_0} $$ $$ \lesssim_{R_0} E_0 \ee^2 (1+\kappa_k )^{-2+\aaa} \approx E_0 \ee^2 (1+\kappa_{k+1} )^{-2+\aaa} \approx E_0 \ee^2 (1+\tau )^{-2+\aaa} , $$
as required.

\end{proof}

\section{A Priori Pointwise Decay and Boundedness Estimates}\label{apdbe}
\subsection{A Priori Pointwise Decay for $\psi$}\label{appdp1}
Under the same assumptions as in Theorem \ref{endec} we can prove the following pointwise decay result for $\psi$ away from the horizon (something that we will need later).
\begin{thm}[\textbf{Pointwise Decay For $\psi$ Away From The Horizon}]\label{pdec}
Assume that the bootstrap assumptions \eqref{A1}, \eqref{A2}, \eqref{A3} hold true for some constant $C_{\aaa} := C(\aaa )$, for some $\ee > 0$ (which is related to the initial data), for any $\tau_0$, $\tau_1$, $\tau_2$ with $\tau_1 < \tau_2$ and for any given $\aaa > 0$.

Then for $\psi$ a spherically symmetric solution of \eqref{nwef} and any $\tau$ we have that:
\begin{equation}\label{pdec1}
\|\psi  \|_{L^{\infty} ( \si_{\tau} \cap \{ M < r_0 \mik r \} )} \lesssim_{r_0 , \aaa} \dfrac{\sqrt{E_0} \ee }{\sqrt{r} (1+\tau )^{1-\aaa/2}} ,
\end{equation}
for any such $r_0$ and with the constant that is $r_0 , \aaa$-dependent becoming singular as $r_0 \rightarrow M$ or $\aaa \rightarrow 0$.
\end{thm}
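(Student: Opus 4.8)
The plan is to obtain pointwise decay for $\psi$ away from the horizon via a Sobolev-type embedding on the spheres combined with the $r^p$-weighted hierarchy and the energy decay of Theorem \ref{endec}. The starting observation is that for a spherically symmetric $\psi$ and $\ph = r\psi$, at a fixed point $(\tau, r)$ with $r \geq r_0 > M$, one can write $\ph(\tau, r)$ as an integral of $\partial_v \ph$ along the outgoing null segment $N_\tau$ from $r$ to future null infinity, where $\ph \to$ (the radiation field, which need not vanish) — so instead one integrates from a point where $\psi$ is controlled by the energy. More precisely, I would fix a dyadic-type reference radius or use the fundamental theorem of calculus from $r$ outward together with the fact that $\int_{N_\tau} r^{p-2}(\partial_v \ph)^2 \, d\mi_{g_N}$ is finite for $p$ close to $3$; a Cauchy--Schwarz estimate then gives $|\ph(\tau,r)|^2 \lesssim r^{-(p-1)} \int_{N_\tau} r^{p-2}(\partial_v \ph)^2\,d\mi_{g_N}$ up to a boundary contribution at the inner radius $r_0$ (or at $R_0$).

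First I would control the boundary term: at $r = r_0$ (or $r = R_0$) one has $|\psi(\tau, r_0)|^2 \lesssim \int_{\si_\tau} J^T_\mu[\psi]n^\mu\,d\mi_{g_\si}$ by a one-dimensional Sobolev inequality in $r$ on the compact region $M \leq r \leq r_0$ together with Hardy's inequality \eqref{hardy} (this is where the constant becomes singular as $r_0 \to M$, since the Sobolev embedding on the interval $[M, r_0]$ degenerates). By Theorem \ref{endec} this boundary contribution is $\lesssim E_0\ee^2(1+\tau)^{-2+\aaa}$, which gives decay in $\tau$ strictly better than required, and the $\sqrt{r}$ weight in this range is harmless since $r$ is bounded below. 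Next I would handle the "bulk" contribution from $\int_{N_\tau} r^{p-2}(\partial_v\ph)^2\,d\mi_{g_N}$: I would use estimate \eqref{pintegint} (equivalently Proposition \ref{rw} with $p$ near $1$, interpolated), which under the bootstrap assumptions yields $\int_{N_\tau} \frac{(\partial_v\ph)^2}{r}\,d\mi_{g_N} \lesssim E_0\ee^2(1+\tau)^{-1+\aaa}$ on a dyadic sequence and hence, by Proposition \ref{rw} and the bootstrap bounds \eqref{A2}, \eqref{A3}, at every $\tau$. Choosing $p = 1$ here gives exactly the $r^{-(p-1)} = r^0$... so instead I would take a slightly larger $p$, say $p = 1 + \aaa_1$ with $\aaa_1$ chosen so that the integral $\int_{N_\tau} r^{-1+\aaa_1}(\partial_v\ph)^2$ is controlled (by \eqref{awayvri}, which gives a uniform-in-$\tau$ bound $\lesssim E_0\ee^2$), interpolate that with the $(1+\tau)^{-1+\aaa}$-decaying quantity $\int_{N_\tau} r^{-1}(\partial_v\ph)^2$ to get $\int_{N_\tau} r^{-1}(\partial_v\ph)^2 \lesssim E_0\ee^2(1+\tau)^{-1+\aaa}$ with a small loss, and conclude $|\ph(\tau,r)|^2 \lesssim E_0\ee^2(1+\tau)^{-1+\aaa}$, i.e. $|\psi(\tau,r)|^2 = r^{-2}|\ph(\tau,r)|^2 \lesssim r^{-2} E_0\ee^2(1+\tau)^{-1+\aaa}$. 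Taking square roots and noting $r^{-1} \leq r^{-1/2}$ for $r \geq r_0 > 1$ (after possibly shrinking $r_0$ is not needed since $M$ can be normalized; in any case the constant absorbs $r_0^{-1/2}$) yields the bound $\|\psi\|_{L^\infty(\si_\tau \cap \{r \geq r_0\})} \lesssim_{r_0,\aaa} \sqrt{E_0}\,\ee\, r^{-1/2}(1+\tau)^{-1/2 + \aaa/2}$, and renaming $\aaa$ gives the stated exponent $1 - \aaa/2$.

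The main obstacle I expect is bookkeeping the $r$-weights and the $\aaa$-losses consistently: one must pick the interpolation exponents $\aaa_1, \aaa_2$ so that all the $r^p$-weighted fluxes appearing are indeed bounded (which forces $p < 3$ and ties $p$ to the admissible weight $3 - \aaa$ in the bootstrap assumption \eqref{A3}), while still extracting enough $r$-decay to beat the $r^{-1/2}$ target and enough $\tau$-decay to reach $(1+\tau)^{-1+\aaa/2}$ (before the final square root). A secondary subtlety is the passage from the dyadic sequence estimates (coming from the pigeonhole/contradiction argument applied to the integrated-in-$\tau$ bounds) to every $\tau$, which requires the uniform boundedness propositions (Proposition \ref{rw} forward in $\tau$, and Proposition \ref{deg1} or \eqref{degi2}) exactly as in the proof of Theorem \ref{endec}; this is routine but must be spelled out. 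The degeneration of the constant as $r_0 \to M$ is genuine and unavoidable here — it reflects that this argument gives no information on $\psi$ on the horizon itself, which is consistent with the non-decay results of Theorem \ref{asb}.
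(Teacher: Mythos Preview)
Your approach is substantially more elaborate than the paper's and, more importantly, it does not reach the required $\tau$-decay rate. The paper's argument is a two-line application of the radial Sobolev inequality (inequality (6.1) of \cite{A2}, quoted as \eqref{pdec3}):
\[
\int_{\mathbb{S}^2}\psi^2(r,\omega)\,d\omega \lesssim \frac{1}{r}\int_{\Sigma_\tau\cap\{r'\geq r\}} J^N_\mu[\psi]n^\mu\,d\mi_{g_\Sigma},
\]
followed by the observation that away from the horizon $J^N\approx J^T$, so the right-hand side decays like $r^{-1}(1+\tau)^{-2+\aaa}$ by Theorem \ref{endec}. Spherical symmetry and a square root then give exactly $r^{-1/2}(1+\tau)^{-1+\aaa/2}$.

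The gap in your route is the $\tau$-exponent. Your bulk term is controlled by $\int_{N_\tau}\frac{(\partial_v\ph)^2}{r}\,d\mi_{g_N}$, and the best available decay for this quantity under the bootstrap assumptions is $(1+\tau)^{-1+\aaa}$ (this is \eqref{pintegint}, itself obtained by interpolation and a dyadic pigeonhole). After dividing by $r^2$ and taking a square root you get $|\psi|\lesssim r^{-1}\sqrt{E_0}\,\ee\,(1+\tau)^{-1/2+\aaa/2}$. Weakening $r^{-1}$ to $r^{-1/2}$ is harmless, but the $\tau$-rate $-1/2+\aaa/2$ is a full half-power short of the target $-1+\aaa/2$, and no renaming of $\aaa$ can recover that: $\aaa$ is a small positive parameter, not a free one. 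The point is that the $r^p$-weighted flux at $p=1$ inherits only one power of $\tau$-decay from the hierarchy, whereas the $T$-flux itself carries two. By routing the pointwise bound through $\partial_v\ph$ on $N_\tau$ rather than through the $T$-flux directly, you lose exactly the power you need. Your boundary term at $r_0$ does pick up the correct $(1+\tau)^{-2+\aaa}$ rate, and in fact that computation \emph{is} essentially the paper's whole proof; the mistake is adding a bulk term that decays more slowly and then letting it dominate.
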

\begin{proof}
For any function we have the following inequality (inequality (6.1) in \cite{A2}) just by the Fundamental Theorem of Calculus:
\begin{equation}\label{pdec3}
\int_{\mathbb{S}^2} \psi^2 (r_0 , \omega ) d\omega \mik \dfrac{c}{r_0} \int_{\Sigma_{\tau} \cap \{ r \meg r_0 \}} J^N_{\mu} [\psi ] n^{\mu} d\mi_{g_{\si}} ,
\end{equation}
for $c = c (M, \Sigma_0 )$.

Since away from the horizon we have that 
$$ J^N_{\mu} [\psi ] n^{\mu} \approx J^T_{\mu} [\psi ] n^{\mu} ,$$
we use the result of Theorem \ref{endec} (because of our assumptions) and \eqref{pdec3} becomes for any $r \meg r_0$:
\begin{equation}\label{pdec4}
\int_{\mathbb{S}^2} \psi^2 (r , \omega ) d\omega \lesssim \dfrac{ E_0 \ee^2 }{r (1+ \tau)^{2-\aaa}}  .
\end{equation}
The required result \eqref{pdec1} follows by the assumption of spherical symmetry on $\psi$.
\end{proof}
The aforementioned result can be improved in terms of the $r$ decay, although this will have the effect of worsening the decay with respect to $\tau$.
\begin{thm}[\textbf{Improved Pointwise Decay For $\psi$ Away From The Horizon With Respect To $r$}]\label{pdecr}
Assume that the bootstrap assumptions \eqref{A1}, \eqref{A2}, \eqref{A3} hold true for some constant $C_{\aaa} := C(\aaa )$, for some $\ee > 0$ (which is related to the initial data), for any $\tau_0$, $\tau_1$, $\tau_2$ with $\tau_1 < \tau_2$ and for any given $\aaa > 0$.

Then for $\psi$ a spherically symmetric solution of \eqref{nwef} and any $\tau$ we have that:
\begin{equation}\label{pdecr1}
\| \psi  \|_{L^{\infty} ( \si_{\tau} \cap \{ M < r_0 \mik r \} )} \lesssim_{r_0 , \aaa} \dfrac{\sqrt{E_0} \ee }{r^{1-\aaa} (1+\tau )^{1/2-\aaa/4}} ,
\end{equation}
for any such $r_0$ and with the constant that is $r_0,\aaa$-dependent becoming singular as $r_0 \rightarrow M$ or $\aaa\rightarrow 0$.
\end{thm}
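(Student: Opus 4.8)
The plan is to run the argument behind Theorem \ref{pdec} one level higher in the $r$-weighted hierarchy of Proposition \ref{rw}. First I would dispose of the compact piece $\si_\tau\cap\{r_0\mik r\mik R_0\}$, which lies on $S_\tau$ away from both $\ho$ and $\mathcal{I}^{+}$: there the spatial factor $r^{1-\aaa}$ is comparable to a constant while $(1+\tau)^{1/2-\aaa/4}$ decays strictly more slowly than the rate $(1+\tau)^{1-\aaa/2}$ already supplied by \eqref{pdec1}, so Theorem \ref{pdec} immediately gives \eqref{pdecr1} on this region. It remains to treat $\si_\tau\cap\{r\meg R_0\}=N_\tau$, where I set $\ph=r\psi$ and use the spherically symmetric reduction.

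The only inputs needed on $N_\tau$ are the $r$-weighted fluxes already extracted inside the proof of Theorem \ref{endec}. Combining Proposition \ref{rw} with the decay \eqref{end} and the interpolated bounds \eqref{pintegint}, \eqref{pintegint1} (invoking the a priori estimates of Section \ref{apede} with $\aaa$ replaced by a slightly smaller parameter, which only affects constants), one has on $N_\tau$, for all $1<p\mik 2-\aaa$,
\begin{equation*}
\int_{N_\tau} r^{p}\,\dfrac{(\partial_v\ph)^2}{r^2}\,d\mi_{g_N}\ \lesssim_{p}\ E_0\,\ee^2\,(1+\tau)^{-(2-\aaa-p)},
\end{equation*}
with the endpoint $p=2-\aaa$ uniform in $\tau$ and $p$ near $1$ giving the strongest temporal decay. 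I would then apply the fundamental theorem of calculus in $r$ along $N_\tau$ to $\ph$, writing $\ph(r,\omega)$ as a boundary term at $\mathcal{I}^{+}$ plus an integral of $\partial_v\ph$, and bounding that integral by Cauchy--Schwarz against one of the weighted fluxes above (the complementary $r$-weight being integrable once $p>1$). The boundary term at $\mathcal{I}^{+}$ is harmless: finiteness of the weighted fluxes forces $\liminf_{r\to\infty}r\psi^2=0$ (equivalently, the radiation field exists and is controlled by the same Cauchy--Schwarz bound together with $\ph(R_0,\omega)=R_0\psi(R_0,\omega)$ and Theorem \ref{pdec} at $r=R_0$). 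Collecting terms gives $|\ph(r,\omega)|\lesssim_{R_0,\aaa}\sqrt{E_0}\,\ee\,(1+\tau)^{-(1/2-\aaa/4)}$ uniformly in $r\meg R_0$; dividing by $r$ and using $r^{-1}\mik r_0^{-\aaa}r^{-(1-\aaa)}$ for $r\meg r_0$ yields \eqref{pdecr1} (in fact with the marginally better weight $r^{-1}$, which is the honest $1/r$ tail carried by the radiation field).

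The step I expect to be the main obstacle is the bookkeeping of the two competing rates. The hierarchy of Proposition \ref{rw} is truncated at $p\mik 2-\aaa$ precisely because its right-hand side carries $\int_{N_{\tau'}}r^{p+1}|F|^2$, which the bootstrap assumption \eqref{A3} controls only for $p+1\mik 3-\aaa$; and raising $p$ to improve the spatial weight degrades the temporal decay at exactly the rate $-(2-\aaa-p)$ recorded above. One must therefore choose $p$ just above $1$ and let the auxiliary small parameters (the gap $p-1$ and the rescaling of $\aaa$) tend to zero, checking that both the power of $r$ and the power of $1+\tau$ in the final estimate are no worse than claimed. No genuinely new estimate enters: all nonlinear information has already been absorbed into Theorem \ref{endec} and the $r$-weighted fluxes through the bootstrap assumptions.
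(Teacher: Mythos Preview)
Your approach is correct in spirit and close to the paper's, but the specific Cauchy--Schwarz splitting differs, and your claimed temporal rate is slightly overstated. The paper does not separate the compact region from $N_\tau$; it applies the fundamental theorem of calculus directly to $(r^{1-\aaa}\psi)^2$ from $r_0$ outward and bounds the resulting cross term by
\[
\left(\int_{\si_\tau}\frac{\psi^2}{\rho^2}\,d\mi\right)^{1/2}\left(\int_{\si_\tau}\frac{(\partial_\rho(\rho\psi))^2}{\rho^{4\aaa}}\,d\mi\right)^{1/2}.
\]
The first factor is controlled by Hardy and the full $T$-flux decay of Theorem \ref{endec} (rate $(1+\tau)^{-2+\aaa}$), while the second is uniformly bounded by Proposition \ref{rw} with $p=2-4\aaa$. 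The product then has rate $(1+\tau)^{-1+\aaa/2}$, and the square root delivers exactly the claimed $(1+\tau)^{-(1/2-\aaa/4)}$.

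Your route instead feeds the $p\approx 1$ weighted flux (rate $(1+\tau)^{-1+\aaa}$ from \eqref{pintegint}) into Cauchy--Schwarz against an integrable $r$-weight. This is perfectly valid, and indeed yields the sharper spatial factor $r^{-1}$ rather than $r^{-(1-\aaa)}$; but the temporal output is only $(1+\tau)^{-(1/2-\aaa/2-\delta/2)}$ for $p=1+\delta$, with the constant blowing up as $\delta\to 0$. Your appeal to ``replacing $\aaa$ by a slightly smaller parameter'' does not quite recover the missing factor, because the hypothesis fixes a single $\aaa$ and the bootstrap \eqref{A3} is genuinely stronger for smaller $\aaa$. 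The discrepancy is harmless for the rest of the paper (any rate strictly better than $1/2$ suffices), but the paper's pairing of a fast-decaying factor with a bounded one avoids the limiting argument and lands exactly on the stated exponent.
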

\begin{proof}
We work as before, applying the Fundamental Theorem of Calculus to $r^{1-\aaa} \psi$ this time, for a  fixed $r_0 > M$, $r \meg r_0$, and for any $\tau$:
$$ \left( r^{1-\aaa} \psi \right)^2 (r, \tau) = \left( r_0^{1-\aaa} \psi \right)^2 (r_0 , \tau ) + 2\int_{r_0}^r \rho^{1-\aaa} \psi \partial_{\rho} \left( \rho^{1-\aaa} \psi \right) d\rho \Rightarrow $$ $$ \Rightarrow \left( r^{1-\aaa} \psi \right)^2 \mik \left( r_0^{1-\aaa} \psi \right)^2 (r_0 ,\tau ) + \left( \int_{\si_{\tau}} \dfrac{\psi^2}{\rho^2} d\mi_{g_{\si}} \right)^{1/2} \left( \int_{\si_{\tau}} \dfrac{\left( \partial_{\rho} (\rho \psi ) \right)^2}{\rho^{4\aaa}} d\mi_{g_{\si}} \right)^{1/2} \lesssim $$ $$\lesssim \left( r_0^{1-\aaa} \psi \right)^2 (r_0 , \tau) + \left( \int_{\si_{\tau}} J^T_{\mu} [\psi ] n^{\mu} d\mi_{g_{\si}} \right)^{1/2} \left( \int_{\si_{\tau}} \dfrac{\left( \partial_{\rho} (\rho \psi ) \right)^2}{\rho^{4\aaa}} d\mi_{g_{\si}} \right)^{1/2} , $$
where in the second line we used the assumption of spherical symmetry and that
$$ 2\int_{r_0}^r \rho^{2-\aaa} \partial_{\rho} (\rho^{-\aaa} ) \psi^2  d\rho \mik 0 ,$$ 
and in the third line we used Hardy's inequality \eqref{hardy}. By Proposition \ref{pdec} applied to the term $\left( r_0^{1-\aaa} \psi \right)^2 (r_0 , \tau)$, the fact that the $T$-flux decays as $\tau^{-2+\aaa}$ under our assumptions (as it was shown in Theorem \ref{endec}), and the boundedness of the term $\left( \int_{\si_{\tau}} \dfrac{\left( \partial_{\rho} (\rho \psi ) \right)^2}{\rho^{4\aaa}} d\mi_{g_{\si}} \right)^{1/2}$ by Proposition \ref{rw} with $p= 2-4\aaa$ and our assumptions (since $3-4\aaa < 3-\aaa$) we obtain the following:
$$ \left( r^{1-\aaa} \psi \right)^2 (r, \tau) \lesssim E_0 \ee^2 (1+\tau )^{-2+\aaa} + E_0 \ee^2 (1+\tau )^{-1+\aaa/2} \Rightarrow $$ $$ \Rightarrow \left( r^{1-\aaa} \psi \right)^2 (r, \tau) \lesssim E_0 \ee^2 (1+\tau )^{-1+\aaa/2} , $$
as desired.

\end{proof}

Unfortunately the decay rate of $-1+\aaa$ can't be extended to the whole domain of outer communications. This is a novel aspect of our analysis on extremal black holes. 

\begin{thm}[\textbf{First Global Pointwise Decay Result For $\psi$}]\label{dpsi}
Assume that the bootstrap assumptions \eqref{A1}, \eqref{A2}, \eqref{A3} hold true for some constant $C_{\aaa} := C(\aaa )$, for some $\ee > 0$ (which is related to the initial data), for any $\tau_0$, $\tau_1$, $\tau_2$ with $\tau_1 < \tau_2$ and for any given $\aaa > 0$.

Then for $\psi$ a spherically symmetric solution of \eqref{nwef} we have:
\begin{equation}\label{dw}
\|\psi \|_{L^{\infty} (\Sigma_{\tau} )} \lesssim_{\aaa} \dfrac{\sqrt{E_0} \ee}{(1+\tau )^{1/2 - \aaa/2}} ,
\end{equation}
for any $\tau$.
\end{thm}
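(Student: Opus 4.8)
The plan is to split $\si_{\tau}$ into the region $\{r\meg r_0\}$ for a fixed $r_0>M$, where Theorem \ref{pdec} already yields the stronger bound $\|\psi\|_{L^{\infty}(\si_{\tau}\cap\{r\meg r_0\})}\lesssim_{r_0}\sqrt{E_0}\,\ee\,(1+\tau)^{-1+\aaa/2}$ (stronger than \eqref{dw}), and the compact strip $\{M\mik r\mik r_0\}$ adjacent to $\mathcal{H}^{+}$, on which I will run a one--dimensional fundamental--theorem--of--calculus argument in the radial direction. Thus everything reduces to bounding $\psi$ on the strip.

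On the strip I would integrate $\psi^2$ radially along $S_{\tau}$, whose radial coordinate $\widetilde{r}$ satisfies $\partial_{\widetilde{r}}=\bar{r}\,T+Y$ (so $|\partial_{\widetilde{r}}\psi|\lesssim|T\psi|+|Y\psi|$, and over the compact region the change of variables between $\widetilde{r}$ and $r$ is bounded), starting from $r=r_0$: for $r_{*}\in[M,r_0]$, $\psi^2(r_{*},\tau)=\psi^2(r_0,\tau)-2\int_{r_{*}}^{r_0}\psi\,\partial_{\widetilde{r}}\psi\,d\widetilde{r}$. Using spherical symmetry and Cauchy--Schwarz this gives
$$ \psi^2(r_{*},\tau)\mik\psi^2(r_0,\tau)+C_{r_0}\left(\int_{\si_{\tau}\cap\{r\mik r_0\}}\frac{\psi^2}{r^2}\,d\mi_{g_{\si}}\right)^{1/2}\left(\int_{\si_{\tau}\cap\{r\mik r_0\}}\big((T\psi)^2+(Y\psi)^2\big)\,d\mi_{g_{\si}}\right)^{1/2}. $$
The boundary term $\psi^2(r_0,\tau)$ is $\lesssim_{r_0}E_0\ee^2(1+\tau)^{-2+\aaa}$ by Theorem \ref{pdec}, and the first bracket is, by Hardy's inequality \eqref{hardy} and the degenerate energy decay of Theorem \ref{endec}, $\lesssim\int_{\si_{\tau}}J^T_{\mu}[\psi]n^{\mu}\,d\mi_{g_{\si}}\lesssim E_0\ee^2(1+\tau)^{-2+\aaa}$.

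For the second bracket I would invoke the uniform non--degenerate energy estimate of Proposition \ref{deg4} with $\tau_1=0$: the initial $N$--flux is $\lesssim E_0\ee^2$ (from \eqref{inen1} and the hypotheses on the data), the two quadratic $F$--terms are $\lesssim_{\aaa}E_0\ee^2$ by \eqref{A2} and \eqref{A3} (taking $\eta\mik 2-\aaa$), while the last ``problematic'' term obeys, by \eqref{A1},
$$ \left(\int_0^{\tau}\Big(\int_{S_{\tau'}}|F|^2\,d\mi_{g_S}\Big)^{1/2}d\tau'\right)^2\lesssim_{\aaa}\left(\int_0^{\tau}\frac{\sqrt{E_0}\,\ee}{(1+\tau')^{1-\aaa/2}}\,d\tau'\right)^2\lesssim_{\aaa}E_0\ee^2(1+\tau)^{\aaa}, $$
so that $\int_{\si_{\tau}}\big((T\psi)^2+(Y\psi)^2\big)\,d\mi_{g_{\si}}\lesssim\int_{\si_{\tau}}J^N_{\mu}[\psi]n^{\mu}\,d\mi_{g_{\si}}\lesssim_{\aaa}E_0\ee^2(1+\tau)^{\aaa}$. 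Multiplying the two brackets then gives $\psi^2(r_{*},\tau)\lesssim_{\aaa,r_0}E_0\ee^2(1+\tau)^{-1+\aaa}$, i.e. the bound \eqref{dw} on the strip, and combining with the estimate on $\{r\meg r_0\}$ finishes the proof.

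The one genuinely delicate point — and the reason only the lossy rate $(1+\tau)^{-1/2+\aaa/2}$ is obtained here, rather than the off--horizon rate $-1+\aaa/2$ of Theorem \ref{pdec} — is that the non--degenerate energy $\int_{\si_{\tau}}J^N_{\mu}[\psi]n^{\mu}\,d\mi_{g_{\si}}$ does not decay and in fact grows like $(1+\tau)^{\aaa}$, forced by the problematic spacetime term in Proposition \ref{deg4} that reflects the degeneration of the redshift on extremal Reissner--Nordstr\"{o}m; near $\mathcal{H}^{+}$ one cannot do better than interpolating this non--decaying $\dot H^1$ control against the decaying $L^2$ control furnished by Hardy's inequality together with Theorem \ref{endec}.
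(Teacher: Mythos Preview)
Your proof is correct and follows essentially the same approach as the paper: split into the region away from the horizon (handled by Theorem \ref{pdec}) and the near-horizon strip, on which a one--dimensional fundamental--theorem--of--calculus argument together with Cauchy--Schwarz reduces the estimate to the product of the decaying Hardy/$T$--flux term and the (merely $(1+\tau)^{\aaa}$--bounded) non--degenerate energy from Proposition \ref{deg4}. The only cosmetic difference is that the paper integrates over a unit-width interval $[r_c,r_c+1]$ while you integrate inward from a fixed $r_0$; both are equivalent on the compact strip.
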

\begin{proof}
Using the previously stated inequality \eqref{pdec3} and applying Theorem \ref{endec} (which holds true because of our assumptions) we have that for any $M < r' < R_0$ the following holds:
$$ \int_{\mathbb{S}^2} \psi^2 (r' , \omega) d\omega \lesssim \dfrac{1}{(r' - M)^2} \int_{\Sigma_{\tau}} J^T_{\mu} [\psi ] n^{\mu} d\mi_{g_{\sis}} \Rightarrow $$ 
\begin{equation}\label{p1}
\int_{\mathbb{S}^2} \psi^2 (r' , \omega) d\omega \lesssim \dfrac{E_0 \ee^2}{(r' - M)^2 (1+\tau )^{2-\aaa} }.
\end{equation}

Using now Stokes' Theorem in the region $\rrr (\tau_1 , \tau_2 ) \cap \{ r_c \mik r \mik r_c +1 \}$ for $r_c \meg M$ (we also assume that $r_c + 1 < R_0$ without loss of generality), Cauchy-Schwarz, the uniform boundedness of the non-degenerate energy \eqref{degi4}, Theorem \ref{endec}, our assumptions and Hardy's inequality \eqref{hardy} we arrive at the following estimate for any $\tau$:
$$ \int_{\mathbb{S}^2} \psi^2 (r_c , \omega) d\omega \mik \int_{\mathbb{S}^2} \psi^2 (r_c + 1 , \omega) d\omega + \int_{\Sigma_{\tau} \cap \{ r_c \mik r \mik r_c + 1 \}} \psi \partial_{\rho} \psi d\mi_{g_{\si}} \lesssim $$ $$ \lesssim  \dfrac{E_0 \ee^2}{ (1+\tau )^{2-\aaa}} + \dfrac{\sqrt{E_0} \ee}{ (1+\tau )^{1-\aaa/2} } \left( \int_{\Sigma_{0}} J^N_{\mu} [\psi ] n^{\mu} d\mi_{g_{\si}} +  \int_{0}^{\tau} \int_{S_{\tau'}} |F|^2 d\mi_{g_S} + \right. $$ $$\left. + \int_{0}^{\tau} \int_{N_{\tau'}} r^{1+\eta} |F|^2 d\mi_{g_N} + \left(\int_{0}^{\tau} \left( \int_{S_{\tau'}} |F|^2 d\mi_{S_{\tau'}} \right)^{1/2} d\tau'\right)^{2} \right)^{1/2} \Rightarrow $$
$$
\int_{\mathbb{S}^2} \psi^2 (r' , \omega) d\omega \lesssim  \dfrac{E_0 \ee^2}{ (1+ \tau )^{2-\aaa} } + \dfrac{\sqrt{E_0} \ee}{ (1+ \tau )^{1-\aaa/2} } \left( E_0 \ee^2 + E_0 \ee^2 \tau^{\aaa} \right)^{1/2} 
$$
\begin{equation}\label{p2}
 \Rightarrow \int_{\mathbb{S}^2} \psi^2 (r' , \omega) d\omega \lesssim \dfrac{E_0 \ee^2}{ (1+ \tau )^{1 - \aaa} } .
 \end{equation}
 
Estimate \eqref{p2} gives us in the end the required decay once we assume that $\psi$ is spherically symmetric.

\end{proof}

\subsection{A Priori Pointwise Boundedness For $Y \psi$}\label{drbound}
We will now prove a priori boundedness results for $Y \psi$. 
\begin{thm}[\textbf{Boundedness For $Y\psi$}]\label{dr}
There exists some $\ee_r : = \ee_r (\aaa ) > 0$ such that if $0 < \ee < \ee_r$ and the bootstrap assumptions \eqref{A1}, \eqref{A2}, \eqref{A3} hold true for $\psi$ a spherically symmetric solution of \eqref{nwef}, for some constant $C_{\aaa} := C(\aaa )$, for any $\tau_0$, $\tau_1$, $\tau_2$ with $\tau_1 < \tau_2$ and for any given $\aaa > 0$, then we have that:
\begin{equation}\label{drr}
\| Y \psi  \|_{L^{\infty} (\Sigma_{\tau})} \lesssim \sqrt{E_0} \ee ,
\end{equation}
\begin{equation}\label{rdecpsi1}
| r \psi | \lesssim_{\bar{\aaa}} \sqrt{E_0} \ee r^{\bar{\aaa}} ,
\end{equation}
\begin{equation}\label{rdecdv1}
| r^2 \partial_v \psi | \lesssim_{\bar{\aaa}} \sqrt{E_0} \ee r^{\bar{\aaa}} ,
\end{equation}
for any $\bar{\aaa} > 0$ and for all $\tau$, where the $\bar{\aaa}$ dependent constant becomes singular (like $1/\bar{\aaa}$) as $\bar{\aaa} \rightarrow 0$.
\end{thm}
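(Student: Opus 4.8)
\noindent\emph{Strategy.} The plan is to run a continuity/bootstrap argument on the three quantities in \eqref{drr}, \eqref{rdecpsi1}, \eqref{rdecdv1} simultaneously (on a time interval $[0,\tau^*]$, with an enlarged constant $2B$, improving it to $B$), feeding it with the energy decay of Theorem \ref{endec}, the pointwise decay of $\psi$ from Theorems \ref{dpsi}, \ref{pdec}, \ref{pdecr}, and the method of characteristics. The key algebraic fact is that, for spherically symmetric $\psi$, equation \eqref{nwef} reads in the $(v,r)$ coordinates as a transport equation for $Y\psi$ along the outgoing null field $L=2\partial_v+D\,\partial_r$,
$$ L(Y\psi) = F - \Big(D' + \tfrac{2D}{r}\Big) Y\psi - \tfrac{2}{r}\,T\psi . $$
Since the friction coefficient $D'+\tfrac{2D}{r}$ degenerates on $\ho$ (the degenerate red--shift), one cannot propagate $Y\psi$ alone. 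Instead I would use the renormalised quantity $w:=Y\psi+\tfrac1r\psi$, for which a direct computation along $L$ gives
$$ L(w) = F - \Big(D' + \tfrac{D}{r}\Big) w + \tfrac{D'}{r}\,\psi . $$
Here the $T\psi$ source has disappeared, so $T\psi$ re-enters only through $F=A(\psi)\big(D(Y\psi)^2+2\,T\psi\,Y\psi\big)$, hence \emph{quadratically}; $w|_{\ho}$ is precisely the almost-conserved quantity of Theorem \ref{asb}(a). I would split the domain into $\{r\meg R_0\}$, $\{r_0\mik r\mik R_0\}$ and $\{M\mik r\mik r_0\}$.

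\noindent\emph{Near $\mathcal I^+$ and the $r$--weighted bounds.} Working with $\ph=r\psi$ (so $r\partial_v\ph=r^2\partial_v\psi$), Proposition \ref{rw} with $p$ slightly below $3$ (so that $\int r^{p+1}|F|^2$ is dominated by \eqref{A3}), together with Theorem \ref{endec} and the bootstrap assumptions, gives a uniform bound for $\int_{N_\tau} r^{p-2}(\partial_v\ph)^2\,d\mi_{g_{\nnn}}$. Integrating $\partial_v\ph$ along the outgoing ray $N_\tau$ from $r=R_0$ and applying Cauchy--Schwarz with this weight (using $dr\approx D\,dv$ for large $r$) yields $|\ph|\lesssim_{\bar\aaa}|\ph|_{r=R_0}+\sqrt{E_0}\,\ee\,r^{\bar\aaa}$, the boundary term being $\lesssim\sqrt{E_0}\,\ee$ by Theorem \ref{pdec}; this is \eqref{rdecpsi1}. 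For \eqref{rdecdv1} I would integrate the equation for $\ph$ (of the form $\partial_u\partial_v\ph+\tfrac{DD'}{4r}\ph=(\text{lower order in }\ph)+O(rF)$) along the incoming rays $v=\mathrm{const}$ inward from $\mathcal I^+$: the potential $\tfrac{DD'}{r}\sim r^{-3}$ is integrable, $|\ph|\lesssim r^{\bar\aaa}$ by the previous step, and the $F$--term is controlled by \eqref{A3}, giving $|r\partial_v\ph|\lesssim_{\bar\aaa}\sqrt{E_0}\,\ee\,r^{\bar\aaa}$. Finally \eqref{drr} for $r\meg R_0$ follows from $Y\psi=\tfrac1D(\partial_v-\partial_u)\psi$ once $\partial_u\psi$ is bounded, which comes from one more integration of the $\ph$--equation in $v$ out of $r=R_0$ (with elliptic estimates near $r=R_0$, available away from $\ho$, for the boundary datum) together with the already established decay of $\partial_v\psi$ and $\psi$ (Theorem \ref{pdecr}).

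\noindent\emph{Middle region and near $\ho$.} In $\{r_0\mik r\mik R_0\}$, where $D$ is bounded below, I would bound $Y\psi$ by integrating $L(w)=\dots$ backwards along $L$ to $\Sigma_0$ (where $w$ is bounded by the data), or equivalently by the second-order $L^2$ elliptic estimates away from $\ho$ of \cite{A1} plus the fundamental theorem of calculus in $r$; the source $F$ is quadratically small by the bootstrap and smallness of $\ee$, the $T\psi$--term is absorbed via Cauchy--Schwarz in $r$, the Morawetz estimate \eqref{degix1} and Theorem \ref{endec}, and $\psi$ is bounded by Theorem \ref{dpsi}; in this region \eqref{rdecpsi1}, \eqref{rdecdv1} reduce to the (bootstrapped) boundedness of $\psi$ and $T\psi$ since $r$ is bounded. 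In $\{M\mik r\mik r_0\}$ the rays $L$ asymptote to $\ho$; integrating $L(w)$ along such a ray, the genuine nonnegative friction $D'+\tfrac Dr$ contributes only a bounded multiplicative factor, the term $\tfrac{D'}{r}\psi$ is controlled as in the linear case (the factor $D'\sim\sqrt D$ makes $\int D'|\psi|$ finite and $O(\sqrt{E_0}\,\ee)$), and one is left with $\int|F|$ along the ray. The first piece of $F$, namely $D(Y\psi)^2$, carries an extra $D$ which against $dv=\tfrac2D\,dr$ integrates to a bounded quantity, of size $\lesssim B^2E_0\ee^2$. For the second piece, $\lesssim\sqrt{E_0}\,\ee\,|T\psi|$ under the bootstrap, I would pass from the single ray to the spacetime integral, slice dyadically in $\tau$, and bound each slice by Cauchy--Schwarz against the $\tau$--decaying spacetime--integrated $T$--flux near $\ho$ (Proposition \ref{ilep} refined by Theorem \ref{endec}), obtaining a convergent sum of size $\lesssim_\aaa B^2E_0\ee^2$. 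Choosing $\ee<\ee_r(\aaa)$ small then makes the total nonlinear contribution strictly smaller than the bootstrapped size $B\sqrt{E_0}\,\ee$, closing the argument and proving \eqref{drr}, hence also \eqref{rdecpsi1}, \eqref{rdecdv1}, in $\{M\mik r\mik r_0\}$.

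\noindent\emph{Main obstacle.} The hard part is exactly this last step: because the red--shift degenerates on $\ho$, the transport equation for $w$ provides no decay mechanism, so every source must be estimated \emph{without any loss}; in particular the linear--in--$T\psi$ structure of $F$ must be exploited carefully, since $T\psi$ is not known to decay pointwise and only its $\tau$--integrated flux near $\ho$ decays. This borderline character is what forces the appearance of the bad term in Proposition \ref{deg4}, the passage to the renormalised quantity $w$, and the fact that $Y\psi$ (and, in the companion argument, $T\psi$) must be bootstrapped rather than estimated outright; it is also the mechanism behind the almost--conservation law of Theorem \ref{asb}(a).
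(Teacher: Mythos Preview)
Your overall plan—bootstrap on $Y\psi$, $r\psi$, $r^2\partial_v\psi$ simultaneously, fed by the method of characteristics and the pointwise decay of $\psi$—is the same as the paper's, and your choice of renormalised quantity $w=Y\psi+\tfrac1r\psi$ is a perfectly reasonable (and arguably cleaner) substitute for the paper's $\zeta/\nu=rY\psi$ in null coordinates. Your treatment of the region $\{r\meg R_0\}$ via the $r^p$--estimate and integration of the $\ph$--equation in $u$ is essentially what the paper does (the paper's equation \eqref{upartialv1} is the same $u$--integration).

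The genuine gap is in the near--horizon step, specifically your handling of the piece $\int_0^v |T\psi\cdot Y\psi|\,ds$ along an $L$--ray. Your proposal ``pass from the single ray to the spacetime integral, slice dyadically in $\tau$, and bound each slice by Cauchy--Schwarz against the $\tau$--decaying spacetime--integrated $T$--flux'' does not work: a ray integral $\int_{2^n}^{2^{n+1}} (T\psi)^2(s,r(s))\,ds$ is a one--dimensional object and is not controlled by the two--dimensional spacetime integral $\int_{2^n}^{2^{n+1}}\int_M^{r_0}(T\psi)^2\,r^2\,dr\,ds$ without an $L^\infty_r$--to--$L^2_r$ embedding, which would require $YT\psi$ (i.e.\ a commutation you do not have). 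More concretely, the dyadic decay $\int_{2^n}^{2^{n+1}}\int (T\psi)^2 \lesssim 2^{-n(1-\aaa)}$ only produces, after Cauchy--Schwarz in $s$, a bound of order $\sum_n 2^{n/2}\cdot 2^{-n(1-\aaa)/2}=\sum_n 2^{n\aaa/2}$ for the ray $L^1$--norm, which diverges.

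The paper resolves this not by passing to a spacetime integral but by an iterated integration by parts \emph{along the ray}: writing $r\partial_v\psi=\partial_v(r\psi)-\partial_v r\cdot\psi$ and repeatedly using the equation to trade each appearance of $\partial_v\psi$ (or $\partial_{uv}\psi$) for boundary terms plus integrals carrying an extra factor of $\psi$ or $\psi^2$. After two rounds, every remaining integrand contains $\psi^2$, which \emph{is} $L^1$ in $v$ thanks to the improved pointwise decay $|\psi|\lesssim (1+v)^{-3/5+\aaa_0}$ (Theorem \ref{dwee}, used inside the bootstrap). This is the content of the paper's formula \eqref{nlnull} for the nonlinear integral, and it is precisely what replaces your spacetime--integral step. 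Your framework with $w$ would close the same way once you insert this integration--by--parts mechanism in place of the dyadic argument.
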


Before giving the proof of this Theorem we will record the equations for certain quantities of interest for this Section. These will be derived in the abstract null coordinate system and in the region $\mathcal{D}$ (which can be seen as a subset of the domain of outer communications which includes a part -- and not all -- of future null infinity) that are described in Appendix \ref{ngrn}.

In this coordinate system we define the following quantities:
$$ \la = \partial_v r, \quad \nu = \partial_u r, \quad \zeta = r \partial_u \psi, \quad \theta = r \partial_v \psi .$$
A computation shows the following:
\begin{equation}\label{zeta}
 \partial_v \zeta = -\frac{\theta \nu}{r} - \frac{\Omega^2}{4} rF ,
 \end{equation}
\begin{equation}\label{theta}
\partial_u \theta = -\frac{\zeta \lambda}{r} - \frac{\Omega^2}{4} rF.
\end{equation}
Recall also that in this coordinate system we have that:
$$ F = A(\psi ) g^{\alpha \beta} \partial_{\alpha} \psi \partial_{\beta} \psi = -\frac{4}{\Omega^2} A(\psi ) \partial_u \psi \partial_v \psi ,$$
so the aforementioned equations \eqref{zeta} and \eqref{theta} are consequences of the equation:
\begin{equation}\label{nwefnull}
r \partial_{uv} \psi = -\partial_u r \partial_v \psi - \partial_v r \partial_u \psi + r A(\psi ) \partial_u \psi \partial_v \psi .
\end{equation}

We notice first that the quantity $Y\psi$ can be written as follows:
\begin{equation}\label{ypsi}
Y\psi = \dfrac{1}{r} \dfrac{\zeta}{\nu} .
\end{equation}

We examine the quantity $\dfrac{\zeta}{\nu}$ since proving boundedness for this will imply boundedness (and decay with respect to $r$ additionally) for $Y\psi$. We have:
$$ \partial_v \left( \frac{\zeta}{\nu} \right) = \frac{\partial_v \zeta}{\nu} - \frac{\zeta}{\nu} \frac{\partial_v \nu}{\nu} .$$
For this last ODE, by noticing first that $e^{-\int_{\bar{v}}^{v} \frac{\partial_{v'} \nu}{\nu} dv'} = \dfrac{\nu (u,\bar{v})}{\nu(u,v)}$, we obtain the following solution:
\begin{equation}\label{zetan}
\frac{\zeta}{\nu} (u,v) = \frac{\zeta}{\nu} (u, 0 ) \dfrac{\nu (u,0)}{\nu (u,v)} - \int_0^v \partial_{\bar{v}} \psi \dfrac{\nu (u,\bar{v})}{\nu(u,v)} d\bar{v}  + \dfrac{1}{\nu(u,v)} \int_0^v r A(\psi) \partial_u \psi \partial_{\bar{v}} \psi d\bar{v} .
\end{equation}
We look at the last term that comes from the nonlinearity:
\begin{equation}\label{zetan11}
 \dfrac{1}{\nu(u,v)} \int_0^v r A(\psi) \partial_u \psi \partial_{\bar{v}} \psi d\bar{v} = \dfrac{1}{\nu(u,v)} \int_0^v \partial_{\bar{v}} ( r \psi ) A(\psi) \partial_u \psi d\bar{v} - 
 \end{equation}
  $$ - \dfrac{1}{\nu(u,v)} \int_0^v \partial_{\bar{v}} r A(\psi) \partial_u \psi  \cdot \psi d\bar{v} .$$
We look at the two terms of the equality given above separately.
$$ \dfrac{1}{\nu(u,v)} \int_0^v \partial_{\bar{v}} ( r \psi ) A(\psi) \partial_u \psi d\bar{v} = \dfrac{1}{\nu (u,v)} r A(\psi ) \partial_u \psi \cdot \psi |_0^v - $$ $$ - \dfrac{1}{\nu(u,v)} \int_0^v r A' (\psi) \partial_{\bar{v}} \psi \partial_u \psi \cdot \psi d\bar{v}  - \dfrac{1}{\nu(u,v)} \int_0^v r \partial_{u\bar{v}} \psi A(\psi) \psi d\bar{v} .$$
We look now at:
$$ - \dfrac{1}{\nu(u,v)} \int_0^v \partial_{\bar{v}} r A(\psi) \partial_u \psi  \psi d\bar{v} = - \dfrac{1}{\nu(u,v)} \int_0^v \partial_{\bar{v}} ( r \partial_u \psi ) A(\psi) \psi d\bar{v} + $$ $$ + \dfrac{1}{\nu(u,v)} \int_0^v  r \partial_{u \bar{v}} \psi  A(\psi) \psi d\bar{v} .$$
We add the right hand sides of the last two equalities and we turn \eqref{zetan11} into the following:
\begin{equation}\label{null1}
 \dfrac{1}{\nu (u,v)} \int_0^v r A(\psi) \partial_u \psi \partial_{\bar{v}} \psi d\bar{v} = \dfrac{1}{\nu (u,v)} r A(\psi ) \partial_u \psi \cdot \psi |_0^v - 
\end{equation} 
$$ - \dfrac{1}{\nu (u,v)} \int_0^v r A' (\psi) \partial_{\bar{v}} \psi \partial_u \psi \cdot \psi d\bar{v}  - \dfrac{1}{\nu(u,v)} \int_0^v \partial_{\bar{v}} ( r \partial_u \psi ) A(\psi) \psi d\bar{v} .$$
Now for the second term of the equality \eqref{null1} given above we have:
$$ - \dfrac{1}{\nu(u,v)} \int_0^v r A' (\psi) \partial_{\bar{v}} \psi \partial_u \psi \cdot \psi d\bar{v} = - \dfrac{1}{2\nu(u,v)} \int_0^v r A' (\psi ) \partial_u \psi \partial_{\bar{v}} (\psi^2 ) d\bar{v} = $$ $$ = - \dfrac{1}{2\nu(u,v)} r A' (\psi ) \partial_u \psi \cdot \psi^2 |_0^v + \dfrac{1}{2\nu(u,v)} \int_0^v \partial_{\bar{v}} r A' (\psi ) \partial_u \psi \cdot \psi^2 d\bar{v} + $$ $$ + \dfrac{1}{2\nu(u,v)} \int_0^v r A'' (\psi ) \partial_u \psi \partial_{\bar{v}} \psi \cdot \psi^2 d\bar{v} + \dfrac{1}{2\nu(u,v)} \int_0^v r A' (\psi ) \partial_{u \bar{v}} \psi \cdot \psi^2 d\bar{v} .$$
For the last term of the equality given above we use the equation \eqref{nwefnull}:
$$ \dfrac{1}{2\nu(u,v)} \int_0^v r A' (\psi ) \partial_{u \bar{v}} \psi \cdot \psi^2 d\bar{v} = - \dfrac{1}{2\nu(u,v)} \int_0^v \partial_u r A' (\psi ) \partial_{\bar{v}} \psi \cdot \psi^2 d\bar{v} - $$ $$ - \dfrac{1}{2\nu(u,v)} \int_0^v \partial_{\bar{v}} r A' (\psi ) \partial_u \psi \cdot \psi^2 d\bar{v} + \dfrac{1}{2\nu(u,v)} \int_0^v  r A(\psi ) A' (\psi ) \partial_u \psi \partial_{\bar{v}} \psi \cdot \psi^2 d\bar{v} .$$

Having computed all the sub-terms of the second term of \eqref{null1}, we turn now to the third and last term of this equality. We use \eqref{zeta}:
$$ - \dfrac{1}{\nu(u,v)} \int_0^v \partial_{\bar{v}} ( r \partial_u \psi ) A(\psi) \psi d\bar{v} = \dfrac{1}{\nu(u,v)} \int_0^v \partial_u r A(\psi ) \partial_{\bar{v}} \psi \cdot \psi d\bar{v} - $$ $$ - \dfrac{1}{\nu(u,v)} \int_0^v  r A^2 (\psi ) \partial_u \psi \partial_{\bar{v}} \psi \cdot \psi d\bar{v} .$$

For the second term we have:
$$  \dfrac{1}{\nu(u,v)} \int_0^v  r A^2 (\psi ) \partial_u \psi \partial_{\bar{v}} \psi \cdot \psi d\bar{v} =  \dfrac{1}{2 \nu(u,v)} \int_0^v  r A^2 (\psi ) \partial_u \psi \partial_{\bar{v}} ( \psi^2 ) d\bar{v} . $$

Gathering all the terms together we arrive at the following:
$$ \dfrac{1}{\nu (u,v)} \int_0^v r A(\psi) \partial_u \psi \partial_{\bar{v}} \psi d\bar{v} = \dfrac{1}{\nu (u,v)} r A(\psi ) \partial_u \psi \cdot \psi |_0^v - \dfrac{1}{2\nu(u,v)} r A' (\psi ) \partial_u \psi \cdot \psi^2 |_0^v + $$ $$ + \dfrac{1}{2\nu(u,v)} \int_0^v \partial_{\bar{v}} r A' (\psi ) \partial_u \psi \cdot \psi^2 d\bar{v}  + \dfrac{1}{2\nu(u,v)} \int_0^v r A'' (\psi ) \partial_u \psi \partial_{\bar{v}} \psi \cdot \psi^2 d\bar{v} - $$ $$ - \dfrac{1}{2\nu(u,v)} \int_0^v \partial_u r A' (\psi ) \partial_{\bar{v}} \psi \cdot \psi^2 d\bar{v} - \dfrac{1}{2\nu(u,v)} \int_0^v \partial_{\bar{v}} r A' (\psi ) \partial_u \psi \cdot \psi^2 d\bar{v} + $$ $$ + \dfrac{1}{2\nu(u,v)} \int_0^v  r A(\psi ) A' (\psi ) \partial_u \psi \partial_{\bar{v}} \psi \cdot \psi^2 d\bar{v} + \dfrac{1}{\nu(u,v)} \int_0^v \partial_u r A(\psi ) \partial_{\bar{v}} \psi \cdot \psi d\bar{v} - $$ $$ - \dfrac{1}{2 \nu(u,v)} \int_0^v  r A^2 (\psi ) \partial_u \psi \partial_{\bar{v}} ( \psi^2 ) d\bar{v} =$$ 
\begin{equation}\label{zetan2}
= \dfrac{1}{\nu (u,v)} r A(\psi ) \partial_u \psi \cdot \psi |_0^v - \dfrac{1}{2\nu(u,v)} r A' (\psi ) \partial_u \psi \cdot \psi^2 |_0^v + 
\end{equation}
 $$ +\dfrac{1}{2\nu(u,v)} \int_0^v r A'' (\psi ) \partial_u \psi \partial_{\bar{v}} \psi \cdot \psi^2 d\bar{v} -  \dfrac{1}{2\nu(u,v)} \int_0^v \partial_u r A' (\psi ) \partial_{\bar{v}} \psi \cdot \psi^2 d\bar{v} +$$ $$ + \dfrac{1}{2\nu(u,v)} \int_0^v  r A(\psi ) A' (\psi ) \partial_u \psi \partial_{\bar{v}} \psi \cdot \psi^2 d\bar{v} + \dfrac{1}{\nu(u,v)} \int_0^v \partial_u r A(\psi ) \partial_{\bar{v}} \psi \cdot \psi d\bar{v} - $$ $$ - \dfrac{1}{2 \nu(u,v)} \int_0^v  r A^2 (\psi ) \partial_u \psi \partial_{\bar{v}} ( \psi^2 ) d\bar{v} .$$  

We look now at the last two terms above. First we have by integrating by parts once more:
$$ \dfrac{1}{\nu(u,v)} \int_0^v \partial_u r A(\psi ) \partial_{\bar{v}} \psi \cdot \psi d\bar{v} =\dfrac{1}{2\nu(u,v)} \int_0^v \partial_u r A(\psi ) \partial_{\bar{v}} (\psi^2 ) d\bar{v} = $$ $$ =\dfrac{1}{2\nu(u,v)} \partial_u r A(\psi ) \psi^2 |_0^v - \dfrac{1}{2\nu(u,v)} \int_0^v \partial_u r A' (\psi ) \partial_{\bar{v}} \psi \cdot \psi^2 d\bar{v} - $$ $$ - \dfrac{1}{2\nu(u,v)} \int_0^v \partial_{u\bar{v}} r A(\psi ) \psi^2 d\bar{v} .$$ 
And then we have for the last term of \eqref{zetan2}:
$$ - \dfrac{1}{2 \nu(u,v)} \int_0^v  r A^2 (\psi ) \partial_u \psi \partial_{\bar{v}} ( \psi^2 ) d\bar{v} = $$ $$ = - \dfrac{1}{2 \nu(u,v)} r A^2 (\psi ) \partial_u \psi \cdot \psi^2 |_0^v + \dfrac{1}{2 \nu(u,v)} \int_0^v \partial_{\bar{v}} r A^2 (\psi) \partial_u \psi \cdot \psi^2 d\bar{v} + $$ $$ + \dfrac{1}{ \nu(u,v)} \int_0^v r A' (\psi ) A(\psi ) \partial_u \psi \partial_v \psi \cdot \psi^2 d\bar{v} + \dfrac{1}{2 \nu(u,v)} \int_0^v r A^2 (\psi ) \partial_{u\bar{v}} \psi \cdot \psi^2 d\bar{v} .$$
For the last term of the above equality we use the equation \eqref{nwefnull}:
$$ \dfrac{1}{2 \nu(u,v)} \int_0^v r A^2 (\psi ) \partial_{u\bar{v}} \psi \cdot \psi^2 d\bar{v} = - \dfrac{1}{2 \nu(u,v)} \int_0^v  A^2 (\psi ) \partial_u r \partial_{\bar{v}} \psi \cdot \psi^2 d\bar{v} - $$ $$ - \dfrac{1}{2 \nu(u,v)} \int_0^v  A^2 (\psi ) \partial_{\bar{v}} r \partial_u \psi \cdot \psi^2 d\bar{v} + \int_0^v r A^3 (\psi ) A' (\psi ) \partial_u \psi \partial_{\bar{v}} \psi \cdot \psi^2 d\bar{v} .$$
Once more we gather all the terms for $\dfrac{1}{\nu (u,v)} \int_0^v r A(\psi) \partial_u \psi \partial_{\bar{v}} \psi d\bar{v}$ and we have:
\begin{equation}\label{nlnull}
\dfrac{1}{\nu (u,v)} \int_0^v r A(\psi) \partial_u \psi \partial_{\bar{v}} \psi d\bar{v} =
\end{equation}
$$ = \dfrac{1}{\nu (u,v)} r A(\psi ) \partial_u \psi \cdot \psi |_0^v - \dfrac{1}{2\nu(u,v)} r A' (\psi ) \partial_u \psi \cdot \psi^2 |_0^v + $$ $$ + \dfrac{1}{2\nu(u,v)} \partial_u r A(\psi ) \psi^2 |_0^v - \dfrac{1}{2 \nu(u,v)} r A^2 (\psi ) \partial_u \psi \cdot \psi^2 |_0^v + $$ $$ +\dfrac{1}{2\nu(u,v)} \int_0^v \left(A'' (\psi) + 3 A(\psi ) A' (\psi ) + A^3 (\psi ) \right) r \partial_u \psi \partial_{\bar{v}} \psi \cdot \psi^2 d\bar{v} -  $$ $$ - \dfrac{1}{\nu(u,v)} \int_0^v \left( A' (\psi ) +\dfrac{A^2 (\psi)}{2} \right) \partial_u r \partial_{\bar{v}} \psi \cdot \psi^2 d\bar{v}  - \dfrac{1}{2\nu(u,v)} \int_0^v \partial_{u\bar{v}} r A(\psi ) \psi^2 d\bar{v}  := $$ $$ := I + II + III + IV + V + VI +VII .$$ 
At this point we point out that we have the following as well:
$$ - \int_0^v \partial_{\bar{v}} \psi \dfrac{\nu (u,\bar{v})}{\nu(u,v)} d\bar{v} = -\psi (u,v) + \psi (0,v ) \dfrac{\nu (u,0)}{\nu (u,v) } + \dfrac{1}{\nu (u,v)}\int_0^v \psi (u, \bar{v} ) \partial_{\bar{v}} \nu d\bar{v} .$$
Going back to \eqref{zetan} we obtain the following:
\begin{equation}\label{zetan1}
\frac{\zeta}{\nu} (u,v) = \frac{\zeta}{\nu} (u, 0 ) \dfrac{\nu (u,0)}{\nu (u,v)} - \psi (u,v) + \psi (0,v ) \dfrac{\nu (u,0)}{\nu (u,v) } +\dfrac{1}{\nu (u,v)}\int_0^v \psi (u, \bar{v} ) \partial_{\bar{v}} \nu d\bar{v}+
\end{equation}
$$ + I + II + III + IV + V + VI +VII .$$ 

Now we turn to another quantity that will be useful for us in order to prove Theorem \ref{dr}. From equation \eqref{nwefnull} and equation \eqref{ruv1} we notice that we have:
\begin{equation}\label{intupartialv}
\partial_u (\theta + \la \cdot \psi ) = \dfrac{2\nu \psi}{r^2} \left( M - \frac{M^2}{r} \right) + r A(\psi ) \partial_u \psi \partial_v \psi .
\end{equation}
We integrate equation \eqref{intupartialv} with respect to $u$ and we get that for any $v$ the following holds true:
\begin{equation}\label{upartialv1}
 \theta (u,v) + \la (u,v) \cdot \psi (u,v) = \theta (0,v) + \la (0,v) \cdot \psi (0,v) + 
\end{equation}
 $$ + \int_0^u \left[ \dfrac{2\nu \psi}{r^2} \left( M - \frac{M^2}{r}\right) \right] d\bar{u} + \int_0^u r A(\psi ) \partial_{\bar{u}} \psi \partial_v \psi d\bar{u} .$$

\begin{proof}[Proof of Theorem \ref{dr}]

We will prove a bound for $\dfrac{\zeta}{\nu}$
by a bootstrap argument. This will give us the desired result because of \eqref{ypsi}.

We will also prove the boundedness of $\partial_v \psi $ by a bootstrap argument as well, since this will be needed for proving the boundedness of $\dfrac{\zeta}{\nu}$. Additionally we prove a bound for $r\psi$, improving in some sense the estimate that we obtained in Theorem \ref{pdecr}. 

By our condition on the initial data, we can assume that for a constant $B$ we have the following bounds:
\begin{equation}\label{maxas1}
 \max_{0\mik u \mik U}\left| \frac{\zeta}{\nu} (u,0) \right| + \sup_{0\mik v < \infty} \left| \frac{\zeta}{\nu} (0,v) \right| \mik B \sqrt{E_0}  \ee ,   
 \end{equation}
\begin{equation}\label{maxas2}
\max_{0\mik u \mik U}\left| r^2 (u,0) \cdot \partial_v \psi (u,0) \right| + \sup_{0\mik v < \infty} \left| r^2 (0,v) \cdot \partial_v \psi (0,v) \right| \mik B \sqrt{E_0}  \ee r^{\bar{\aaa}} ,
\end{equation} 
\begin{equation}\label{maxas3}
\max_{0\mik u \mik U}\left| r (u,0) \cdot\psi (u,0) \right| + \sup_{0\mik v < \infty} \left| r (0,v) \cdot \psi (0,v) \right| \mik B \sqrt{E_0}  \ee r^{\bar{\aaa}} ,
\end{equation}
\begin{equation}\label{maxas4}
\max_{0\mik u \mik U}\left| \partial_v r (u,0) \cdot \psi (u,0) + r (u,0) \cdot\psi (u,0) \right| + 
\end{equation}
$$ + \sup_{0\mik v < \infty} \left|\partial_v r (0,v) \cdot \psi (0,v) + r (0,v) \cdot \psi (0,v) \right| \mik B \sqrt{E_0}  \ee \dfrac{1}{r^2} , $$
for $\bar{\aaa} = \bar{\aaa} (\aaa)$ small enough to be chosen later.
  
We define $\mathcal{B}$ as the subset of $\mathcal{D}$ having the following form:
$$ \mathcal{B} = \left\{ ( \bar{u} , \bar{v} ) \in \mathcal{D} : \left| \frac{\zeta}{\nu} (\bar{u} , \bar{v} ) \right|\mik 2 B_1 B \sqrt{E_0} \ee,  | r^2 \partial_v \psi |, |r\psi | \mik \frac{2}{\bar{\aaa}} \sqrt{E_0} \ee r^{\bar{\aaa}} \right\} ,$$
for some constants $B_0 , B_1 > 100$.   

By estimates \eqref{maxas1}, \eqref{maxas2}, \eqref{maxas3} we have that $\mathcal{B}$ is non-empty. By the continuity of $\dfrac{\zeta}{\nu}$, $\partial_v \psi$ and $\psi$ it is also closed in $\mathcal{D}$. 

We will now show the following: fix some $v' > 0$ such that for all $(u' , v'' ) \in \mathcal{D}$, $0 \mik v'' \mik v'$ we have that
\begin{equation}\label{bootassummain} 
 \left| \frac{\zeta}{\nu} (u' , v'' ) \right| \mik 2 B_1 B  \sqrt{E_0}  \ee , \quad |r^2 (u' , v'') \cdot \partial_v \psi (u', v'' )| \mik \frac{2}{\bar{\aaa}} B_1 B  \sqrt{E_0}  \ee r^{\bar{\aaa}} , 
 \end{equation}
 $$ \quad |r (u' , v'') \cdot \psi (u', v'' )| \mik \frac{2}{\bar{\aaa}} B_1 B  \sqrt{E_0}  \ee r^{\bar{\aaa}} . $$
 We will now prove that in fact we have the improved estimate 
 $$\left| \frac{\zeta}{\nu} (u' , v'' ) \right|  \mik \frac{3}{2} B_1 B \sqrt{E_0} \ee , \quad  |r^2 (u' , v'' ) \cdot \partial_v \psi (u', v'' )| \mik   \frac{3}{2\bar{\aaa}} B_1 B \sqrt{E_0} \ee r^{\bar{\aaa}} , $$ $$ \quad |r (u' , v'' ) \cdot  \psi (u', v'' )| \mik   \frac{1}{\bar{\aaa}} B_1 B \sqrt{E_0} \ee r^{\bar{\aaa}} ,$$
  for all  $(u' , v'' )$ as before which will give us that $\mathcal{B}$ is in fact open in $\mathcal{D}$, a fact that allows us to conclude that $\mathcal{D} = \mathcal{B}$ by the connectedness property of both sets.
 
As a remark it should be noted that in our coordinate system we can state the decay estimate for $\psi$ in terms of decay in the $v$ coordinate. For all $(u,v) $ for which we have boundedness for $Y \psi$ we can improve estimate \eqref{dpsi} (as we will show in the subsequent subsection \ref{improvedpsi}) to the following decay estimate: 
\begin{equation}\label{deenull}
|\psi(u,v)| \lesssim \dfrac{\sqrt{E_0} \ee}{(1+v )^{3/5 - \aaa_0 }} ,
\end{equation}
for $\aaa_0 = \dfrac{3\aaa}{10}$ where $\aaa$ is chosen so that
$$ \dfrac{3}{5} - \dfrac{3\aaa}{10} > \dfrac{1}{2} . $$

We can choose $B$ such that due to the assumption \eqref{bootassummain} we can turn estimate \eqref{deenull} into the following:
\begin{equation}\label{denull}
|\psi(u,v)| \mik \frac{B \sqrt{E}_0  \ee}{100} .
\end{equation}

First we will close the bootstrap assumptions for $r^2 \partial_v \psi$ and $r\psi$. We use equation \eqref{upartialv1}. Taking absolute values in this equation we obtain the following estimate for any $0 \mik v'' \mik v'$ and $u' \in [0,U]$:
$$ |r(u' , v'' ) \cdot \partial_v \psi (u' , v'' ) + \partial_v r (u' , v'' ) \cdot \psi (u' , v'' )| \mik $$ $$ \mik |r(0 , v'' ) \cdot \partial_v \psi (0 , v'' ) + \partial_v r (0, v'' ) \cdot \psi (0 , v'' )| + \int_0^{u'} \left| \dfrac{2\nu \psi}{r^2} \left( M - \dfrac{M^2}{r} \right) \right| du + $$ $$ + \int_0^{u'} |A(\psi)| \left| \dfrac{\zeta}{\nu} \right| \left| \dfrac{r^2 \partial_v \psi }{r^2} \right| |\nu| du \mik $$ $$ \mik |r(0 , v'' ) \cdot \partial_v \psi (0 , v'' ) + \partial_v r (0, v'' ) \cdot \psi (0 , v'' )| + 2M \sqrt{E_0} \ee \int_0^{u'} - \dfrac{\partial_u r }{r^{3-\bar{\aaa}}} du + $$ $$ + 4 a_0 B_1^2 B^2 E_0 \ee^2 \int_0^{u'} - \dfrac{\partial_u r }{r^{2-\bar{\aaa}}} du \mik $$ $$ B \sqrt{E_0} \ee \dfrac{1}{r^2 (0,v'')} + 4 M \sqrt{E_0} \ee \dfrac{1}{r^{2-\bar{\aaa}} (u' , v'' )} +    8 a_0 B_1^2 B^2 E_0 \ee^2 \dfrac{1}{r^{1-\bar{\aaa}} (u' , v'' )} \mik $$ $$ \mik \dfrac{1}{2} B_1 B \sqrt{E_0} \ee \dfrac{1}{r^{1-\bar{\aaa}} (u' , v'' )} , $$
where we used assumptions \eqref{maxas4} and \eqref{bootassummain}, the fact that $r(0) \meg r(u')$, the sign of $\nu$ and $r$, and finally we chose $\ee$ to be sufficiently small.

We note that by the boundedness of $\psi$ the above estimate finishes the proof in a bounded region with respect to $r$. Hence we focus on proving the desired estimates for $r$ such that $r \meg r_0 \gg M$  (choosing $r_0$ such that $D ( r_0 ) \meg 2/3$ is enough).

Now we can just note that we have also the following ($v^{*}$ depends on $r_0$, see the figure \eqref{auarea} for a pictorial representation of the region of integration with respect to $v$):
$$ \partial_v (r\psi ) = r \cdot \partial_v \psi + \partial_v r \cdot \psi \Rightarrow $$ $$ \Rightarrow | r(u' , v'' ) \cdot \psi (u' , v'' )| \mik | r(u' , v^{*} ) \cdot \psi (u' , v^{*} )| + \int_{v^{*}}^{v''} |r \cdot \partial_v \psi + \partial_v r \cdot \psi | dv \Rightarrow $$ $$ \Rightarrow | r(u' , v'' ) \cdot \psi (u' , v'' )| \mik | r(u' , v^{*} ) \cdot \psi (u' , v^{*} )| + \dfrac{1}{2\bar{\aaa}} \int_{v^{*}}^{v''} B_1 B \sqrt{E_0} \ee \dfrac{1}{r^{1-\bar{\aaa}}} \dfrac{D}{D} dv \mik $$ $$ \mik B \sqrt{E_0} \ee \dfrac{1}{r^2 (u' , v^{*} )} + \dfrac{3}{4\bar{\aaa}} B_1 B \sqrt{E_0} \ee r^{\bar{\aaa}} (u' , v'' ) \mik \dfrac{1}{\bar{\aaa}}B_1 B \sqrt{E_0} \ee r^{\bar{\aaa}} (u' , v'' )  , $$
where we integrated as shown in the picture below by noticing that in this region $v \approx r$ (we introduced $D dv$ to change coordinates and revert to integration in $r$ and we also used that $\dfrac{1}{D(r_0 )} \mik 3/2$) and we used again assumption \eqref{maxas4}.

Finally we turn to the term $r^2 \partial_v \psi$ and by our two last estimates we have as desired:
$$ |r (u' , v'' ) \cdot \partial_v \psi (u' , v'' ) | \mik |\partial_v r (u' , v'' ) | |\psi (u' , v'' )| + \dfrac{1}{2} B_1 B \sqrt{E_0} \ee \dfrac{1}{r^{1-\bar{\aaa}} (u' , v'' )} \mik $$ $$ \mik |\psi (u' , v'' )| + \dfrac{1}{2} B_1 B \sqrt{E_0} \ee \dfrac{1}{r^{1-\bar{\aaa}} (u' , v'' )} \mik \dfrac{3}{2\bar{\aaa}} B_1 B \sqrt{E_0} \ee \dfrac{1}{r^{1-\bar{\aaa}} (u' , v'' )}  . $$

\begin{rem}
The threshold for $\ee$ that we defined above will be called $\ee_1$ in this subsection.
\end{rem}
\begin{figure}[H]\label{auarea}
\centering
\includegraphics[width=6cm]{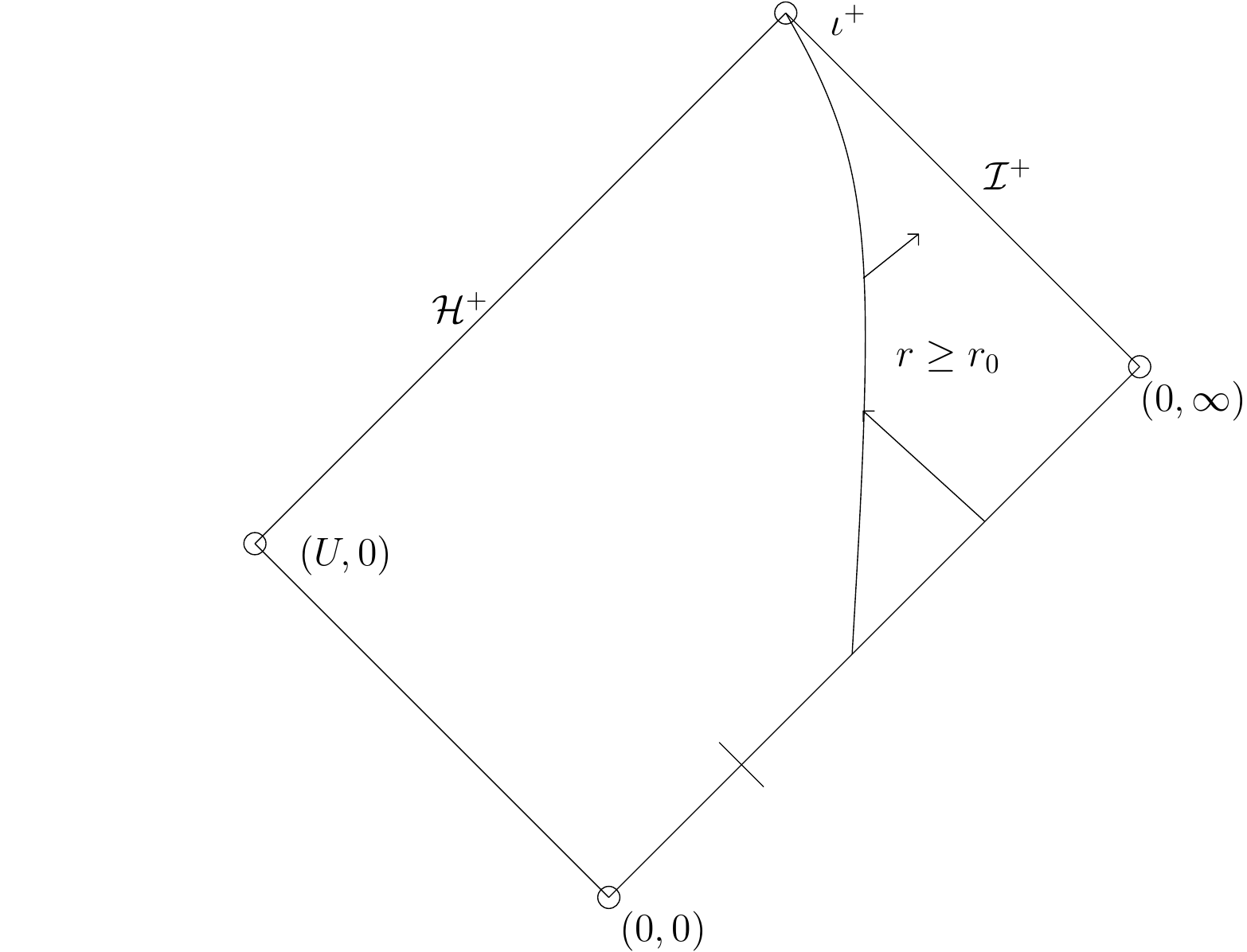}
\caption{The arrows show the domains of integration in $u$ and $v$ that were used in the previous computations.}
\end{figure}

We now turn to the main task of closing the bootstrap for $\dfrac{\zeta}{\nu}$. We fix $u'$ so that $0 \mik u' \mik U$ and we always take $0 < v'' \mik v'$ (note that we don't impose any further restrictions as before). We use equation \eqref{zetan1} and we have:
\begin{equation}\label{est1zetan1}
\left| \frac{\zeta}{\nu} (u' , v'' ) \right| \mik B \sqrt{E_0}  \ee + \frac{B E_0 \ee}{50} + \frac{B E_0 \ee}{50} \int_0^{v''} \dfrac{\partial_{\bar{v}} \nu}{\nu (u' , v'' )}  d\bar{v} + 
\end{equation}
$$ + |I + II + III + IV + V + VI + VII | ,$$
by using estimate \eqref{denull} and the fact that both $\partial_v \nu$ and $\nu$ are negative.

Now we deal with the last seven terms of the above inequality, leaving the third one for the end. 

For the first four terms we use the boundedness of $A$ and $A'$, assumptions \eqref{denull}, \eqref{maxas1}, \eqref{maxas3}, \eqref{bootassummain}, the boundedness of $\left| \dfrac{1}{\nu} \right|$ and the fact that $\left| \dfrac{\nu (u' , 0 )}{\nu (u' , v'' )} \right| \mik 1$, to arrive at the following estimate:
$$ | I + II + III + IV | \mik  a_0 B^2 (B_1 + 1) E_0 \ee^2 + B_1 B^3 \left( a_0^2 + a_1 \right) E_0^{3/2} \ee^3 .$$
For the remaining three terms we have the following computations.

For the term $\frac{1}{2\nu(u,v)} \int_0^v \left(A'' (\psi) + 3 A(\psi ) A' (\psi ) + A^3 (\psi ) \right) r \partial_u \psi \partial_{\bar{v}} \psi \cdot \psi^2 d\bar{v}$ we work as follows:

$$ V = \dfrac{1}{2\nu(u',v'')} \int_0^{v''} \left(A'' (\psi) + 3 A(\psi ) A' (\psi ) + A^3 (\psi ) \right) r \partial_u \psi \partial_{\bar{v}} \psi \cdot \psi^2 d\bar{v} \Rightarrow $$ $$ |V| \mik \sup_{[0,v'' ]} \left( \left| \dfrac{\zeta}{\nu} \right| \cdot |\partial_v \psi | \right)  \int_0^{v''} |\left(A'' (\psi) + 3 A(\psi ) A' (\psi ) + A^3 (\psi ) \right)| \dfrac{B^2 E_0 \ee^2}{(1+v)^{6/5 - 2\aaa_0}} dv  $$ $$ \mik 8 (a_2 + 3a_0 a_1 + a_0^3 ) B_1^2 B^4 E_0^2 \ee^4 , $$ 
where we used assumption \eqref{bootassummain}, estimate \eqref{deenull}, assumption \eqref{maxas3} and the boundedness of $A$, $A'$ and $A''$.
 
For the term $\frac{1}{\nu(u,v)} \int_0^v \left( A' (\psi ) +\frac{A^2 (\psi)}{2} \right) \partial_u r \partial_{\bar{v}} \psi \cdot \psi^2 d\bar{v}$ we work similarly and we have:
 
$$ VI = -  \dfrac{1}{\nu(u',v'')} \int_0^{v''} \left( A' (\psi ) +\dfrac{A^2 (\psi)}{2} \right) \partial_u r \partial_{\bar{v}} \psi \cdot \psi^2 d\bar{v} \Rightarrow $$ $$ |VI| \mik  ( a_1 + a^2 /2 ) B_1 B \sqrt{E_0} \ee  \int_0^{v''} \dfrac{B^2 E_0 \ee^2}{(1+v)^{6/5 - 2\aaa_0}} dv \mik $$ $$ \mik ( a_1 + a^2 /2 ) B_1 B^2  \ee^3 , $$
where we used the monotonicity property of $\nu$, assumption \eqref{bootassummain}, estimate \eqref{deenull}, assumption \eqref{maxas3} and the boundedness of $A$ and $A'$.

Finally we deal with the term $\frac{1}{2\nu(u,v)} \int_0^v \partial_{u\bar{v}} r A(\psi ) \psi^2 d\bar{v}$.

$$ VII = -\dfrac{1}{2\nu(u',v'')} \int_0^{v''} \partial_{u\bar{v}} r A(\psi ) \psi^2 d\bar{v} \Rightarrow $$ $$ VII = -\dfrac{1}{2\nu(u',v'')} \int_0^{v''} \dfrac{2\nu}{r^2} \left( M - \dfrac{M^2}{r} \right) A(\psi ) \psi^2 d\bar{v} \Rightarrow $$  $$ |VII| \mik M a_0 \int_0^{v''} \dfrac{B^2 E_0 \ee^2}{(1+v)^{6/5 - 2\aaa_0}} dv \mik $$ $$ \mik  2M a_0 B^2 E_0 \ee^2 , $$
where we used equation \eqref{ruv1} for $\partial_{uv} r$, estimate \eqref{deenull}, assumption \eqref{maxas2} and the boundedness of $A$.

\begin{figure}[H]
\centering
\includegraphics[width=6cm]{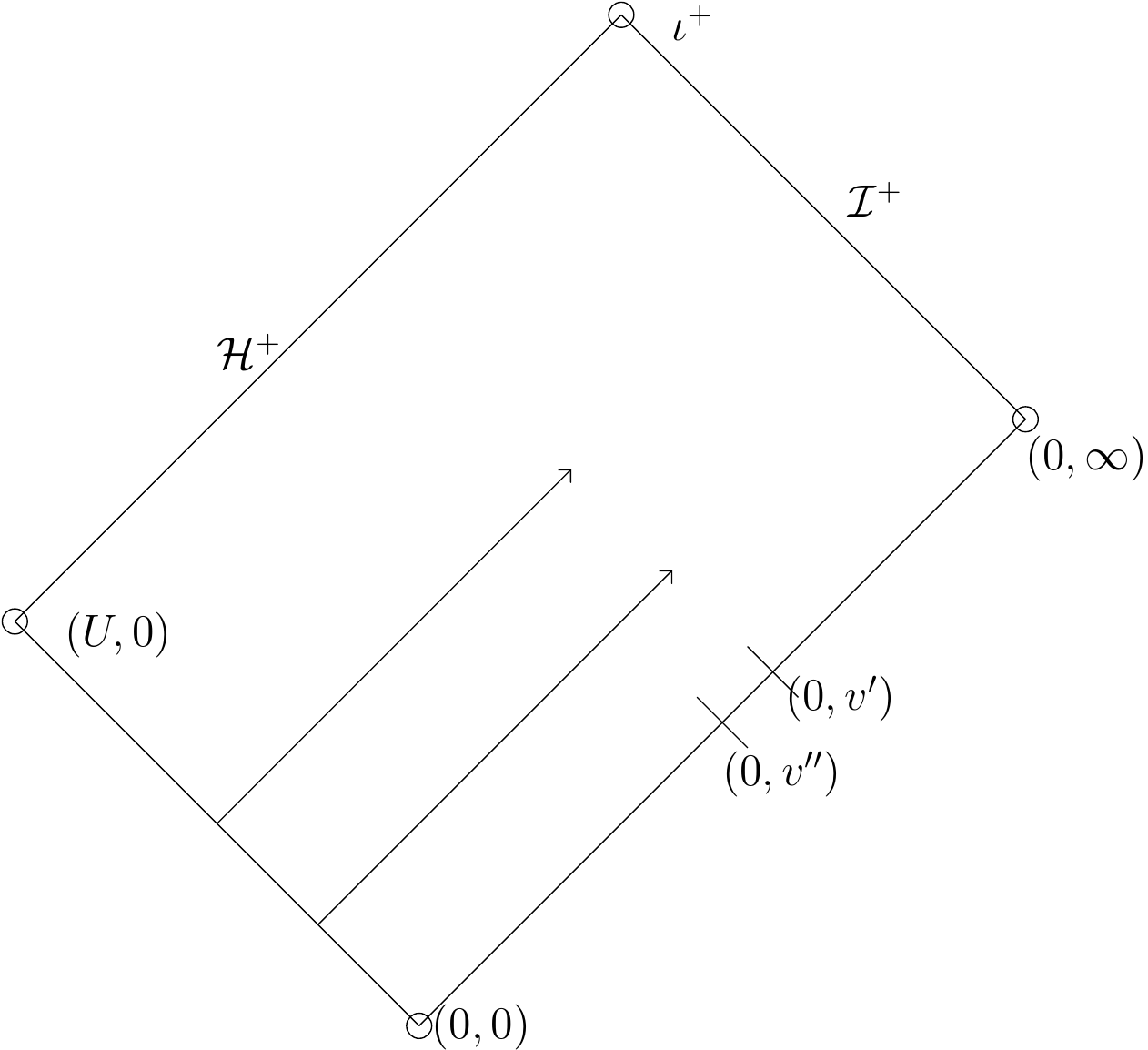}
\caption{The arrows show the domains of integration (in $v$) used for the boundedness of $\dfrac{\zeta}{\nu}$, where $u'$ is always taken to be $0 \mik u' \mik U$. The constant $v''$ is taken to be $0 < v'' \mik v'$. Note that since $v' \mik V$ and $V$ can be infinite, the $L^2$ integrability of $\psi$ in the bootstrap region (where Theorem \ref{dwee} applies) is crucial for our computations.}
\end{figure}

Gathering all the estimates together and going back to \eqref{est1zetan1} we have:
\begin{equation}\label{est2zetan1}
 \left| \frac{\zeta}{\nu} (u' , v'' ) \right| \mik B \sqrt{E_0} \ee + \frac{B \sqrt{E_0} \ee}{50} +  
 \end{equation}
 $$ + \frac{B \sqrt{E_0} \ee}{50}   \int_0^v \dfrac{\partial_{\bar{v}}\nu(u' , \bar{v} )}{\nu (u' , v'' )} d\bar{v} + 2 \left(  a_0 B^2 (B_1 + 1) +   a_0 B^2 \right) E_0 \ee^2 + $$ $$ +  4 \left( ( B_1 B^3 a_0^2 + a_1 ) + ( a_1 + a^2_2 /2 ) B_1 B^2  \right) E_0^{3/2} \ee^3 + $$ $$ + 8(a_2 + 3 a_0 a_1 + a_0^3 ) B_1^2 B^4 E_0^2 \ee^4 . $$
And since it holds that:
$$  \int_0^v \dfrac{\partial_{\bar{v}}\nu(u' , \bar{v} )}{\nu (u' , v'' )} d\bar{v} = \left( 1 - \dfrac{\nu (u' , 0 )}{\nu (u' , v'' )} \right)  \mik 1 , $$
as $\nu$ is negative and non-increasing everywhere in $\mathcal{D}$, we can restate \eqref{est2zetan1} as:
$$ \left| \frac{\zeta}{\nu} (u' , v'' ) \right| \mik B \sqrt{E_0} \ee + \frac{2 B \sqrt{E_0} \ee}{50}  + $$ $$  + 2 \left(  a_0 B^2 (B_1 + 1) +   a_0 B^2 \right) E_0 \ee^2 + $$ $$ +  4 \left( ( B_1 B^3 a_0^2 + a_1 ) + ( a_1 + a^2_2 /2 ) B_1 B^2  \right) E_0^{3/2} \ee^3 + $$ $$ + 8(a_2 + 3 a_0 a_1 + a_0^3 ) B_1^2 B^4 E_0^2 \ee^4 . $$
From this we can conclude that
$$ \left| \frac{\zeta}{\nu} (u' , v'' ) \right| \mik \frac{3}{2} B_1 B  \sqrt{E_0} \ee \mbox{ for every $(u' , v'' ) \in \mathcal{D}$} ,$$
as required, by choosing $\ee$ to be small enough (and smaller than $\ee_1$ -- we call $\ee_r$ this threshold).

\end{proof}

\begin{rem}\label{rdecpsidv}
For the estimates $ | r \psi | \lesssim_{\aaa} \sqrt{E_0} \ee r^{\bar{\aaa}} $ and $ | r^2 \partial_v \psi | \lesssim_{\aaa} \sqrt{E_0} \ee r^{\bar{\aaa}} $ we choose $\bar{\aaa}$ to be:
\begin{equation}\label{baracond}
\bar{\aaa} = \dfrac{\aaa}{4} ,
\end{equation}
for $\aaa$ the fixed constant (that was chosen so that $\psi$ is $L^2$ integrable with respect to $v$) from the assumptions of Theorem \ref{dr} (actually any $\bar{\aaa}$ satisfying $\bar{\aaa} < \dfrac{\aaa}{2}$ will do).
\end{rem}

\subsection{A Priori Pointwise Boundedness for $T \psi$}
Using the results of the previous subsection, we will now prove that $T\psi$ is bounded as well in all of the domain of outer communications. 

We have the following equation for the vector fields of the different coordinate systems:
\begin{equation}\label{tpartialv}
 \partial_v = T + D Y .
\end{equation} 
We can now prove the following:
\begin{thm}[\textbf{Boundedness For $T\psi$}]\label{dvaway}
There exists some $\ee_r := \ee_r (\aaa ) > 0$ such that if $0 < \ee < \ee_r$ and the bootstrap assumptions \eqref{A1}, \eqref{A2}, \eqref{A3} hold true for $\psi$ a spherically symmetric solution of \eqref{nwef}, for some constant $C_{\aaa} := C(\aaa )$, for any $\tau_0$, $\tau_1$, $\tau_2$ with $\tau_1 < \tau_2$ and for any given $\aaa > 0$, then we have that:
\begin{equation}\label{dvaway1}
\| T \psi (\tau ) \|_{L^{\infty} (\Sigma_{\tau} )} \lesssim \sqrt{E_0} \ee ,
\end{equation}
for all $\tau$.
\end{thm}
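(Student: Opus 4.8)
The plan is to deduce the boundedness of $T\psi$ from the estimates already obtained in Theorem \ref{dr}, together with the coordinate identity \eqref{tpartialv}. Writing
$$ T\psi \;=\; \partial_v\psi - D\,Y\psi , $$
where on the right $\partial_v$ is the null--coordinate derivative, $Y=\partial_r$ and $D=(1-M/r)^2$, and using that $0\mik D\mik 1$ throughout the domain of outer communications together with the uniform bound $\|Y\psi\|_{L^{\infty}(\Sigma_{\tau})}\lesssim\sqrt{E_0}\,\ee$ from \eqref{drr}, the problem reduces to showing $|\partial_v\psi|\lesssim\sqrt{E_0}\,\ee$, equivalently $|\theta|=|r\,\partial_v\psi|\lesssim\sqrt{E_0}\,\ee$ since $r\meg M>0$.

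First I would dispose of the region away from the horizon: for $r\meg r_0$ with $r_0>M$ fixed, estimate \eqref{rdecdv1} gives $|\theta|=r^{-1}|r^2\partial_v\psi|\lesssim_{\bar{\aaa}}\sqrt{E_0}\,\ee\,r^{\bar{\aaa}-1}\lesssim\sqrt{E_0}\,\ee$, since we may take $\bar{\aaa}<1$. Hence $|T\psi|\lesssim\sqrt{E_0}\,\ee$ on $\{r\meg r_0\}$, and it remains to handle the near--horizon region $\{M\mik r\mik r_0\}$, where $D$ degenerates and the null coordinate $u$ runs off to infinity, so \eqref{rdecdv1} is not directly available.

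For the near--horizon region I would integrate the transport equation \eqref{intupartialv}, i.e. use the representation \eqref{upartialv1} for $\theta+\la\,\psi$, and estimate the right-hand side. The data terms are $\lesssim\sqrt{E_0}\,\ee$ by \eqref{maxas2} and \eqref{maxas3} in the bounded-$r$ range, and the term $\la\,\psi$ on the left is harmless since $\la=\partial_v r$ is bounded and $|\psi|\lesssim\sqrt{E_0}\,\ee$ by \eqref{deenull}. For the first bulk integral I would change variables from $\bar u$ to $r$ via $dr=\nu\,d\bar u$, so that it becomes $\int \tfrac{2\psi}{r^2}\big(M-\tfrac{M^2}{r}\big)\,dr$ with a bounded integrand (which in fact vanishes at $r=M$), giving a contribution $\lesssim\sqrt{E_0}\,\ee$. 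For the genuinely nonlinear integral I would write $\partial_{\bar u}\psi\,\partial_v\psi=r^{-2}\zeta\theta$ and again change variables to $r$, so that it equals $\int \tfrac{A(\psi)\,\theta}{r}\cdot\tfrac{\zeta}{\nu}\,dr$; using $|A|\mik a_0$ and the bound $|\zeta/\nu|\lesssim\sqrt{E_0}\,\ee$ from the proof of Theorem \ref{dr}, this is $\mik C a_0\sqrt{E_0}\,\ee\,\big(\sup_{M\mik r\mik r_0}|\theta|\big)$. Setting $\Theta=\sup|\theta|$ over the region (a priori finite on each bounded $u$-range, since there $\theta$ is continuous on a compact set, whence the argument runs as a continuity argument in $u$), one obtains $\Theta\mik C\sqrt{E_0}\,\ee+C a_0\sqrt{E_0}\,\ee\,\Theta$; choosing $\ee_r$ so that $C a_0\sqrt{E_0}\,\ee<\tfrac12$ for $\ee<\ee_r$ lets us absorb the last term and gives $\Theta\lesssim\sqrt{E_0}\,\ee$, uniformly up to $\ho$. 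Combining with the exterior region and using $T\psi=\theta/r-D\,Y\psi$ completes the proof.

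The hard part is precisely this near--horizon absorption. Since $A$ is only assumed bounded and not small, the nonlinear integral unavoidably contains $\partial_v\psi$ itself, so one cannot simply plug in previous bounds; one must instead run a continuity/Gr\"{o}nwall-type argument and exploit the smallness of $\ee$ (and of $\zeta/\nu$) to close it, exactly in the spirit of the proof of Theorem \ref{dr}. A secondary point requiring care is that every source term in \eqref{upartialv1} be integrable all the way down to the horizon --- this is what the structure $M-M^2/r=M(1-M/r)\to 0$ as $r\to M$, combined with the change of variables $dr=\nu\,d\bar u$, is there to guarantee.
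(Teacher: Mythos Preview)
Your overall strategy is exactly the paper's: write $T\psi=\partial_v\psi-D\,Y\psi$, cite the bound $|Y\psi|\lesssim\sqrt{E_0}\,\ee$ from Theorem~\ref{dr}, and reduce to bounding $\partial_v\psi$. The paper's proof is in fact one line shorter than yours: it simply observes that the bound $\|\partial_v\psi\|_{L^\infty(\Sigma_\tau)}\lesssim\sqrt{E_0}\,\ee$ was \emph{already} obtained inside the proof of Theorem~\ref{dr}, uniformly on all of $\mathcal{D}$ (including the horizon), because the bootstrap there was run simultaneously for $\zeta/\nu$, $r^2\partial_v\psi$, and $r\psi$, and the estimate $|r\partial_v\psi+\lambda\psi|\lesssim\sqrt{E_0}\,\ee\,r^{\bar\aaa-1}$ was derived for all $(u',v'')$, with the remark ``by the boundedness of $\psi$ the above estimate finishes the proof in a bounded region with respect to $r$''. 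So no separate near-horizon argument is needed; you are re-proving a piece of Theorem~\ref{dr}.

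That said, your re-derivation has a gap in the nonlinear bulk term. You integrate \eqref{upartialv1} from $u=0$, so after the change of variables $dr=\nu\,d\bar u$ the $r$-range is $[r(u',v),\,r(0,v)]$, and for large $v$ the upper endpoint $r(0,v)$ is large (it tends to infinity). Hence the integral $\int \tfrac{A(\psi)\,\theta}{r}\cdot\tfrac{\zeta}{\nu}\,dr$ does \emph{not} reduce to an integral over $[M,r_0]$, and bounding it by $Ca_0\sqrt{E_0}\,\ee\,\sup_{M\mik r\mik r_0}|\theta|$ is not justified as written: on the portion $r\meg r_0$ you need a separate bound on $\theta$, and with only $|\theta|\mik\Theta$ one would get a logarithm $\log\big(r(0,v)/M\big)$ which is unbounded in $v$. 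The fix is immediate --- either split the integral at $r=r_0$ and use the already-established far-region estimate $|\theta|\lesssim\sqrt{E_0}\,\ee\,r^{\bar\aaa-1}$ there (which makes $\int_{r_0}^\infty|\theta|r^{-1}\,dr$ finite), or start the $u$-integration at the point where $r=r_0$ rather than at $u=0$, using \eqref{rdecdv1} as data. With either correction your absorption inequality becomes $\Theta\mik C\sqrt{E_0}\,\ee+Ca_0\sqrt{E_0}\,\ee\,\Theta$ and closes for $\ee$ small. This is precisely why the paper's bootstrap in Theorem~\ref{dr} is phrased for $r^2\partial_v\psi\lesssim r^{\bar\aaa}$ rather than for $\theta$ itself: that extra power of $r$ makes the $u$-integral convergent at spatial infinity without any splitting.
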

\begin{proof}
By Theorem \ref{dr} we have that $Y\psi$ is bounded by $\lesssim \sqrt{E_0} \ee$. In the proof of the same Theorem we also showed that we have as well:
$$ \| \partial_v \psi \|_{ L^{\infty} (\Sigma_{\tau} )} \lesssim \sqrt{E_0} \ee \mbox{ for any $\tau$}.$$
The result follows now from equation \eqref{tpartialv}. 
\end{proof}

\subsection{Improved A Priori Pointwise Decay for $\psi$}\label{improvedpsi}

The decay estimate \eqref{dw} that we obtained for $\psi$ can be improved by using the boundedness results for $Y \psi$ of the previous subsections. This was actually used already in the proof of the boundedness of $Y \psi$ in the form of estimate \eqref{deenull}. We prove it below.
\begin{thm}[\textbf{Decay In $L^{\infty}$ For $\psi$}]\label{dwee}
There exists some $\ee_r := \ee_r (\aaa ) > 0$ such that if $0 < \ee < \ee_r$ and the bootstrap assumptions \eqref{A1}, \eqref{A2}, \eqref{A3} hold true for $\psi$ a spherically symmetric solution of \eqref{nwef}, for some constant $C_{\aaa} := C(\aaa )$, for any $\tau_0$, $\tau_1$, $\tau_2$ with $\tau_1 < \tau_2$ and for any given $\aaa > 0$, then we have that:
\begin{equation}\label{dwe} 
\|\psi \|_{L^{\infty} (\Sigma_{\tau})} \lesssim_{\aaa} \dfrac{ \sqrt{E_0} \ee}{(1+ \tau )^{3/5 - 3\aaa/10}} ,
\end{equation}
for all $\tau$.
\end{thm}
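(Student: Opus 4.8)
The plan is to split $\Sigma_\tau$ into a region bounded away from the horizon---where Theorem~\ref{pdec} already gives a faster decay than the one claimed---and a near-horizon region, and on the latter to propagate decay from $r=r_0$ down to the horizon by the fundamental theorem of calculus, combining the decay of the degenerate energy (Theorem~\ref{endec}) with the pointwise boundedness of $T\psi$ and $Y\psi$ (Theorems~\ref{dr} and \ref{dvaway}).

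First I would fix some $r_0$ with $M<r_0<R_0$. By Theorem~\ref{pdec},
$$ \|\psi\|_{L^{\infty}(\Sigma_\tau\cap\{r\meg r_0\})}\lesssim_{r_0,\aaa}\dfrac{\sqrt{E_0}\,\ee}{(1+\tau)^{1-\aaa/2}}, $$
which decays faster than $(1+\tau)^{-3/5+3\aaa/10}$ for small $\aaa$, so it remains to bound $\psi$ on $\Sigma_\tau\cap\{M\mik r\mik r_0\}=S_\tau\cap\{r\mik r_0\}$. Along $S_\tau=\{v-r=\tau\}$ in the Eddington--Finkelstein coordinates $(v,r)$ the $r$-derivative of $\psi$ is $\partial_v\psi+\partial_r\psi=T\psi+Y\psi$, so integrating inward from $r_0$ and using spherical symmetry gives, uniformly for $M\mik r\mik r_0$,
$$ |\psi(\tau,r)|\mik|\psi(\tau,r_0)|+\int_M^{r_0}\big|(T\psi+Y\psi)(\tau,\rho)\big|\,d\rho, $$
where the first term is bounded by the displayed estimate above, evaluated at $r_0$.

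For the integral I would introduce a cutoff distance $\delta=\delta(\tau)\in(0,r_0-M)$ and split at $\rho=M+\delta$. On $[M,M+\delta]$ I use that $Y\psi$ is bounded by $\lesssim\sqrt{E_0}\,\ee$ (Theorem~\ref{dr}) and that $T\psi=\partial_v\psi-D\,Y\psi$ is likewise bounded by $\lesssim\sqrt{E_0}\,\ee$ (Theorem~\ref{dvaway}, together with the boundedness of $\partial_v\psi$ established in the proof of Theorem~\ref{dr}); this part is therefore $\lesssim\delta\,\sqrt{E_0}\,\ee$. On $[M+\delta,r_0]$ I apply Cauchy--Schwarz with the weight $D$, use $\int_{M+\delta}^{r_0}D^{-1}\,d\rho\lesssim_{r_0}\delta^{-1}$ (since $D\approx(\rho-M)^2$ near the horizon) together with $D(T\psi+Y\psi)^2\lesssim(T\psi)^2+D(Y\psi)^2\approx J^T_{\mu}[\psi]n^{\mu}$, and---since on $\{M\mik r\mik r_0\}$ the induced volume element is comparable to $d\rho\,d\omega$---invoke Theorem~\ref{endec} to obtain
$$ \int_{M+\delta}^{r_0}\big|(T\psi+Y\psi)(\tau,\rho)\big|\,d\rho\lesssim\delta^{-1/2}\Big(\int_{\Sigma_\tau}J^T_{\mu}[\psi]n^{\mu}\,d\mi_{g_{\si}}\Big)^{1/2}\lesssim\delta^{-1/2}\,\sqrt{E_0}\,\ee\,(1+\tau)^{-1+\aaa/2}. $$
Collecting terms, $|\psi(\tau,r)|\lesssim\sqrt{E_0}\,\ee\big[(1+\tau)^{-1+\aaa/2}+\delta+\delta^{-1/2}(1+\tau)^{-1+\aaa/2}\big]$ uniformly on $M\mik r\mik r_0$; optimizing $\delta$ so as to balance $\delta$ against $\delta^{-1/2}(1+\tau)^{-1+\aaa/2}$ (that is, $\delta\sim(1+\tau)^{-2/3+\aaa/3}$) gives a bound of order $\sqrt{E_0}\,\ee\,(1+\tau)^{-2/3+\aaa/3}$, which in particular is $\lesssim_{\aaa}\sqrt{E_0}\,\ee\,(1+\tau)^{-3/5+3\aaa/10}$. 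Together with the far-region bound this proves \eqref{dwe}.

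I expect the near-horizon step to be the crux. The degeneracy of the red-shift on the extremal horizon forces the $T$-energy to carry the weight $D$, so the Cauchy--Schwarz estimate cannot be run all the way to $r=M$ (there $\int D^{-1}$ diverges); one must cut off at distance $\delta$ from the horizon and pay there with the pointwise bound on $Y\psi$---itself one of the more delicate outputs of the preceding sections---after which the decay rate is dictated by balancing these two competing contributions. Everything else is a routine assembly of already-established decay and boundedness statements.
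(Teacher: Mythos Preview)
Your argument is correct and follows the same overall strategy as the paper: handle $r\meg r_0$ by Theorem~\ref{pdec}, then propagate decay to the horizon via the fundamental theorem of calculus, combining the $T$-flux decay with the pointwise boundedness of $Y\psi$, and optimize a near-horizon length scale. The implementation differs slightly. The paper works with $\psi^2$ on a shrinking shell $[r',r'+\tau^{-\alpha_0}]$, bounds the endpoint by \eqref{p1} (paying the $(r'-M)^{-2}$ factor, i.e.\ $\tau^{2\alpha_0}$), and uses Hardy on the $\psi$ factor together with boundedness of $Y\psi$ on the $\partial_\rho\psi$ factor; the optimization gives $\alpha_0=\tfrac{2}{5}-\tfrac{\aaa}{5}$ and hence the exponent $\tfrac{3}{5}-\tfrac{3\aaa}{10}$. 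Your two-zone splitting with the $D$-weighted Cauchy--Schwarz on $[M+\delta,r_0]$ extracts the $T$-flux decay directly from $D(T\psi+Y\psi)^2\lesssim (T\psi)^2+D(Y\psi)^2$ without an endpoint blow-up factor, and after optimizing $\delta\sim(1+\tau)^{-2/3+\aaa/3}$ actually yields the stronger rate $(1+\tau)^{-2/3+\aaa/3}$, which of course implies \eqref{dwe}. So your route is a modest sharpening of the paper's, at the same level of inputs (Theorems~\ref{endec}, \ref{pdec}, \ref{dr}, \ref{dvaway}).
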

\begin{rem}
One of the important consequences of Theorem \ref{dwee} is that we can conclude that $\psi$ is $L^2$ integrable with respect to $\tau$. This was used in the proof of Theorem \ref{dr} in a bootstrap setting and will be used later as well when we will establish the existence of an almost conservation law on the horizon and the blow up of derivatives of order 2 and higher again on the horizon.
\end{rem}
\begin{proof}
We will only consider the case that $r'$ is close to the horizon since for $r'$ far away from the horizon we have the estimate \eqref{pdec1} which is even better than the desired one here.

As we did in the proof of Theorem \ref{dpsi} we have the following inequality for some $\alpha_0 > 0$ that will be chosen later by applying Stokes' Theorem in the region $\rrr(0,\tau) \cap r' \mik r \mik r' + \tau^{-\aaa_0}$ for $r' \meg M$:
$$ \int_{\mathbb{S}^2} \psi^2 (r' , \omega) d\omega \lesssim \int_{\mathbb{S}^2} \psi^2 (r' + \tau^{-\alpha_0} , \omega) d\omega + \int_{\Sigma_{\tau} \cap \{ r' \mik r \mik r' + \tau^{-\alpha_0} \}} \psi \partial_{\rho} \psi d\mi_{g_{\si}} \lesssim $$ $$ \lesssim \dfrac{ E_0\ee^2}{ (1+\tau )^{2-\aaa-2\alpha_0}} + \left( \int_{ \Sigma_{\tau} \cap \{ r' \mik r \mik r' + \tau^{-\alpha_0} \}} \psi^2 d\mi_{g_{\si}} \right)^{1/2} \times $$ $$ \times \left( \int_{ \Sigma_{\tau} \cap \{ r' \mik r \mik r' + \tau^{-\alpha_0} \}} (\partial_{\rho} \psi )^2 d\mi_{g_{\si}} \right)^{1/2} \lesssim $$ $$ \lesssim \dfrac{ E_0 \ee^2 }{ (1+\tau )^{2-\aaa-2\alpha_0}} + \dfrac{E_0\ee^2 }{ ( 1 + \tau )^{1-\aaa/2 + \alpha_0 /2} }  ,$$
where for the term $\int_{\mathbb{S}^2} \psi^2 (r' + \tau^{-\alpha_0}, \omega ) d\omega$ we used estimate \eqref{p1} and where in the last step we used Hardy's inequality \eqref{hardy} (for the first term in parenthesis), the decay of the $T$-Flux \eqref{end} (for the first term in parenthesis again after using Hardy) and the boundedness of $Y \psi$ (for the second term in parenthesis).

We can easily compute the optimal $\aaa_0$:
$$ 2-\aaa-2\alpha_0 = 1 - \frac{\aaa}{2} + \frac{\alpha_0}{2} \Rightarrow \alpha_0 = \frac{2}{5} - \frac{\aaa}{5}. $$
This now gives us the following decay:
\begin{equation}\label{impdec}
\int_{\mathbb{S}^2} \psi^2 (r' , \omega) d\omega \lesssim E_0 \ee^2 (1+\tau )^{6/5-3\aaa/5} .
\end{equation}
This is the desired decay because of the assumption of spherical symmetry.
\end{proof}

\section{Improving Bootstrap Assumptions And Closing Estimates}\label{bre}
In this section we will try to prove the necessary decay results for the nonlinear term using a bootstrap method, i.e. we will show that the bootstrap assumptions \eqref{A1}, \eqref{A2}, \eqref{A3} hold always true. 

\begin{prop}[\textbf{Bootstrap Results}]\label{boot}
Assume that the following bounds hold for some constants $C_{\aaa} = C(\aaa) > 0$ and $\ee > 0$ (satisfying $\ee < \ee_r$ for $\ee_r := \ee_r (\aaa)$ given in Section \ref{drbound}), for any $\tau_0$, $\tau_1$, $\tau_2$ with $\tau_1 < \tau_2$, and for any given $\aaa > 0$ such that $\dfrac{3}{5} - \dfrac{3\aaa}{10} > \dfrac{1}{2}$:
\begin{equation}\label{boot1a}
\int_{S_{\tau_0 }} |F|^2 d\mi_{g_S} \mik C_{\aaa} E_0 \ee^2  (1+\tau_0 )^{-2+\aaa} ,
\end{equation}
\begin{equation}\label{boot2a}
\int_{\tau_1}^{\tau_2} \int_{S_{\tau'}} |F|^2 d\mi_{g_{\so}} \mik C_{\aaa} E_0 \ee^2  (1+\tau_1 )^{-2+\aaa} ,
\end{equation}
\begin{equation}\label{boot3a}
\int_{\tau_1}^{\tau_2} \int_{N_{\tau'}} r^{3-\aaa} |F|^2 d\mi_{g_{\nnn}} \mik C_{\aaa} E_0  \ee^2 (1+\tau_1 )^{-2+\aaa} .
\end{equation}
Then we have that:
\begin{equation}\label{boot1}
\int_{S_{\tau_0 }} |F|^2 d\mi_{g_S} \lesssim E_0^2 \ee^4  (1+\tau_0 )^{-2+\aaa} ,
\end{equation}
\begin{equation}\label{boot2}
 \int_{\tau_1}^{\tau_2} \int_{S_{\tau'}} |F|^2 d\mi_{g_{\so}} \lesssim E_0^2 \ee^4  (1+\tau_1 )^{-2+\aaa} ,
 \end{equation}
\begin{equation}\label{boot3} 
\int_{\tau_1}^{\tau_2} \int_{N_{\tau'}} r^{3-\aaa} |F|^2 d\mi_{g_{\nnn}} \lesssim  E_0^2  \ee^4 (1+\tau_1 )^{-2+\aaa} .
 \end{equation}
\end{prop}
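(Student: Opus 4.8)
The plan is to read the improved bounds \eqref{boot1}--\eqref{boot3} off the a priori estimates of Sections \ref{apede} and \ref{apdbe} --- all of which hold under the assumed \eqref{boot1a}--\eqref{boot3a} --- by exploiting the null (quadratic--derivative) structure of $F$ together with the a priori boundedness of the first derivatives, and then to observe that every constant that appears depends on $C_\aaa$ at most linearly, so that for $\ee$ small the improved bounds feed back into \eqref{A1}--\eqref{A3}.

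First I would record the pointwise consequences of the null structure. For spherically symmetric $\psi$ one has $F = A(\psi)\big(D\,(Y\psi)^2 + 2(T\psi)(Y\psi)\big)$ in the $(v,r)$ coordinates, and $F = -\tfrac{4}{\Omega^2}A(\psi)\,\partial_u\psi\,\partial_v\psi$ in the null coordinates $(u,v)$, with $\partial_u\psi = -\tfrac{D}{2}Y\psi$. Using $|A|\mik a_0$ together with the a priori bounds $\|Y\psi\|_{L^\infty},\|T\psi\|_{L^\infty}\lesssim\sqrt{E_0}\ee$ of Theorems \ref{dr} and \ref{dvaway}, and the sharper near--infinity bound $|Y\psi| = \tfrac1r\big|\tfrac{\zeta}{\nu}\big|\lesssim\tfrac{\sqrt{E_0}\ee}{r}$ coming from \eqref{ypsi} and the proof of Theorem \ref{dr}, each of the three quantities in \eqref{boot1}--\eqref{boot3} reduces to $E_0\ee^2$ times a purely linear energy expression: on $\{r\mik R_0\}$ one has $|F|^2\lesssim E_0\ee^2\big((T\psi)^2 + D^2(Y\psi)^2\big)\lesssim E_0\ee^2\,J^T_\mu[\psi]n^\mu$ (since $D^2\lesssim D$ there), while near $\mathcal{I}^{+}$, writing $\ph = r\psi$ and $r\partial_v\psi = \partial_v\ph - \la\psi$, one has $r^{3-\aaa}|F|^2\lesssim E_0\ee^2\,r^{-1-\aaa}\big((\partial_v\ph)^2 + \psi^2\big)$.

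Then \eqref{boot1} and \eqref{boot2} are immediate. For \eqref{boot1}, $\int_{S_{\tau_0}}|F|^2\,d\mi_{g_S}\lesssim E_0\ee^2\int_{S_{\tau_0}}J^T_\mu[\psi]n^\mu\,d\mi_{g_S}\mik E_0\ee^2\int_{\Sigma_{\tau_0}}J^T_\mu[\psi]n^\mu\,d\mi_{g_\si}\lesssim E_0^2\ee^4(1+\tau_0)^{-2+\aaa}$ by the decay of the degenerate energy, Theorem \ref{endec}. For \eqref{boot2}, $\int_{\tau_1}^{\tau_2}\int_{S_{\tau'}}|F|^2\,d\mi_{g_\so}\lesssim E_0\ee^2\int_{\tau_1}^{\tau_2}\int_{S_{\tau'}}\big((T\psi)^2 + D^2(Y\psi)^2\big)\,d\mi_{g_\so}$, and the last spacetime integral is bounded by the localized Morawetz estimate \eqref{degix1}, whose right--hand side consists of $\int_{\Sigma_{\tau_1}\cap\{r\mik R_0\}}J^T_\mu[\psi]n^\mu$ (which decays like $(1+\tau_1)^{-2+\aaa}$ by Theorem \ref{endec}) together with $\int_{\tau_1}^{\tau_2}\int_{S_{\tau'}}|F|^2$ and $\int_{\tau_1}^{\tau_2}\int_{N_{\tau'}}r^{1+\eta}|F|^2$ for some $\eta<1$; these two $F$--terms are $\mik C_\aaa E_0\ee^2(1+\tau_1)^{-2+\aaa}$ by the assumed \eqref{boot2a} and \eqref{boot3a} (note $r^{1+\eta}\mik r^{3-\aaa}$ on $\{r\meg R_0\}$ since $\eta<2-\aaa$). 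Hence $\int_{\tau_1}^{\tau_2}\int_{S_{\tau'}}|F|^2\,d\mi_{g_\so}\lesssim(1+C_\aaa)\,E_0^2\ee^4(1+\tau_1)^{-2+\aaa}$.

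The main obstacle is \eqref{boot3}. The $(\partial_v\ph)^2$--contribution, namely $E_0\ee^2\int_{\nnn(\tau_1,\tau_2)}r^{-1-\aaa}(\partial_v\ph)^2\,d\mi_{g_\nnn}$, is --- up to bounded factors --- $E_0\ee^2$ times the degenerate bulk term on the left of the $r^p$--weighted energy inequality, Proposition \ref{rw}, with parameter $p=2-\aaa$, which is admissible precisely because $p<3$; its right--hand side involves $\int_{N_{\tau_1}}\tfrac{(\partial_v\ph)^2}{r^\aaa}\,d\mi_{g_N}$, $\int_{\Sigma_{\tau_1}}J^T_\mu[\psi]n^\mu$, and $F$--terms of weight at most $r^{3-\aaa}$, so that no uncontrolled $r^{4-\aaa}|F|^2$ is ever generated --- this exact matching of weights against the bootstrap assumption is the delicate point. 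To upgrade the resulting estimate to the rate $(1+\tau_1)^{-2+\aaa}$, I would combine the decay of $\int_{\Sigma_{\tau_1}}J^T$ (Theorem \ref{endec}), the decay and boundedness of the weighted null fluxes already established inside the proof of Theorem \ref{endec} (the estimates around \eqref{pintegint}, and Proposition \ref{awayvr}), and integration--by--parts identities in the null variables $(u,v)$ in the spirit of Section \ref{apdbe}, which transfer the remaining derivatives onto the undifferentiated $\psi$; the $\psi^2$--contribution is then absorbed using the pointwise decay of $\psi$ away from the horizon (Theorems \ref{pdec} and \ref{pdecr}) and Hardy's inequality \eqref{hardy}. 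All of this forces the loss parameter $\bar{\aaa}$ to be taken small relative to $\aaa$ (as in \eqref{baracond}), so that every $r$--integral near $\mathcal{I}^{+}$ converges. Finally, since every implicit constant above depends on $C_\aaa$ at most linearly, one fixes $C_\aaa = C_\aaa(\aaa)$ first and then $\ee$ sufficiently small (independently of $C_\aaa$) so that the improved bounds \eqref{boot1}--\eqref{boot3} indeed recover the a priori assumptions \eqref{A1}--\eqref{A3}, which is the content of the Proposition.
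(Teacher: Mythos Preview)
Your proofs of \eqref{boot1} and \eqref{boot2} are essentially those of the paper: pull out $\|Y\psi\|_{L^\infty}^2\lesssim E_0\ee^2$ from Theorem \ref{dr}, leaving $\int_{S_\tau}J^T_\mu[\psi]n^\mu$ (handled by Theorem \ref{endec}) for \eqref{boot1} and the spacetime integral $\int\int_S\big((T\psi)^2+D^2(Y\psi)^2\big)$ (handled by the localized Morawetz estimate \eqref{degix1}) for \eqref{boot2}. Those parts are fine.

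The gap is in your treatment of \eqref{boot3}. Bounding $(\partial_u\psi)^2$ pointwise by $E_0\ee^2/r^2$ and then appealing to the $r^p$-bulk $\int_{\nnn(\tau_1,\tau_2)}r^{-1-\aaa}(\partial_v\ph)^2\,d\mi_{g_\nnn}$ with $p=2-\aaa$ gives only \emph{boundedness}, not decay in $\tau_1$: the right-hand side of Proposition \ref{rw} at $p=2-\aaa$ contains the flux $\int_{N_{\tau_1}}r^{-\aaa}(\partial_v\ph)^2\,d\mi_{g_N}$, which (by \eqref{pintegint1} with $\aaa_1=1-\aaa$) is $\lesssim E_0\ee^2$ and does not decay --- the available decaying flux is only $\int_{N_{\tau_1}}r^{-1}(\partial_v\ph)^2\lesssim(1+\tau_1)^{-1+\aaa}$. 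Your ``upgrade'' paragraph is a gesture, not an argument: no amount of null-coordinate integration by parts will extract $(1+\tau_1)^{-2+\aaa}$ decay from a spacetime integral that is controlled only by a non-decaying data term.

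The paper's proof of \eqref{boot3} proceeds differently and this is the missing idea. It does \emph{not} take out either null factor in $L^\infty$. Instead it expands $r^{3-\aaa}|F|^2\lesssim r^{-1-\aaa}\big((\partial_u\ph)^2(\partial_v\ph)^2+\cdots\big)$ and, for the main cross term, performs an $L^2_u\times L^1_v$ splitting:
\[
\int_{v_{R_0}}^{\infty}\!\!\int_{u_{\tau_1}}^{u_{\tau_2}} r^{1-\aaa}(\partial_u\ph)^2(\partial_v\ph)^2\,du\,dv
\ \mik\ \sup_v\!\int_{u_{\tau_1}}^{u_{\tau_2}}(\partial_u\ph)^2\,du\ \cdot\ \int_{v_{R_0}}^{\infty}\sup_u\big(r^{1-\aaa}(\partial_v\ph)^2\big)\,dv.
\]
The first factor is bounded by the $J^T$-flux on the null hypersurface $\{v=\mathrm{const}\}$ via Stokes' theorem (estimates \eqref{intuph}--\eqref{dunonlin}), and \emph{this} is what carries the decay $(1+\tau_1)^{-2+\aaa}$. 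The second factor is shown to be $\lesssim E_0\ee^2$ using the pointwise bounds $|r\psi|,\ |r^2\partial_v\psi|\lesssim\sqrt{E_0}\ee\,r^{\bar\aaa}$ of Theorem \ref{dr} with the choice $\bar\aaa=\aaa/4$, which makes $r^{1-\aaa}(\partial_v\ph)^2\lesssim E_0\ee^2\,r^{-1-\aaa/2}$ integrable in $v\approx r$. The $\psi^2(\partial_u\ph)^2$ term is handled the same way (with \eqref{rdecpsi1} providing $v$-integrability of $r^{1-\aaa}\psi^2$), while the remaining $\psi^4$ and $\psi^2(\partial_v\ph)^2$ terms are easier and use Theorem \ref{pdec} and Hardy. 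The point is that the $\tau$-decay must come from the $\partial_u$-factor integrated in $u$, not from the $r^p$-hierarchy applied to the $\partial_v$-factor.
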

\begin{proof}
(a) First we will prove \eqref{boot1}.

Recall that the nonlinearity has the following form:
$$ F = A(\psi ) \left( D (Y\psi )^2 + 2T\psi \cdot Y\psi \right) ,$$
so we can compute:
\begin{equation}\label{pnonlin}
|F|^2 = A^2 (\psi ) \left( D (Y\psi )^2 + 2T\psi Y\psi \right)^2 \mik a_0 \left( 2 D^4 (Y\psi )^2 + 8 (T\psi )^2 (Y\psi )^2 \right) =
\end{equation}
 $$ = a_0 \left[ (Y\psi )^2 \left( 2 D^2 (Y\psi )^2 + 8 (T\psi )^2 \right) \right] \mik a_0 \left[ 8 (Y\psi )^2 \left( D^2 (Y\psi )^2 + (T\psi )^2 \right) \right] .$$
We then have for any $\tau$: 
\begin{equation}\label{boot11}
\int_{S_{\tau}} |F|^2 d\mi_{g_S} \mik 8 a_0 \| Y \psi \|_{L^{\infty} ( S_{\tau} )}^2  \int_{S_{\tau}} \left( D^2 (Y \psi )^2 + ( T \psi )^2 \right) d\mi_{g_S} . 
\end{equation}
Notice that since $D^2 \mik D$ and the integral is taken over the spacelike surface $S_{\tau}$, then it is implied that the following is true:
$$ \int_{S_{\tau}} D^2 (Y \psi )^2 + ( T \psi )^2 d\mi_{g_S} \mik \int_{S_{\tau}} J^T_{\mu} [\psi ] n^{\mu}  d\mi_{g_S} .$$

Because of our assumptions \eqref{boot1a}, \eqref{boot2a}, \eqref{boot3a} we can apply Theorem \ref{dr} which gives us that:
$$ \| Y \psi \|_{L^{\infty} ( S_{\tau} )}^2 \lesssim E_0 \ee^2 .$$

Moreover since we have that:
$$ \int_{S_{\tau}} J^T_{\mu} [\psi ] n^{\mu} d\mi_{g_S} \mik \int_{\si_{\tau}} J^T_{\mu} [\psi ] n^{\mu} d\mi_{g_{\si}} , $$
we can apply Theorem \ref{endec} which gives us that
$$  \int_{S_{\tau}} J^T_{\mu} [\psi ] n^{\mu} d\mi_{g_S} \lesssim E_0 \ee^2 (1+ \tau )^{-2+\aaa} ,$$ 
and, finally, we can conclude by the previous inequality \eqref{boot11} that we have in the end the desired estimate:
$$ \int_{S_{\tau}} |F|^2 d\mi_{g_{\si}} \lesssim E_0 \ee^2 \cdot E_0 \ee^2 (1+ \tau )^{-2+\aaa} . $$ 

(b) Now we will prove estimate \eqref{boot2}.

The proof is similar to the proof of estimate \eqref{boot1}. We use again the pointwise bound \eqref{pnonlin} and we get:
\begin{equation}\label{boot22}
\int_{\tau_1}^{\tau_2} \int_{S_{\tau'}} |F|^2 d\mi_{g_{\so}} \mik 8 a_0 \| Y \psi \|_{L^{\infty} ( \so (\tau_1 , \tau_2) )}^2  \int_{\tau_1}^{\tau_2} \int_{S_{\tau'}} \left( D^2 (Y \psi )^2 + ( T \psi )^2 \right) d\mi_{g_{\so}} .
\end{equation}
Again because of all three of our assumptions \eqref{boot1a}, \eqref{boot2a}, \eqref{boot3a} we can bound $\| Y \psi \|_{L^{\infty} ( \so (\tau_1 , \tau_2) )}^2$ by $E_0 \ee^2$.

For the term $\int_{\tau_1}^{\tau_2} \int_{S_{\tau'}} \left( D^2 (Y \psi )^2 + ( T \psi )^2 \right) d\mi_{g_{\so}}$ we use the Morawetz estimate \eqref{degix1} for some $0 < \eta < 1$ and we have that:
$$ \int_{\tau_1}^{\tau_2} \int_{S_{\tau'}} \left( D^2 (Y \psi )^2 + ( T \psi )^2 \right) d\mi_{g_{\so}} \lesssim \int_{\si_{\tau_1}} J^T_{\mu} [\psi ] n^{\mu} d\mi_{g_{\si}} + $$ $$ + \int_{\tau_1}^{\tau_2} \int_{S_{\tau'}} |F|^2 d\mi_{g_{\so}} + \int_{\tau_1}^{\tau_2} \int_{N_{\tau'}} r^{1+\eta} |F|^2 d\mi_{g_{\nnn}} \lesssim E_0 \ee^2 (1+ \tau_1 )^{-2+\aaa} ,$$
where in the last step we used our assumptions and the fact that because of them we can apply Theorem \ref{endec} which gives us that $\int_{\si_{\tau_1}} J^T_{\mu} [\psi ] n^{\mu} d\mi_{g_{\si}}$ decays with rate $-2+\aaa$.

The proof finishes by using this last bound and the bound for $Y\psi$ (mentioned earlier) in \eqref{boot22}.

In the proof given above for estimate \eqref{boot2} the importance of the factor $D^2$ in front of $(Y\psi )^2$ should be highlighted, if we had just $D$ we wouldn't be able to apply the Morawetz estimate and the bootstrap wouldn't close.

(c) Finally we deal with the nonlinear term close to future null infinity, i.e. with the proof of \eqref{boot3}.

Note that here we will use the "standard" null coordinates $(u , v ) = \left( t-r^{*} , t+r^{*} \right)$ under which the nonlinearity takes the following form:
$$ \Box_g \psi = \frac{4}{D} A(\psi ) \partial_u \psi \partial_v \psi .$$

We will express the nonlinearity in terms of $\ph = r\psi$. We have:
$$ \partial_u \ph = r \partial_u \psi - D \psi, \quad \partial_v \ph = r \partial_v \psi + D \psi \Rightarrow $$ $$ \Rightarrow (\partial_u \psi )^2 \lesssim \dfrac{1}{r^2} \left( (\partial_u \ph )^2 + D^2 \psi^2 \right) , \quad (\partial_v \psi )^2 \lesssim \dfrac{1}{r^2} \left( (\partial_v \ph )^2 + D^2 \psi^2 \right) \Rightarrow $$  
\begin{equation}\label{rnonlin}
 r^{3-\aaa} (\partial_u \psi )^2 (\partial_v \psi )^2 \lesssim \dfrac{1}{r^{1+\aaa}} \left( (\partial_u \ph )^2 (\partial_v \ph )^2 + D^2 \psi^2 (\partial_u \ph )^2 + D^2 \psi^2 (\partial_v \ph )^2 + D^4 \psi^4 \right) . 
 \end{equation}

We will treat separately the four different terms in \eqref{rnonlin} and we will ignore $4A$ for now since it is bounded.

We start with the last term that involves only $\psi$. We have:
\begin{equation}\label{psinonlin}
 \int_{\tau_1}^{\tau_2} \int_{N_{\tau'}} \dfrac{D^4}{D^2 r^{1+\aaa}}  \psi^4 d\mi_{g_{\nnn}} \mik \| r^{1-\aaa} \psi^2 \|_{L^{\infty} (\nnn (\tau_1 , \tau_2 ))} \int_{\tau_1}^{\tau_2} \int_{N_{\tau'}} \dfrac{\psi^2}{r^2} d\mi_{g_{\nnn}} .
 \end{equation}

Because of our assumptions we can apply Theorem \ref{pdec} and get the estimate:
\begin{equation}\label{psinonlin1}
 \| r^{1-\aaa} \psi^2 \|_{L^{\infty} (\nnn (\tau_1 , \tau_2 ))} \lesssim E_0 \ee^2 (1+\tau_1 )^{-2+\aaa} .
\end{equation}

On the other hand again because of our assumptions we can apply Theorem \ref{endec} and Hardy's inequality \eqref{hardy} and get the following estimate as well:
\begin{equation}\label{psinonlin2}
 \int_{\tau_1}^{\tau_2} \int_{N_{\tau'}} \dfrac{\psi^2}{r^2} d\mi_{g_{\nnn}} \lesssim \int_{\tau_1}^{\tau_2} \int_{\si_{\tau'}} J^T_{\mu} [\psi] n^{\mu} d\mi_{g_{\rrr}} \lesssim 
\end{equation} 
  $$ \lesssim \int_{\tau_1}^{\tau_2} \dfrac{E_0 \ee^2}{(1+\tau' )^{2-\aaa}} d\tau' \lesssim E_0 \ee^2 (1+\tau_1 )^{-1+\aaa} .$$ 

So in the end for the term $ \dfrac{D^4}{D^2 r^{1+\aaa}} \psi^4$ we get better decay than required by inserting estimates \eqref{psinonlin1}, \eqref{psinonlin2} in \eqref{psinonlin}:
\begin{equation}\label{psinonlinf}
\int_{\tau_1}^{\tau_2} \int_{N_{\tau'}} \dfrac{D^4}{D^2 r^{1+\aaa}}  \psi^4 d\mi_{g_{\nnn}} \lesssim E_0^2 \ee^4 (1+ \tau_1 )^{-3+2\aaa} .
\end{equation} 

We now look at the term $\dfrac{D^2}{D^2 r^{1+\aaa}} \psi^2 (\partial_v \ph )^2$:
\begin{equation}\label{psidvnonlin}
\int_{\tau_1}^{\tau_2} \int_{N_{\tau'}} \dfrac{D^2}{D^2 r^{1+\aaa}} \psi^2 (\partial_v \ph )^2 d\mi_{g_{\nnn}} \mik 
\end{equation}
$$ \mik \| r^{1-\aaa} \psi^2 \|_{L^{\infty} (\nnn (\tau_1 , \tau_2 ))} \int_{\tau_1}^{\tau_2} \int_{N_{\tau'}} \dfrac{(\partial_v \ph )^2}{r^2}  d\mi_{g_{\nnn}} \lesssim $$ $$ \lesssim E_0 \ee^2 (1+\tau_1 )^{-2+\aaa} \int_{N_{\tau'}} \dfrac{(\partial_v \ph )^2}{r^2}  d\mi_{g_{\nnn}} \lesssim E_0^2 \ee^4 (1+\tau_1 )^{-2+\aaa} , $$
where we used estimate \eqref{psinonlin1} and estimate \eqref{pintegint2}.

We now look at the term $\dfrac{D^2}{D^2 r^{1+\aaa}} \psi^2 (\partial_u \ph )^2$. We write the volume form in detail and we use the spherical symmetry assumption.

\begin{equation}\label{psidunonlin}
\int_{\tau_1}^{\tau_2} \int_{N_{\tau'}} \dfrac{D^2}{D^2 r^{1+\aaa}} \psi^2 (\partial_u \ph )^2 d\mi_{g_{\nnn}} =  
\end{equation}
$$ = \int_{v_{R_0}}^{\infty}  \int_{u_{\tau_1} (v) }^{u_{\tau_2} (v) } \int_{\mathbb{S}^2}  \dfrac{D^2}{D^2 r^{1+\aaa}} \psi^2 (\partial_u \ph )^2 D r^2 d\gamma_{\mathbb{S}^2} dudv \lesssim $$ $$ \lesssim \int_{v_{R_0}}^{\infty}  \int_{u_{\tau_1} (v) }^{u_{\tau_2} (v) }  r^{1-\aaa} \psi^2 (\partial_u \ph )^2  dudv \lesssim  \int_{v_{R_0}}^{\infty}  \int_{u_{\tau_1} (v) }^{u_{\tau_2} (v) }  (\partial_u \ph )^2 du \sup_u \left(  r^{1-\aaa} \psi^2 \right)  dv \lesssim $$ $$ \lesssim \sup_v \left( \int_{u_{\tau_1} (v) }^{u_{\tau_2} (v) }  (\partial_u \ph )^2 du \right) \cdot \int_{v_{R_0}}^{\infty} \sup_u \left(  r^{1-\aaa} \psi^2 \right)  dv . $$
Now we examine separately the terms $\int_{v_{R_0}}^{\infty} \sup_u \left(  r^{1-\aaa} \psi^2 \right)  dv$ and $\int_{u_{\tau_1} (v) }^{u_{\tau_2} (v) }  (\partial_u \ph )^2 du$. 

Because of our assumptions we can apply estimate \eqref{rdecpsi1} and have the following estimate:
$$ \left( r (u,v) \right)^{1-\aaa} \psi^2 (u,v) \lesssim E_0 \ee^2 (1+v)^{-1 - \aaa + 2\bar{\aaa}} \mbox{ for all $u,v$.} $$
Under condition \eqref{baracond} ($\bar{\aaa} = \aaa/4$) we get then that $r^{1-\aaa} \psi^2$ is integrable with respect to $r$ (and here more specifically with respect to $v$ since in this region $v \approx r$).

From this we get that:
\begin{equation}\label{psidunonlin1}
\int_{v_{R_0}}^{\infty} \sup_u \left(  r^{1-\aaa} \psi^2 \right)  dv \lesssim E_0 \ee^2 .
\end{equation}

For the term $\int_{u_{\tau_1} (v) }^{u_{\tau_2} (v) }  (\partial_u \ph )^2 du$ we work as follows.

\begin{equation}\label{intuph}
 \int_{u_{\tau_1}}^{u_{\tau_2}} (\partial_u \ph )^2 du = \int_{u_{\tau_1}}^{u_{\tau_2}} r^2 (\partial_u \psi )^2 du - \int_{u_{\tau_1}}^{u_{\tau_2}} D \partial_u (r \psi^2 ) du =
 \end{equation}
$$ = \int_{u_{\tau_1}}^{u_{\tau_2}} r^2 (\partial_u \psi )^2 du - D r\psi^2 |_{u_{\tau_1}}^{u_{\tau_2}} + \int_{u_{\tau_1}}^{u_{\tau_2}} \partial_u D \cdot r\psi^2 du . $$
For the last term of \eqref{intuph} we can notice that:
$$ \int_{u_{\tau_1}}^{u_{\tau_2}} \partial_u D \cdot r\psi^2 du \mik 0 ,$$
since
$$ \partial_u D =  - D \cdot D' = -D \dfrac{2M}{r^2} \left( 1-\dfrac{M}{r} \right) \mik 0 \mbox{ for $r \meg M$}. $$
Hence going back to \eqref{intuph} we get that:
\begin{equation}\label{intuph1}
 \int_{u_{\tau_1}}^{u_{\tau_2}} (\partial_u \ph )^2 du \mik \int_{u_{\tau_1}}^{u_{\tau_2}} r^2 (\partial_u \psi )^2 du - D r\psi^2 |_{u_{\tau_1}}^{u_{\tau_2}} .
\end{equation}
For the first term of the right hand side of \eqref{intuph1} we have:
$$ \int_{u_{\tau_1}}^{u_{\tau_2}} r^2 (\partial_u \psi )^2 du \lesssim \int_{\mathcal{U}_{\tau_1}^{\tau_2}} J^T_{\mu} [\psi ] n^{\mu} d\mi_{g_{\mathcal{U}_{\tau_1}^{\tau_2}}} ,$$
where $\mathcal{U}_{\tau_1}^{\tau_2}$ is the null line between $u_{\tau_1}$ and $u_{\tau_2}$. 

We apply Stokes' theorem to the current $J^T$ in the following region defined for any given $\bar{v} \in [v_{R_0} , \infty )$: 
$$U_{\tau_1}^{\tau_2} (\bar{v} ) = [u_{\tau_1} (\bar{v} ) , u_{\tau_2} (\bar{v} ) ] \times [\bar{v} , \infty ) \times \mathbb{S}^2 \subset \nnn (\tau_1 , \tau_2 ) $$
and we have that:
\begin{equation}\label{stokesunull}
\int_{\mathcal{U}_{\tau_1}^{\tau_2}} J^T_{\mu} [\psi ] n^{\mu} d\mi_{g_{\mathcal{U}_{\tau_1}^{\tau_2} (\bar{v} )}} =
\end{equation}
$$  = \int_{v \meg \bar{v} , u = u_{\tau_2}}  J^T_{\mu} [\psi ] n^{\mu} d\mi_g - \int_{v \meg \bar{v} , u = u_{\tau_1}}  J^T_{\mu} [\psi ] n^{\mu} d\mi_g + $$ $$ + \int_{\mathcal{I}^{+}}  J^T_{\mu} [\psi ] n^{\mu} d\mi_{g_{\mathcal{I}^{+}}} + \int_{U_{\tau_1}^{\tau_2}} F \cdot T\psi d\mi_{g_{U_{\tau_1}^{\tau_2}}} .$$

We note that all the terms in the right hand side of \eqref{stokesunull} can be bounded by the quantity
\begin{equation}\label{jtnonlin}
\int_{\si_{\tau_1}} J^T_{\mu} [\psi ] n^{\mu} d\mi_{g_{\si}} + \int_{\tau_1}^{\tau_2} \int_{S_{\tau'}} |F|^2 d\mi_{g_{\so}} +  \int_{\tau_1}^{\tau_2} \int_{N_{\tau'}} r^{1+\eta} |F|^2 d\mi_{g_{\nnn}} ,  
\end{equation}
for some $0 < \eta < 1$, since the terms $$\int_{v \meg v_{R_0} , u = u_{\tau_2}}  J^T_{\mu} [\psi ] n^{\mu} d\mi_g, \quad \int_{v \meg v_{R_0} , u = u_{\tau_2}}  J^T_{\mu} [\psi ] n^{\mu} d\mi_g$$ are of the form $\int_{\si_{\tau_j}} J^T_{\mu} [\psi ] n^{\mu} d\mi_{g_{\si}}$ for $\tau_j$ being either $\tau_1$ or $\tau_2$, the term at null infinity is obviously bounded by \eqref{jtnonlin} (see the proof of Proposition \ref{deg2}) and for the nonlinear term $\int_{U_{\tau_1}^{\tau_2}} F \cdot T\psi d\mi_{g_{U_{\tau_1}^{\tau_2}}}$ we use Cauchy-Schwarz with weight $r^{1+\eta}$ for some $0 < \eta < 1$ and then the Morawetz estimate of Proposition \ref{degixsn} (with $\eta$ as before).

On the other hand we also have:
$$ Dr\psi^2 |_{u_{\tau_1}}^{u_{\tau_2}} \lesssim E_0 \ee^2 (1+\tau_1 )^{-2 + \aaa} , $$
by Theorem \ref{pdec} as we noticed before.

So in the end we have by our assumptions that:
\begin{equation}\label{dunonlin}
\int_{u_{\tau_1}}^{u_{\tau_2}} (\partial_u \ph )^2 du \lesssim E_0 \ee^2 (1+\tau_1 )^{-2 + \aaa} .
\end{equation}
Inserting estimates \eqref{psidunonlin1} and \eqref{dunonlin} into \eqref{psidunonlin} we get in the end that:
\begin{equation}\label{psidunonlinf}
\int_{\tau_1}^{\tau_2} \int_{N_{\tau'}} \dfrac{D^2}{D^2 r^{1+\aaa}} \psi^2 (\partial_u \ph )^2 d\mi_{g_{\nnn}} \lesssim E_0^2 \ee^4 (1+ \tau_1 )^{-2+\aaa} . 
\end{equation}

Finally we turn to the term $\dfrac{1}{D^2 r^{1+\aaa}} (\partial_u \ph )^2 (\partial_v \ph )^2$. Working as before we have:
$$  \int_{\tau_1}^{\tau_2} \int_{N_{\tau'}} \dfrac{1}{D^2 r^{1+\aaa}} (\partial_u \ph )^2 (\partial_v \ph )^2 d\mi_{g_{\nnn}}  = $$ $$ = \int_{v_{R_0}}^{\infty}  \int_{u_{\tau_1} (v) }^{u_{\tau_2} (v) } \int_{\mathbb{S}^2} \dfrac{1}{D^2 r^{1+\aaa}} (\partial_u \ph )^2 (\partial_v \ph )^2 D r^2 d\gamma_{\mathbb{S}^2} du dv \lesssim $$ $$ \lesssim \int_{v_{R_0}}^{\infty}  \int_{u_{\tau_1} (v) }^{u_{\tau_2} (v) } (\partial_u \ph )^2 du \sup_u \left( r^{1-\aaa} (\partial_v \ph )^2 \right) dv \lesssim $$ $$ \lesssim \sup_v \left( \int_{u_{\tau_1} (v) }^{u_{\tau_2} (v) } (\partial_u \ph )^2 du \right) \cdot \int_{v_{R_0}}^{\infty} \sup_u \left( r^{1-\aaa} (\partial_v \ph )^2 \right) dv . $$

We examine separately the terms $\int_{u_{\tau_1}}^{u_{\tau_2}} (\partial_u \ph )^2 du$ and $\int_{v_{R_0}}^{\infty} \sup_u \left(  r^{1-\aaa} (\partial_v \ph )^2 \right)  dv$. From \eqref{dunonlin} we have that the first one is bounded by
$$ E_0 \ee^2 (1+\tau_1 )^{-2 + \aaa} .$$

For the second term we have the pointwise bounds:
$$ r^{1-\aaa} (\partial_v \ph )^2 \lesssim r^{1-\aaa} \psi^2 + r^{3-\aaa} (\partial_v \psi )^2 , $$
and since 
$$ | r\psi | \lesssim \sqrt{E_0} \ee r^{\bar{\aaa}} \Rightarrow r^{1-\aaa} \psi^2 \lesssim \sqrt{E_0} \ee \dfrac{r^{2\bar{\aaa} - \aaa}}{r} , $$
and 
$$ |r^2 \partial_v \psi | \lesssim \sqrt{E_0} \ee r^{\bar{\aaa}} \Rightarrow r^{3-\aaa} (\partial_v \psi )^2 \lesssim \sqrt{E_0} \ee \dfrac{r^{2\bar{\aaa} - \aaa}}{r} , $$
we can conclude that
$$ \sup_u  r^{1-\aaa} (\partial_v \ph )^2 \lesssim E_0 \ee^2 \dfrac{r^{2\bar{\aaa} - \aaa}}{r} , $$
which is integrable in $r$ as 
$$ \bar{\aaa} = \dfrac{\aaa}{4} \Rightarrow 2\bar{\aaa} - \aaa = - \dfrac{\aaa}{2} .$$
This shows that
$$ \int_{v_{R_0}}^{\infty} \sup_u \left(  r^{1-\aaa} (\partial_v \ph )^2 \right)  dv \lesssim E_0 \ee^2 . $$

Hence we conclude that:
\begin{equation}\label{psidudvnonlinf}
\int_{\tau_1}^{\tau_2} \int_{N_{\tau'}} \dfrac{1}{D^2 r^{1+\aaa}} (\partial_u \ph )^2 (\partial_v \ph )^2 d\mi_{g_{\nnn}} \lesssim E_0^2 \ee^4 (1+\tau_1 )^{-2 + \aaa} .
\end{equation}

So in the end because of the pointwise bound \eqref{rnonlin} and the boundedness of $A$ we have that:
$$ \int_{\tau_1}^{\tau_2} \int_{N_{\tau'}} r^{3-\aaa} |F|^2 d\mi_{g_{\nnn}} \lesssim \int_{\tau_1}^{\tau_2} \int_{N_{\tau'}} \dfrac{D^4}{D^2 r^{1+\aaa}}  \psi^4 d\mi_{g_{\nnn}} + $$ $$+\int_{\tau_1}^{\tau_2} \int_{N_{\tau'}} \dfrac{D^2}{D^2 r^{1+\aaa}} \psi^2 (\partial_v \ph )^2 d\mi_{g_{\nnn}} + \int_{\tau_1}^{\tau_2} \int_{N_{\tau'}} \dfrac{D^2}{D^2 r^{1+\aaa}} \psi^2 (\partial_u \ph )^2 d\mi_{g_{\nnn}} + $$ $$ +\int_{\tau_1}^{\tau_2} \int_{N_{\tau'}} \dfrac{1}{D^2 r^{1+\aaa}} (\partial_u \ph )^2 (\partial_v \ph )^2 d\mi_{g_{\nnn}}. $$

Gathering together estimates \eqref{psinonlinf}, \eqref{psidvnonlin}, \eqref{psidunonlinf} and \eqref{psidudvnonlinf} we arrive at the desired estimate \eqref{boot3}.

\end{proof}

\section{Global Well-Posedness}\label{tgwpt}
Now we combine the local theory with the boundedness estimates derived in the previous sections.

First we close our bootstrap assumptions by choosing $\ee > 0$ such that it is $\ee < \ee_r$ (where $\ee_r$ was the threshold for $\ee$ chosen in Theorem \ref{dr}) and such that
$$ C' E_0^2 \ee^4 \mik C E_0 \ee^2 ,$$
for $C'$ being the constant that shows up in the conclusion of Proposition \ref{boot}.

For spherically symmetric initial data $ (\ee \psi_0 , \ee \psi_1 )$ that are compactly supported and of size $\mik \ee$ in the $H^s (\Sigma_0 )$ norm for all $s>5/2$ we consider the initial value problem for equation \eqref{nwef}. We note that we choose our spacelike null foliation by setting $R_0$ to be such that $supp (\psi_0 ) \subset \{ r \mik R_0 \}$ for an $R_0$ that is big enough for Proposition \ref{rw} to hold.

We set up an iteration scheme as follows: let $\Psi_0 = 0$ and solve the set of linear wave equations given below recursively:
$$ \Box_g \Psi_n = F (\Psi_{n-1}), \quad \Psi_n |_{\Sigma_0} = (\ee \psi_0 , \ee \psi_1 ) \mbox{ for $n \meg 1$, $n \in \mathbb{N}$} .$$

We can't apply a local well-posedness theorem for this problem because of the null part of the foliation. But by the estimates that we derived in the previous sections, we get that $\Psi_n$, $T \Psi_n$, $Y \Psi_n$ are uniformly bounded for all $\tau$ and for all $n$. 

By the local well-posedness result of Theorem \ref{lwp} we know that there exists a unique solution $\psi$ to \eqref{nwef} up to a "time" $\mathcal{T}$ over the spacelike foliation $\widetilde{\Sigma}_{\tau}$. Since this solution is obtained by a Picard iteration scheme as well we actually have that:

$$ \Psi_n \rightarrow \psi \mbox{ in $C^{\infty} \left( \cup_{ \tau \in [0, \mathcal{T})} \Sigma_{\tau} \right)$} .$$

Now by using the estimates on the spacelike-null foliation and the fact that we start with data that are compactly supported, we get that every $\Psi_n$, $T\Psi_n $, $Y \Psi_n $ is uniformly bounded over all the spacelike foliations $\widetilde{\Sigma}_{\tau}$. This property will be inherited in the limit $\psi$ and then we can apply the continuation criterion \eqref{contlwp} and conclude that we can't have $\mathcal{T} < \infty$, hence our solution $\psi$ is actually a global one.

\begin{figure}[H]
\centering
\includegraphics[width=7cm]{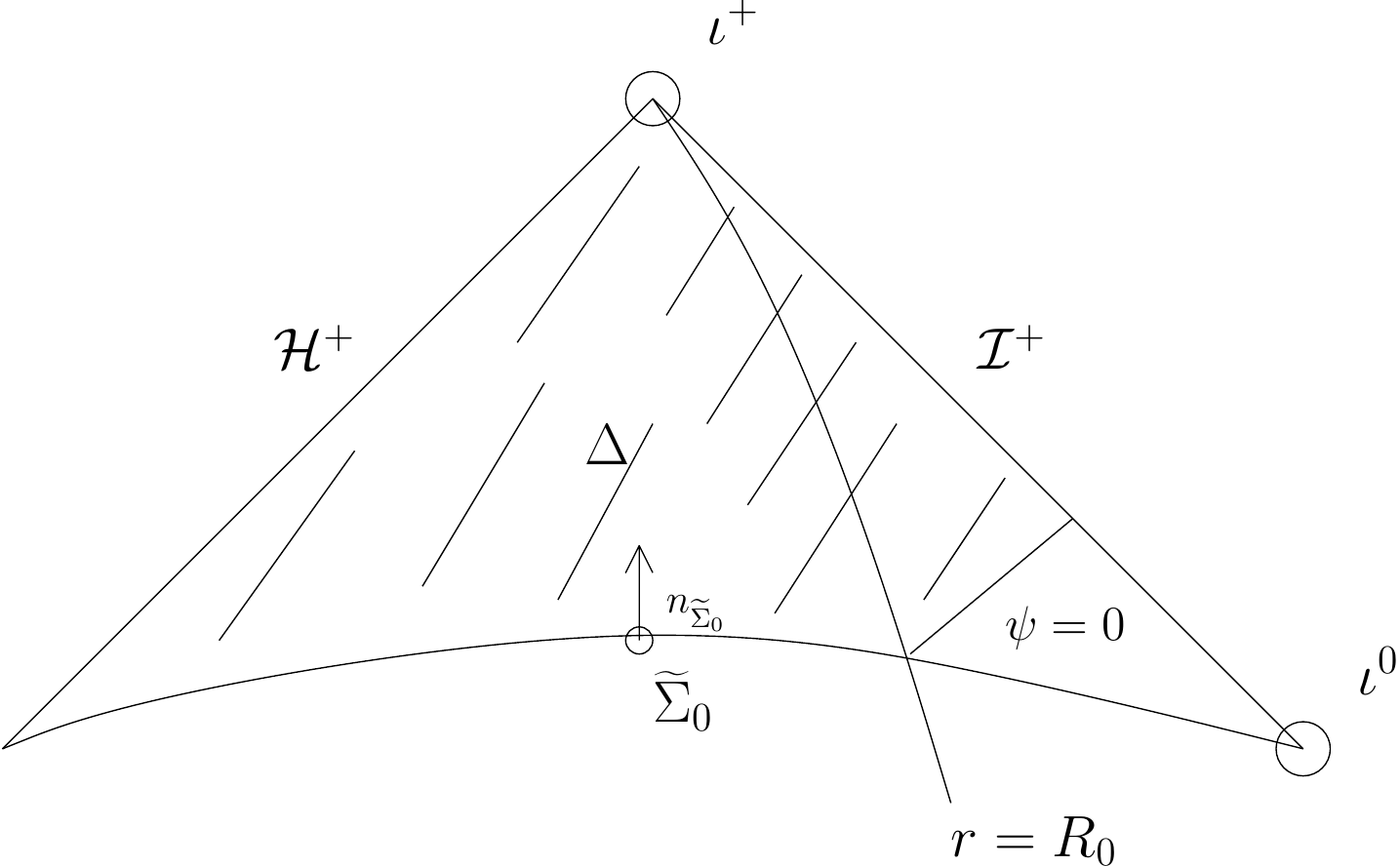}
\caption{By the finite speed of propagation we have that $supp (\psi ) \subset \Delta$. The area $\Delta$ is covered also by the the spacelike-null foliation $\{ \Sigma_{\tau} \}_{\tau \meg 0}$, while the two different foliations coincide up to $r \mik R_0$.}
\end{figure}

\section{Almost Conservation Law On The Horizon $\mathcal{H}^{+}$}\label{acwothh}
Having closed all our bootstrap assumptions, we proved in the previous section that we have global well-posedness for solutions of \eqref{nwef} for compactly supported data $(\ee \psi_0 , \ee \psi_1 ) \in C^{\infty}_0 \times C^{\infty}_0$ satisfying:
\begin{equation}\label{assum}
\| \ee \psi_0 \|_{H^s (\widetilde{\Sigma}_0 )} \mik \sqrt{E_0} \ee , \quad \| \ee \psi_1 \|_{ H^{s-1} (\widetilde{\Sigma}_0 )} \mik \sqrt{\widetilde{E}_0} \ee ,
\end{equation}
for $\ee$ as chosen before and for some $s > 5/2$.

In this class of initial data we will now show that on the future event horizon $\mathcal{H}^{+}$ we can obtain an almost conservation law.

Define the following quantity:
$$ H (\tau ) = Y \psi (\tau , M ) + \frac{1}{M} \psi (\tau , M ) .$$
Then we have:

\begin{thm}\label{acl}
For all spherically symmetric solutions $\psi$ of \eqref{nwef} with compactly supported initial data satisfying assumption \eqref{assum} we have that for all $\tau$:
\begin{equation}\label{aclh}
|H(\tau ) - H(0) | \mik O( \ee^2 )
\end{equation}
\end{thm}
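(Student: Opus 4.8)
The plan is to derive an evolution equation for $H(\tau)$ along the horizon $\mathcal{H}^{+} = \{r = M\}$ by writing the wave equation \eqref{nwef} in the Eddington--Finkelstein coordinates $(v,r)$ and restricting it to $r = M$. Recall that on $r=M$ we have $D = (1-M/r)^2 = 0$ and $D' = 2(1-M/r)/r \cdot \ldots$, in fact $D'(M) = 0$ as well, so only the term $D'' $ and the cross term $2\partial_v \partial_r \psi$ survive in the expression for $\Box_g$. The standard computation (this is exactly the source of the Aretakis conservation law in the linear case) shows that $\Box_g \psi$ restricted to $r=M$ gives, after using the spherical symmetry, an identity of the schematic form
$$ T\left( Y\psi + \tfrac{1}{M}\psi \right)(v,M) = \text{(terms in $\psi$, $T\psi$, $Y\psi$ at $r=M$)} + F(v,M), $$
where the left-hand side is precisely $T H$ along $\mathcal{H}^{+}$, and in the linear case $F \equiv 0$ and the right-hand side vanishes identically (hence $H$ is exactly conserved). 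First I would carefully carry out this computation to isolate the exact form of the right-hand side; the key point is that for a \emph{linear} solution the bracket of lower-order terms cancels, so for our nonlinear solution the only genuinely new contribution is the nonlinearity $F$ evaluated on the horizon.

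Next I would integrate this transport equation in $v$ from $0$ to $\tau$ along $\mathcal{H}^{+}$, obtaining
$$ H(\tau) - H(0) = \int_0^{\tau} F(v,M)\, dv \quad (\text{plus, if present, any residual linear lower-order terms that I expect to vanish}). $$
Since $F = A(\psi)\big(D(Y\psi)^2 + 2 T\psi\, Y\psi + \text{angular}\big)$ and on the horizon $D=0$ and the angular derivatives vanish by spherical symmetry, we get $F(v,M) = 2 A(\psi)\, T\psi(v,M)\, Y\psi(v,M)$. By Theorem \ref{dr} we have $\|Y\psi\|_{L^\infty} \lesssim \sqrt{E_0}\,\ee$, by Theorem \ref{dvaway} we have $\|T\psi\|_{L^\infty} \lesssim \sqrt{E_0}\,\ee$, and $A$ is bounded by \eqref{boundedgeom}. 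The crucial additional input is decay: $T\psi$ by itself is only known to be bounded, not decaying, which is a priori not enough to make $\int_0^\tau F(v,M)\,dv$ finite. The resolution is to use the improved pointwise decay of $\psi$ from Theorem \ref{dwee}, $|\psi(v,M)| \lesssim \sqrt{E_0}\,\ee (1+v)^{-3/5 + 3\aaa/10}$, which is $L^2$-integrable in $v$ (as emphasized in the remark after Theorem \ref{dwee}, since $3/5 - 3\aaa/10 > 1/2$); one integrates by parts in $v$ to trade one factor of $T\psi$ or $Y\psi$ for a factor of $\psi$ (exactly as was done in Section \ref{drbound} with $\frac{\zeta}{\nu}$), converting $\int T\psi\, Y\psi\, dv$ into boundary terms of size $O(\ee^2)$ plus bulk integrals of the form $\int \psi \cdot (\text{bounded}) \cdot (\text{derivative})\, dv$ that are controlled by $\|\psi\|_{L^2_v} \cdot \|\partial\psi\|_{L^2_v} \lesssim \ee^2$ via Cauchy--Schwarz and the energy estimates of Section \ref{eeh}.

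The organization of the argument would therefore be: (i) establish the transport identity for $H$ on $\mathcal{H}^{+}$, confirming that all linear lower-order terms cancel and the only source is $2A(\psi)T\psi\, Y\psi$; (ii) integrate in $v$ and integrate by parts to express $H(\tau) - H(0)$ as boundary terms plus $L^2$-controlled bulk terms; (iii) invoke the already-proven $L^\infty$ bounds on $Y\psi$, $T\psi$ (Theorems \ref{dr}, \ref{dvaway}), the $L^\infty$ and $L^2_v$ decay of $\psi$ (Theorem \ref{dwee}), and the $r^p$-weighted / non-degenerate energy bounds restricted to the horizon, to conclude each term is $O(\ee^2)$ uniformly in $\tau$. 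I expect the main obstacle to be step (i): making precise exactly which lower-order terms appear when restricting $\Box_g\psi = F$ to $r=M$ and verifying their cancellation, since this requires the Aretakis-type algebraic miracle (the vanishing of $D$, $D'$ at $r=M$ conspiring to make $Y\psi + \frac1M\psi$ the conserved quantity) to go through with the nonlinear right-hand side attached — and then controlling the horizon-restricted integral $\int_0^\tau |A(\psi) T\psi\, Y\psi|(v,M)\,dv$, for which the mere boundedness of $T\psi$ is insufficient and one genuinely needs the integration-by-parts trick combined with the $L^2_v$-integrability of $\psi$ on the horizon, which itself must be extracted from Theorem \ref{dwee} and the energy hierarchy rather than assumed.
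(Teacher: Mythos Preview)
Your proposal is correct and follows essentially the same route as the paper: derive the transport identity $T H = A(\psi)\,T\psi\,Y\psi$ on $\mathcal{H}^{+}$ from \eqref{nwef} at $r=M$, integrate in $v$, and then integrate by parts to trade derivative factors for factors of $\psi$ so that the $L^2_v$-integrability of $\psi$ from Theorem \ref{dwee} closes the estimate. The paper carries out two rounds of integration by parts (substituting the horizon equation for $TY\psi$ each time) and controls the resulting bulk terms purely by the pointwise bounds $|T\psi|,|Y\psi|\lesssim\ee$ together with $\int_0^\infty \psi^2\,dv<\infty$, whereas your suggested single IBP plus Cauchy--Schwarz using the horizon $T$-flux $\int_{\mathcal{H}^+}(T\psi)^2\,dv\lesssim E_0\ee^2$ is an equally valid (and slightly more streamlined) implementation of the same idea.
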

\begin{proof}

By the properties of our solution $\psi$ we have that for the given initial data of \eqref{assum} the following bounds are satisfied for all $\tau$:
\begin{equation}\label{assumgwp1}
| \psi (\tau , M ) | \lesssim \dfrac{\sqrt{E_0} \ee}{(1+ \tau )^{3/5 - \aaa}} ,
\end{equation}
\begin{equation}\label{assumgwp2}
| T\psi (\tau , M )| \lesssim \sqrt{E_0} \ee ,
\end{equation}
\begin{equation}\label{assumgwp3}
| Y \psi (\tau , M )| \lesssim \sqrt{E_0} \ee ,
\end{equation}
for any $\aaa > 0$ (note that this $\aaa$ here is related to the $\aaa$ used in the previous sections by a factor of $3/10$, for our purposes this is not important). We fix $\aaa$ so that $\psi$ is $L^2$ integrable with respect to the $v$ variable.

Note that in this section we will use the notation $T = \partial_v$ (since we will always be working with the Eddington--Finkelstein coordinates $(v,r)$). 

In the spherically symmetric situation, on the event horizon (set $r=M$) we have the following equation:
\begin{equation}\label{nwhss}
TY \psi + \frac{1}{M} T \psi = A(\psi ) T \psi \cdot Y \psi \Rightarrow T \left( Y \psi + \frac{1}{M} \psi \right) = A(\psi ) T \psi \cdot Y \psi .
\end{equation}
We integrate \eqref{nwhss} with respect to $v$ and we have:
\begin{equation}\label{inwhss}
Y \psi (\tau , M) + \frac{1}{M} \psi (\tau , M) = Y \psi (0 , M) + \frac{1}{M} \psi (0 , M) + \int_0^{\tau} A(\psi ) T \psi \cdot Y \psi dv .
\end{equation}
We proceed by repeatedly integrating by parts the right hand side of \eqref{inwhss}.
$$ \int_0^{\tau} A(\psi ) T \psi \cdot Y \psi dv = A(\psi ) \psi \cdot Y \psi |_0^{\tau} - \int_0^{\tau} A(\psi ) \psi \cdot ( TY \psi ) dv - \int_0^{\tau} A' (\psi ) \psi \cdot T \psi \cdot Y \psi dv = $$ $$ = A(\psi ) \psi \cdot Y \psi |_0^{\tau} + \frac{1}{M} \int_0^{\tau} A(\psi ) \psi \cdot T \psi dv - \int_0^{\tau} \left( A^2 (\psi ) + A' (\psi ) \right) \psi \cdot T \psi \cdot Y \psi dv ,$$
where we used the equation \eqref{nwhss} for the second term of the first line.

We look at the following term:
$$ \frac{1}{M} \int_0^{\tau} A(\psi ) \psi \cdot T \psi dv = \frac{1}{2M} A(\psi ) \psi^2 |_0^{\tau} - \frac{1}{2M} \int_0^{\tau} A' (\psi ) \psi^2 \cdot T \psi dv ,$$
and by estimating in absolute value and using the triangle inequality we have:
$$ \left| \frac{1}{M} \int_0^{\tau} A(\psi ) \psi \cdot T \psi dv \right| \lesssim \dfrac{a_0 E_0 \ee^2}{M} + \dfrac{a_1 \sqrt{E_0} \ee }{M} \int_0^{\tau} \dfrac{E_0 \ee^2}{(1+ v )^{6/5 - 2\aaa}} dv \lesssim $$ $$ \lesssim \dfrac{a_0 E_0 \ee^2}{M} + \dfrac{a_1 E_0^{3/2} \ee^3 }{M} ,$$
by using estimates \eqref{assumgwp1}, \eqref{assumgwp2} and the boundedness of $A$ and $A'$.

Now we investigate the following term:
$$ \int_0^{\tau} \left( A^2 (\psi ) + A' (\psi ) \right) \psi \cdot T \psi \cdot Y \psi dv := \int_0^{\tau} B(\psi ) \psi \cdot T \psi \cdot Y \psi dv = $$ $$ = B(\psi ) \psi^2 \cdot Y \psi |_0^{\tau} - \int_0^{\tau} B' (\psi ) \psi^2 \cdot T \psi \cdot Y \psi dv - $$ $$ - \int_0^{\tau} B(\psi ) \psi^2 \cdot ( TY \psi ) dv - \int_0^{\tau} B(\psi ) \psi \cdot T \psi \cdot Y \psi dv \Rightarrow $$ $$ \Rightarrow \int_0^{\tau} B(\psi ) \psi \cdot T \psi \cdot Y \psi dv = \frac{1}{2} B(\psi ) \psi^2 \cdot Y \psi |_0^{\tau} - \frac{1}{2} \int_0^{\tau} B' (\psi ) \psi^2 \cdot T \psi \cdot Y \psi dv  - $$ $$ - \frac{1}{2} \int_0^{\tau} B(\psi ) \psi^2 \cdot ( TY \psi ) dv .$$ 
For the second term of the right hand side of the above equation we have:
$$ \left| \int_0^{\tau} B' (\psi ) \psi^2 \cdot T \psi \cdot Y \psi dv \right| \lesssim (a_0^2 + a_1 ) E_0 \ee^2 \int_0^{\tau} \dfrac{E_0 \ee^2}{( 1 + v )^{6/5 - 2\aaa}} dv \lesssim (a_0^2 + a_1 ) E_0^2 \ee^4 .$$
For the third term we use again the equation \eqref{nwhss}:
$$ \int_0^{\tau} B(\psi ) \psi^2 \cdot ( TY \psi ) dv = \int_0^{\tau} A(\psi ) B(\psi ) \psi^2 \cdot T \psi \cdot Y \psi dv - \frac{1}{M} \int_0^{\tau} B(\psi ) \psi^2 \cdot T \psi dv .$$
We examine these last terms separately. For the first one we have:
$$ \left| \int_0^{\tau} A(\psi ) B(\psi ) \psi^2 \cdot T \psi \cdot Y \psi dv\right| \lesssim $$ $$ \lesssim a_0 (a_0^2 + a_1 ) E_0 \ee^2 \int_0^{\tau} \dfrac{E_0 \ee^2}{( 1 + v )^{6/5 - 2\aaa}} dv  \lesssim a_0 (a_0^2 + a_1 ) E_0^2 \ee^4 ,$$
where we used the estimates \eqref{assumgwp1}, \eqref{assumgwp2}, \eqref{assumgwp3} and the boundedness of $A$ and $A'$.

We turn now to the second one:
$$ \left| \frac{1}{M} \int_0^{\tau} B(\psi ) \psi^2 \cdot ( TY \psi ) dv\right| \lesssim $$ $$ \lesssim (a_0^2 + a_1 ) \sqrt{E_0} \ee \int_0^{\tau} \dfrac{E_0 \ee^2}{( 1 + v )^{6/5-2\aaa}} dv \lesssim (a_0^2 + a_1 ) E_0^{3/2} \ee^3 .$$
So in the end we have that:
$$ \left| \int_0^{\tau} B(\psi ) \psi \cdot T \psi \cdot Y \psi dv \right| \lesssim 2(a_0^2 + a_1 ) E_0^{3/2} \ee^3 + (a_0 +1 ) (a_0^2 + a_1 ) E_0^2 \ee^4 .$$

Gathering together all the above computations we have in the end the following:
\begin{equation}\label{acons}
\left| \left( Y \psi (\tau , M) + \frac{1}{M} \psi (\tau , M) \right) - \left(Y \psi (0 , M) + \frac{1}{M} \psi (0 , M) \right)  \right| \lesssim  
\end{equation}
$$ \lesssim \dfrac{a_0 E_0 \ee^2}{M} + \left( 2(a_0^2 + a_1 ) E_0^{3/2} + \dfrac{a_1 E_0^{3/2}}{M} \right) \ee^3 +  (a_0 +1 ) (a_0^2 + a_1 ) E_0^2 \ee^4 .$$
\end{proof}

The reason for characterising estimate \eqref{acons} as an almost conservation law is that we can choose $\ee > 0$ to be small enough (smaller always than the $\ee$ that we get for the global well-posedness theorem) so that the right hand side of \eqref{acons} is always smaller than $\ee^{1+\eta}$ for some $\eta > 0$. This result combined with the decay of $\psi$ forces $Y \psi$ to be almost conserved on the horizon.

\section{Asymptotic Blow-Up For Higher Derivatives On The Horizon $\mathcal{H}^{+}$}\label{asbufhd}
In this section we will further examine the asymptotic behaviour of spherically symmetric solutions of \eqref{nwef}. In particular we will prove a result that is related to the almost conservation law of the previous section, specifically we will show that all 2nd or higher derivatives with respect to $r$ (always in the Eddington--Finkelstein coordinate system $(v,r)$) of solution that come from initial data satisfying a positivity condition on the future event horizon blow up along the future event horizon $\mathcal{H}^{+}$.

\begin{thm}\label{inst}
For a spherically symmetric global solution $\psi$ of \eqref{nwef} that was obtained in the previous sections (i.e. we start with data that are less than $\ee > 0$ in some $H^s$ norm for $s > 5/2$, for $\ee$ chosen appropriately) with initial data that satisfy:
\begin{equation}\label{posblow}
 \psi_0 (M) > 0 , \quad Y \psi (0 , M ) > 0 ,
\end{equation}
 we have that on the event horizon $r=M$ the following asymptotic behaviour can be observed:
$$ | ( Y^k \psi ) (\tau , M )| \rightarrow \infty \mbox{ as $\tau \rightarrow \infty$} ,$$
for $k \meg 2$. 
\end{thm}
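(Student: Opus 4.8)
The plan is to bootstrap from the almost conservation law of Theorem \ref{acl} together with the decay estimate \eqref{assumgwp1} for $\psi$ on the horizon, and then propagate information from the $Y\psi$ equation to the higher-order transversal derivatives via repeated commutation of \eqref{nwef} with $Y$. The starting point: Theorem \ref{acl} gives $|H(\tau)-H(0)|\lesssim \ee^2$, where $H(\tau)=Y\psi(\tau,M)+\frac1M\psi(\tau,M)$. Since $\psi(\tau,M)\to 0$ by \eqref{assumgwp1}, and since the positivity assumption \eqref{posblow} forces $H(0)=Y\psi(0,M)+\frac1M\psi_0(M)>0$ to be bounded below by a positive constant which — after choosing $\ee$ small enough so that the $O(\ee^2)$ deviation is a small fraction of $H(0)$ — stays positive for all $\tau$, we conclude
\[
Y\psi(\tau,M)\xrightarrow{\tau\to\infty} H(0) + O(\ee^2) =: c_1 > 0 .
\]
So $Y\psi$ does not decay on $\mathcal{H}^+$; it converges to a strictly positive constant. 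This is essentially part (b) of Theorem \ref{asb} and the base case for the induction.

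Next I would set up an induction on $k$. Commuting \eqref{nwef} (in the form $\Box_g\psi = F$) with $Y=\partial_r$ and restricting to $r=M$, using the commutation relation \eqref{commr} with $D(M)=D'(M)=0$, one finds that on the horizon $\Box_g(Y\psi)|_{r=M}$ reduces to a transport equation in $v$ for $TY^2\psi$ of the schematic form
\[
T(Y^2\psi) + \frac{c}{M}\, Y^2\psi = (\text{lower-order in }Y) + (\text{horizon nonlinearity}),
\]
for some positive constant $c$ coming from the $\frac{2}{r^2}T$ term and the $R'\cdot Y$ term evaluated at $r=M$ (this is exactly the structure Aretakis exploited in the linear case, where $Y^{k+1}\psi + \sum\beta_i Y^i\psi$ is conserved). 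The crucial point is that the inhomogeneity contains the term $\propto (Y\psi)^2$ (or more generally a product with a factor $Y\psi$), which by the base case is asymptotically $\gtrsim c_1^2 > 0$ and does not decay, whereas all the genuinely lower-order contributions either decay (they involve $\psi$ or $T\psi$ with decay, or integrable-in-$v$ quantities by the $L^2$ integrability of $\psi$ noted after Theorem \ref{dwee}) or can be absorbed. Integrating the transport equation in $v$, the non-decaying positive source forces $Y^2\psi(\tau,M)$ to grow: one gets $Y^2\psi(\tau,M) \sim (\text{const})\cdot\tau$ up to lower-order terms, hence $|Y^2\psi(\tau,M)|\to\infty$. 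For general $k$, induction: assuming $Y^j\psi(\tau,M)$ is known to blow up at a definite polynomial rate $\sim \tau^{j-1}$ for all $j\le k$, commuting $\Box_g$ with $Y^k$ and restricting to $r=M$ yields a transport equation for $TY^{k+1}\psi$ whose inhomogeneity is dominated by a term of the form $(\text{positive const})\cdot Y^k\psi \cdot (\text{something bounded below})$ plus terms of strictly lower growth; integrating in $v$ promotes the $\tau^{k-1}$ growth of $Y^k\psi$ to $\tau^{k}$ growth of $Y^{k+1}\psi$.

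To make the nonlinear error terms manageable I would, just as in the proof of Theorem \ref{acl}, integrate by parts in $v$ to trade a bad factor like $T\psi$ (only bounded, not decaying) for $\psi$ (decaying and $L^2$ in $v$), so that all the nonlinear contributions to the inhomogeneity are either $O(\ee^2)$-bounded-in-$\tau$ or $o(\tau^{k-1})$ and thus cannot cancel the main growing term. One also needs the pointwise bounds \eqref{assumgwp2}, \eqref{assumgwp3} on $T\psi, Y\psi$ at $r=M$ and the already-established fact (from the global existence theory, since the solution is $C^\infty$) that $Y^k\psi$ is finite at each fixed $\tau$, so the blow-up is genuinely asymptotic ($\tau\to\infty$) rather than at finite time.

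The main obstacle I expect is bookkeeping the exact sign and size of the leading coefficient in the transport equation for $Y^{k+1}\psi$ after commutation — i.e. verifying that the $v$-transport operator $T + \frac{c_k}{M}Y^{k+1}\psi$-type structure has the right sign so that the particular (forced) solution genuinely grows rather than being damped, and that the forcing term's leading piece does not accidentally vanish (in the linear case this is precisely the content of the conserved quantities $Y^{k+1}\psi+\sum\beta_i Y^i\psi$, and the constants $\beta_i$ are nonzero). The nonlinear corrections only perturb this at order $\ee$, so once the linear algebra of the commutators is pinned down the nonlinear part follows by the same integration-by-parts/$L^2$-in-$v$ argument used for the almost conservation law; keeping track of which error terms are genuinely lower order at each induction step is the delicate part.
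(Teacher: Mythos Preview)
Your overall strategy matches the paper's: deduce $Y\psi(\tau,M)\to c_1>0$ from the almost conservation law and the decay of $\psi$, commute the equation once with $Y$, restrict to the horizon, integrate in $v$ to extract growth of $Y^2\psi$, and handle the nonlinear terms by the same integration-by-parts tricks as in Theorem \ref{acl}. That is exactly what the paper does.

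However, your description of the transport equation on $\mathcal H^+$ contains two errors that would derail the argument as you have written it. First, there is \emph{no} term of the form $\frac{c}{M}Y^2\psi$ in the linear part of the equation for $TY^2\psi$ on the horizon. Using $D(M)=D'(M)=0$, $D''(M)=2/M^2$, $R'(M)=2/M^2$ one finds
\[
TY^2\psi + \tfrac{1}{M}\,TY\psi - \tfrac{1}{M^2}\,T\psi + \tfrac{1}{M^2}\,Y\psi \;=\; \tfrac12\,(YF)\big|_{r=M},
\]
so $Y^2\psi$ only enters through the nonlinear piece $A(\psi)(Y^2\psi)\,T\psi$, which is $O(\ee)$-small and is absorbed after one integration by parts exactly as in the paper. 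If a genuine damping term $+\frac{c}{M}Y^2\psi$ with $c>0$ were present, a bounded source would drive $Y^2\psi$ to a bounded equilibrium rather than to infinity, and your argument would collapse. Second, the dominant forcing is the \emph{linear} term $-\frac{1}{M^2}Y\psi\sim -c_1/M^2$, not a quadratic $(Y\psi)^2$; the latter appears only inside $YF$ multiplied by $T\psi$ and is $O(\ee^3)$. Integrating the linear forcing from $\tau_0$ to $\tau$ is what produces the linear-in-$\tau$ growth $Y^2\psi(\tau,M)\lesssim -c_\ee\,\tau$. Once these two points are corrected your sketch coincides with the paper's proof, and the induction for $k>2$ proceeds the same way with the now-blowing-up lower derivatives providing the forcing.
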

\begin{rem}
If we don't assume the positivity condition \eqref{posblow}, then it should be expected that we will have in general non-decay for the second derivative $Y^2 \psi$ and blow-up for the third and higher derivatives $Y^k \psi$, $k \meg 3$, in analogy with the linear case as it was shown in \cite{aretakis2012}. We won't pursue this direction here.
\end{rem}
\begin{proof}
First we differentiate with respect to $r$ equation \eqref{nwef} and we evaluate the equation \eqref{nwef} on the horizon:
\begin{equation}\label{nwefr}
T Y^2 \psi + \frac{1}{M} TY \psi - \frac{1}{M^2} T \psi + \frac{1}{M^2} Y \psi = A' (\psi ) (Y \psi )^2 \cdot T \psi + 
\end{equation}
$$ + A(\psi ) ( TY \psi ) \cdot Y \psi + A(\psi ) ( Y^2 \psi ) \cdot T \psi . $$

We integrate \eqref{nwefr} with respect to $v$ for some $\tau_0$ big enough that will be chosen later, and we have:
\begin{equation}\label{intr}
\int_{\tau_0}^{\tau} ( T Y^2 \psi ) dv = \frac{1}{M} Y \psi (\tau_0) - \frac{1}{M} Y \psi (\tau) + \frac{1}{M^2} \psi (\tau ) - \frac{1}{M^2} \psi (\tau_0) - 
\end{equation}
$$ - \frac{1}{M^2} \int_{\tau_0}^{\tau} ( Y \psi ) dv + \int_{\tau_0}^{\tau} A' (\psi ) (Y \psi )^2 \cdot T \psi dv + $$ $$ + \int_{\tau_0}^{\tau} A(\psi ) ( TY \psi ) \cdot Y \psi dv + \int_{\tau_0}^{\tau} A(\psi ) ( Y^2 \psi ) \cdot T \psi dv .$$
We look first at the last term of \eqref{intr}:
$$ \int_{\tau_0}^{\tau} A(\psi ) ( Y^2 \psi ) \cdot T \psi dv = A(\psi ) \psi  \cdot ( Y^2 \psi ) |_{\tau_0 }^{\tau} - \int_{\tau_0}^{\tau} A' (\psi ) \psi \cdot T \psi \cdot ( Y^2 \psi ) dv - $$ $$ - \int_{\tau_0}^{\tau} A(\psi ) \psi \cdot ( T Y^2 \psi ) dv .$$
Going back to \eqref{intr} we have:
\begin{equation}\label{intr2}
\int_{\tau_0}^{\tau} ( T Y^2\psi ) (1+A(\psi ) \psi ) dv = \frac{1}{M} Y \psi (\tau_0) - \frac{1}{M} Y \psi (\tau) + \frac{1}{M^2} \psi (\tau ) - \frac{1}{M^2} \psi (\tau_0) + 
\end{equation}
$$ + A(\psi ) \psi (\tau) ( Y^2 \psi ) (\tau) - A(\psi ) \psi (\tau_0) ( Y^2 \psi ) (\tau_0) - \frac{1}{M^2} \int_{\tau_0}^{\tau} ( Y \psi ) dv + $$ $$ + \int_{\tau_0}^{\tau} A' (\psi ) (Y \psi )^2 \cdot T \psi dv + \int_{\tau_0}^{\tau} A(\psi ) ( TY \psi ) \cdot Y \psi dv - \int_{\tau_0}^{\tau} A' (\psi ) \psi \cdot T \psi \cdot ( Y^2 \psi ) dv .$$
We use the boundedness of $A'$ and we get that:
\begin{equation}\label{intr5}
\int_{\tau_0}^{\tau} ( T Y^2\psi ) (1+A(\psi ) \psi ) dv \mik \frac{1}{M} Y \psi (\tau_0) - \frac{1}{M} Y \psi (\tau) + \frac{1}{M^2} \psi (\tau ) - \frac{1}{M^2} \psi (\tau_0) +
\end{equation}
$$ + A(\psi ) \psi (\tau)  ( Y^2 \psi ) (\tau) - A(\psi ) \psi (\tau_0) ( Y^2 \psi ) (\tau_0) - \frac{1}{M^2} \int_{\tau_0}^{\tau} ( Y \psi ) dv + $$ $$ + \int_{\tau_0}^{\tau} A' (\psi ) (Y \psi )^2 \cdot T \psi dv + \int_{\tau_0}^{\tau} A(\psi ) ( TY \psi ) \cdot Y \psi dv + \frac{a_1}{2} \int_{\tau_0}^{\tau}  T (\psi^2 ) \cdot ( Y^2 \psi ) dv .$$
For the last term we have:
$$\frac{a_1}{2} \int_{\tau_0}^{\tau}  T (\psi^2 ) ( Y^2 \psi ) dv = \frac{a_1}{2} \psi^2 ( Y^2 \psi ) |_{\tau_0}^{\tau} - \frac{a_1}{2} \int_{\tau_0}^{\tau}  \psi^2 \cdot (T Y^2 \psi ) dv .$$
Then we have that:
\begin{equation}\label{intr6}
\int_{\tau_0}^{\tau} ( T Y^2\psi ) \left( 1+A(\psi ) \psi + \frac{a_1}{2} \psi^2 \right) dv \mik \frac{1}{M} Y \psi (\tau_0) - \frac{1}{M} Y \psi (\tau) + \frac{1}{M^2} \psi (\tau ) - \frac{1}{M^2} \psi (\tau_0) +
\end{equation}
$$ + A(\psi ) \psi (\tau) ( Y^2 \psi ) (\tau) - A(\psi ) \psi (\tau_0) ( Y^2 \psi ) (\tau_0) + \frac{a_1}{2} \psi^2 (\tau ) ( Y^2 \psi ) (\tau ) - \frac{a_1}{2} \psi^2 (\tau_0 ) ( Y^2 \psi ) (\tau_0 ) - $$ $$-\frac{1}{M^2} \int_{\tau_0}^{\tau} ( Y \psi ) dv  + \int_{\tau_0}^{\tau} A' (\psi ) (Y \psi )^2 \cdot T \psi dv + \int_{\tau_0}^{\tau} A(\psi ) ( TY \psi ) \cdot Y \psi dv  .$$

We use estimate \eqref{assumgwp1} and we choose an appropriate $\tau_0$ so that for any $\tau' \meg \tau_0$ we have that:
$$ 1 + A(\psi ) \psi (\tau') + \frac{a_1}{2} \psi^2 (\tau' ) \meg \frac{3}{4} .$$
Then we have that:
$$ \int_{\tau_0}^{\tau} ( T Y^2\psi ) \left( 1+ A(\psi ) \psi + \frac{a_1}{2} \psi^2 \right) dv \meg \frac{3}{4} \int_{\tau_0}^{\tau} ( T Y^2 )\psi dv = \frac{3}{4} ( Y^2 \psi ) (\tau) - \frac{3}{4} ( Y^2 \psi ) (\tau_0 ) ,$$
which going back to \eqref{intr6} and using again \eqref{assumgwp1} allows us to absorb the terms $$ \psi (\tau) ( Y^2 \psi ) (\tau) \mbox{ and } \frac{a_1}{2} \psi^2 (\tau ) Y^2 \psi (\tau )$$
 to the left hand side of \eqref{intr6}:
\begin{equation}\label{intr3}
( Y^2 \psi ) (\tau) \mik \frac{3}{2} ( Y^2 \psi ) (\tau_0 ) + \frac{2}{M} Y \psi (\tau_0) - \frac{2}{M} Y \psi (\tau) + \frac{2}{M^2} \psi (\tau ) - \frac{2}{M^2} \psi (\tau_0) + 
\end{equation}
$$ - 2\psi (\tau_0) ( Y^2 \psi ) (\tau_0) - \frac{a_1}{2} \psi^2 (\tau_0 ) ( Y^2 \psi ) (\tau_0 ) - $$ $$ - \frac{2}{M^2} \int_{\tau_0}^{\tau} ( Y \psi ) dv + 2\int_{\tau_0}^{\tau} A' (\psi ) (Y \psi )^2 \cdot T \psi dv + 2\int_{\tau_0}^{\tau} A(\psi ) ( TY \psi ) \cdot Y \psi dv .$$
Now we look at the term $\frac{2}{M^2} \int_0^{\tau} ( Y \psi ) dv$. We use the smallness of the initial data and the positivity of $\psi_0 (M)$ and $Y \psi (0,M)$. From Theorem \ref{acl} we have:
$$ Y \psi (\tau , M ) + \frac{1}{M} \psi (\tau , M ) = Y \psi (0, M) + \frac{1}{M} \psi (0,M) +\ee_1 = C_M \ee + \ee_1 ,$$
for $C_M = c + \frac{c}{M}$ where $0 \mik c \mik 1$ and for $\ee_1$ some number (might be positive or negative) which is much smaller in absolute value than $\ee$. By estimate \eqref{assumgwp1} we have:
$$ \left| \int_{\tau_0}^{\tau} \psi dv \right| \mik \int_{0}^{\tau} |\psi| dv \lesssim \sqrt{E_0} \ee (1+ \tau)^{2/5 + \aaa} ,$$
for $\aaa > 0$ that we have chosen to be small enough (so that $\psi$ is $L^2$ integrable with respect to $v$) which gives us here that $2/5 + \aaa < 1$.

By the last two inequalities we have that:
$$\int_{\tau_0}^{\tau} ( Y \psi ) dv = \int_{\tau_0}^{\tau} \left( Y \psi + \frac{1}{M} \psi - \frac{1}{M} \psi \right) dv \gtrsim $$ $$ \gtrsim \ee (\tau - \tau_0 ) + \ee_1 (\tau - \tau_0 ) - \ee (1+ \tau)^{2/5 + \aaa} \gtrsim \ee \tau \Rightarrow $$
\begin{equation}\label{assum1}
-\int_{\tau_0}^{\tau} ( Y \psi )  dv\lesssim - \ee \tau ,
\end{equation}
which is true by all the above assumptions asymptotically on $\mathcal{H}^{+}$. 

Next we look at the term $2\int_{\tau_0}^{\tau} A(\psi ) ( TY \psi ) \cdot Y \psi$ using the equation on the horizon \eqref{nwhss}:
$$ 2\int_{\tau_0}^{\tau} A(\psi ) ( TY \psi ) \cdot Y \psi dv = 2\int_{\tau_0}^{\tau} A^2 (\psi ) T \psi \cdot (Y \psi )^2 dv - \frac{2}{M} \int_{\tau_0}^{\tau} T \psi \cdot Y \psi dv  $$
$$ \Rightarrow \left|\int_{\tau_0}^{\tau} A(\psi ) ( TY \psi ) \cdot Y \psi dv \right| \lesssim E_0^{3/2} \ee^3 (\tau - \tau_0 ) + E_0 \ee^2 (\tau - \tau_0 ) $$
\begin{equation}\label{assum2}
2\int_{\tau_0}^{\tau} A(\psi ) ( TY \psi ) \cdot Y \psi dv \lesssim E_0 \ee^2 \tau , 
\end{equation}
asymptotically on $\mathcal{H}^{+}$. 

Finally we also have:
$$\left| \int_{\tau_0}^{\tau} A' (\psi ) (Y \psi )^2 T \psi dv \right| \lesssim E_0^{3/2}\ee^3 (\tau - \tau_0 ) ,$$ 
which implies that asymptotically on $\mathcal{H}^{+}$ we have:
$$ 2 \int_{\tau_0}^{\tau} A' (\psi ) (Y \psi )^2 T \psi dv \lesssim E_0^{3/2} \ee^3 \tau .$$
By the smallness of $\ee$ we have that:
\begin{equation}\label{assum3}
-\frac{2}{M^2} \int_{\tau_0}^{\tau} ( Y \psi ) dv + 2 \int_{\tau_0}^{\tau} A' (\psi ) (Y \psi )^2 \cdot T \psi dv + 2\int_{\tau_0}^{\tau} A(\psi ) ( TY \psi ) \cdot Y \psi dv \mik -C_{\ee} \tau ,
\end{equation}
for some $C_{\ee} > 0$.

Now we go back to \eqref{intr3} and putting together all the above estimates we have:
$$ ( Y^2 \psi ) (\tau) \mik G(\psi, Y \psi, Y^2 \psi) (\tau_0 ) + \textbf{C}_{\ee} \dfrac{1}{(1+\tau)^{3/5-\aaa}} - C_{\ee} \tau $$
\begin{equation}\label{intr4}
( Y^2 \psi ) (\tau) \mik - c_{\ee} \tau ,
\end{equation}
for some appropriate constant $c_{\ee} = c(\ee) > 0$ asymptotically on $\mathcal{H}^{+}$ and some function $G$ that depends on the values of $\psi$, $Y \psi$, $Y^2 \psi$ at $\tau_0$. This last inequality \eqref{intr4} finishes the proof of the Theorem for the case $k=2$. For $k > 2$ the proof is similar.

\end{proof} 

\section{Remarks On Other Nonlinearities}\label{othernon}
As we mentioned in Section \ref{Main}, we carried all computations so far for the nonlinear term being the null form $A(\psi) g^{\alpha \beta} \partial_{\alpha} \psi \partial_{\beta} \psi$. In this Section we will deal with the other nonlinear terms that are present in \eqref{nw}, and we will also look at some different nonlinearities for which we have some different results. 

We won't try to optimize the results though, so we will just mention what else can be included in the nonlinearity $F$ apart from the nonlinear null form term. 

Let us look for now at the nonlinear equation
$$ \Box_g \psi = |\psi |^l . $$
To make things easier, we will use (and of course close) the bootstrap assumption:
\begin{equation}\label{bassum}
\int_{\si_{\tau}} |F|^2 d\mi_{g_{\si}} \lesssim E_0 \ee^2 (1+\tau )^{-3+\aaa} ,
\end{equation}
for all $\tau$. This will actually imply the other two bootstrap assumptions of Theorem \ref{boot} (while this assumption itself is better than the first of Theorem \ref{boot}).

Let 
$$ l = l(\aaa) = l_0 + l_1 \mbox{ with } l_0 \meg \dfrac{3-\aaa}{6/5 - 3\aaa/5} \mbox{ and } l_1 > \dfrac{1}{2-2\aaa} , $$
for any given $\aaa$ such that 
$$\dfrac{3}{5} - \dfrac{3\aaa}{10} > \dfrac{1}{2}.$$

Then the bootstrap for \eqref{bassum} for the term $O(|\psi |^l )$ can easily close since
$$ |\psi (\tau) |^{2l_0} \lesssim E_0^{l_0} \ee^{2l_0} (1+\tau )^{3-\aaa} ,$$
and $|\psi |^{2l_1}$ is integrable with respect to $r$. One can then easily check that the continuation criterion is satisfied (note that now we have to control additionally $\psi$ in $L^{\infty}$ but this is true of course as well). The only difference is that the equations in Section \ref{drbound} are not the same, but similar estimates can be derived by using the decay for $\psi$ and the properties of $\partial_u r$.

Now we examine a different class of nonlinear equations that have the following form:
\begin{equation}\label{nwwnc}
\Box_g \psi = (\psi )^{2n} + \chi (T \psi )^{2n} + \chi (Y\psi )^{2n} \mbox{ for $n \in \mathbb{N}$, $n \meg 2$,} 
\end{equation}
for $\chi := \chi (r) \in C^{\infty}_0 ([M,\infty))$ a smooth cut-off that is 1 in $[M, M+c/2]$ and 0 in $[M+c , \infty)$ for some constant $c < \infty$.

We include here for completeness an argument of Aretakis from \cite{aretakis2013} that shows that we have finite time blow-up on the horizon $\mathcal{H}^{+}$ for (spherically symmetric) solutions of \eqref{nwwnc}. We have that on the horizon equation \eqref{nwwnc} becomes:
\begin{equation}\label{nwwnch}
T\left( Y \psi + \frac{1}{M} \psi \right) = \frac{1}{2} \left[ (\psi )^{2n} + (T\psi )^{2n} + (Y \psi )^{2n} \right] .
\end{equation}
By the trivial inequality $a^{2n} + b^{2n} \meg C_n (a+b)^{2n}$ for any $a,b \in \rr$ we get that \eqref{nwwnch} gives us:
$$ (\psi )^{2n} + (T\psi )^{2n} + (Y \psi )^{2n} \meg C_n \left( Y \psi + \frac{1}{M} \psi \right)^{2n} \Rightarrow $$
$$ \Rightarrow T \left( Y \psi + \frac{1}{M} \psi \right) \meg C_n \left( Y \psi + \frac{1}{M} \psi \right)^{2n} \Rightarrow $$
\begin{equation}\label{nwwnch1}
\Rightarrow T H(\tau) \meg C_n H^{2n} (\tau) \meg 0  ,
\end{equation}
in the notation of Section \ref{acwothh}. So if at $\tau = 0$ we have that $ H(0) = \eta > 0$ we get $H(\tau ) > 0$ for all $\tau$.

All we have to do now is to integrate the inequality \eqref{nwwnch1} and arrive at:
$$ T \left( -\dfrac{1}{(2n-1) H^{2n-1} (\tau )} \right) \meg C_n \Rightarrow \dfrac{1}{\eta^{2n-1}} - \dfrac{1}{H^{2n-1} (\tau)} \meg C_n (2n-1) \tau \Rightarrow $$ $$ \Rightarrow \dfrac{1}{H^{2n-1} (\tau)} \mik - C_n (2n-1) \tau + \frac{1}{\eta^{2n-1}} , $$
which shows that $H(\tau) \rightarrow \infty$ as $\tau \rightarrow \dfrac{1}{C_n (2n-1) \eta^{2n-1}}$ (i.e. the solution of \eqref{nwwnch1} blows up in $C^1$).

But we just saw that spherically symmetric solutions of the equation
\begin{equation}
\Box_g \psi = (\psi )^{2n}
\end{equation}
behave well for $n$ big enough. So in the end, we can state the following conjecture that asks for a reinterpretation of the Aretakis instability phenomenon as a shock formation mechanism in a certain sense.
\begin{conj}\label{shockconj}
Spherically symmetric solutions of equation \eqref{nwwnc} have \textbf{first} singularities on the event horizon. 
\end{conj}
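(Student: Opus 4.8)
The plan is to reduce Conjecture \ref{shockconj} to three ingredients: (i) the finite-time blow-up of $H(\tau) = Y\psi(\tau,M) + \frac{1}{M}\psi(\tau,M)$ on $\mathcal{H}^{+}$ already derived above; (ii) a semi-global regularity statement in the domain of outer communications valid for every $\tau$ strictly less than the horizon blow-up time; and (iii) a localization argument showing that in any region $\{ r \meg r_0 > M\}$ the solution behaves like a small-data solution of $\Box_g \psi = \psi^{2n}$, which is globally regular by the discussion of Section \ref{othernon}. Granting (i)--(iii), the maximal smooth development of the data has future boundary meeting $\mathcal{H}^{+}$ and nowhere else, i.e. the first singularity is on the horizon.

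First I would make (i) precise: for data with $H(0) = \eta > 0$ (which holds under \eqref{posblow} and the smallness assumption), inequality \eqref{nwwnch1} gives a finite $T^{*}\mik \frac{1}{C_n (2n-1)\eta^{2n-1}}$ with $H(\tau)\to\infty$ as $\tau\to T^{*}$; since $\psi(\tau,M)\to 0$ this forces $Y\psi(\tau,M)\to\infty$. The content of the conjecture is that $T^{*}$ is also the maximal time of existence of the solution, and that the breakdown is ``located'' on $\{r=M\}$ in the sense that in every $\{r\meg r_0>M\}$ the solution extends arbitrarily far in $\tau$.

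For (ii) I would run the bootstrap/continuation machinery of Sections \ref{apede}--\ref{tgwpt} with the nonlinearity $F = \psi^{2n} + \chi (T\psi)^{2n} + \chi (Y\psi)^{2n}$ in place of the null form, but only on a truncated interval $[0,\tau]$ with $\tau < T^{*}$. The term $\psi^{2n}$ is controlled exactly as in the $|\psi|^l$ remark of Section \ref{othernon} using the pointwise decay of $\psi$; the terms $\chi(T\psi)^{2n}$ and $\chi(Y\psi)^{2n}$ are supported in $\{M\mik r\mik M+c\}$, so they can be absorbed into the non-degenerate-energy estimate of Proposition \ref{deg1} together with a Gr\"onwall argument in $\tau$, \emph{provided} $\|T\psi\|_{L^{\infty}}$ and $\|Y\psi\|_{L^{\infty}}$ stay finite on $\widetilde{\mathcal{R}}(0,\tau)$. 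By the continuation criterion (Proposition \ref{cont}, suitably adapted), it therefore suffices to show these two quantities do not blow up before $T^{*}$: away from the horizon this follows from (iii) and the decay of $\psi$; near the horizon one reads off $Y\psi$, $Y^k\psi$ from the transport equations on and near $\{r=M\}$ (the analogues of \eqref{nwhss} and its $r$-derivatives, using that $D'$ vanishes to first order on $\mathcal{H}^{+}$), whose solutions remain finite for $\tau<T^{*}$ as long as $H$ does.

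The hard part --- and the reason this is stated as a conjecture rather than a theorem --- is precisely the closing of step (ii) near the horizon: the terms $(Y\psi)^{2n}$ with $n\meg 2$ are supercritical from the point of view of the non-degenerate energy, so the energy method does not obviously control $\|Y\psi\|_{L^{\infty}}$, let alone higher derivatives $Y^k\psi$, uniformly on $\widetilde{\mathcal{R}}(0,\tau)$ as $\tau\to T^{*}$; the degeneration of the redshift on extremal Reissner--Nordstr\"om compounds this. One would need a more refined, likely characteristics-based, argument tracking the precise rate at which $Y\psi$ grows near $\{r=M\}$ and showing that this growth is driven \emph{only} by the horizon ODE \eqref{nwwnch1}, with no faster bulk mechanism intervening. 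A secondary difficulty is the genuine ill-posedness risk of \eqref{nwwnc} in the near-horizon strip, which must be circumvented either by exploiting the good sign structure in \eqref{nwwnch1} or by further restricting the class of admissible data. I would not expect either difficulty to be merely technical.
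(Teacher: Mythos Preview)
The paper does not prove this statement: it is explicitly recorded as a \emph{conjecture} at the end of Section~\ref{othernon}, with no proof attempted. So there is no ``paper's own proof'' to compare against.

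Your proposal is therefore not so much a proof as a strategy outline, and you are candid about this: you yourself flag that ``the reason this is stated as a conjecture rather than a theorem'' is the supercriticality of the $(Y\psi)^{2n}$ term near the horizon, combined with the degenerate redshift on extremal Reissner--Nordstr\"om. That diagnosis is exactly right. The three-part plan (horizon ODE blow-up, semi-global regularity up to $T^*$, localization away from the horizon) is a reasonable organizing framework, and your step (iii) is indeed the easy part given the material in Section~\ref{othernon}. But step (ii) is not just ``hard'' --- it is the entire content of the conjecture, and your sketch does not supply the missing idea. In particular, the Gr\"onwall argument you invoke for the $\chi(Y\psi)^{2n}$ term requires an a~priori $L^\infty$ bound on $Y\psi$ that is precisely what fails as $\tau\to T^*$; and the claim that near-horizon transport equations ``remain finite for $\tau<T^*$ as long as $H$ does'' is circular, since controlling $Y\psi$ off the horizon from the horizon value of $H$ is again the open problem.

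So your write-up is best read as an honest articulation of the obstacles, not a proof. That matches the paper's stance: the statement is left open.
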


\section{Acknowledgements}
First and foremost I would like to thank Stefanos Aretakis for all his support, his advice, and all the time that he spent for this work. I would also like to thank Georgios Moschidis for many stimulating discussions concerning this paper. Finally, I'd like to thank Mihalis Dafermos, Yakov Shlapentokh-Rothman and Shiwu Yang for sharing with me many useful insights, as well as Jonathan Luk for helpful comments on a preliminary version of this manuscript.

\appendix
\section{The Extremal Reissner-Nordstr\"{o}m Solution In Null Coordinates}\label{ngrn}
The extremal Reissner-Nordstr\"{o}m black-hole spacetime is a spherically symmetric solution of the Einstein--Maxwell system of equations:
$$ R_{\alpha \beta} - \frac{1}{2} R g_{\alpha \beta} = 2 T_{\alpha \beta} ,$$
$$ \nabla^{\alpha} F_{\alpha \beta} = 0 , \quad dF = 0 ,$$
where
$$ T_{\alpha \beta} = 2 \left( F^{\rho}_{\alpha} F_{\beta \rho} - \frac{1}{4} g_{\alpha \beta} F^{ab} F_{ab} \right) .$$

Under the assumption of spherical symmetry the Einstein--Maxwell equations reduce to a number of simpler equations. For a detailed and elementary treatment of this topic (which can be generalized to the setting of the Einstein equations being coupled to any matter field) we refer to the lecture notes of Schlue \cite{volkerlectures}, and for the more interested reader we mention the works of Christodoulou \cite{DC93}, \cite{DC91}, Dafermos \cite{MD03}, \cite{MD12}, Dafermos and Rodnianski \cite{MDIR05} and Kommemi \cite{JK10b}. In spherical symmetry we are looking at our spacetime $\mathcal{M}$ as follows: define the quotient space
$$\mathcal{Q} = \mathcal{M} \diagup SO(3) , $$
which is an $1+1$-dimensional manifold with boundary. The metric of $\mathcal{M}$ can be split into two parts
$$ g_{\mathcal{M}} = g_{\mathcal{Q}} + g_{\mathbb{S}^2} ,$$
where $g_{\mathbb{S}^2} = r^2 (x) \gamma_{\mathbb{S}^2}$ for $x \in \mathcal{Q}$ and for $r : \mathcal{Q} \rightarrow \rr_{\meg 0}$ the area radius function.

We now consider a double null coordinate system for our solution, under which the metric takes the form:
$$ g = -\Omega^2 dudv + r^2 \gamma_{\mathbb{S}^2} .$$
We note that $\mathcal{Q}$ contains a subset $\mathcal{D} = \{ (u,v) \in [0,U] \times [0,\bar{V}] \}$ with the future event horizon $\mathcal{H}^{+}$ being on $\{ (u,v) | u=U, v \meg 0 \}$ and the future null infinity $\mathcal{I}^{+}$ on $\{ (u,v) | v=\bar{V} , u \meg 0 \}$ for $\bar{V} \mik \infty$. Moreover the function $\Omega$ (which is not arbitrary) is a strictly positive $C^1$ function on $\mathcal{D}$ and the variables $u$ and $v$ increase towards the future (i.e. $\nabla u$ and $\nabla v$ are future pointing).

$\mathcal{D}$ has the following Penrose diagram:
\begin{figure}[H]
\centering
\includegraphics[width=6cm]{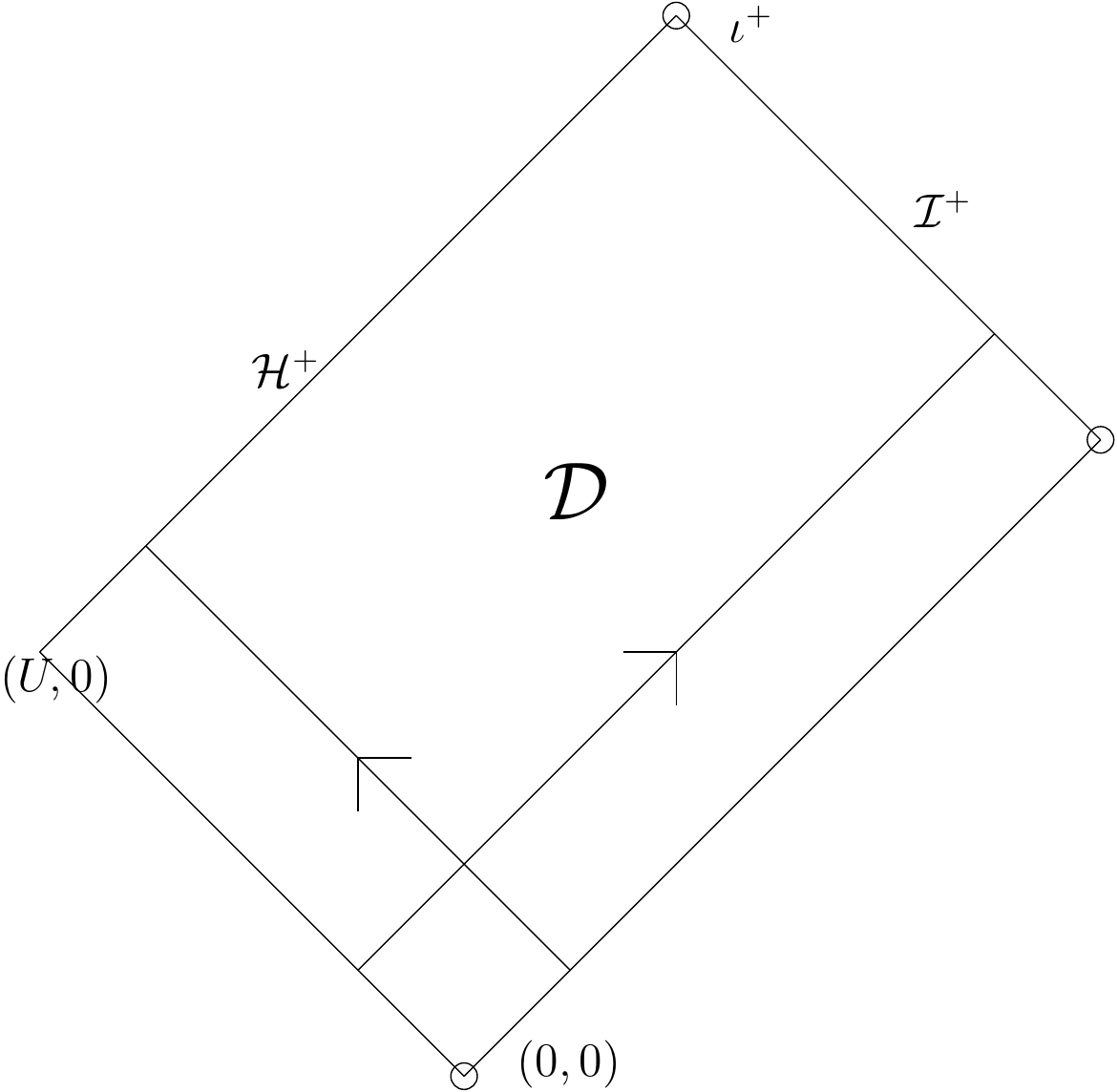}
\end{figure}

We will actually be more specific here and define the $v$ variable to be $v = t + r^{*}$. On the other hand we renormalize the $u$ variable  to be regular on the future event horizon (i.e. $U < \infty$). \footnote{Recall that in the null coordinate system that we already used we considered $u = t- r^{*}$ and $v$ as above, in this case $u$  becomes $-\infty$ on the horizon.}

In the same null coordinate system the Maxwell field takes the following form:
$$ F = \dfrac{\Omega^2 E}{2r^2} du \wedge dv + B \sin \theta d\theta \wedge d\phi ,$$
for real constants $E, B$ (the electric and the magnetic charge respectively). By denoting $e = \sqrt{E^2 + B^2}$ we can compute the energy momentum tensor:
\begin{equation}\label{mtmaxwell}
 T_{uu} = T_{vv} = 0, \quad T_{uv} = \dfrac{\Omega^2 M^2}{4 r^4} ,
 \end{equation}
since in the extremal case we have that $e^2 = M^2$ for the mass function $M$ which is defined by:
$$ \left( 1 - \frac{M}{r} \right)^2 = -\frac{4}{\Omega^2} \partial_u r \partial_v r .$$

The Einstein--Maxwell system gives us the following equations for the area radius function $r$ (combined with the computation \eqref{mtmaxwell}):
\begin{equation}\label{ruv}
\partial_{uv} r = -\frac{1}{r} \partial_u r \partial_v r - \frac{\Omega^2}{4r} + r T_{uv} , 
\end{equation}
\begin{equation}\label{ruu}
\partial_u \left( \dfrac{\partial_u r}{\Omega^2} \right) = 0 ,
\end{equation}
\begin{equation}\label{rvv}
\partial_v \left( \dfrac{\partial_v r}{\Omega^2} \right) = 0 . 
\end{equation}
Substituting the mass equation in \eqref{ruv} we get that:
\begin{equation}\label{ruv2}
\partial_{uv} r = -\dfrac{\Omega^2}{2r^2} \left( M - \frac{M^2}{r} \right) .
\end{equation}
We can get the extremal Reissner-Nordstr\"{o}m solution by starting with $\partial_v r \meg 0$, $\partial_u r < 0$ initially. 

Note that here we have chosen $\partial_v r = D$, which implies that we have:
\begin{equation}\label{ruomega}
-\frac{4}{\Omega^2} \partial_u r = 1 .
\end{equation}
From equation \eqref{ruomega} we have that everywhere in $\mathcal{D}$ the following holds:
\begin{equation}\label{ruminus}
\partial_u r < 0 .
\end{equation}
Moreover from equation \eqref{ruv2} and the initial condition $\partial_u r < 0$ we can see that $\partial_u r$ remains always negative and increases in absolute value with $v$. 

Finally we rewrite \eqref{ruv2} in the following form using \eqref{ruomega}:
\begin{equation}\label{ruv1}
\partial_{uv} r = \dfrac{2\partial_u r}{ r^2} \left( M - \frac{M^2}{r} \right) .
\end{equation}

\section{The d'Alembertian On The Extremal Reissner-Nordstr\"{o}m Spacetime}
We record here the form that the d'Alembertian operator $\Box_g$ takes under the different coordinates systems that we use in this work for the extremal Reissner-Nordstr\"{o}m black hole spacetime.

In the Eddington--Finkelstein $(v,r)$ coordinates we have:
\begin{equation}\label{wave}
\Box_g \psi = D \partial_{rr} \psi + 2\partial_{vr} \psi + \frac{2}{r} \partial_v \psi + \left( D' + \frac{2D}{r} \right) \partial_r \psi + \slashed{\Delta} \psi ,
\end{equation}
where $\slashed{\Delta} = \frac{1}{r^2}\Delta_{\mathbb{S}^2}$. 

In the null coordinates $(u = t-r^{*} , v = t+r^{*} )$ we have:
\begin{equation}\label{wavenull}
\Box_g \psi = -\frac{4}{Dr} \partial_{uv} (r\psi ) - \frac{D'}{r} \psi + \slashed{\Delta} \psi .
\end{equation}
\bibliographystyle{acm}
\bibliography{bibliography}
\end{document}